\newcommand{\boldN}{\boldsymbol{N}}
\newcommand{\boldL}{\boldsymbol{L}}
\newcommand{\Nmin}{N_{\mathrm{min}}}
\newcommand{\Nmax}{N_{\mathrm{max}}}
\newcommand{\Lmin}{L_{\mathrm{min}}}
\newcommand{\Lmed}{L_{\mathrm{med}}}
\newcommand{\Lmax}{L_{\mathrm{max}}}
\newcommand{\hypwt}{h}
\newcommand{\vangle}{\theta}
\newcommand{\abs}[1]{\left\vert #1 \right\vert}
\newcommand{\bigabs}[1]{\bigl\vert #1 \bigr\vert}
\newcommand{\Bigabs}[1]{\Bigl\vert #1 \Bigr\vert}
\newcommand{\norm}[1]{\left\Vert #1 \right\Vert}
\newcommand{\bignorm}[1]{\bigl\Vert #1 \bigr\Vert}
\newcommand{\C}{\mathbb{C}}
\newcommand{\N}{\mathbb{N}}
\newcommand{\Proj}{\mathbf{P}}
\newcommand{\R}{\mathbb{R}}
\newcommand{\Z}{\mathbb{Z}}
\newcommand{\Innerprod}[2]{\left\langle \, #1 , #2 \, \right\rangle}
\newcommand{\innerprod}[2]{\langle \, #1 , #2 \, \rangle}
\newcommand{\angles}[1]{\langle #1 \rangle}
\DeclareMathOperator{\diag}{diag}
\DeclareMathOperator{\supp}{supp}
\newtheorem{theorem}{Theorem}[section]
\newtheorem{lemma}{Lemma}[section]
\theoremstyle{definition}
\theoremstyle{remark}
\numberwithin{equation}{section}
\title[Global solutions of 2d Maxwell-Dirac]{Global well-posedness of the Maxwell-Dirac system in two space dimensions}
\author[P. D'Ancona]{Piero D'Ancona}
\address{Department of Mathematics\\
University of Rome ``La Sapienza''\\
Piazzale Aldo Moro 2\\
I-00185 Rome\\ Italy}
\email{dancona@mat.uniroma1.it}
\author[S. Selberg]{Sigmund Selberg}
\address{Department of Mathematical Sciences\\
Norwegian University of Science and Technology\\
Alfred Getz' vei 1\\
N-7491 Trondheim\\ Norway}
\email{sigmund.selberg@math.ntnu.no}
\subjclass[2000]{35Q40; 35L70}
\begin{document}

\begin{abstract}
In recent work, Gr\"unrock and Pecher proved that the Dirac-Klein-Gordon system in 2d is globally well-posed in the charge class (data in $L^2$ for the spinor and in a suitable Sobolev space for the scalar field). Here we obtain the analogous result for the full Maxwell-Dirac system in 2d. Making use of the null structure of the system, found in earlier joint work with Damiano Foschi, we first prove local well-posedness in the charge class. To extend the solutions globally we build on an idea due to Colliander, Holmer and Tzirakis. For this we rely on the fact that MD is charge subcritical in two space dimensions, and make use of the null structure of the Maxwell part. 
\end{abstract}

\maketitle
\tableofcontents

\section{Introduction}\label{A}

The Maxwell-Dirac system (MD) describes the motion of an electron interacting with an electromagnetic field. Here we study the 2d (two space dimensions) case, where the electron is restricted to move in  the $(x^1,x^2)$-plane. Then the electric field $\mathbf E$ is constrained to the same plane, the magnetic field $\mathbf B$ is perpendicular to it, and all fields depend only on $(t,x^1,x^2)$ (not on $x^3$), so we write $x=(x^1,x^2)$, and occasionally $t=x^0$. The partial derivative with respect to $x^\mu$ is denoted $\partial_\mu$ for $\mu=0,1,2$; we write $\partial_t=\partial_0$, and $\nabla$ denotes the spatial gradient. The summation convention is in effect: Roman indices $j,k,\dots$ run over $\{ 1,2 \}$, greek indices $\mu,\nu,\dots$ over $\{ 0,1,2 \}$, and repeated upper/lower indices are implicitly summed over these ranges. Indices are raised and lowered using the metric $\diag(-1,1,1)$ on $\R^{1+2}$. 

In terms of a potential $A=\{A_\mu\}_{\mu=0,1,2}$ with $A_\mu \colon \R^{1+2} \to \R$,
$$
  \mathbf B = \nabla \times \mathbf A = (0,0,\partial_1 A_2 - \partial_2 A_1),
  \qquad
  \mathbf E = \nabla A_0 - \partial_t \mathbf A,
$$
where $\mathbf A = (A_1,A_2,0)$ denotes the spatial part of $A$. Expressing Maxwell's equations in terms of $A$, and imposing the Lorenz gauge condition
$$
  \partial^\mu A_\mu = 0 \qquad (\iff \partial_t A_0 = \nabla \cdot \mathbf A),
$$
the MD system reads (see e.g.~\cite{Selberg:2008b})
\begin{equation}\label{MD}
  \left\{
  \begin{aligned}
  \left(-i\boldsymbol\alpha^\mu \partial_\mu + M\boldsymbol\beta\right)\psi &= A_\mu \boldsymbol\alpha^\mu \psi,
  \\
  \square A_\mu &= - \innerprod{\boldsymbol\alpha_\mu\psi}{\psi},
\end{aligned}
\right.
\end{equation}
where $\psi \colon \R^{1+2} \to \C^N$ is the Dirac spinor, $M \in \R$ is a constant and $\square = \partial_\mu \partial^\mu = -\partial_t^2 + \Delta_x$ is the D'Alembertian on $\R^{1+2}$. Since we work in 2d, the smallest possible dimension of the spinor space is $N=2$, and then for the $2 \times 2$ Dirac matrices we can take the representation $\boldsymbol\alpha^0 = \mathbf I_{2 \times 2}$, $\boldsymbol\alpha^1 = \boldsymbol\sigma^1$, $\boldsymbol\alpha^2 = \boldsymbol\sigma^2$, $\boldsymbol\beta = \boldsymbol\sigma^3$, where the $\boldsymbol\sigma^j$ are the Pauli matrices. Finally, $\innerprod{\cdot}{\cdot}$ is the standard $\C^2$ inner product.

Recently there has been significant progress in the regularity theory for MD and the simpler Dirac-Klein-Gordon system (DKG),
\begin{equation}\label{DKG}
  \left\{
  \begin{aligned}
  \left(-i\boldsymbol\alpha^\mu \partial_\mu + M\boldsymbol\beta\right)\psi &= \phi \boldsymbol\beta \psi,
  \\
  (-\square + m^2) \phi &= \innerprod{\boldsymbol\beta\psi}{\psi},
\end{aligned}
\right.
\end{equation}
where $\phi$ is real-valued and $m \in \R$ is a constant.

A key question for both systems is whether \emph{global regularity} holds, i.e.~starting from  smooth initial data, does the solution exist for all time and stay smooth? For small data this has been answered affirmatively by Georgiev \cite{Georgiev:1991} in 3d, but for large data there was until quite recently only the 1d result of Chadam \cite{Chadam:1973}.

To make progress on the large data question in 2d and 3d, a natural strategy is to study local (in time) well-posedness for rough data and exploit conservation laws to extend the solutions globally.

But for both DKG and MD, the energy lacks a definite sign (see \cite{Glassey:1979}), so the only conserved quantity that appears to be immediately useful is the charge:
$$
  \norm{\psi(t)}_{L^2}^2 = \text{const}.
$$
This constant will be called the \emph{charge constant} in what follows.

The charge conservation was of course a key ingredient in Chadam's global result for 1d MD \cite{Chadam:1973}, later improved for the 1d DKG case by Bournaveas \cite{Bournaveas:2000}, in the sense that the regularity requirements were lowered to the charge class (data in $L^2$ for the spinor and in some Sobolev space for the scalar field). Since then a number of papers improving the local and global theory for 1d DKG have appeared, see \cite{Fang:2004, BournaveasGibbeson:2006, Machihara:2007, Pecher:2006, Pecher:2008, Selberg:2007c, Selberg:2008a, Tesfahun:2009, Machihara:2010}.

As the space dimension increases, however, it becomes much more difficult to prove local existence in the charge class, and therefore correspondingly difficult to exploit the charge conservation. Indeed, it was to take more than thirty years from the 1d result of Chadam until the next major breakthrough in the global theory was achieved quite recently by Gr\"unrock and Pecher \cite{Gruenrock:2009}, who proved global well-posedness for 2d DKG. At the same time, but independently, Ovcharov \cite{Ovcharov:2009} proved a corresponding result under a spherical symmetry assumption.

Decisive improvements in the local theory have been made possible through the discovery, by the authors in joint work with Damiano Foschi, of the complete null structure of first DKG, in \cite{Selberg:2007d}, and then MD, in \cite{Selberg:2008b}, permitting significant progress compared to earlier local results such as \cite{Gross:1966, Bournaveas:1996, Bournaveas:2001, Masmoudi:2003, Fang:2005, Selberg:2005}, where at most partial null structure was used.

In \cite{Gruenrock:2009}, Gr\"unrock and Pecher use the DKG null structure combined with bilinear estimates similar to those used in \cite{Selberg:2007b}, where in particular it was shown that 2d DKG is locally well posed for data 
\begin{equation}\label{DKGdata}
  \psi(0) \in L^2(\R^2), \quad \phi(0) \in H^{1/2}(\R^2), \quad \partial_t \phi(0) \in H^{-1/2}(\R^2),
\end{equation}
with a time of existence depending only on the size of the data norm. Thus, to get a global result it suffices---in view of the conservation of charge---to show that
$$
  \norm{\phi(t)}_{H^{1/2}(\R^2)} +  \norm{\partial_t\phi(t)}_{H^{-1/2}(\R^2)}
$$
cannot blow up in finite time. In fact, Gr\"unrock and Pecher prove this for an equivalent norm which we shall denote $D(t)$. In our reformulation, they prove: 

\begin{theorem}\label{GPthm} \emph{(\cite{Gruenrock:2009}.)}
The local solution of 2d DKG exists up to a time $T > 0$ determined by
\begin{equation}\label{Time}
  T^{1/2} [1 + D(0)] = \varepsilon,
\end{equation}
where $\varepsilon > 0$ depends on the charge constant. Moreover, if $D(0) \ge 1$ then
\begin{equation}\label{Growth}
  \sup_{0 \le t \le T} D(t) \le D(0) + CT^{1/2},
\end{equation}
where $C$ depends on the charge constant.
\end{theorem}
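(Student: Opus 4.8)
The plan is to derive Theorem~\ref{GPthm} from a Picard iteration in Bourgain spaces adapted to the half-wave and Klein--Gordon flows, using the complete null structure of DKG established in \cite{Selberg:2007d} together with the bilinear spacetime estimates of \cite{Selberg:2007b}, and then to read off the quantitative lifespan \eqref{Time} and the growth bound \eqref{Growth} by tracking how the charge constant and $D(0)$ enter. First I would pass to a first-order system: diagonalizing the symbol of the Dirac operator, write $\psi = \psi_{+} + \psi_{-}$ with $\psi_{\pm} = \Pi_{\pm}(D)\psi$, where $\Pi_{\pm}(\xi)$ is the orthogonal projection onto the eigenspace of $\boldsymbol\alpha \cdot \xi/\abs{\xi}$ with eigenvalue $\pm 1$, so that $\left(-i\partial_t \pm \abs{D}\right)\psi_{\pm} = -M\boldsymbol\beta\psi_{\mp} + \Pi_{\pm}(D)\bigl(\phi\,\boldsymbol\beta\,\psi\bigr)$, the mass term being harmless at this regularity. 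Splitting $\phi = \phi_{+} + \phi_{-}$ with $\phi_{\pm} = \half\bigl(\phi \mp i\angles{D}^{-1}\partial_t\phi\bigr)$, where $\angles{D}$ is the Fourier multiplier $\sqrt{m^2 + \abs{\xi}^2}$, turns the Klein--Gordon equation into $\left(-i\partial_t \mp \angles{D}\right)\phi_{\pm} = \mp\half\,\angles{D}^{-1}\innerprod{\boldsymbol\beta\psi}{\psi}$, and $D(t) = \sum_{\pm}\bignorm{\angles{D}^{1/2}\phi_{\pm}(t)}_{L^2}$ is then equivalent to $\norm{\phi(t)}_{H^{1/2}} + \norm{\partial_t\phi(t)}_{H^{-1/2}}$; note $\norm{\psi(0)}_{L^2}$ is exactly the charge constant. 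The iteration will run in the restriction spaces $X_{\pm}^{0,1/2+}[0,T]$ for $\psi_{\pm}$ and $X_{\pm}^{1/2,1/2+}[0,T]$ for $\phi_{\pm}$ --- the wave-Sobolev norms built on the modulation weights adapted to the respective half-wave and Klein--Gordon flows --- with the nonlinearities estimated at modulation exponent $-1/2+$.

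The core is a pair of bilinear estimates with a gain of a power of $T$. The null structure of \cite{Selberg:2007d} gives that the matrix symbol of $\Pi_{\pm_0}(\zeta)\bigl(\phi\,\boldsymbol\beta\,\Pi_{\pm_1}(\xi)\psi\bigr)$ carries a factor $O\bigl(\angle(\zeta,\xi)\bigr)$, and that $\Pi_{\pm_1}(\xi)^{*}\,\boldsymbol\beta\,\Pi_{\pm_2}(\eta) = O\bigl(\angle(\xi,\eta)\bigr)$ whenever $\pm_1 = \pm_2$; these angular gains precisely cancel the near-parallel (resonant) interactions that would otherwise obstruct the estimates. Feeding them into the $X^{s,b}$ machinery of \cite{Selberg:2007b}, and using that the data regularity \eqref{DKGdata} leaves a little subcritical room, one obtains, for a $\theta > 0$ which can be taken equal to $\half$,
\begin{align*}
  \norm{\phi\,\boldsymbol\beta\,\psi}_{X_{\pm}^{0,-1/2+}[0,T]}
  &\lesssim T^{\theta}\Bigl(\textstyle\sum_{\pm}\norm{\phi_{\pm}}_{X_{\pm}^{1/2,1/2+}[0,T]}\Bigr)\Bigl(\textstyle\sum_{\pm}\norm{\psi_{\pm}}_{X_{\pm}^{0,1/2+}[0,T]}\Bigr),
  \\
  \bignorm{\angles{D}^{-1}\innerprod{\boldsymbol\beta\psi}{\psi}}_{X^{1/2,-1/2+}[0,T]}
  &\lesssim T^{\theta}\Bigl(\textstyle\sum_{\pm}\norm{\psi_{\pm}}_{X_{\pm}^{0,1/2+}[0,T]}\Bigr)^{2}.
\end{align*}
Combined with the linear estimates for the half-wave and Klein--Gordon propagators (the energy inequality and the usual inhomogeneous $X^{s,b}$ bound), these make the iteration map a contraction on a ball whose $\psi_{\pm}$-radius is a fixed multiple of $\norm{\psi(0)}_{L^2}$ and whose $\phi_{\pm}$-radius is a fixed multiple of $1 + D(0)$, provided $T^{1/2}\bigl(1 + D(0)\bigr)$ is below a constant depending only on the charge constant --- this is \eqref{Time}. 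Hence the solution exists on $[0,T]$.

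For \eqref{Growth}, apply the Duhamel formula to $\phi_{\pm}$ on $[0,t]$ with $t \le T$ and the $L^2$ energy inequality for $e^{\mp it\angles{D}}$; since $\angles{D}^{1/2}\angles{D}^{-1} = \angles{D}^{-1/2}$, the second bilinear estimate with $\theta = \half$ yields
$$
  D(t) \le D(0) + C\bignorm{\innerprod{\boldsymbol\beta\psi}{\psi}}_{X^{-1/2,-1/2+}[0,T]} \le D(0) + C\,T^{1/2}\Bigl(\textstyle\sum_{\pm}\norm{\psi_{\pm}}_{X_{\pm}^{0,1/2+}[0,T]}\Bigr)^{2}.
$$
The decisive point is that the $\psi$-equation closes on the charge alone: by the first bilinear estimate the $\phi$-feedback into it is bounded by a small multiple of $\sum_{\pm}\norm{\psi_{\pm}}_{X_{\pm}^{0,1/2+}[0,T]}$ once $T$ obeys \eqref{Time}, so the fixed point gives $\sum_{\pm}\norm{\psi_{\pm}}_{X_{\pm}^{0,1/2+}[0,T]} \le C\norm{\psi(0)}_{L^2}$ with $C$ absolute, and therefore $D(t) \le D(0) + CT^{1/2}$ with $C$ depending only on the charge constant, as claimed (the hypothesis $D(0) \ge 1$ serving only to absorb lower-order terms into $D(0)$).

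I expect the main obstacle to be the sharp bilinear estimates above, together with the bookkeeping that isolates the charge dependence: one must show that the $\psi$-component of the solution, and in particular its $X^{0,1/2+}$ norm, is controlled by the charge \emph{alone}, with $D(0)$ entering only through the smallness threshold on $T$. This is exactly where the charge-subcriticality of 2d DKG and the \emph{complete} null structure of \cite{Selberg:2007d} are indispensable, since any logarithmic or polynomial deficiency in the angular gain would couple the bound on $\psi$ to $D(0)$ and destroy the linear-in-$T^{1/2}$ growth in \eqref{Growth}.
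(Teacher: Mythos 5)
The paper does not prove Theorem~\ref{GPthm}; it is cited verbatim from Gr\"unrock and Pecher~\cite{Gruenrock:2009}, and the text around it makes clear that the authors are merely reformulating that reference's result before adapting its strategy to the MD system. So there is no ``paper's own proof'' to compare against directly. That said, your sketch follows the same overall scheme the paper uses for its MD analogues (Theorems~\ref{LWP}, \ref{LWPnew}, \ref{EMgrowth}): split $\psi$ via the Dirac projections, split $\phi$ via the Klein--Gordon analogue of \eqref{WaveSplitting}, work in Bourgain-type spaces adapted to the half-wave and Klein--Gordon flows, and read off the lifespan and the $D(t)$-growth from a bilinear estimate with a $T$-gain while tracking that the spinor component is controlled by the charge alone.

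There is, however, one genuine gap in your proposal as stated. You run the iteration in $X^{0,1/2+}[0,T]$ and $X^{1/2,1/2+}[0,T]$ with nonlinearities at modulation $-1/2+$, and then assert the gain exponent $\theta$ ``can be taken equal to $\frac12$.'' In the standard $X^{s,b}$ framework this is not the case: with $b = \frac12 + \delta$ and $b' = -\frac12 + \delta$ the $T$-gain comes out as $T^{1/2-\delta'}$ for some small $\delta' > 0$, not the clean $T^{1/2}$ needed for \eqref{Time} and \eqref{Growth} (which in turn is essential so that the exponents in the Colliander--Holmer--Tzirakis scheme add up to $1$). The paper explicitly flags this obstruction and resolves it by replacing $X^{s,1/2+}$ with the Besov-modified spaces $X^{s,1/2;1}_{\pm}$ (and $X^{s,b;\infty}$ for the nonlinearity), for which the cutoff estimate \eqref{Cutoff2} gives an exact $T^{1/2-b}$ gain down to $b = 0$ without an $\epsilon$-loss; Gr\"unrock and Pecher use the same device. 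You would need to make this substitution --- and correspondingly use \eqref{Linear1}, \eqref{Linear2}, \eqref{Cutoff1}, \eqref{Cutoff2} in place of the standard $X^{s,b}$ linear estimates --- for the claimed $\theta = \frac12$ and hence for \eqref{Growth} to actually hold with the stated power of $T$.
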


Both DKG and MD are charge subcritical in 2d (whereas the 3d problems are charge critical).
To be precise, the critical regularity determined by scaling is half a derivative below the regularity of the charge class data \eqref{DKGdata}, hence the half power of $T$ in \eqref{Time} is optimal, and in fact so is the half power in \eqref{Growth}. The fact that the two exponents add up to $1$ enabled Gr\"unrock and Pecher to apply a scheme devised by Colliander, Holmer and Tzirakis \cite{Colliander:2008} to extend solutions globally. We recall the argument here since a modified version of it will be used for MD.

Since the only possible impediment to global existence is $D(t)$ becoming large, one may assume $D(t) \gg 1$ for all $t \ge 0$ for which the solution exists. Now as long as $D(t) \le 2D(0)$, Theorem \ref{GPthm} can be applied repeatedly with a uniform time increment $T$ given by $T^{1/2} [1+2D(0)] = \varepsilon$. In view of \eqref{Growth} the theorem can be applied $n$ times, where $n$ is the smallest integer such that $nCT^{1/2} > D(0)$. In this way one covers a total time interval of length
$$
  nT = nCT^{1/2} \frac{1}{C}T^{1/2}
  > D(0) \frac{\varepsilon}{C[1+2D(0)]} 
  \ge D(0) \frac{\varepsilon}{C[3D(0)]} 
  = \frac{\varepsilon}{3C} > 0,
$$
the crucial point being that $\varepsilon/3C$ is independent of $D(0)$. Repeating the whole argument one can therefore cover a time interval of arbitrary length.

The purpose of the present paper is to extend the result of Gr\"unrock and Pecher to the full MD system. This adds significant difficulties since MD has a far more complicated null structure than DKG, and since instead of a single scalar field $\phi$ we have to deal with the electromagnetic field $(\mathbf E,\mathbf B)$. Because of these additional difficulties, we have to face the following two issues, affecting the above global existence argument:
\begin{enumerate}
  \item For MD we are only able to prove the analog of \eqref{Growth} up to a logarithmic loss in the factor $T^{1/2}$, i.e.~the term $CT^{1/2}$ on the right hand side is replaced by $CT^{1/2}\log(1/T)$, where $D(t)$ is now a certain norm of $(\mathbf E,\mathbf B)(t)$ such that local existence holds up to a time $0 < T \ll 1$ determined by \eqref{Time}.
  \item\label{Issue2} The norm $D(t)$ actually depends implicitly on $T$.
\end{enumerate}

Because of these issues, we are not able to apply the scheme of Colliander, Holmer and Tzirakis in its original form, but with some extra work---exploiting in particular a crucial monotonicity property of our data norm with respect to $T$---we are nevertheless able to obtain a global existence result. The detailed argument is given in section \ref{LocalToGlobal}, but as a warm-up we sketch here the argument in the much simpler situation where we ignore the implicit dependence of $D(t)$ on $T$: The local result can then be iterated until $nCT^{1/2}\log(1/T) > D(0)$, giving a total time
$$
  \Delta \equiv nT
  > \frac{D(0)}{\log(1/T)} \frac{\varepsilon}{C[1+2D(0)]} 
  \sim \frac{1}{\log(1/T)} \sim \frac{1}{\log D(0)},
$$
where \eqref{Time} was used. Moreover, one can easily show $D(\Delta) \le 3D(0)$, so by a further iteration one covers successive time intervals of length $\Delta_1,\Delta_2,\dots$ such that
$$
  \Delta_{j+1} \gtrsim \frac{1}{\log(3^j D(0))} \sim \frac{1}{j+1}
$$
for $j \ge 0$, hence $\sum_{j=1}^\infty \Delta_j = \infty$.

\emph{Some notation:} The Fourier transforms on $\R^2$ and $\R^{1+2}$ are defined by
$$
  \widehat f(\xi) = \int_{\R^2} e^{-ix\cdot\xi} f(x) \, dx,
  \qquad
  \widetilde u(X) = \int_{\R^{1+2}} e^{-i(t\tau+x\cdot\xi)} u(t,x) \, dt \, dx,
$$
where $\xi \in \R^2$, $\tau \in \R$ and $X=(\tau,\xi)$. We also write $\mathcal F u = \widetilde u$.

If $A$ is a subset of $\R^{1+2}$, or a condition describing such a set, the multiplier $\Proj_A$ is defined by
$$
  \widetilde{\Proj_A u}(X) = \chi_A(X)\widetilde u(X),
$$
where $\chi_A$ is the characteristic function of $A$, and similarly if $A \subset \R^2$.

We write $D=-i\nabla$, and given $h : \R^2 \to \C$ we denote by $h(D)$ the multiplier defined by $$
  \widehat{h(D)f}(\xi) = h(\xi)\widehat f(\xi).
$$

The notation $\norm{\cdot}$ is reserved for the $L^2$-norms on both $\R^2$ and $\R^{1+2}$ (which one it is will be clear from the context):
$$
  \norm{f} = \left( \int_{\R^2} \abs{f(x)}^2 \, dx \right)^{1/2},
  \qquad
  \norm{u} = \left( \int_{\R^{1+2}} \abs{u(t,x)}^2 \, dt \, dx \right)^{1/2},
$$
and similarly in Fourier space. For $s \in \R$, the Sobolev space $H^s=H^s(\R^2)$ is defined as the completion of the Schwartz space $\mathcal S(\R^2)$ with respect to the norm
$$
  \norm{f}_{H^s} = \norm{\angles{D}^s f},
$$
where $\angles{\xi} = (1+\abs{\xi}^2)^{1/2}$. The Besov space $\dot B^s_{2,1}=\dot B^s_{2,1}(\R^2)$ is the completion of $\mathcal S(\R^2)$ with respect to the norm
$$
  \norm{f}_{\dot B^s_{2,1}} = \sum_{N > 0} N^s \norm{\Proj_{\abs{\xi} \sim N} f},
$$
where $N$ is understood to be dyadic, i.e.~of the form $2^j$ with $j \in \Z$.

In estimates we use the shorthand $X \lesssim Y$ for $X \le CY$, where $C \gg 1$ is either an absolute constant or depends only on quantities that are considered fixed; $X=O(R)$ is short for $\abs{X} \lesssim R$; $X \sim Y$ means $X \lesssim Y \lesssim X$; $X \ll Y$ stands for $X \le C^{-1} Y$, with $C$ as above. We write $\simeq$ for equality up to multiplication by an absolute constant (typically factors involving $2\pi$).

\section{Main results}\label{Main}

\subsection{Local well-posedness} We consider the initial value problem for 2d MD starting from data
\begin{equation*}
  \psi(0,x) = \psi_0(x),
  \qquad \mathbf E(0,x) = \mathbf E_0(x),
  \\
  \qquad \mathbf B(0,x) = \mathbf B_0(x)=(0,0,B_0^3),
\end{equation*}
which by Maxwell's equations [see \eqref{Maxwell} below] must satisfy $\nabla \cdot \mathbf E_0 = \abs{\psi_0}^2$ and $\nabla \cdot \mathbf B_0 = 0$. But the latter automatically holds in 2d, since $\mathbf B = (0,0,B^3)$ does not depend on $x^3$, whereas the constraint $\nabla \cdot \mathbf E_0 = \abs{\psi_0}^2$ determines the curl-free part\footnote{Recall the splitting of $\mathbf E$ (or indeed any vector field) into divergence-free and curl-free parts:
$
\mathbf E = -\Delta^{-1} \nabla \times (\nabla
\times \mathbf E)
  + \Delta^{-1} \nabla ( \nabla \cdot \mathbf E )
  \equiv
  \mathbf E^{\text{df}} + \mathbf E^{\text{cf}}.
$
} of $\mathbf E_0$, so we only specify data $\mathbf E^{\mathrm{df}}_0$ for the divergence-free part $\mathbf E^{\mathrm{df}}$. Thus,
$$
  \mathbf E_0 = \mathbf E^{\mathrm{df}}_0 + \Delta^{-1}\nabla(\abs{\psi_0}^2).
$$
The data for the potential $A$,
$$
  A_\mu(0,x) = a_\mu(x), \qquad
  \partial_t A_\mu(0,x) = \dot a_\mu(x) \qquad (\mu=0,1,2),
$$
are fixed by choosing
$$
  a_0 = \dot a_0 = 0.
$$
Then the spatial parts $\mathbf a = (a_1,a_2,0)$ and $\dot{\mathbf a} = (\dot a_1,\dot a_2,0)$ are given by, since $\nabla \cdot \mathbf a = 0$ by the Lorenz condition,
$$
  \mathbf a = - \Delta^{-1} (\partial_2 B^3_0, - \partial_1 B^3_0, 0),
  \qquad
  \dot{\mathbf a} = - \mathbf E_0.
$$

Solving the second equation in \eqref{MD} and splitting $A_\mu$ into its homogeneous and inhomogeneous parts, we reduce MD to a nonlinear Dirac equation
\begin{equation}\label{MDreduced}
  \left(-i\boldsymbol\alpha^\mu \partial_\mu + M \boldsymbol\beta\right)\psi
  = A_\mu^{\mathrm{hom.}} \boldsymbol\alpha^\mu \psi
  - \mathcal N(\psi,\psi,\psi),
\end{equation}
where
$$
  \square A_\mu^{\mathrm{hom.}} = 0,
  \qquad
  A_\mu^{\mathrm{hom.}}(0,x) = a_\mu(x),
  \qquad
  \partial_t A_\mu^{\mathrm{hom.}}(0,x) = \dot a_\mu(x),
$$
and
$$
  \mathcal N(\psi_1,\psi_2,\psi_3)
  =
  \left( \square^{-1} \Innerprod{\boldsymbol\alpha_\mu\psi_1}{\psi_2} \right) 
  \boldsymbol\alpha^\mu\psi_3.
$$
Here $\square^{-1}F$ denotes the solution of $\square u = F$ on $\R^{1+2}$ with vanishing data at  $t=0$.

Assuming the following data regularity:
\begin{equation}\label{Data}
\left\{\begin{gathered}
  \psi_0 \in L^2(\R^2,\C^2),
  \\
  \Proj_{\abs{\xi} \ge 1} \mathbf E_0^{\text{df}} \in H^{-1/2}(\R^2,\R^2),
  \qquad
  \Proj_{\abs{\xi} <  1} \mathbf E_0^{\text{df}} \in \dot B^0_{2,1}(\R^2,\R^2),
  \\
  \Proj_{\abs{\xi} \ge 1}B^3_0 \in H^{-1/2}(\R^2,\R),
  \qquad
  \Proj_{\abs{\xi} < 1}B^3_0 \in \dot B^0_{2,1}(\R^2,\R),
\end{gathered}
\right.
\end{equation}
we can prove existence up to a time $T > 0$ determined by a condition like~\eqref{Time} in Theorem \ref{GPthm}, but with a norm depending implicitly on $T$, namely
\begin{equation}\label{DataNorm}
  D_T(t)
  = \norm{\mathbf E^{\text{df}}(t)}_{(T)}
  + \norm{B^3(t)}_{(T)},
\end{equation}
where we use the norm $\norm{\cdot}_{(T)}$ defined by
\begin{equation}\label{MagicNorm}
  \norm{f}_{(T)} 
  =
  \norm{\Proj_{\abs{\xi} \ge 1/T} f}_{H^{-1/2}}
  + T^{1/2} \sum_{0 < N < 1/T} \norm{\Proj_{\abs{\xi} \sim N} f},
\end{equation}
the sum being over dyadic $N$'s. Recall that $\norm{\cdot}$ denotes the $L^2$-norm.

\begin{theorem}\label{LWP}
Given initial data as in \eqref{Data}, construct data for $A$ by choosing $a_0 = \dot a_0 = 0$ and setting $\mathbf a = - \Delta^{-1} (\partial_2 B^3_0, - \partial_1 B^3_0, 0)$ and $\dot{\mathbf a} = - \mathbf E_0$, and consider the 2d MD equation \eqref{MDreduced}.

There exists a constant $\varepsilon > 0$, depending only on $\abs{M}$ and the charge constant $\norm{\psi_0}_{L^2}^2$, such that if $T > 0$ is so small that
\begin{equation}\label{MDtime}
  T^{1/2} [1 + D_T(0)] \le \varepsilon,
\end{equation}
then \eqref{MDreduced} has a solution
$$
  \psi \in C\left([-T,T];L^2(\R^2,\C^2)\right)
$$
satisfying $\psi(0) = \psi_0$.

Moreover, the solution is unique in a certain subspace of $C\left([-T,T];L^2\right)$, and depends continuously on the data. Persistence of higher regularity holds, and in particular, if the data $\psi_0$, $\mathbf E_0^{\mathrm{df}}$ and $B^3_0$ are smooth, then so is $\psi$.
\end{theorem}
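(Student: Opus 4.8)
The plan is to run a contraction-mapping argument in a suitable $X^{s,b}$-type space adapted to the Dirac operator, exploiting the null structure of the cubic nonlinearity $\mathcal N(\psi,\psi,\psi)$ and of the linear term $A_\mu^{\mathrm{hom.}}\boldsymbol\alpha^\mu\psi$. First I would set up the functional framework: decompose the spinor along the Dirac projections $\Pi_{\pm}(D)$ onto the characteristic cones $\tau = \pm|\xi|$, and work in the corresponding Bourgain spaces $X^{0,b}_\pm$ with $b$ slightly larger than $1/2$, restricted to the time slab $[-T,T]$; I would pair this with an $L^2$-based energy norm so that the iterates live in $C([-T,T];L^2)$. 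The data for $A^{\mathrm{hom.}}$ come from $B^3_0$ and $\mathbf E_0$ via the formulas in the statement, so by the free-wave estimates they are controlled in the appropriate wave-Sobolev/Besov norms dictated by \eqref{Data}; the implicit-$T$ norm $\norm{\cdot}_{(T)}$ in \eqref{MagicNorm} is precisely the hybrid $H^{-1/2}$-at-high-frequency / $L^2$-at-low-frequency norm (with the $T^{1/2}$ weight absorbing the scaling gap) that makes the low-frequency wave contribution summable over dyadic blocks.

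Second, I would establish the two multilinear estimates that drive the contraction. The cubic term requires a bound of the form $\norm{\mathcal N(\psi_1,\psi_2,\psi_3)}_{X^{0,b-1+\delta}_\pm} \lesssim T^{\theta} \prod \norm{\psi_i}_{X^{0,b}_{\pm}}$ with $\theta>0$: here one first uses the null structure found in \cite{Selberg:2008b} to replace the raw symbol of $\innerprod{\boldsymbol\alpha_\mu\psi_1}{\psi_2}$ by an expression carrying angular gains between the interacting cones, then applies $\square^{-1}$ (which in these norms is like two derivatives of smoothing, modulated by the wave-Bourgain norm of the bilinear output), and finally bounds the resulting product against $\boldsymbol\alpha^\mu\psi_3$ by bilinear $L^2$ estimates of Klainerman–Machihara–Foschi–Selberg type in $2+1$ dimensions. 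The linear term $A^{\mathrm{hom.}}\boldsymbol\alpha^\mu\psi$ is estimated similarly, using that $A^{\mathrm{hom.}}$ is a free wave whose data sit in the spaces \eqref{Data}, together with the null structure of $A_\mu\boldsymbol\alpha^\mu\psi$; this is where the norm $\norm{\cdot}_{(T)}$ enters on the data side and where the $T$-dependence becomes genuinely implicit, since the high/low frequency cutoff is at $|\xi|\sim 1/T$. The extra power $T^\theta$ in each estimate (from restricting to $[-T,T]$ and from the subcritical scaling gap) is what makes the map a contraction once \eqref{MDtime} holds, with $\varepsilon$ depending only on $|M|$ and the charge constant $\norm{\psi_0}_{L^2}^2$ — the latter entering because the quadratic output $\square^{-1}\innerprod{\boldsymbol\alpha_\mu\psi}{\psi}$ is, at the level of the energy estimate, controlled by the charge.

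Third, with the two estimates in hand, the standard Picard iteration on a ball in $X^{0,b}_\pm \cap C([-T,T];L^2)$ of radius $\sim \norm{\psi_0}_{L^2} + (\text{free-wave data norm})$ gives existence and uniqueness in that ball; uniqueness in the full $C([-T,T];L^2)$ subspace follows by the usual difference-of-solutions argument absorbing the $T^\theta$ factor; continuous dependence is the same estimates applied to differences of data; and persistence of higher regularity (hence propagation of smoothness) follows by running the identical fixed-point argument in $X^{s,b}_\pm$ for $s>0$, since the multilinear estimates are non-increasing in the lowest-regularity slot. The main obstacle I expect is the cubic null-form estimate at the charge-critical-minus-$\half$ regularity: the bilinear factor $\square^{-1}\innerprod{\boldsymbol\alpha_\mu\psi_1}{\psi_2}$ lands in a wave-Sobolev space at exactly the borderline exponent, so one cannot afford any loss, and the null structure must be used sharply — first to cancel the worst parallel-frequency interactions in the spinor bilinear form, and then again, in the Maxwell part, to handle the product with $\psi_3$; keeping every constant uniform in $T$ (so that $\varepsilon$ does not secretly depend on $T$) while the frequency cutoff $1/T$ moves is the delicate bookkeeping point, and is exactly why the hybrid norm \eqref{MagicNorm} with its $T^{1/2}$ weight is chosen as it is.
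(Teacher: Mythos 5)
Your overall scheme — split $\psi$ via the Dirac projections, reduce \eqref{MDreduced} to the two half-wave equations, prove a trilinear estimate for the $A_\mu^{\mathrm{hom.}}\boldsymbol\alpha^\mu\psi$ term and a quadrilinear estimate for the dualized $\mathcal N$ term using the null structure of \cite{Selberg:2008b} together with 2d bilinear $L^2$/null-form estimates, then close a Picard iteration — matches the paper. But there is one concrete technical choice that breaks your plan: you propose to work ``in the corresponding Bourgain spaces $X^{0,b}_\pm$ with $b$ slightly larger than $1/2$.'' The paper explicitly avoids this, because with any $b>1/2$ the time-localization gain comes out as $T^{1/2-\delta}$ rather than $T^{1/2}$, so one would only prove the theorem with \eqref{MDtime} replaced by $T^{1/2-\delta}[1+D_T(0)]\le\varepsilon$. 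This is not merely a cosmetic weakening: the half power in the time-of-existence condition must match the half power in the growth estimate \eqref{ModifiedEMgrowth} so that the Colliander--Holmer--Tzirakis exponents sum to exactly $1$; any loss $\delta>0$ destroys the global extension argument in section~\ref{LocalToGlobal}.

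The fix the paper uses — and which you would need — is to work at the endpoint in the Besov-refined spaces $X^{s,1/2;1}_\pm$ (dyadic $\ell^1$-summation in the modulation variable $L$) rather than $X^{s,b}_\pm$ with $b>1/2$. These spaces still embed into $C_tH^s$ (see \eqref{Embedding2}), so you do not need to pair them with a separate $C_tL^2$ energy norm as you suggest, and they support the sharp cutoff estimates \eqref{Cutoff1}--\eqref{Cutoff2} and linear estimates \eqref{Linear1}--\eqref{Linear2} that yield exactly $T^{1/2}$. Correspondingly, the multilinear estimates have to be proved with $\ell^1$ dyadic summation in the $L$'s on the input side and $\ell^\infty$ on the dual side, via \eqref{Duality1}--\eqref{Duality2}; this changes the bookkeeping in the dyadic summation nontrivially (e.g.\ one now sums $N_0^{-\varepsilon}$ and $(L_0L_0'L_4)^{-\varepsilon}$ after peeling off the $L_j^{1/2}$ factors, rather than relying on $b>1/2$ to absorb them). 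One more small point: the contraction constants in the paper separate cleanly into a linear-in-$\psi$ part carrying $T^{1/2}[1+D_T(0)]$ (from $M\boldsymbol\beta\psi$ and $A_\mu^{\mathrm{hom.}}\boldsymbol\alpha^\mu\psi$) and a cubic part carrying only $T^\delta$; this is why $\varepsilon$ can be made to depend only on $\abs{M}$ and the charge constant, which your plan gestures at but does not pin down.
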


Here we mean solution in the sense of distributions on $(-T,T) \times \R^2$. The fact that the right hand side of \eqref{MDreduced} makes sense as a distribution is far from obvious, but follows from the very estimates that will be used to close the iteration argument used to prove existence.

As we show later (see Lemma \ref{MagicLemma}), $D_T(0) \le C D_1(0)$ for $0 < T \le 1$, hence \eqref{MDtime} is indeed satisfied for $T > 0$ sufficiently small.

\subsection{Growth estimate for the electromagnetic field}

Having obtained $\psi$, we reconstruct the full potential
$$
  A_\mu = A_\mu^{\mathrm{hom.}} - \square^{-1} \Innerprod{\boldsymbol\alpha_\mu\psi}{\psi},
$$
which by the definition of the data $(a_\mu,\dot a_\mu)$ satisfies the Lorenz gauge condition $\partial^\mu A_\mu = 0$ (see \cite{Selberg:2008b}). Now define
$$
  \mathbf B = \nabla \times \mathbf A = (0,0,\partial_1 A_2 - \partial_2 A_1),
  \qquad
  \mathbf E = \nabla A_0 - \partial_t \mathbf A.
$$
Since $\square A_\mu = - \innerprod{\boldsymbol\alpha^\mu\psi}{\psi}$, it follows that Maxwell's equations hold:
\begin{equation}\label{Maxwell}
  \nabla \cdot \mathbf E = \rho,
  \qquad
  \nabla \cdot \mathbf B = 0,
  \qquad
  \nabla \times \mathbf E + \partial_t \mathbf B = 0,
  \qquad
  \nabla \times \mathbf B - \partial_t \mathbf E = \mathbf J,
\end{equation}
where
$$
  \rho = J^0 = \abs{\psi}^2,
  \qquad
  \mathbf J = (J^1,J^2,0),
  \qquad
  J^\mu = \innerprod{\boldsymbol\alpha^\mu\psi}{\psi}.
$$

The first equation in \eqref{Maxwell} determines the curl-free part of $\mathbf E$ and implies
$$
  \mathbf E = \mathbf E^{\mathrm{df}} + \Delta^{-1}\nabla(\abs{\psi}^2),
$$
where $\mathbf E^{\mathrm{df}} = \mathcal P_{\mathrm{df}} \mathbf E$ is the divergence-free part of $\mathbf E$. Here $\mathcal P_{\mathrm{df}} = - \Delta^{-1} {\nabla \times} {\nabla \times}$ is the projection onto divergence-free fields. From Maxwell's equations we know that $\square \mathbf E = \nabla \rho + \partial_t \mathbf J$ and $\square \mathbf B = - \nabla \times \mathbf J$, hence 
\begin{equation}\label{WaveEqE}
\left\{
\begin{aligned}
  &\square \mathbf E^{\mathrm{df}}
  = \mathcal P_{\text{df}}(-\nabla J_0 + \partial_t \mathbf J),
  \\
  &\mathbf E^{\mathrm{df}}(0) = \mathbf E_0^{\text{df}},
  \qquad
  \partial_t \mathbf E^{\mathrm{df}}(0) = \nabla \times (0,0,B^3_0) - \mathcal P_{\text{df}} \mathbf J(0),
\end{aligned}
\right.
\end{equation}
and
\begin{equation}\label{WaveEqB}
\left\{
\begin{aligned}
  &\square B^3 = \partial_1 J_2 - \partial_2 J_1,
  \\
  &B^3(0) = B^3_0, \qquad \partial_t B^3(0) = - ( \nabla \times \mathbf E^{\mathrm{df}}_0 )^3.
\end{aligned}
\right.
\end{equation}

We want to use these wave equations to prove an estimate analogous to \eqref{Growth} in Theorem \ref{GPthm} for our norm $D_T(t)$. To be precise, we aim to prove
$$
  \sup_{0 \le t \le T} D_T(t) \le D_T(0) + CT^{1/2}\log(1/T),
$$
but in order to avoid a constant factor $C > 1$ in front of the first term on the right hand side, we first split the wave equations into first order equations and modify $D_T(t)$ accordingly.

Recall that the splitting $u = u_+ + u_-$ given by
\begin{equation}\label{WaveSplitting}
  u_\pm = \frac12 \left( u \pm i\abs{D}^{-1} \partial_t u \right)
\end{equation}
transforms $\square u = F$ into
$$
  \left(-i\partial_t \pm \abs{D}\right) u_\pm = - (\pm 2\abs{D})^{-1} F.
$$
The term $\abs{D}^{-1} \partial_t u$ in \eqref{WaveSplitting} causes problems at low frequency if $u=\mathbf E^{\mathrm{df}}$, however. To avoid this we use a general trick going back at least as far as \cite{Pecher:2008}, and used also in \cite{Gruenrock:2009}: Adding $-\mathbf E^{\mathrm{df}}$ to both sides of \eqref{WaveEqE} gives the Klein-Gordon equation
\begin{equation}\label{KleinGordonEqE}
\left\{
\begin{aligned}
  &(\square-1) \mathbf E^{\mathrm{df}}
  = \mathcal P_{\text{df}}(-\nabla J_0 + \partial_t \mathbf J) - \mathbf E^{\mathrm{df}},
  \\
  &\mathbf E^{\mathrm{df}}(0) = \mathbf E_0^{\text{df}},
  \qquad
  \partial_t \mathbf E^{\mathrm{df}}(0) = \nabla \times (0,0,B^3_0) - \mathcal P_{\text{df}} \mathbf J(0).
\end{aligned}
\right.
\end{equation}
The extra term $-\mathbf E^{\mathrm{df}}$ on the right hand side is relatively easy to handle due to the gain in regularity, and the key advantage is that we can now use the analog of \eqref{WaveSplitting} for the Klein-Gordon equation: The splitting $v = v_+ + v_-$ given by
\begin{equation}\label{KGSplitting}
  v_\pm = \frac12 \left( v \pm i\angles{D}^{-1} \partial_t v \right)
\end{equation}
transforms $(\square-1) v = G$ into
$$
  \left(-i\partial_t \pm \angles{D} \right) v_\pm = - (\pm 2\angles{D})^{-1} G.
$$

Applying \eqref{KGSplitting} to $\mathbf E^{\mathrm{df}}$ and \eqref{WaveSplitting} to $B^3$, we now write $\mathbf E^{\mathrm{df}} = \mathbf E^{\mathrm{df}}_+ + \mathbf E^{\mathrm{df}}_-$ and $B^3 = B^3_+ + B^3_-$, where
\begin{align}
  \label{Esplit}
  2\mathbf E^{\mathrm{df}}_\pm &= \mathbf E^{\mathrm{df}} \pm i\angles{D}^{-1} \partial_t \mathbf E^{\mathrm{df}}
  =
  \mathbf E^{\mathrm{df}} \pm i\angles{D}^{-1} \left[ \nabla \times (0,0,B^3) - \mathcal P_{\text{df}} \mathbf J \right],
  \\
  \label{Bsplit}
  2B^3_\pm &= B^3 \pm i\abs{D}^{-1} \partial_t B^3
  =
  B^3 \pm i\abs{D}^{-1} \left[- ( \nabla \times \mathbf E^{\mathrm{df}} )^3 \right]
\end{align}
satisfy
\begin{align}
  \label{Eeq}
  \left(-i\partial_t \pm \angles{D}\right) \mathbf E^{\mathrm{df}}_\pm
  &= - (\pm 2\angles{D})^{-1} \left[ \mathcal P_{\text{df}}(-\nabla J_0 + \partial_t \mathbf J) - \mathbf E^{\mathrm{df}} \right],
  \\
  \label{Beq}
  \left(-i\partial_t \pm \abs{D}\right) B^3_\pm
  &= - (\pm 2\abs{D})^{-1} \left(\partial_1 J_2 - \partial_2 J_1\right).
\end{align}
Define the corresponding norm
\begin{equation}\label{DataNormModified}
\begin{aligned}
  \tilde D_T(t) &= \norm{\mathbf E^{\text{df}}_+(t)}_{(T)}
  + \norm{\mathbf E^{\text{df}}_-(t)}_{(T)}
  + \norm{B^3_+(t)}_{(T)}
  + \norm{B^3_-(t)}_{(T)},
\end{aligned}
\end{equation}
and note that $\tilde D_T(0) < \infty$. Indeed, $\tilde D_T(0) \le C\tilde D_1(0)$ by Lemma \ref{MagicLemma} below, and $\tilde D_1(0) < \infty$ in view of the assumption \eqref{Data} and some straightforward Sobolev estimates for $\mathbf J$ [see \eqref{Jest1} and \eqref{Jest2} below].

Since $D_T(t) \le \tilde D_T(t)$ by the triangle inequality, the iteration argument used to prove Theorem \ref{LWP} will also immediately give us:

\begin{theorem}\label{LWPnew}
Theorem \ref{LWP} still holds with $D_T(0)$ replaced by $\tilde D_T(0)$ in \eqref{MDtime}:
\begin{equation}\label{MDtimeNew}
  T^{1/2} [1 + \tilde D_T(0)] \le \varepsilon,
\end{equation}
where $\varepsilon > 0$ depends only on the charge constant and $\abs{M}$.
\end{theorem}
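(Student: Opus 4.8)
The plan is to deduce Theorem \ref{LWPnew} from Theorem \ref{LWP} with no new estimates, using only the elementary pointwise-in-time comparison $D_T(t) \le \tilde D_T(t)$. First I would record this comparison. By the decompositions \eqref{Esplit} and \eqref{Bsplit} one has $\mathbf E^{\mathrm{df}} = \mathbf E^{\mathrm{df}}_+ + \mathbf E^{\mathrm{df}}_-$ and $B^3 = B^3_+ + B^3_-$, and since $\norm{\cdot}_{(T)}$ as defined in \eqref{MagicNorm} is a norm — it is the sum of an $H^{-1/2}$ norm on the high-frequency part and a $T^{1/2}$-weighted $\ell^1$ sum of $L^2$ norms of the low-frequency dyadic pieces, both of which obey the triangle inequality — we get $\norm{\mathbf E^{\mathrm{df}}(t)}_{(T)} \le \norm{\mathbf E^{\mathrm{df}}_+(t)}_{(T)} + \norm{\mathbf E^{\mathrm{df}}_-(t)}_{(T)}$ and likewise for $B^3(t)$. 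Adding these and comparing \eqref{DataNorm} with \eqref{DataNormModified} gives $D_T(t) \le \tilde D_T(t)$ for every $t$; in particular $D_T(0) \le \tilde D_T(0)$.

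With this in hand the conclusion is immediate. Let $\varepsilon > 0$ be the constant furnished by Theorem \ref{LWP} (depending only on $\abs{M}$ and the charge constant $\norm{\psi_0}_{L^2}^2$). If $0 < T \le 1$ is so small that \eqref{MDtimeNew} holds, i.e.\ $T^{1/2}[1 + \tilde D_T(0)] \le \varepsilon$, then a fortiori $T^{1/2}[1 + D_T(0)] \le \varepsilon$, so the hypothesis \eqref{MDtime} of Theorem \ref{LWP} is met and that theorem applies verbatim: it produces a solution $\psi \in C([-T,T];L^2(\R^2,\C^2))$ of \eqref{MDreduced} with $\psi(0)=\psi_0$, unique in the same iteration subspace, depending continuously on the data, and with persistence of higher regularity. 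Moreover $\tilde D_T(0) < \infty$ for $0 < T \le 1$ — by Lemma \ref{MagicLemma} together with the Sobolev bounds on $\mathbf J$ recorded in \eqref{Jest1} and \eqref{Jest2}, as already observed — so the smallness condition \eqref{MDtimeNew} is satisfied for all sufficiently small $T > 0$ and the statement is non-vacuous.

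I do not expect a genuine obstacle; the only points needing (minimal) care are that the constant $\varepsilon$ is literally the one from Theorem \ref{LWP}, and that every qualitative conclusion — uniqueness, continuous dependence, persistence of regularity — transfers unchanged because the iteration space and the fixed-point argument are exactly those used for Theorem \ref{LWP}: only the quantitative smallness requirement on the electromagnetic data has been strengthened (from $D_T(0)$ to $\tilde D_T(0)$), and tightening a smallness hypothesis can never invalidate an existence conclusion. The reason for stating Theorem \ref{LWPnew} separately is structural rather than analytic: the modified norm $\tilde D_T$ is built so that the forthcoming growth estimate reads $\sup_{0 \le t \le T} \tilde D_T(t) \le \tilde D_T(0) + CT^{1/2}\log(1/T)$ with no spurious constant $>1$ in front of $\tilde D_T(0)$, and it is this norm that will be iterated in the local-to-global scheme of section \ref{LocalToGlobal}, so we need the local existence time controlled in terms of it.
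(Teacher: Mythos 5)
Your proposal is correct and matches the paper's approach: the paper likewise reduces Theorem~\ref{LWPnew} to the single observation $D_T(t)\le\tilde D_T(t)$ (by the triangle inequality applied to the $\pm$ splittings), remarking that since $D_T(0)\le\tilde D_T(0)$ one can replace $D_T(0)$ by $\tilde D_T(0)$ on the right-hand sides of the contraction estimates and the iteration goes through with the same~$\varepsilon$. Your phrasing of it as an ``a fortiori'' implication between the two smallness hypotheses, invoking Theorem~\ref{LWP} as a black box, is equivalent and, if anything, slightly cleaner.
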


We shall prove the following growth estimate for $\tilde D_T(t)$.

\begin{theorem}\label{EMgrowth}
Let $\psi$ be the solution of 2d MD obtained in Theorem \ref{LWPnew}, with existence time $T$ satisfying \eqref{MDtimeNew}, and reconstruct the electromagnetic field as above. Then $\mathbf E^{\mathrm{df}}_\pm$ and $B^3_\pm$, as functions of $t \in [-T,T]$, describe continuous curves in the data space \eqref{Data}, hence the same is true for $\mathbf E^{\mathrm{df}}=\mathbf E^{\mathrm{df}}_+ + \mathbf E^{\mathrm{df}}_-$ and $B^3=B^3_+ + B^3_-$. Moreover, we have
\begin{equation}\label{ModifiedEMgrowth}
  \sup_{0 \le t \le T} \tilde D_T(t) \le \tilde D_T(0) + CT^{1/2}\log(1/T),
\end{equation}
where $C$ depends only on the charge constant and $\abs{M}$.
\end{theorem}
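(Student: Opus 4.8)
The plan is to represent each of $\mathbf E^{\mathrm{df}}_\pm$ and $B^3_\pm$ by Duhamel's formula relative to its half--Klein--Gordon (resp.\ half--wave) propagator, using \eqref{Eeq} and \eqref{Beq}, and to show that the free part contributes exactly $\tilde D_T(0)$ to $\tilde D_T(t)$ while the Duhamel part is $O\bigl(T^{1/2}\log(1/T)\bigr)$ in the norm $\norm{\cdot}_{(T)}$. The first point is immediate: the propagators $e^{\mp it\angles{D}}$ and $e^{\mp it\abs{D}}$ are isometries on $L^2(\R^2)$ and commute with the Littlewood--Paley cutoffs $\Proj_{\abs{\xi}\sim N}$ and with $\Proj_{\abs{\xi}\ge 1/T}$, hence they preserve $\norm{\cdot}_{(T)}$. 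Thus $\norm{\mathbf E^{\mathrm{df}}_\pm(t)}_{(T)}\le\norm{\mathbf E^{\mathrm{df}}_\pm(0)}_{(T)}+\norm{\Phi^{\mathbf E}_\pm(t)}_{(T)}$ and likewise for $B^3_\pm$, where $\Phi^{\mathbf E}_\pm(t)=\int_0^t e^{\mp i(t-s)\angles{D}}\bigl[\mp(2\angles{D})^{-1}\bigl(\mathcal P_{\mathrm{df}}(-\nabla J_0+\partial_t\mathbf J)(s)-\mathbf E^{\mathrm{df}}(s)\bigr)\bigr]\,ds$ and $\Phi^{B}_\pm(t)=\int_0^t e^{\mp i(t-s)\abs{D}}\bigl[\mp(2\abs{D})^{-1}(\partial_1 J_2-\partial_2 J_1)(s)\bigr]\,ds$. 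Summing over the four fields and both signs, everything reduces to the bound $\norm{\Phi^{\mathbf E}_\pm(t)}_{(T)}+\norm{\Phi^{B}_\pm(t)}_{(T)}\lesssim T^{1/2}\log(1/T)$ for $0\le t\le T$, with implicit constant depending only on the charge constant and $\abs{M}$.

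Next I would separate the lower--order linear term $-\mathbf E^{\mathrm{df}}$ in $\Phi^{\mathbf E}_\pm$. Because $(2\angles{D})^{-1}$ gains a derivative, and $\angles{\xi}^{-1}\le T$ on $\abs{\xi}\ge 1/T$ while $\angles{\xi}^{-1}\sim 1$ on $\abs{\xi}< 1/T$, a crude fixed--time estimate followed by $\int_0^t$ gives a contribution $\lesssim T\sup_{0\le s\le T}\norm{\mathbf E^{\mathrm{df}}(s)}_{(T)}\le T\sup_{0\le s\le T}\tilde D_T(s)$. Using the a priori control $\sup_{0\le s\le T}\tilde D_T(s)\lesssim 1+\tilde D_T(0)$ coming from the iteration construction in Theorem \ref{LWPnew}, together with the time condition \eqref{MDtimeNew} (which forces $T$ small and $T^{1/2}(1+\tilde D_T(0))\le\varepsilon$), this is $\lesssim T^{1/2}\cdot T^{1/2}(1+\tilde D_T(0))\le\varepsilon T^{1/2}\le\varepsilon T^{1/2}\log(1/T)$, so it is absorbed into the target with no constant appearing in front of $\tilde D_T(0)$. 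The genuine work is therefore the quadratic forms $\mathcal P_{\mathrm{df}}(-\nabla J_0+\partial_t\mathbf J)$ and $\partial_1 J_2-\partial_2 J_1$, evaluated on $\psi=\psi_+ + \psi_-$, where $\psi_\pm$ denote the Dirac projections of the local solution, which by Theorem \ref{LWPnew} lies on $[-T,T]$, with norm controlled by the charge constant, in the same wave--Sobolev--type spaces $X^{0,b}$, $b>\half$, used to run the iteration.

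For the high--output--frequency piece $\Proj_{\abs{\xi}\ge 1/T}$ I would argue via the transfer principle and the Duhamel estimate already underlying Theorem \ref{LWPnew}: $\sup_{\abs{t}\le T}\norm{\Proj_{\abs{\xi}\ge 1/T}\Phi^{\mathbf E}_\pm(t)}_{H^{-1/2}}$ is dominated by an $X^{-1/2,b}_\pm$--norm of the Duhamel integral, hence by an $X^{-1/2,b-1}_\pm$--norm of the forcing, and after the derivative gain from $(2\angles{D})^{-1}$ this is controlled by a wave/Klein--Gordon Sobolev norm of the Maxwell quadratic form. Here one invokes the bilinear null--form estimates for these quadratic forms, analogous to those of \cite{Selberg:2007b} and resting on the complete Maxwell null structure of \cite{Selberg:2008b}, which after Dirac projection exhibits $\mathcal P_{\mathrm{df}}(-\nabla J_0+\partial_t\mathbf J)$ and $\partial_1 J_2-\partial_2 J_1$ as combinations carrying angular factors between the two characteristic cones; since $2$d MD is charge subcritical by exactly half a derivative, the half--derivative of room available in the output regularity is converted into a factor $T^{1/2}$ using that $\angles{\xi}^{-1/2}\le T^{1/2}$ on $\abs{\xi}\ge 1/T$. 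The low--output--frequency piece is handled analogously for each dyadic $N<1/T$: here it is essential that the Klein--Gordon modification \eqref{KleinGordonEqE} replaced $\abs{D}^{-1}$ by $\angles{D}^{-1}$ in the $\mathbf E^{\mathrm{df}}$ equation, so that no low--frequency singularity arises, and that for $B^3$ the operator $\abs{D}^{-1}$ is harmless because $\partial_1 J_2-\partial_2 J_1=(\curl\mathbf J)^3$ is $O(\abs{\xi})$ at the output frequency; summing the resulting dyadic series and multiplying by the weight $T^{1/2}$ from \eqref{MagicNorm} produces the announced factor $T^{1/2}\log(1/T)$, the single logarithm reflecting that the relevant low--frequency bilinear estimate provides no gain across the $\sim\log(1/T)$ dyadic scales below $1/T$. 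This is exactly the loss flagged as item (1) in the introduction, and it is the only point where one does worse than in Theorem \ref{GPthm}.

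The main obstacle is precisely this bilinear step: extracting from the Maxwell null structure of \cite{Selberg:2008b} the sharp $T^{1/2}$ gain (up to the logarithm), uniformly in the output--frequency scale and at $L^2$ regularity for the spinor, including the resonant and low--modulation regions where the null cancellation must be used most carefully, and keeping track of the Dirac projections, signs, and dyadic sums. Finally, the continuity assertion---that $t\mapsto\mathbf E^{\mathrm{df}}_\pm(t),B^3_\pm(t)$ are continuous curves in the space \eqref{Data}---would follow from the Duhamel representation together with the facts that the free flows act strongly continuously on $H^s$ and on $\dot B^0_{2,1}$ and that the forcing terms lie, by the above estimates carried out at the Besov level at low frequency as the structure of $\norm{\cdot}_{(T)}$ dictates, in spaces on which $t\mapsto\int_0^t e^{\mp i(t-s)\angles{D}}(\cdot)\,ds$ is continuous into \eqref{Data}; the standard route is to establish this first for smooth data using persistence of regularity from Theorem \ref{LWP}, and then pass to the limit using \eqref{ModifiedEMgrowth} and continuous dependence on the data.
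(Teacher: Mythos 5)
Your overall architecture agrees with the paper's: apply Duhamel to \eqref{Eeq} and \eqref{Beq}, observe that the unitary half--wave and half--Klein--Gordon propagators preserve $\norm{\cdot}_{(T)}$ so that the free part contributes exactly $\tilde D_T(0)$, and then bound the inhomogeneous integrals by inserting time cutoffs, exploiting the Maxwell null structure (Lemma~\ref{N:Lemma}) to split the quadratic forcing, and locating the $\log(1/T)$ loss in the flat dyadic sum over the $\sim\log(1/T)$ scales $1\le N<1/T$. Your identification of where the logarithm comes from is essentially right, though imprecise: in the paper it arises specifically from the $\abs{\tau_0\pm_0\abs{\xi_0}}$ piece of the symbol $\sigma_{k0}$ in the $\partial_t\mathbf J$ part of the $\mathbf E^{\mathrm{df}}$ forcing, where the modulation weight exactly cancels the Duhamel gain and leaves an estimate with no dyadic summability.

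There is, however, a genuine gap in your treatment of the linear correction $\angles{D}^{-1}\mathbf E^{\mathrm{df}}$ from the Klein--Gordon modification \eqref{KleinGordonEqE} (the paper's $I_3$). You bound its Duhamel contribution by $T\sup_{0\le s\le T}\tilde D_T(s)$ and then appeal to ``the a priori control $\sup_s\tilde D_T(s)\lesssim 1+\tilde D_T(0)$ coming from the iteration construction in Theorem~\ref{LWPnew}.'' But the iteration in Theorem~\ref{LWPnew} controls only the $X^{0,1/2;1}_\pm$ norms of the spinor $\psi_\pm$; the electromagnetic field is reconstructed from $\psi$ afterwards, and no bound on $\tilde D_T(t)$ for $t>0$ is part of that theorem. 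The inequality $\sup_s\tilde D_T(s)\lesssim 1+\tilde D_T(0)$ is, under \eqref{MDtimeNew}, essentially the conclusion of Theorem~\ref{EMgrowth}, so invoking it here is circular. The paper avoids this by never feeding $\tilde D_T(s)$ back into the estimate: it bounds $\sup_{\abs{t}\le 1}\norm{\Proj_{\abs{\xi_0}\sim N_0}\mathbf E^{\mathrm{df}}(t)}_{H^{-1}}$ (and its high-frequency analogue) via Lemma~\ref{EnergyLemma} applied to the \emph{original} wave equation \eqref{WaveEqE}, expressing $\mathbf E^{\mathrm{df}}(t)$ directly in terms of $\mathbf E^{\mathrm{df}}_0$, $B^3_0$, $\norm{\psi_0}^2$ and the bilinear current, and then closes using \eqref{MDtimeNew}. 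One could imagine repairing your version with a genuine continuity/bootstrap argument on $t\mapsto\tilde D_T(t)$, but that requires establishing continuity and finiteness of $\tilde D_T(t)$ first, whereas you defer the continuity claim to the end and even invoke \eqref{ModifiedEMgrowth} to prove it; as written the argument does not close.
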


Combining Theorems \ref{LWPnew} and \ref{EMgrowth}, we shall obtain the global well-posedness:

\begin{theorem}\label{GWP}
The solution of 2d MD obtained in Theorem \ref{LWPnew} extends globally in time. In particular, for smooth data the solution is smooth on $\R^{1+2}$, so global regularity holds for 2d MD.
\end{theorem}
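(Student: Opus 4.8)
The plan is to combine Theorem \ref{LWPnew} with the growth estimate \eqref{ModifiedEMgrowth} of Theorem \ref{EMgrowth} via a modified version of the Colliander--Holmer--Tzirakis iteration sketched in the introduction, the modification being forced by the two issues already identified: the logarithmic loss in \eqref{ModifiedEMgrowth}, and the implicit dependence of $D_T(t)$ (and $\tilde D_T(t)$) on $T$. Since local existence is guaranteed by \eqref{MDtimeNew} and the only obstruction to continuation is blow-up of $\tilde D_T(t)$, it suffices to show that the solution can be continued past any prescribed time, equivalently that $\tilde D_T(t)$ stays finite on every bounded time interval. Throughout I would normalize by assuming $\tilde D_\cdot(0) \gg 1$, since otherwise there is nothing to do.

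\medskip

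First I would record the monotonicity of the norm $\norm{\cdot}_{(T)}$ in $T$: for $0 < T' \le T \le 1$ one has $\norm{f}_{(T')} \lesssim \norm{f}_{(T)}$, with an absolute implied constant (this is essentially the content of Lemma \ref{MagicLemma}, and follows by comparing the two pieces of \eqref{MagicNorm}—shrinking $T$ moves frequencies from the $H^{-1/2}$ piece into the $L^2$-sum piece, and on the overlap $N \sim 1/T$ the two weights are comparable). Consequently $\tilde D_{T'}(t) \lesssim \tilde D_T(t)$ for $0 < T' \le T \le 1$. This is the ``crucial monotonicity property'' alluded to in the introduction, and it is what lets one fix a single step size. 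Next, fix $D_0 := C_0 \tilde D_1(0)$ with $C_0$ the constant from Lemma \ref{MagicLemma}, so that $\tilde D_T(0) \le D_0$ for all $0 < T \le 1$, and choose the step $T \in (0,1)$ by $T^{1/2}[1 + 2D_0] = \varepsilon$ (so $T \sim D_0^{-2}$ and $\log(1/T) \sim \log D_0$). As long as the current value of the norm does not exceed $2D_0$, condition \eqref{MDtimeNew} holds with this $T$ at every restart, so Theorem \ref{LWPnew} applies with the \emph{same} increment $T$. Applying \eqref{ModifiedEMgrowth} on each such step, the norm grows by at most $CT^{1/2}\log(1/T)$ per step; hence after $n$ steps, as long as we stay below $2D_0$, we have $\tilde D_T(nT) \le \tilde D_T(0) + nCT^{1/2}\log(1/T)$. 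The norm can only reach $2D_0$ once $nCT^{1/2}\log(1/T) \ge D_0$, i.e. after
\begin{equation*}
  n \ge \frac{D_0}{CT^{1/2}\log(1/T)}
\end{equation*}
steps, so the total time covered before the norm can double is at least
\begin{equation*}
  \Delta := nT \ge \frac{D_0 T^{1/2}}{C\log(1/T)}
  = \frac{1}{C\log(1/T)} \cdot \frac{D_0 \varepsilon}{1+2D_0}
  \gtrsim \frac{1}{\log D_0},
\end{equation*}
using $T^{1/2}[1+2D_0]=\varepsilon$ and $D_0 \gg 1$. The point, exactly as in the CHT scheme, is that $\Delta$ depends on $D_0$ only through $\log D_0$.

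\medskip

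The remaining step is to iterate this across successive epochs. At time $\Delta$ the norm satisfies $\tilde D_T(\Delta) \le 2D_0$; by monotonicity this bounds $\tilde D_{T'}(\Delta)$ for all smaller $T'$ as well, so we may restart the whole construction from $t=\Delta$ with $D_0$ replaced by $3D_0$ (a harmless factor absorbing the loss between $\tilde D_T$ and $\tilde D_1$ at the new initial time, via Lemma \ref{MagicLemma} again—one must check the norm at time $\Delta$ is controlled in the $\norm{\cdot}_{(1)}$-sense, which follows since $\Delta \le 1$ can be arranged and $\norm{\cdot}_{(1)} \lesssim \norm{\cdot}_{(T)}$ fails in the wrong direction, so here instead one simply notes $\tilde D_T(\Delta) \ge c\,\tilde D_1(\Delta)$ is automatic from $T \le 1$ and re-runs the argument with the genuine norm at the new base point). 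After the $j$-th epoch the governing size is $\sim 3^j D_0$, so the $j$-th epoch has length
\begin{equation*}
  \Delta_{j} \gtrsim \frac{1}{\log(3^j D_0)} \sim \frac{1}{j+1},
\end{equation*}
and since $\sum_{j\ge 1}\frac{1}{j+1}=\infty$, the epochs exhaust $[0,\infty)$. The same argument run backwards in time gives $(-\infty,0]$. Thus the solution $\psi$ of \eqref{MDreduced} is global, and by the persistence of regularity in Theorem \ref{LWP}, smooth data yield a solution smooth on all of $\R^{1+2}$; reconstructing $(\mathbf E,\mathbf B)$ and $A$ as in Section \ref{Main} then gives a global solution of 2d MD.

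\medskip

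The main obstacle is bookkeeping the implicit $T$-dependence of $\tilde D_T$ consistently across restarts: one must ensure that the hypothesis \eqref{MDtimeNew} at each restart is checked for the \emph{same} $T$ used in the growth estimate \eqref{ModifiedEMgrowth} on that step, and that the monotonicity $\tilde D_{T'} \lesssim \tilde D_T$ ($T' \le T$) is invoked in the correct direction—it is safe to \emph{decrease} the step size, never to increase it, which is why we can fix one $T$ per epoch but must tie it to the epoch's a priori bound $2 \cdot 3^{j}D_0$ rather than to the instantaneous value of the norm. A secondary point requiring care is the continuity-in-time of the electromagnetic field in the data space \eqref{Data}, needed so that the restarts are legitimate initial-value problems; this is furnished by the first assertion of Theorem \ref{EMgrowth}, so no new analysis is required, only the verification that the reconstructed data at each restart time indeed lie in the space \eqref{Data} with norms controlled by $\tilde D_T$ at that time.
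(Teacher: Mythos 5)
Your overall scheme is sound and matches the paper's two-level iteration, with one genuine structural difference in the second iteration: you fix the step size $T_j$ for epoch $j$ in terms of a prescribed geometric sequence of a priori bounds $K^j D_0$, whereas the paper defines $T_j$ adaptively by the implicit equation $T_j^{1/2}[1 + \tilde D_{T_j}(S_{j-1})] = \varepsilon/2$. Your choice makes the $T_j$ manifestly decreasing, so only the safe direction of Lemma~\ref{MagicLemma} is ever invoked; the paper's adaptive choice requires a separate observation to handle the case $T_{j+1} > T_j$ (comparing \eqref{Tj} at consecutive steps). Your variant is therefore slightly cleaner. The trade-off is that the epoch lengths are tied to the a priori bounds rather than the actual norms, giving the weaker (but still sufficient) lower bound $\Delta_j \gtrsim 1/j$.

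Two local defects should be corrected. First, the parenthetical aside claiming that ``$\tilde D_T(\Delta) \ge c\,\tilde D_1(\Delta)$ is automatic from $T \le 1$'' is false: Lemma~\ref{MagicLemma} gives the \emph{opposite} inequality $\tilde D_T(\Delta) \le C\,\tilde D_1(\Delta)$. Fortunately this aside is not load-bearing — your scheme never needs to bound $\tilde D_1$ at a restart time; it only needs $\tilde D_{T_{j+1}}(S_j) \lesssim \tilde D_{T_j}(S_j)$, which Lemma~\ref{MagicLemma} does supply because $T_{j+1} < T_j$. You should simply delete the aside and state the comparison you actually use. Second, the replacement ``$D_0 \to 3D_0$'' across epochs must absorb the constant $C_0$ from Lemma~\ref{MagicLemma}: the inductive step gives $\tilde D_{T_{j+1}}(S_j) \le C_0\,\tilde D_{T_j}(S_j) \le 3C_0 D_0^{(j)}$, so you need $D_0^{(j+1)} \ge 3C_0 D_0^{(j)}$, i.e.~a factor $3C_0$ rather than $3$. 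This changes only the base of the geometric sequence, still gives $\log D_0^{(j)} \sim j$, and so preserves the divergence of $\sum_j \Delta_j$. Finally, the normalization $\tilde D_\cdot(0) \gg 1$ deserves a word of justification rather than ``there is nothing to do'': the paper reduces to it by assuming the maximal existence time $T^*$ is finite, translating the time origin close to $T^*$, and observing from \eqref{MDtimeNew2} that $\tilde D_T(0) \sim T^{-1/2} > (T^*)^{-1/2}$ is then large.
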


The rest of this paper is organized as follows: In the next section we prove Theorem \ref{GWP}, in section \ref{Notation} we introduce various notation and functions spaces needed for the proof of Theorems \ref{LWP} and \ref{LWPnew}, given in sections~\ref{LWPsection}--\ref{S}. Finally, in section~\ref{N} we prove Theorem~\ref{EMgrowth}.

\section{From local to global solutions}\label{LocalToGlobal}

Here we prove that if the conclusions of Theorems \ref{LWPnew} and \ref{EMgrowth} hold, then the solutions extend globally in time, hence we obtain Theorem \ref{GWP}. We follow as closely as possible the argument outlined at the end of section \ref{A}, but the fact that our norm depends implicitly on $T$ creates some difficulties. To resolve these we rely crucially on the following monotonicity property of the norm \eqref{MagicNorm}:

\begin{lemma}\label{MagicLemma}
There exists $C > 1$ such that for all $0 < S < T \le 1$ and $f \in \mathcal S(\R^2)$,
$$
  \norm{f}_{(S)} \le C \norm{f}_{(T)}.
$$
\end{lemma}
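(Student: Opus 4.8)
The plan is to compare the two norms frequency-band by frequency-band, splitting the frequency space $\R^2$ into three regions determined by the two thresholds $1/S$ and $1/T$ (note $1/T < 1/S$ since $S < T$): the high region $\abs{\xi} \ge 1/S$, the intermediate region $1/T \le \abs{\xi} < 1/S$, and the low region $\abs{\xi} < 1/T$. In the high region both norms use the $H^{-1/2}$ piece, so that part is trivially bounded (with constant $1$). The intermediate region is where the two norms behave differently: in $\norm{f}_{(S)}$ it contributes via the low-frequency sum $S^{1/2} \sum_{1/T \le N < 1/S} \norm{\Proj_{\abs{\xi}\sim N} f}$, while in $\norm{f}_{(T)}$ it contributes via the $H^{-1/2}$ norm $\norm{\Proj_{1/T \le \abs{\xi} < 1/S} f}_{H^{-1/2}}$. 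The low region contributes $S^{1/2}\sum_{N < 1/T}\norm{\Proj_{\abs{\xi}\sim N}f}$ to $\norm{f}_{(S)}$ and $T^{1/2}\sum_{N<1/T}\norm{\Proj_{\abs{\xi}\sim N}f}$ to $\norm{f}_{(T)}$; since $S < T$ this is immediate, with constant $1$.

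So the crux is the intermediate region. First I would bound, for a single dyadic $N$ with $1/T \le N < 1/S$,
\[
  S^{1/2}\norm{\Proj_{\abs{\xi}\sim N} f} \le S^{1/2} N^{1/2} \angles{N}^{-1} \cdot \angles{N}\norm{\Proj_{\abs{\xi}\sim N}f} \lesssim S^{1/2} N^{1/2} \norm{\Proj_{\abs{\xi}\sim N} f}_{H^{-1/2}},
\]
using $N \ge 1/T \ge 1$ so $\angles{N} \sim N$, and $N^{1/2}\angles{N}^{-1} \sim N^{-1/2}$, hence $S^{1/2}N^{1/2} = S^{1/2}N^{1/2}$. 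Wait—more carefully: $\angles{N}^{-1/2} \sim N^{-1/2}$ for $N \ge 1$, so $\norm{\Proj_{\abs{\xi}\sim N}f}_{H^{-1/2}} \sim N^{-1/2}\norm{\Proj_{\abs{\xi}\sim N}f}$, giving $S^{1/2}\norm{\Proj_{\abs{\xi}\sim N}f} \sim S^{1/2}N^{1/2}\norm{\Proj_{\abs{\xi}\sim N}f}_{H^{-1/2}}$. Since $N < 1/S$ we have $S^{1/2}N^{1/2} < 1$, so each term is $\lesssim \norm{\Proj_{\abs{\xi}\sim N}f}_{H^{-1/2}}$; but summing over the (possibly many) dyadic $N$ in $[1/T, 1/S)$ naively would cost a logarithmic factor $\log(T/S)$, which is unbounded. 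This is the main obstacle and must be handled by exploiting the geometric decay of the weights rather than the triangle inequality alone.

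The resolution is that the weights $S^{1/2}N^{1/2}$ decay geometrically as $N$ decreases from $1/S$ down to $1/T$, so one should sum a geometric series instead of $\log$-many equal terms. Concretely,
\[
  S^{1/2}\sum_{1/T \le N < 1/S}\norm{\Proj_{\abs{\xi}\sim N} f}
  \lesssim \sum_{1/T \le N < 1/S} (S^{1/2}N^{1/2}) \norm{\Proj_{\abs{\xi}\sim N} f}_{H^{-1/2}}.
\]
Using Cauchy–Schwarz in the dyadic variable, and the fact that $\sum_{N < 1/S} (S^{1/2}N^{1/2})^2 = S\sum_{N<1/S} N \lesssim S \cdot (1/S) = O(1)$ by summing the geometric series (the sum is dominated by its largest term $N \sim 1/S$), while $\sum_N \norm{\Proj_{\abs{\xi}\sim N}f}_{H^{-1/2}}^2 \lesssim \norm{\Proj_{1/T \le \abs{\xi} < 1/S} f}_{H^{-1/2}}^2$ by almost-orthogonality of the dyadic pieces, we get the intermediate region contribution to $\norm{f}_{(S)}$ bounded by $C\norm{\Proj_{1/T\le\abs{\xi}<1/S}f}_{H^{-1/2}} \le C\norm{\Proj_{\abs{\xi}\ge 1/T}f}_{H^{-1/2}} \le C\norm{f}_{(T)}$. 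Combining the three regions with the triangle inequality (with a harmless loss of an absolute constant from the overlaps of the dyadic cutoffs) yields $\norm{f}_{(S)} \le C\norm{f}_{(T)}$ with $C$ independent of $S, T$. The only point requiring a little care is the almost-orthogonality / finite-overlap bookkeeping when passing from $\sum_N \norm{\Proj_{\abs{\xi}\sim N}f}_{H^{-1/2}}^2$ to a single projection, which is standard for Littlewood–Paley pieces.
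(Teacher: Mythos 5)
Your proposal is correct and uses the same three-region decomposition (high, intermediate, low in $\abs{\xi}$), with the same key observations: $S^{1/2} \le T^{1/2}$ handles the low band, the high band is monotone in the $H^{-1/2}$ cutoff, and in the intermediate band $\norm{\Proj_{\abs{\xi}\sim N}f} \sim N^{1/2}\norm{\Proj_{\abs{\xi}\sim N}f}_{H^{-1/2}}$ with the coefficient $S^{1/2}N^{1/2}$ decaying geometrically for dyadic $N < 1/S$. Your closing step, however, is heavier than necessary: you invoke Cauchy--Schwarz in the dyadic variable together with almost-orthogonality of the Littlewood--Paley pieces, whereas the paper simply dominates each factor $\norm{\Proj_{\abs{\xi}\sim N}f}_{H^{-1/2}}$ by the single quantity $\norm{\Proj_{1/T\le\abs{\xi}<1/S}f}_{H^{-1/2}}$ (a sub-projection in $L^2$ never increases the norm), pulls that out of the sum, and sums the remaining scalars $S^{1/2}N^{1/2}$ to $O(1)$ as a geometric series. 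In particular the obstacle you flag --- that ``the triangle inequality alone'' would cost a logarithm --- does not actually arise: it would only be lossy if you first discarded the geometric weight $S^{1/2}N^{1/2}$; retaining it and using the crude monotone bound above closes the argument with no orthogonality bookkeeping at all. Both routes give the lemma with an absolute constant, so your proof is fine; the paper's is just the more elementary version of the same idea.
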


\begin{proof} By definition,
$$
  \norm{f}_{(S)} 
  =
  \norm{\Proj_{\abs{\xi} \ge 1/S} f}_{H^{-1/2}}
  + S^{1/2} \sum_{0 < N < 1/S} \norm{\Proj_{\abs{\xi} \sim N} f},
$$
but the second term is clearly bounded by
$$
  T^{1/2}\sum_{0 < N < 1/T} \norm{\Proj_{\abs{\xi} \sim N} f}
  + S^{1/2}\sum_{1/T \le N < 1/S} \norm{\Proj_{\abs{\xi} \sim N} f},
$$
where in turn the second term is bounded by an absolute constant times
$$
  S^{1/2} \sum_{N < 1/S} N^{1/2} \norm{\Proj_{1/T \le \abs{\xi} < 1/S} f}_{H^{-1/2}}
  \lesssim \norm{\Proj_{1/T \le \abs{\xi} < 1/S} f}_{H^{-1/2}},
$$
hence
\begin{align*}
  \norm{f}_{(S)} 
  &\lesssim
  \norm{\Proj_{\abs{\xi} \ge 1/S} f}_{H^{-1/2}}
  + \norm{\Proj_{1/T \le \abs{\xi} < 1/S} f}_{H^{-1/2}}
  + T^{1/2}\sum_{0 < N < 1/T} \norm{\Proj_{\abs{\xi} \sim N} f}
  \\
  &\lesssim
  \norm{\Proj_{\abs{\xi} \ge 1/T} f}_{H^{-1/2}}
  + T^{1/2}\sum_{0 < N < 1/T} \norm{\Proj_{\abs{\xi} \sim N} f}
  = \norm{f}_{(T)},
\end{align*}
where the implicit constants are absolute.
\end{proof}

We now proceed in two steps, first iterating the local existence result with a fixed time increment. Then in the second step we iterate the entire first step.

\subsection{First iteration} Since $\tilde D_T(0) \le C\tilde D_1(0)$, there clearly exists $0 < T \ll 1$ such that
\begin{equation}\label{MDtimeNew2}
  T^{1/2} [1 + \tilde D_T(0)] = \frac{\varepsilon}{2},
\end{equation}
with $\varepsilon$ as in \eqref{MDtimeNew}. Then as long as
$$
  \tilde D_T(t) \le 2\tilde D_T(0)
$$
we will have
$$
  T^{1/2} [1 + \tilde D_T(t)] \le \varepsilon,
$$
so that the solution can be continued on $[t,t+T]$, by Theorem \ref{LWPnew}. Thus we obtain existence on successive time intervals $[0,T],[2T,3T],\dots,[(n-1)T,nT]$, and in view of the estimate \eqref{ModifiedEMgrowth} from Theorem \ref{EMgrowth}, we must stop at the first $n$ for which
\begin{equation}\label{Stop}
  nCT^{1/2}\log(1/T) > \tilde D_T(0),
\end{equation}
at which point we have covered a total time interval of length
\begin{equation}\label{Delta}
  \Delta \equiv nT
  > \frac{\tilde D_T(0)}{C\log(1/T)} \frac{\varepsilon}{2[1+\tilde D_T(0)]} 
  \sim \frac{1}{\log(1/T)} \sim \frac{1}{\log \tilde D_T(0)},
\end{equation}
where we used the fact, justified below, that $\tilde D_T(0)$ can be assumed as large as we like:
\begin{equation}\label{Dlarge}
  \tilde D_T(0) \gg 1,
\end{equation}
so in particular
\begin{equation}\label{Tlog}
  \log(1/T) \sim \log \tilde D_T(0),
\end{equation}
in view of \eqref{MDtimeNew2}.

Moreover, we claim that
\begin{equation}\label{Triple}
  \tilde D_T(\Delta) \le 3\tilde D_T(0).
\end{equation}
To see this, first note that by \eqref{Stop}, and using \eqref{MDtimeNew2}, \eqref{Dlarge} and \eqref{Tlog},
$$
  n \gtrsim \frac{\tilde D_T(0)^2}{\log D(0)},
$$
so by \eqref{Dlarge} we may assume $n \gg 1$, and using the definition of $n$ we then get
$$
  \tilde D_T(0) \ge (n-1)CT^{1/2}\log(1/T) \ge \frac{n}{2}CT^{1/2}\log(1/T),
$$
which together with \eqref{ModifiedEMgrowth} proves \eqref{Triple}.

Finally, to justify \eqref{Dlarge}, consider the maximal interval of existence $[0,T^*)$. We assume $T^* < \infty$, as otherwise we already have global existence and there is nothing to prove. But then by translating the time origin sufficiently close to $T^*$ we may in fact assume $T^*$ as small as we like, and we observe that \eqref{MDtimeNew2} implies
$$
  \tilde D_T(0) \sim T^{-1/2} > (T^*)^{-1/2}
$$
for small $T^* > 0$. This proves \eqref{Dlarge}.

\subsection{Second iteration}

Now we iterate the first iteration, introducing a subscript $j=1,2,\dots$ on $T$, $n$ and $\Delta$ belonging to the $j$-th interation step. Define $S_0 = 0$ and $S_j = S_{j-1} + \Delta_j$ for $j \ge 1$.

The initial data at the $j$-th step are then taken at time $t=S_{j-1}$, and the time increment $T_j$ is determined by the condition
\begin{equation}\label{Tj}
  T_j^{1/2} [1 + \tilde D_{T_j}(S_{j-1})] = \frac{\varepsilon}{2},
\end{equation}
and the first iteration allows us to move forward by a time step
\begin{equation}\label{Deltaj}
  \Delta_j = n_jT_j \sim \frac{1}{\log \tilde D_{T_j}(S_{j-1})},
\end{equation}
so we reach the time $S_j =S_{j-1}+\Delta_j$, at which the data norm can at most have tripled in size:
\begin{equation}\label{Triplej}
  \tilde D_{T_j}(S_j) \le 3\tilde D_{T_j}(S_{j-1}).
\end{equation}

But in order to relate $\Delta_{j+1}$ to $\Delta_j$, we need to compare $\tilde D_{T_{j+1}}(S_{j})$ and $\tilde D_{T_j}(S_{j-1})$, whereas \eqref{Triplej} only provides a comparison of $\tilde D_{T_{j}}(S_{j})$ and $\tilde D_{T_j}(S_{j-1})$. We bridge this gap by the following argument: 
\begin{itemize}
\item
If $T_{j+1} \le T_j$, then Lemma \ref{MagicLemma} gives
$$
  \tilde D_{T_{j+1}}(S_{j})
  \le C \tilde D_{T_j}(S_{j})
  \le 3C\tilde D_{T_j}(S_{j-1}),
$$
where we used \eqref{Triplej} at the end.
\item
If $T_{j+1} > T_j$, comparison of \eqref{Tj} for $j$ and $j+1$ gives
$$
  \tilde D_{T_{j+1}}(S_{j}) < \tilde D_{T_j}(S_{j-1}).
$$
\end{itemize}
Thus, in both cases,
$$
  \tilde D_{T_{j+1}}(S_{j})
  \le 3C\tilde D_{T_j}(S_{j-1})
$$
for $j \ge 1$, and induction gives
$$
  \tilde D_{T_{j+1}}(S_{j})
  \le (3C)^{j} \tilde D_{T_1}(0)
$$
for $j \ge 0$, so by \eqref{Deltaj},
$$
  \Delta_{j+1} \gtrsim \frac{1}{\log( (3C)^{j} \tilde D_{T_1}(0))}
  \sim \frac{1}{j+1}
$$
for $j \ge 0$, hence
$$
  \sum_{j=0}^\infty \Delta_{j+1} = \infty,
$$
proving global existence.

\section{Preliminaries}\label{Notation}

In this section we prepare the ground for the proof of Theorem \ref{LWPnew}.

\subsection{Function spaces} As is usual, we split
$$
  \psi=\psi_+ + \psi_-, \qquad \psi_\pm \equiv \mathbf\Pi_\pm \psi,
$$
using the Dirac projections $\mathbf\Pi_\pm = \mathbf\Pi(\pm D)$, defined in terms of the symbol
$$
  \mathbf\Pi(\xi)
  =
  \frac12 \bigl( \mathbf I_{2 \times 2} + \frac{\xi^j}{\abs{\xi}} \boldsymbol\alpha_j \bigr).
$$
The projections are self-adjoint and orthogonal, i.e.~$\mathbf\Pi_+ \mathbf\Pi_- = \mathbf\Pi_- \mathbf\Pi_+ = 0$, so in particular
$
  \norm{\psi(t)}^2 = \norm{\psi_+(t)}^2 + \norm{\psi_-(t)}^2.
$

Now \eqref{MDreduced} splits into two equations:
\begin{equation}\label{SplitMD}
  \left(-i\partial_t\pm\abs{D}\right) \psi_\pm
  =
  - \mathbf\Pi_\pm ( M \boldsymbol\beta \psi )
  + \mathbf\Pi_\pm \left( A_\mu^{\mathrm{hom.}} \boldsymbol\alpha^\mu \psi \right)
  - \mathbf\Pi_\pm \mathcal N(\psi,\psi,\psi),
\end{equation}
and we introduce $X^{s,b}$ spaces corresponding to $\left(-i\partial_t\pm\abs{D}\right)$. More generally, consider an equation of the form
$$
  \left[-i\partial_t+\phi(D)\right] u
  = F,
$$
where $\phi \colon \R^2 \to \R$ is a given function. Define $X_{\phi(\xi)}^{s,b}$ (for $s,b \in \R$)  as the completion of $\mathcal{S}(\R^{1+2})$ with respect to the norm
$$
  \norm{u}_{X_{\phi(\xi)}^{s,b}} = \bignorm{ \angles{\xi}^s
  \angles{\tau+\phi(\xi)}^b \,\widetilde
  u(\tau,\xi)}_{L^2_{\tau,\xi}},
$$
where
$$
  \angles{\xi} = (1+\abs{\xi}^2)^{1/2}.
$$
In fact, we use either $\phi(\xi) = \pm\abs{\xi}$ or $\phi(\xi) = \pm\angles{\xi}$, but since $\angles{\tau\pm\abs{\xi}} \sim \angles{\tau\pm\angles{\xi}}$, the corresponding norms are equivalent, hence the spaces $X_{\pm\abs{\xi}}^{s,b}$ and $X_{\pm\angles{\xi}}^{s,b}$ are identical, and we denote them simply by $X_\pm^{s,b}$.

Estimating $\psi_\pm$ in $X_\pm^{s,b}$, however, one can only get the estimates in Theorems \ref{LWP} and \ref{EMgrowth} with $T^{1/2}$ replaced by $T^{1/2-\delta}$ for arbitrarily small $\delta > 0$. To avoid this loss, we use instead some Besov versions of $X^{s,b}_\pm$, as  was done in \cite{Gruenrock:2009}. Similar spaces have been used in \cite{Bejenaru:2009} and \cite{Colliander:2003}.

Specifically, we shall use $X_{\phi(\xi)}^{s,b;1}$ and $X_{\phi(\xi)}^{s,b;\infty}$, defined as the completions of $\mathcal{S}(\R^{1+2})$ with respect to the norms
\begin{align*}
  \norm{u}_{X_{\phi(\xi)}^{s,b;1}}
  &=
  \sum_{L \ge 1} L^b \norm{\angles{D}^s\Proj_{\angles{\tau+\phi(\xi)} \sim L} u},
  \\
  \norm{u}_{X_{\phi(\xi)}^{s,b;\infty}}
  &=
  \sup_{L \ge 1} L^b \norm{\angles{D}^s \Proj_{\angles{\tau+\phi(\xi)} \sim L} u},
\end{align*}
where $L \ge 1$ is restricted to the dyadic numbers. The spaces corresponding to $\phi(\xi) = \pm\abs{\xi}$ or $\phi(\xi) = \pm\angles{\xi}$ coincide, and we simply write
$$
  X_\pm^{s,b;p} = X_{\pm\abs{\xi}}^{s,b;p} = X_{\pm\angles{\xi}}^{s,b;p}.
$$

Restriction to the time-slab
$$
  S_T = (-T,T) \times \R^2
$$
is handled in the usual way. Define
 $$
  \norm{u}_{X_{\phi(\xi)}^{s,b;p}(S_T)}
  =
  \inf_{\text{$v = u$ on $S_T$}} \norm{v}_{X_{\phi(\xi)}^{s,b;p}}.
$$
This is a seminorm on $X_{\phi(\xi)}^{s,b;p}$, but becomes a norm if we identify elements which agree on $S_T$, and the resulting space is denoted $X_{\phi(\xi)}^{s,b;p}(S_T)$. In other words, $X_{\phi(\xi)}^{s,b;p}(S_T)$ is the quotient $X_{\phi(\xi)}^{s,b;p}/\mathcal M$, where $\mathcal M = \{ v \in X_{\phi(\xi)}^{s,b;p} : \text{$v = 0$ on $S_T$} \}$. Since $\mathcal M$ is a closed subspace of $X_{\phi(\xi)}^{s,b;p}$, we conclude from general facts about quotient spaces (see e.g.~\cite[Section 5.1]{Folland:1999}) that $X_{\phi(\xi)}^{s,b;p}(S_T)$ a Banach space.

\subsection{Basic properties of $X^{s,b;p}_{\phi(\xi)}$}

First observe that
\begin{equation}\label{Embedding1}
  \norm{u}_{X_{\phi(\xi)}^{s,b;1}} \le C_{b,b'} \norm{u}_{X_{\phi(\xi)}^{s,b';\infty}}
  \qquad \text{for $b < b'$},
\end{equation}
since $\sum_{L \ge 1} L^{b-b'} < \infty$ for dyadic $L$'s.

Second, by standard methods one finds that
\begin{align}\label{Duality1}
  \norm{u}_{X_{\phi(\xi)}^{s,b;1}}
  &=
  \sup_{\text{$v$ s.t.~$\norm{v}_{X_{\phi(\xi)}^{-s,-b;\infty}} = 1$}}
  \abs{\int u \overline v \, dt \, dx},
  \\
  \label{Duality2}
  \norm{u}_{X_{\phi(\xi)}^{s,b;\infty}}
  &=
  \sup_{\text{$v$ s.t.~$\norm{v}_{X_{\phi(\xi)}^{-s,-b;1}} = 1$}}
  \abs{\int u \overline v \, dt \, dx},
\end{align}
and similarly for spinor-valued $u$ and $v$, replacing $u \overline v$ by $\Innerprod{u}{v}$.

Next, observing that by the Hausdorff-Young inequality followed by H\"older's inequality one has
$$
  \norm{\angles{D}^s\Proj_{\angles{\tau+\phi(\xi)} \sim L} u}_{L_t^p L_x^2}
  \lesssim L^{1/2-1/p} \norm{\angles{D}^s\Proj_{\angles{\tau+\phi(\xi)} \sim L} u}
  \qquad (2 \le p \le \infty),
$$
it follows that
\begin{equation}\label{Embedding2}
  \norm{u}_{L_t^p H^s}
  \lesssim
  \sum_L \norm{\angles{D}^s\Proj_{\angles{\tau+\phi(\xi)} \sim L} u}_{L_t^p L_x^2}
  \lesssim
  \norm{u}_{X_{\phi(\xi)}^{s,1/2-1/p;1}},
\end{equation}
implying the embedding
$$
  X_{\phi(\xi)}^{s,1/2;1} \hookrightarrow C_tH^s
$$
and also, writing $\rho_T(t) = \rho(t/T)$, where $\rho$ is a smooth cutoff function satisfying $\rho(t) = 1$ for $\abs{t} \le 1$ and $\rho(t) = 0$ for $\abs{t} \ge 2$,
\begin{equation}\label{Cutoff1}
  \norm{\rho_T u}
  \le \norm{\rho_T}_{L^p_t} \norm{u}_{L_t^{2p/(p-2)} L_x^2}
  \lesssim T^{1/p} \norm{u}_{X_{\phi(\xi)}^{0,1/p;1}} \qquad (2 \le p \le \infty).
\end{equation}
Moreover, one has (see \cite[Proposition 2.1(iii)]{Gruenrock:2009})
\begin{equation}\label{Cutoff2}
  \norm{\rho_T u}_{X_{\phi(\xi)}^{s,b;1}}
  \lesssim T^{1/2-b} \norm{u}_{X_{\phi(\xi)}^{s,1/2;1}} \qquad \text{for $0 < b \le 1/2$}.
\end{equation}

Finally, consider the solution of the initial value problem
\begin{equation}\label{IVP}
  \left[-i\partial_t +\phi(D)\right] u = F \quad \text{on $S_T$},
  \qquad
  u(0) = f,
\end{equation}
given (for sufficiently regular $f$ and $F$) by the Duhamel formula
\begin{equation}\label{Duhamel}
  u(t) = e^{-it\phi(D)}f + \int_0^t e^{-i(t-t')\phi(D)}F(t') \, dt'.
\end{equation}
Then for any $s \in \R$ and $0 < T \le 1$, the following estimates hold:
\begin{align}
  \label{Linear1}
  \norm{u}_{X_{\phi(\xi)}^{s,1/2;1}(S_T)}
  &\lesssim
  \norm{f}_{H^s}
  +
  \norm{F}_{X_{\phi(\xi)}^{s,-1/2;1}(S_T)},
  \\
  \label{Linear2}
  \norm{u}_{X_{\phi(\xi)}^{s,1/2;1}(S_T)}
  &\lesssim
  \norm{f}_{H^s}
  +
  T^{1/2+b}\norm{F}_{X_{\phi(\xi)}^{s,b;\infty}(S_T)}
  \qquad \text{for $-1/2 < b < 1/2$}.
\end{align}
See section \ref{LemmaProofs} for the proof, by standard methods. We remark that \eqref{Linear2} is included in \cite[Proposition 2.1]{Gruenrock:2009}, but only for $-1/2 < b < 0$.

Moreover, we will need:
\begin{equation}\label{Linear3}
  \sup_{t \in \R} \norm{\int_0^t e^{-i(t-t')\phi(D)}F(t') \, dt'}_{H^s}
  \lesssim
  \norm{\angles{\xi}^s \int \frac{\bigabs{\widetilde F(\tau,\xi)}}{\angles{\tau+\phi(\xi)}} \, d\tau}_{L^2_{\xi}},
\end{equation}
which is also proved in section \ref{LemmaProofs}.

\subsection{A Sobolev product estimate} We will need the following elementary fact:

\begin{lemma}\label{SobolevLemma}
If $a,b \in \R$ satisfy $a < 1$ and $a+b > 1$, then for all $f,g \in L^2(\R^2)$,
$$
  \norm{\abs{D}^{-a}\angles{D}^{-b}(fg)} \le C_{a,b} \norm{f}\norm{g}
$$
\end{lemma}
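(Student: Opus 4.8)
The estimate is a standard Sobolev-type product bound in $\R^2$; the only subtlety is the extra factor $\abs{D}^{-a}$ near the origin, which is harmless precisely because $a < 1$ guarantees that $\abs{\xi}^{-a}$ is locally square-integrable in dimension $2$. The plan is to pass to Fourier variables and decompose according to frequency magnitude. Write $F = \widehat f$, $G = \widehat g$, so that $\widehat{fg} = (2\pi)^{-2} F * G$, and we must show
$$
  \norm{ \abs{\xi}^{-a} \angles{\xi}^{-b} (F * G)(\xi) }_{L^2_\xi} \lesssim \norm{F} \norm{G}.
$$
Split into the region $\abs{\xi} \le 1$ and $\abs{\xi} > 1$.

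\textbf{Low frequencies $\abs{\xi} \le 1$.} Here $\angles{\xi}^{-b} \sim 1$, so we need $\norm{ \abs{\xi}^{-a} (F*G) }_{L^2(\abs{\xi} \le 1)} \lesssim \norm{F}\norm{G}$. Since $\norm{F*G}_{L^\infty} \le \norm{F}_{L^2}\norm{G}_{L^2}$ by Cauchy--Schwarz, it suffices that $\abs{\xi}^{-a} \in L^2(\abs{\xi}\le 1)$, i.e.~$\int_0^1 r^{-2a} r \, dr < \infty$, which holds exactly when $a < 1$. This disposes of the low-frequency piece.

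\textbf{High frequencies $\abs{\xi} > 1$.} Here $\abs{\xi}^{-a}\angles{\xi}^{-b} \sim \angles{\xi}^{-a-b}$, so we must show $\norm{ \angles{\xi}^{-a-b}(F*G) }_{L^2(\abs{\xi}>1)} \lesssim \norm{F}\norm{G}$, where now $a+b > 1$. By duality, pair against $h \in L^2$ and estimate $\int\int \angles{\xi}^{-a-b} \abs{F(\eta)}\abs{G(\xi-\eta)}\abs{h(\xi)}\,d\eta\,d\xi$. In the support of the integrand $\max(\abs{\eta},\abs{\xi-\eta}) \gtrsim \abs{\xi}$, so by symmetry we may assume $\abs{\eta} \gtrsim \abs{\xi} \gtrsim 1$, whence $\angles{\xi}^{-a-b} \lesssim \angles{\eta}^{-a-b}$; but that is not quite what I want since I need to keep $F$ in $L^2$. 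Instead: since $a+b>1$ we can write $a+b = 1 + 2\epsilon$ for some $\epsilon>0$ and distribute weights so that the kernel becomes $L^2$ in the appropriate variable. Concretely, estimate by Cauchy--Schwarz in $\eta$:
$$
  \int \abs{F(\eta)}\abs{G(\xi-\eta)} \, d\eta
  \le \left( \int \frac{\abs{F(\eta)}^2 \abs{G(\xi-\eta)}^2}{w(\xi,\eta)} d\eta \right)^{1/2}
      \left( \int w(\xi,\eta)\, d\eta \right)^{1/2},
$$
with the weight chosen so that the second factor is $\lesssim \angles{\xi}^{a+b}$ (using $\int \angles{\eta}^{-2(a+b)}\,d\eta < \infty$ in $\R^2$, which needs $a+b>1$), after which Fubini and $\norm{G}_{L^2}^2$-integration close the bound. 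Alternatively, and more cleanly, one reduces to the classical product law $H^{s_1}\cdot H^{s_2} \hookrightarrow H^{s_3}$ in $\R^2$ valid for $s_1+s_2+s_3 = 0$, $s_i < 1$, $s_1+s_2 > 0$: take $s_1 = s_2 = 0$ and $s_3 = -(a+b)$, whose hypotheses are $a+b>0$ and $a+b<2$ (hence implied), giving $\norm{\angles{D}^{-(a+b)}(fg)} \lesssim \norm{f}\norm{g}$ directly.

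\textbf{Main obstacle.} There is no deep obstacle — this is a soft lemma. The one point requiring care is the interplay of the two hypotheses: $a<1$ is exactly the local integrability of $\abs{\xi}^{-2a}$ at the origin (controlling the $\abs{D}^{-a}$ singularity), while $a+b>1$ is exactly the integrability of $\angles{\xi}^{-2(a+b)}$ at infinity (controlling the high-frequency product loss); one must not conflate them. The cleanest write-up is the two-region split above: Cauchy--Schwarz plus local $L^2$ of the weight for $\abs{\xi}\le 1$, and either the explicit weighted Cauchy--Schwarz or a citation of the standard fractional Leibniz/product estimate in $\R^2$ for $\abs{\xi}>1$.
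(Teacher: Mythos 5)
Your two-region split reaches the right conclusion, but note that the paper argues differently and more economically: it decomposes the \emph{output} frequency dyadically, uses the single bound $\norm{\Proj_{\abs{\xi_0}\lesssim N_0}(fg)}\lesssim N_0\norm{f}\norm{g}$ (Plancherel plus Cauchy--Schwarz on the convolution), multiplies by the dyadic weight $N_0^{-a}$ for $N_0<1$ and $N_0^{-a-b}$ for $N_0\ge 1$, and sums two geometric series whose convergence is precisely $a<1$ and $a+b>1$; low and high frequencies are treated uniformly in one stroke. Your low-frequency paragraph is sound and is essentially the $N_0\lesssim 1$ instance of the same elementary bound. Your high-frequency paragraph, however, has gaps as written. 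The weighted Cauchy--Schwarz does not close in the form you indicate: if $\int w(\xi,\eta)\,d\eta\lesssim\angles{\xi}^{2(a+b)}$, then absorbing the $\angles{\xi}^{-2(a+b)}$ and applying Fubini to the other factor would require $w\gtrsim 1$ uniformly, which is incompatible with $\eta$-integrability of $w$. A correct fix is to first split the convolution according to whether $\abs{\eta}\gtrsim\abs{\xi}$ or $\abs{\xi-\eta}\gtrsim\abs{\xi}$, move $\angles{\xi}^{-(a+b)}$ onto the large frequency, and apply Young's inequality together with $\angles{\cdot}^{-(a+b)}\in L^2(\R^2)$, which holds exactly when $a+b>1$. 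The product law you cite as an alternative is also misstated: the hypothesis cannot be $s_1+s_2+s_3=0$ (it should be of the form $s_1+s_2-s_3\ge n/2=1$ with suitable strict inequalities), and your reading introduces a spurious upper bound $a+b<2$ which this lemma neither needs nor should impose. The underlying fact you are after, $L^2\cdot L^2\hookrightarrow H^{-(a+b)}(\R^2)$ for $a+b>1$, is of course correct and follows most cleanly either by the paper's dyadic summation or by duality and the Sobolev embedding $H^{a+b}(\R^2)\hookrightarrow L^\infty$; I would quote one of those rather than leave the argument in its current imprecise form.
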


\begin{proof} Note that
\begin{equation}\label{BallSobolev}
  \norm{\Proj_{\abs{\xi_0} \lesssim N_0} (fg)}
  \lesssim N_0 \norm{f}\norm{g}
\end{equation}
by Plancherel and Cauchy-Schwarz:
$$
  \norm{\Proj_{\abs{\xi_0} \lesssim N_0} (fg)}
  \simeq
  \norm{\chi_{\abs{\xi_0} \lesssim N_0} \int \widehat f(\xi_1) \widehat g(\xi_0-\xi_1) \, d\xi_1}_{L^2_{\xi_0}}
  \lesssim
  \norm{\chi_{\abs{\xi_0} \lesssim N_0}}_{L^2_{\xi_0}} \bignorm{\widehat f\,} \norm{\widehat g}.
$$
Thus
\begin{align*}
  \norm{\abs{D}^{-a}\angles{D}^{-b}(fg)}
  &\le
  \sum_{0 < N_0 < 1} N_0^{-a} \norm{\Proj_{\abs{\xi_0} \sim N_0} (fg)}
  +
  \sum_{N_0 \ge 1} N_0^{-a-b} \norm{\Proj_{\abs{\xi_0} \sim N_0} (fg)}
  \\
  &\le
  \left(
  \sum_{0 < N_0 < 1} N_0^{1-a}
  +
  \sum_{N_0 \ge 1} N_0^{1-a-b} \right)
  \norm{f}\norm{g},
\end{align*}
and the last two sums are finite if and only if $a < 1$ and $a+b > 1$.
\end{proof}

In particular, we then obtain the following estimates for the current, already used in section \ref{Main} to see that the data for $\mathbf E^{\text{df}}_\pm$ are in the correct space. First,
\begin{equation}\label{Jest1}
  \sum_{0 < N_0 < 1} \norm{\Proj_{\abs{\xi_0} \sim N_0} \mathbf J(t)}
  \lesssim
  \sum_{0 < N_0 < 1} N_0 \norm{\psi(t)}^2 \sim \norm{\psi(t)}^2,
\end{equation}
where \eqref{BallSobolev} was used. Second,
\begin{equation}\label{Jest2}
  \norm{\mathbf J(t)}_{H^{-3/2}} \lesssim \norm{\psi(t)}^2,
\end{equation}
by Lemma \ref{SobolevLemma}.

\subsection{Some special sets} For $N,L \ge 1$, $r,\gamma > 0$ and $\omega \in \mathbb S^1$, where $\mathbb S^1 \subset \R^2$ is the unit circle, define
\begin{align*}
  \Gamma_\gamma(\omega) &= \left\{ \xi \in \R^2 \colon \theta(\xi,\omega) \le \gamma \right\},
  \\
  T_r(\omega) &= \left\{ \xi \in \R^2 \colon \abs{P_{\omega^{\perp}} \xi} \lesssim r \right\},
  \\
  K^\pm_{L}
  &= \left\{ (\tau,\xi) \in \R^{1+2} \colon \angles{\tau\pm\abs{\xi}} \sim L \right\},
  \\
  K^\pm_{N,L}
  &= \left\{ (\tau,\xi) \in \R^{1+2} \colon \angles{\xi} \sim N, \; \angles{\tau\pm\abs{\xi}} \sim L \right\},
  \\
  K^{\pm}_{N,L,\gamma}(\omega)
  &= \left\{ (\tau,\xi) \in \R^{1+2} \colon \angles{\xi} \sim N, \; \pm\xi \in \Gamma_{\gamma}(\omega), \; \angles{\tau\pm\abs{\xi}} \sim L \right\},
  \\
  H_d(\omega) &= \left\{ (\tau,\xi) \in \R^{1+2} \colon \abs{\tau+\xi\cdot\omega} \lesssim d \right\},
\end{align*}
where $\theta(a,b)$ denotes the angle between nonzero vectors $a,b \in \R^2$ and $P_{\omega^{\perp}}$ is the projection onto the orthogonal complement $\omega^\perp$ of $\omega$ in $\R^2$. For later use we note the elementary fact (see \cite{Selberg:2008b}) that
\begin{equation}\label{B:112}
  K^{\pm}_{N,L,\gamma}(\omega)
  \subset H_{\max(L,N\gamma^2)}(\omega).
\end{equation}

We shall also need the following:

\begin{lemma}\label{HyperLemma} Suppose $N,d,\gamma > 0$. The estimate
$$
  \sum_{\omega \in \Omega(\gamma)} \chi_{H_d(\omega)}(\tau,\xi)
  \lesssim 1 + \left(\frac{d}{N\gamma^2}\right)^{1/2}
$$
holds for all $(\tau,\xi) \in \R^{1+2}$ with $\abs{\xi} \sim N$.
\end{lemma}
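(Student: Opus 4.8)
The plan is to reduce the sum over $\omega \in \Omega(\gamma)$ to a counting problem on the unit circle, and then bound the number of admissible $\omega$ geometrically. Fix $(\tau,\xi)$ with $\abs{\xi} \sim N$, and note that $\omega$ contributes to the sum only if $\abs{\tau + \xi\cdot\omega} \lesssim d$, i.e.\ only if $\xi\cdot\omega$ lies in an interval of length $O(d)$. Writing $\vangle = \vangle(\xi,\omega)$ for the angle between $\xi$ and $\omega$, we have $\xi\cdot\omega = \abs{\xi}\cos\vangle$, so as $\omega$ ranges over the directions making the contribution nonzero, $\cos\vangle$ is confined to an interval of length $O(d/N)$. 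The key point is therefore to count how many $\gamma$-separated directions $\omega$ can have $\abs{\xi}\cos\vangle$ in a fixed interval of length $O(d)$.

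The main step is the following elementary estimate: the number of $\omega \in \Omega(\gamma)$ (a maximal $\gamma$-separated set on $\mathbb S^1$, so consecutive elements are $\sim \gamma$ apart) with $\cos\vangle(\xi,\omega) \in I$ for a given interval $I$ of length $\mu$ is $\lesssim 1 + (\mu/\gamma^2)^{1/2}$. To see this, parametrize by the angle $\vangle \in [0,\pi]$ (the two halves $\vangle \in [-\pi,0]$ being symmetric, and contributing only a constant factor). On the set where $\vangle \gtrsim \gamma$, one has $\abs{\frac{d}{d\vangle}\cos\vangle} = \abs{\sin\vangle} \gtrsim \vangle$, so the condition $\cos\vangle \in I$ forces $\vangle$ into a union of arcs whose total measure near the value $\vangle_0$ (where $\cos\vangle_0$ is the center of $I$) is governed by $\abs{\Delta(\cos\vangle)} \lesssim \mu$; solving $\vangle \cdot \abs{\Delta\vangle} \lesssim \mu$ one finds $\vangle$ ranges over an arc of length $\lesssim \mu^{1/2}$ adjacent to $\vangle = 0$ together with an arc of length $\lesssim \mu/\vangle_0$ elsewhere. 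Either way the arc length is $\lesssim \max(\mu^{1/2}, \mu)$, and since we may assume $\mu \lesssim 1$ (otherwise all $\lesssim 1/\gamma$ directions trivially give the bound $1/\gamma \lesssim (\mu/\gamma^2)^{1/2}$) this is $\lesssim \mu^{1/2}$. Dividing by the separation $\gamma$ and adding $1$ to account for the arc near $\vangle = 0$ (and for the degenerate case of very few directions) gives $\lesssim 1 + \mu^{1/2}/\gamma$. Applying this with $\mu \sim d/N$ yields $\lesssim 1 + (d/(N\gamma^2))^{1/2}$, which is the claim.

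The only genuine obstacle is bookkeeping near $\vangle = 0$ (equivalently, the direction $\omega$ nearly parallel or antiparallel to $\xi$), where $\cos\vangle$ is stationary and the level sets of $\cos\vangle$ are fat; this is precisely what produces the additive $1$ on the right-hand side, and also why one must handle the regime $d \gtrsim N\gamma^2$ by the trivial bound separately. Everything else is a one-variable change of variables on $\mathbb S^1$ and is routine. (Alternatively, one can avoid the stationary-phase discussion entirely by invoking \eqref{B:112}: each nonzero summand forces $(\tau,\xi)$ into $H_d(\omega)$, and one can directly estimate the measure of $\{\omega : (\tau,\xi) \in H_d(\omega)\}$ on $\mathbb S^1$ and divide by $\gamma$, which amounts to the same computation.)
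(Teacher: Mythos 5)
Your proof is correct and takes essentially the same route as the paper: both reduce the count to measuring the arc of $\mathbb S^1$ on which $\abs{\tau+\xi\cdot\omega}\lesssim d$, observe that this arc has length $\lesssim (d/N)^{1/2}$ because of the curvature of the circle, and then divide by the separation $\gamma$ (adding $1$ for the degenerate case). The paper phrases the arc-length bound geometrically as the intersection of $\mathbb S^1$ with a strip of thickness $\sim d/N$; you phrase it via the derivative of $\cos\vangle$, which amounts to the same thing. One small correction to your write-up: the inequality $\abs{\sin\vangle}\gtrsim\vangle$ fails near $\vangle=\pi$ — you should use $\abs{\sin\vangle}\gtrsim\min(\vangle,\pi-\vangle)$ and note that both endpoints $\vangle=0,\pi$ (i.e.\ $\omega=\pm\xi/\abs{\xi}$) are stationary and must be handled symmetrically; the quoted symmetry between $[0,\pi]$ and $[-\pi,0]$ does not address this. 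Also, the parenthetical invocation of \eqref{B:112} in your last remark is spurious — that inclusion plays no role here; the "alternative" you describe is just the paper's own argument.
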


\begin{proof}
The left side equals $\# \left\{ \omega \in \Omega(\gamma) \colon \omega \in A \right\}$ where $A$ is the set of $\omega \in \mathbb S^1$ such that $\abs{\tau+\xi\cdot\omega} \lesssim d$. Without loss of generality assume $\xi = (\abs{\xi},0)$. Then
$$
  A \subset A' \equiv \left\{ \omega = (\omega^1,\omega^2) \in \mathbb S^1 \colon \omega^1 = -\frac{\tau}{\abs{\xi}} + O\left(\frac{d}{N}\right) \right\}.
$$
Thus, $A'$ is the intersection of $\mathbb S^1$ and a strip of thickness comparable to $d/N$, so
$$
  \# \left\{ \omega \in \Omega(\gamma)\colon \omega \in A' \right\}
  \lesssim 1 + \frac{\text{length}(A')}{\gamma}.
$$
But $\text{length}(A') \lesssim (d/N)^{1/2}$, and the proof is complete.
\end{proof}

\subsection{Angular decompositions} For $\gamma \in (0,\pi]$, let $\Omega(\gamma)$ denote a maximal $\gamma$-separated subset of the unit circle. We recall the following angular Whitney decomposition:

\begin{lemma}\label{WhitneyLemma1} We have
$$
  1 \sim \sum_{\genfrac{}{}{0pt}{1}{0 < \gamma < 1}{\text{$\gamma$ \emph{dyadic}}}} 
  \sum_{\genfrac{}{}{0pt}{1}{\omega_1,\omega_2 \in \Omega(\gamma)}{3\gamma \le \vangle(\omega_1,\omega_2) \le 12\gamma}}
  \chi_{\Gamma_\gamma(\omega_1)}(\xi_1) \chi_{\Gamma_\gamma(\omega_2)}(\xi_2),
$$
for all $\xi_1,\xi_2 \in \R^2 \setminus \{0\}$ with $\vangle(\xi_1,\xi_2) > 0$.
\end{lemma}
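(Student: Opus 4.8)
The plan is to fix $\xi_1,\xi_2 \in \R^2\setminus\{0\}$, write $\hat\xi_i = \xi_i/\abs{\xi_i} \in \mathbb S^1$ and $\alpha = \vangle(\xi_1,\xi_2) \in (0,\pi]$, regard the right-hand side as a sum of indicator functions indexed by triples $(\gamma,\omega_1,\omega_2)$, and bound the number of triples that contribute a nonzero term above and below by absolute constants. The single geometric fact I would invoke throughout is that $\vangle$ restricted to $\mathbb S^1$ is the geodesic distance, hence obeys the triangle inequality; in particular, whenever $\vangle(\hat\xi_i,\omega_i)\le\gamma$ for $i=1,2$ one has $\abs{\vangle(\omega_1,\omega_2)-\alpha}\le 2\gamma$.

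For the upper bound: a triple $(\gamma,\omega_1,\omega_2)$ contributes a nonzero term only if $\vangle(\hat\xi_i,\omega_i)\le\gamma$ for $i=1,2$ (so that $\xi_i \in \Gamma_\gamma(\omega_i)$) and $3\gamma\le\vangle(\omega_1,\omega_2)\le 12\gamma$. Together with the inequality above this forces $\alpha/14 \le \gamma \le \alpha$, so only $O(1)$ dyadic scales $\gamma$ are relevant. For each such $\gamma$, every $\omega\in\Omega(\gamma)$ with $\vangle(\hat\xi_i,\omega)\le\gamma$ lies in an arc of length $\le 2\gamma$, and such $\omega$'s are $\gamma$-separated, hence there are at most three of them; so for fixed $\gamma$ there are $O(1)$ admissible pairs $(\omega_1,\omega_2)$. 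Summing over the $O(1)$ relevant scales shows the right-hand side is $\lesssim 1$.

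For the lower bound I would produce one nonzero term. Choose a dyadic $\gamma$ with $\alpha/10\le\gamma\le\alpha/5$ (such a $\gamma$ exists since the interval has endpoint-ratio $2$, and automatically $\gamma\le\pi/5<1$, so it is admissible in the outer sum). By maximality of $\Omega(\gamma)$, for each $i$ there is $\omega_i\in\Omega(\gamma)$ with $\vangle(\hat\xi_i,\omega_i)<\gamma$ --- otherwise $\hat\xi_i$ could be adjoined to $\Omega(\gamma)$ --- hence $\xi_i\in\Gamma_\gamma(\omega_i)$. The triangle inequality and the choice of $\gamma$ then give $3\gamma\le\alpha-2\gamma\le\vangle(\omega_1,\omega_2)\le\alpha+2\gamma\le 12\gamma$, so the triple $(\gamma,\omega_1,\omega_2)$ contributes exactly $1$ and the right-hand side is $\ge 1$. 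Combining the two bounds gives $1\sim(\text{RHS})$.

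I do not expect a genuine obstacle here: this is a routine dyadic/Whitney counting argument. The only thing needing care is the bookkeeping of numerical constants --- matching the fixed window $[3\gamma,12\gamma]$ in the statement against the $\pm 2\gamma$ slack, ensuring a dyadic $\gamma$ lands in the sub-window $[\alpha/10,\alpha/5]$, and checking that the restriction $0<\gamma<1$ in the outer sum is harmless (it is, since contributing scales satisfy $\gamma\le\alpha\le\pi$ and the lower-bound choice is $\le\pi/5$). One could alternatively phrase everything as a dyadic Whitney decomposition of $(\mathbb S^1\times\mathbb S^1)\setminus\{\vangle=0\}$ relative to the diagonal, but the direct estimate above is enough.
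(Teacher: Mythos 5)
Your argument is correct, and it is exactly the standard Whitney-type counting that the paper has in mind when it says ``the straightforward proof is omitted.'' The triangle inequality on $\mathbb S^1$ pins $\gamma$ to $O(1)$ dyadic scales between $\alpha/14$ and $\alpha$, the $\gamma$-separation bounds the admissible $\omega_i$ to at most three per scale, and maximality of $\Omega(\gamma)$ together with the dyadic window $[\alpha/10,\alpha/5]$ produces a nonvanishing term; all the numerical constants check out against the stated window $[3\gamma,12\gamma]$.
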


The straightforward proof is omitted. The condition $\vangle(\omega_1,\omega_2) \ge 3\gamma$ ensures that the sectors $\Gamma_{\gamma}(\omega_1)$ and $\Gamma_{\gamma}(\omega_2)$ are well-separated. If separation is not needed, it is better to use the following variation (again, we skip the easy proof):

\begin{lemma}\label{WhitneyLemma2} For any $0 < \gamma < 1$ and $k \in \N$,
$$
  \chi_{\vangle(\xi_1,\xi_2) \le k\gamma} \lesssim \sum_{\genfrac{}{}{0pt}{1}{\omega_1,\omega_2 \in \Omega(\gamma)}{\vangle(\omega_1,\omega_2) \le (k+2)\gamma}}
  \chi_{\Gamma_\gamma(\omega_1)}(\xi_1) \chi_{\Gamma_\gamma(\omega_2)}(\xi_2),
$$
for all $\xi_1,\xi_2 \in \R^2 \setminus \{0\}$.
\end{lemma}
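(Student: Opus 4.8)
The plan is to show that the right-hand side is $\ge 1$ whenever the left-hand side equals $1$; since each summand is a product of characteristic functions, hence nonnegative, it suffices to exhibit a single admissible pair $(\omega_1,\omega_2) \in \Omega(\gamma)^2$ for which both factors are $1$.

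First I would record the only property of $\Omega(\gamma)$ that is needed: being a \emph{maximal} $\gamma$-separated subset of $\mathbb S^1$, it is automatically $\gamma$-dense, i.e.\ for every $\omega \in \mathbb S^1$ there is some $\omega' \in \Omega(\gamma)$ with $\vangle(\omega,\omega') \le \gamma$ (otherwise $\omega$ could be adjoined to $\Omega(\gamma)$ without destroying $\gamma$-separation, contradicting maximality). I would also note that $\vangle$ restricted to $\mathbb S^1$ is a genuine metric, namely the geodesic distance valued in $[0,\pi]$, so the triangle inequality applies to it.

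Now fix $\xi_1,\xi_2 \in \R^2 \setminus \{0\}$; we may assume $\vangle(\xi_1,\xi_2) \le k\gamma$, otherwise the left side is $0$ and there is nothing to prove. Write $\hat\xi_j = \xi_j/\abs{\xi_j} \in \mathbb S^1$, and by $\gamma$-density pick $\omega_j \in \Omega(\gamma)$ with $\vangle(\hat\xi_j,\omega_j) \le \gamma$ for $j=1,2$. Then $\vangle(\xi_j,\omega_j) = \vangle(\hat\xi_j,\omega_j) \le \gamma$, so $\xi_j \in \Gamma_\gamma(\omega_j)$ by definition, while the triangle inequality gives
$$
  \vangle(\omega_1,\omega_2)
  \le \vangle(\omega_1,\hat\xi_1) + \vangle(\hat\xi_1,\hat\xi_2) + \vangle(\hat\xi_2,\omega_2)
  \le \gamma + k\gamma + \gamma = (k+2)\gamma .
$$
Hence $(\omega_1,\omega_2)$ enters the sum, and its contribution $\chi_{\Gamma_\gamma(\omega_1)}(\xi_1)\chi_{\Gamma_\gamma(\omega_2)}(\xi_2)$ equals $1$, so the sum is $\ge 1$. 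This is the asserted inequality, in fact with implicit constant $1$.

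I expect no real obstacle: the argument is purely a covering statement on the circle, and the only slightly delicate points are the two observations in the second paragraph, namely that maximality of $\Omega(\gamma)$ yields $\gamma$-density and that $\vangle$ obeys the triangle inequality on $\mathbb S^1$. Everything else follows directly from the definitions of $\Gamma_\gamma(\omega)$ and of a maximal $\gamma$-separated set.
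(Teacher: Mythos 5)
Your proof is correct and is exactly the standard covering argument the paper has in mind (the paper itself omits the proof as easy). The two observations you isolate — that maximality of the $\gamma$-separated set $\Omega(\gamma)$ forces $\gamma$-density of the net, and that the angle $\vangle$ is the geodesic metric on $\mathbb S^1$ so the triangle inequality applies — are precisely what is needed, and your bookkeeping with $\hat\xi_j$ and the triangle inequality $\gamma + k\gamma + \gamma = (k+2)\gamma$ matches the constant in the statement, giving the bound with implicit constant $1$.
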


Writing $u^{\gamma,\omega} = \Proj_{\pm\xi \in \Gamma_{\gamma}(\omega)} u$ for a given sign, we note that
\begin{equation}\label{OmegaSum0}
  \norm{u}^2 \sim \sum_{\omega \in \Omega(\gamma)} \norm{u^{\gamma,\omega}}^2
\end{equation}
and (given signs $\pm_1$ and $\pm_2$)
\begin{equation}\label{OmegaSum}
  \sum_{\genfrac{}{}{0pt}{1}{\omega_1,\omega_2 \in \Omega(\gamma)}
  {\theta(\omega_1,\omega_2) \lesssim \gamma}}
  \norm{u_1^{\gamma,\omega_1}} \norm{u_2^{\gamma,\omega_2}}
  \lesssim \norm{u_1}\norm{u_2},
\end{equation}
where we used the Cauchy-Schwarz inequality, \eqref{OmegaSum0} and the fact that, given $\omega_2$, the set of $\omega_1 \in \Omega(\gamma)$ satisfying $\theta(\omega_1,\omega_2) \le k\gamma$ has cardinality at most $2k+1$.

\section{Local well-posedness}\label{LWPsection}

The iterates $\{\psi^{(n)}_\pm\}_{n=-1}^\infty$ for \eqref{SplitMD} are defined in the standard way, i.e.~$\psi^{(-1)}_\pm$ is taken to be identically zero, and in the general inductive step, $\psi^{(n)}_\pm$ is obtained by solving \eqref{SplitMD} on $S_T$ with the previous iterate $\psi^{(n-1)}_\pm$ inserted on the right hand side, and with initial data $\mathbf\Pi_\pm\psi_0$. Note that $\mathbf\Pi_\pm \psi^{(n)}_\pm = \psi^{(n)}_\pm$ on $S_T$. 

We shall estimate the iterates in the norm
$$
  p_n(T) = \bignorm{\psi^{(n)}_+}_{X^{0,1/2;1}_+(S_T)} + \bignorm{\psi^{(n)}_-}_{X^{0,1/2;1}_-(S_T)},
$$
where $T > 0$ remains to be fixed. We also need estimates for the difference of two successive iterates,
$$
  q_n(T) = \sum_\pm \bignorm{\psi^{(n)}_\pm - \psi^{(n-1)}_\pm}_{X^{0,1/2;1}_\pm(S_T)}.
$$

We claim that to prove Theorem \ref{LWP}, it suffices to show, for $0 < T \le 1$,
\begin{align}
  \label{Contraction1}
  p_{n+1}(T) &\le C_1 + C_2T^{1/2}[1+D_T(0)]p_n(T) + C_3T^\delta p_n(T)^3,
  \\
  \label{Contraction2}
  q_{n+1}(T) &\le C_2T^{1/2}[1+D_T(0)]q_n(T) + C_3T^\delta p_n(T)^2 q_n(T),
\end{align}
where $C_1$ and $C_2$ depend on the charge constant, $C_2$ depends in addition on $\abs{M}$, $C_3$ is an absolute constants, and $\delta > 0$ is some small number.

In fact, the verification of the above claim consists of a completely standard argument, which we only sketch here.

First one uses \eqref{Contraction1} to verify that
\begin{equation}\label{IterationBound}
  p_n(T) \le 2C_1
\end{equation}
for all $n$ if $T > 0$ is small enough. Indeed, this clearly holds for $n=-1$ and all $0 < T \le 1$, and then it follows for all $n \ge 0$ by induction, provided that $2C_2 T^{1/2}[1+D_T(0)] \le 1/2$ and $8C_1^2C_3T^\delta \le 1/2$. The latter condition simply says that $T \le \varepsilon$ for some $\varepsilon > 0$ depending only on the charge constant, whereas the former (and stronger) condition says that
$$
  T^{1/2} [1+D_T(0)] \le \varepsilon
$$
for some $\varepsilon > 0$ depending only on the charge constant and $M$, so this is exactly condition \eqref{MDtime} in Theorem \ref{LWP}.

Second one uses \eqref{Contraction2} to verify that, with the same condition on $T$, the sequence of iterates $\psi^{(n)}_\pm$ is Cauchy in $X^{0,1/2;1}_\pm(S_T)$, hence converges in that space to a solution of 2d MD on $S_T=(-T,T) \times \R^2$. Indeed, \eqref{Contraction2} implies $q_{n+1}(T) \le \frac12 q_{n+1}(T)$.

This proves the local existence part of Theorem \ref{LWP}. Uniqueness in the iteration space follows by \eqref{Contraction2} (or rather its analog for the difference of any two solutions instead of two iterates). Finally, continuous dependence on the data and persistence of higher regularity follow from standard arguments which we do not repeat here.

Note that the same argument immediately gives Theorem \ref{LWPnew}, since we can apply the estimate $D_T(0) \le \tilde D_T(0)$ in the right hand sides of \eqref{Contraction1} and \eqref{Contraction2}.

So we need to prove \eqref{Contraction1} and \eqref{Contraction2}.

The first term on the right hand side of \eqref{Contraction1} comes from applying \eqref{Linear1} to the homogeneous part $\psi^{(0)}_\pm$ of $\psi^{(n+1)}_\pm$, while the remaining terms come from the inhomogeneous part, which we split into three parts corresponding to the three terms on the right hand side of \eqref{SplitMD}. Applying \eqref{Linear2} with $b=0$ and $b=-1/4$, respectively, to the first two terms, and \eqref{Linear1} to third, we reduce \eqref{Contraction1} [and in fact also \eqref{Contraction2}, since all the terms in \eqref{SplitMD} are either linear or trilinear in $\psi$] to the following three estimates, where $\pm_1,\dots,\pm_4$ denote independent signs and the implicit constants are absolute:

First, we need
$$
  \norm{M \mathbf\Pi_{\pm_2}\boldsymbol\beta \psi}_{X_{\pm_2}^{0,0;\infty}(S_T)}
  \lesssim \abs{M} \norm{\psi}_{X_{\pm_1}^{0,1/2;1}(S_T)},
$$
but this is trivial since $X_{\pm_2}^{0,0;\infty} = X_{\pm_1}^{0,0;\infty}$. Second, we need
$$
  \norm{\mathbf\Pi_{\pm_2}\left( A_\mu^{\mathrm{hom.}} \boldsymbol\alpha^\mu \mathbf\Pi_{\pm_1} \psi_1 \right)}_{X_{\pm_2}^{0,-1/4;\infty}(S_T)}
  \lesssim T^{1/4} [\norm{\psi_0}^2+D_T(0)] \norm{\psi_1}_{X_{\pm_1}^{0,1/2;1}(S_T)},
$$
and third,
$$
  \bignorm{\mathbf\Pi_{\pm_4}\mathcal N\bigl(\mathbf\Pi_{\pm_1}\psi_1,\mathbf\Pi_{\pm_2}\psi_2,\mathbf\Pi_{\pm_3}\psi_3\bigr)}_{X_{\pm_4}^{0,-1/2;1}(S_T)}
  \lesssim T^\delta \prod_{j=1}^3 \norm{\psi_j}_{X_{\pm_j}^{0,1/2;1}(S_T)}.
$$
It suffices to prove these without the restriction to $S_T = (-T,T) \times \R^2$, but of course we can then insert a smooth time cutoff $\rho_T(t) = \rho(t/T)$, where $\rho(t)=1$ for $\abs{t} \le 1$ and $\rho(t)=0$ for $\abs{t} \ge 2$. By \eqref{Duality1} and \eqref{Duality2} we therefore reduce to proving
\begin{equation}\label{Trilinear}
  \abs{I^{\pm_1,\pm_2}}
  \lesssim T^{1/4} [\norm{\psi_0}^2+D_T(0)] \norm{\psi_1}_{X_{\pm_1}^{0,1/2;1}}
  \norm{\psi_2}_{X_{\pm_2}^{0,1/4;1}}
\end{equation}
and
\begin{equation}\label{Quadrilinear}
  \abs{J^{\pm_1,\dots,\pm_4}}
  \lesssim T^\delta \norm{\psi_1}_{X_{\pm_1}^{0,1/2;1}}
  \norm{\psi_2}_{X_{\pm_2}^{0,1/2;1}}
  \norm{\psi_3}_{X_{\pm_3}^{0,1/2;1}}
  \norm{\psi_4}_{X_{\pm_4}^{0,1/2;\infty}},
\end{equation}
where
\begin{align*}
  I^{\pm_1,\pm_2}
  &=
  \int \rho A_\mu^{\mathrm{hom.}}
  \Innerprod{\boldsymbol\alpha^\mu \mathbf\Pi_{\pm_1} \psi_1}{\mathbf\Pi_{\pm_2} \psi_2} \, dt \, dx,
  \\
  J^{\pm_1,\dots,\pm_4}
  &=
  \int \rho \square^{-1} \Innerprod{\boldsymbol\alpha^\mu\mathbf\Pi_{\pm_1}\psi_1}{\mathbf\Pi_{\pm_2}\psi_2} \cdot\Innerprod{\boldsymbol\alpha_\mu\mathbf\Pi_{\pm_3}\psi_3}{\mathbf\Pi_{\pm_4}\psi_4} \, dt \, dx,
\end{align*}
and the $\psi_j \in \mathcal S(\R^{1+2})$ are $\C^2$-valued. Moreover, we can freely replace $\psi_j$ by $\rho_T\psi_j$ in the above integrals whenever it may be needed.

We concentrate first on the quadrilinear estimate \eqref{Quadrilinear}, proved in the next four sections by adapting the proof of the analogous estimate in 3d from \cite{Selberg:2008b}. We make a dyadic decomposition, use the null structure of the quadrilinear form in the integral, reduce to various $L^2$ bilinear estimates, and finally sum the dyadic pieces to obtain \eqref{Quadrilinear}. The main difference from the 3d case is that the $L^2$ bilinear estimates are different in 2d; the estimates we need have been proved by the second author in \cite{Selberg:2010a}. The trilinear estimate \eqref{Trilinear} is proved in section \ref{S}.

\section{The quadrilinear estimate}\label{QuadrilinearSection}

Here we begin the proof of \eqref{Quadrilinear}. First we switch to Fourier variables in $J^{\pm_1,\dots,\pm_4}$ by Plancherel's theorem. To this end we recall the following representation of $\square^{-1}$,  derived from Duhamel's formula (see \cite[Lemma 4.4]{Klainerman:1995b}).

\begin{lemma}\label{DuhamelLemma} Given $G \in \mathcal S(\R^{1+2})$, set $u = \square^{-1} G$ and consider the splitting $u = u_+ + u_-$ defined by \eqref{WaveSplitting}. Then
$$
  \widehat{u_\pm}(t,\xi) = \pm \frac{e^{\mp it \abs{\xi}}}{4\pi\abs{\xi}} \int_{-\infty}^\infty \frac{e^{it(\tau'\pm\abs{\xi})}-1}{\tau'\pm\abs{\xi}} \widetilde G(\tau',\xi) \, d\tau'.
$$
Moreover, multiplying by the cutoff $\rho(t)$ and taking Fourier transform also in time,
$$
  \widetilde{\rho u_\pm}(\tau,\xi)
  \\
  = \int_{-\infty}^\infty
  \frac{\kappa_{\pm}(\tau,\tau';\xi)}{4\pi\abs{\xi}}
  \widetilde G(\tau',\xi) \, d\tau',
$$
where
$$
  \kappa_{\pm}(\tau,\tau';\xi)
  = \pm \frac{\widehat\rho(\tau-\tau')-\widehat\rho(\tau\pm\abs{\xi})}{\tau'\pm\abs{\xi}}
$$
and $\widehat \rho(\tau)$ denotes the Fourier transform of $\rho(t)$.
\end{lemma}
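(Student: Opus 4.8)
The plan is to derive both identities directly from Duhamel's formula, performing the resulting time integrals explicitly in Fourier space. Since $u = \square^{-1}G$ has vanishing Cauchy data at $t=0$, the same is true of $u_\pm$ as defined by \eqref{WaveSplitting}, and (as recorded just after \eqref{WaveSplitting}) $u_\pm$ solves $\left(-i\partial_t \pm \abs{D}\right)u_\pm = -(\pm 2\abs{D})^{-1}G$. Applying the Duhamel formula \eqref{Duhamel} with $\phi(D) = \pm\abs{D}$, zero initial data, and inhomogeneity $-(\pm 2\abs{D})^{-1}G$, and then taking the spatial Fourier transform, yields
$$
  \widehat{u_\pm}(t,\xi)
  = \mp\frac{1}{2\abs{\xi}}\int_0^t e^{\mp i(t-t')\abs{\xi}}\,\widehat G(t',\xi)\,dt'.
$$

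Next I would insert the temporal Fourier inversion $\widehat G(t',\xi) \simeq \int e^{it'\tau'}\widetilde G(\tau',\xi)\,d\tau'$ and interchange the $t'$- and $\tau'$-integrals, which is legitimate because $G \in \mathcal S(\R^{1+2})$. Pulling out the factor $e^{\mp it\abs{\xi}}$ and evaluating the elementary integral $\int_0^t e^{it'(\tau'\pm\abs{\xi})}\,dt' \simeq \bigl(e^{it(\tau'\pm\abs{\xi})}-1\bigr)/(\tau'\pm\abs{\xi})$ then gives the first displayed formula. One should note that the apparent singularity at $\tau'\pm\abs{\xi} = 0$ is removable, since $e^{it(\tau'\pm\abs{\xi})}-1$ vanishes there, so the expression is a genuine pointwise identity with a smooth (in fact entire) integrand in $\tau'$. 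The precise numerical constant $1/(4\pi)$ and the overall sign come from bookkeeping of the $2\pi$ factors in the Fourier inversion formula; these are exactly the $\simeq$-type constants fixed in the introduction.

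For the second identity I would use the factorization $e^{\mp it\abs{\xi}}\bigl(e^{it(\tau'\pm\abs{\xi})}-1\bigr) = e^{it\tau'} - e^{\mp it\abs{\xi}}$, so that the first formula reads $\widehat{u_\pm}(t,\xi) \simeq \pm(4\pi\abs{\xi})^{-1}\int \bigl(e^{it\tau'}-e^{\mp it\abs{\xi}}\bigr)(\tau'\pm\abs{\xi})^{-1}\widetilde G(\tau',\xi)\,d\tau'$. Multiplying by $\rho(t)$, taking the Fourier transform in $t$, and interchanging integrals once more (again justified by the Schwartz bound on $G$ together with the rapid decay of $\widehat\rho$), one recognizes $\int\rho(t)e^{it\tau'}e^{-it\tau}\,dt = \widehat\rho(\tau-\tau')$ and $\int\rho(t)e^{\mp it\abs{\xi}}e^{-it\tau}\,dt = \widehat\rho(\tau\pm\abs{\xi})$, which assembles precisely into the kernel $\kappa_\pm(\tau,\tau';\xi)$. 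The only points requiring care are the two Fubini interchanges and the observation that $\kappa_\pm$ has a removable singularity along $\tau'\pm\abs{\xi}=0$ — the numerator $\widehat\rho(\tau-\tau')-\widehat\rho(\tau\pm\abs{\xi})$ vanishes there — so $\kappa_\pm$ is a smooth, bounded kernel; no substantial obstacle arises.
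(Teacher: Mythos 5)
The paper does not actually prove this lemma; it cites it from Klainerman--Machedon (their Lemma 4.4). So you are supplying a proof where the paper has none, and your route --- Duhamel's formula, temporal Fourier inversion of $\widehat G(t',\xi)$, the elementary integral $\int_0^t e^{it'(\tau'\pm\abs{\xi})}\,dt'$, then multiplication by $\rho$ and a second time Fourier transform --- is exactly the standard derivation the citation implicitly points to. The observations that the singularities at $\tau'\pm\abs{\xi}=0$ (in both the first integrand and in $\kappa_\pm$) are removable are the right points to flag, and both Fubini interchanges are indeed justified by $G\in\mathcal S$ and the rapid decay of $\widehat\rho$.

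One thing you should not wave away, though: the sentence ``the precise numerical constant $1/(4\pi)$ and the overall sign come from bookkeeping of the $2\pi$ factors'' hides a genuine factor-of-$\pm i$ issue, and that is not a $\simeq$-type constant in the paper's sense (which absorbs $2\pi$'s, not signs). The Duhamel formula \eqref{Duhamel} as printed actually solves $\left[-i\partial_t+\phi(D)\right]u=-iF$ rather than $=F$ (differentiate and check), and a direct computation from $\widehat u(t,\xi)=-\int_0^t\abs{\xi}^{-1}\sin\bigl((t-t')\abs{\xi}\bigr)\widehat G(t',\xi)\,dt'$ combined with \eqref{WaveSplitting} gives
$$
\widehat{u_\pm}(t,\xi)=\mp\frac{i}{2\abs{\xi}}\int_0^t e^{\mp i(t-t')\abs{\xi}}\,\widehat G(t',\xi)\,dt',
$$
with an extra factor of $i$ relative to what you wrote; carrying this through, the $1/(ia)$ from the elementary integral cancels that $i$ and the overall sign comes out $\mp$, not $\pm$. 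Your computation as written, done exactly, would instead yield an overall constant $\pm i/(4\pi\abs{\xi})$. None of this affects the paper (only $\abs{\kappa_\pm}$ is ever used, and the bound $\abs{\kappa_\pm}\lesssim(L_0L_0')^{-1/2}\sigma_{L_0,L_0'}$ is insensitive to sign or phase), but it should be acknowledged rather than folded into ``$2\pi$ bookkeeping.''
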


Thus, writing $\widetilde{\psi_j} = z_j \bigabs{\widetilde{\psi_j}}$, where $z_j : \R^{1+2} \to \C^2$ with $\abs{z_j}=1$, and applying the convolution formula
\begin{equation}\label{Convolution}
  \widetilde{u_1\overline{u_2}}(X_0)
  \simeq \int
  \widetilde{u_1}(X_1)\,
  \overline{\widetilde{u_2}(X_2)}
  \, d\mu^{12}_{X_0},
  \quad
  d\mu^{12}_{X_0} \equiv \delta(X_0-X_1+X_2) \, dX_1 \, dX_2,
\end{equation}
twice, we see that it suffices to prove \eqref{Quadrilinear} for
$$
  J^{\pm_0,\pm_1,\dots,\pm_4}
  =
  \int
  \frac{\kappa_{\pm_0}(\tau_0,\tau_0';\xi_0)}{\abs{\xi_0}}
  \, q_{1234} \, \bigabs{\widetilde{\psi_j}(X_j)}
  \, d\mu^{12}_{X_0'} \, d\mu^{43}_{X_0}
  \, d\tau_0' \, d\tau_0 \, d\xi_0,
$$
where $X_0' = (\tau_0',\xi_0)$, $X_0 = (\tau_0,\xi_0)$, $X_j = (\tau_j,\xi_j)$ for $j=1,\dots,4$,
$$
  q_{1234} = \innerprod{\boldsymbol\alpha^\mu\mathbf\Pi(e_1)z_1(X_1)}{\mathbf\Pi(e_2)z_2(X_2)}
  \innerprod{\boldsymbol\alpha_\mu\mathbf\Pi(e_3)z_3(X_3)}{\mathbf\Pi(e_4)z_4(X_4)}
$$
and $e_j = \pm_j \xi_j/\abs{\xi_j}$. We may restrict the integration to $\xi_j \neq 0$ for $j=0,\dots,4$, hence the unit vectors $e_j$ are well-defined, as are the angles $$\theta_{jk} = \vangle(e_j,e_k) = \vangle(\pm_j\xi_j,\pm_k\xi_k),$$ in terms of which the null structure of $q_{1234}$ will be expressed. Note that
\begin{gather*}
  X_0' = X_1 - X_2, \qquad
  X_0 = X_4 - X_3,
  \\
  \tau_0'=\tau_1-\tau_2, \qquad \tau_0=\tau_4-\tau_3,
  \qquad
  \xi_0=\xi_1-\xi_2=\xi_4-\xi_3,
\end{gather*}
in the above integral. For simplicity we will just write $J$ instead of $J^{\pm_0,\pm_1,\dots,\pm_4}$ from now on. Split
$$
  J
  =
  J_{\abs{\xi_0} < 1}
  +
  J_{\abs{\xi_0} \ge 1}
$$
by restricting the integration to $\abs{\xi_0} < 1$ and $\abs{\xi_0} \ge 1$, respectively. We first dispose of the easy low frequency part.

\subsection{Estimate for $J_{\abs{\xi_0} < 1}$}

From Plancherel's theorem one infers
$$
  \norm{\Proj_{\abs{\xi} < 1} f} \le \abs{B(0,1)}^{1/2} \norm{f}_{L^1},
$$
where $B(0,1) = \{ \xi \in \R^2 \colon \abs{\xi} < 1 \}$. Applying also $\bignorm{\rho{\square}^{-1}F} \lesssim \norm{F}$, which follows from \cite[Lemma 4.3]{Klainerman:1995b}, we estimate
\begin{align*}
  J_{\abs{\xi_0} < 1} &\le \bignorm{\rho\square^{-1} \Proj_{\abs{\xi} < 1}\innerprod{\boldsymbol\alpha^\mu\mathbf\Pi_{\pm_1}\psi_1}{\mathbf\Pi_{\pm_2}\psi_2}} \norm{\Proj_{\abs{\xi} < 1} \innerprod{\boldsymbol\alpha_\mu\mathbf\Pi_{\pm_3}\psi_3}{\mathbf\Pi_{\pm_4}\psi_4}}
  \\
  &\lesssim \norm{\Proj_{\abs{\xi} < 1}\innerprod{\boldsymbol\alpha^\mu\mathbf\Pi_{\pm_1}\psi_1}{\mathbf\Pi_{\pm_2}\psi_2}} \norm{\Proj_{\abs{\xi} < 1} \innerprod{\boldsymbol\alpha_\mu\mathbf\Pi_{\pm_3}\psi_3}{\mathbf\Pi_{\pm_4}\psi_4}}
  \\
  &\lesssim \norm{\innerprod{\boldsymbol\alpha^\mu\mathbf\Pi_{\pm_1}\psi_1}{\mathbf\Pi_{\pm_2}\psi_2}}_{L_t^2L_x^1}
  \norm{\innerprod{\boldsymbol\alpha_\mu\mathbf\Pi_{\pm_3}\psi_3}{\mathbf\Pi_{\pm_4}\psi_4}}_{L_t^2L_x^1}
  \\
  &\lesssim \norm{\psi_1}_{L_t^4L_x^2} \norm{\psi_2}_{L_t^4L_x^2} \norm{\psi_3}_{L_t^4L_x^2} \norm{\psi_4}_{L_t^4L_x^2}.
\end{align*}
Recalling that we can replace $\psi_j$ by $\rho_T\psi_j$, we then get the desired estimate \eqref{Quadrilinear} for the low frequency part by applying \eqref{Embedding2} and \eqref{Cutoff2} to the norms of $\psi_1$, $\psi_2$ and $\psi_3$, whereas for $\psi_4$ we use \eqref{Embedding2} followed by \eqref{Embedding1}.

\subsection{Dyadic decomposition of $J_{\abs{\xi_0} \ge 1}$}

Letting $N$'s and $L$'s denote dyadic numbers greater than or equal to one, we assign dyadic sizes to the weights, writing $\angles{\tau_0'\pm_0\abs{\xi_0}} \sim L_0'$, $\angles{\tau_j\pm_j\abs{\xi_j}} \sim L_j$ and $\angles{\xi_j} \sim N_j$ for $j=0,\dots,4$, and we set $\boldN = (N_0,\dots,N_4)$ and $\boldL = (L_0,L_0',L_1,\dots,L_4)$. We shall use the shorthand $\Nmin^{012}$ for the minimum of $N_0$, $N_1$ and $N_2$, and similarly for other index sets than $012$, for the $L$'s, and for maxima. Since $\xi_0=\xi_1-\xi_2$ in $J$, one of the following must hold:
$$
  \begin{alignedat}{2}
  N_0 &\ll N_1 \sim N_2& \qquad &(\text{``low output''}),
  \\
  N_0 &\sim \Nmax^{12} \ge \Nmin^{12}& \qquad &(\text{``high output''}),
\end{alignedat}
$$
and similarly for the index 034. In particular, the two largest of $N_0$, $N_1$ and $N_2$ must be comparable, and $\Nmin^{012} \Nmax^{012} \sim N_0\Nmin^{12}$.

As shown in \cite{Selberg:2008b}, $\kappa_{\pm}(\tau_0,\tau_0';\xi_0) \lesssim (L_0L_0')^{-1/2}\sigma_{L_0,L_0'}(\tau_0-\tau_0')$, where
$$
  \sigma_{L_0,L_0'}(r) 
  =
  \begin{cases}
  \angles{r}^{-2} &\text{if $L_0 \sim L_0'$},
  \\
  (L_0L_0')^{-1/2} &\text{otherwise},
  \end{cases}
$$
hence
$$
  \abs{J_{\abs{\xi_0} \ge 1}}
  \lesssim \sum_{\boldN,\boldL} \frac{J_{\boldN,\boldL}}{N_0(L_0L_0')^{1/2}},
$$
where
\begin{multline*}
  J_{\boldN,\boldL}
  =
  \int
  \abs{q_{1234}}
  \sigma_{L_0,L_0'}(\tau_0-\tau_0') \,
  \chi_{K^{\pm_0}_{N_0,L_0}}\!\!(X_0) \,
  \chi_{K^{\pm_0}_{N_0,L_0'}}\!(X_0')
  \\
  \times
  \prod_{j=1}^4 \chi_{K^{\pm_j}_{N_j,L_j}}(X_j) \bigabs{\widetilde{\psi_j}(X_j)}
  \, d\mu^{12}_{X_0'} \, d\mu^{43}_{X_0}
  \, d\tau_0' \, d\tau_0 \, d\xi_0.
\end{multline*}
To ease the notation we define $u_j$ (implicitly depending on $N_j$, $L_j$ and $\pm_j$) by
$$
  \widetilde{u_j} = \chi_{K^{\pm_j}_{N_j,L_j}} \bigabs{\widetilde{\psi_j}}.
$$
Recall that $K^\pm_{N,L} = \left\{ (\tau,\xi) \in \R^{1+2} \colon \angles{\xi} \sim N, \; \angles{\tau\pm\abs{\xi}} \sim L \right\}$.
 
We claim that it suffices to prove, for some $\varepsilon > 0$,
\begin{equation}\label{MainDyadic}
  J_{\boldN,\boldL}
  \lesssim
  N_0^{1-\varepsilon} \left(L_0'L_0L_1 L_2 L_3 L_4 \right)^{1/2-\varepsilon}
  \norm{u_1} \norm{u_2} \norm{u_3} \norm{u_4}.
\end{equation}
Indeed, this gives
$$
  \abs{J_{\abs{\xi_0} \ge 1}}
  \lesssim \sum_{\boldN,\boldL} \frac{(L_1^{1/2-\varepsilon}\norm{u_1}) (L_2^{1/2-\varepsilon}\norm{u_2}) (L_3^{1/2-\varepsilon}\norm{u_3}) (L_4^{1/2}\norm{u_4})}{N_0^\varepsilon(L_0L_0'L_4)^\varepsilon},
$$
and we sum the $N$'s using the general estimate
\begin{equation}\label{DyadicSummation}
  \sum_{N_0,N_1,N_2} N_0^{-\varepsilon/2} a_{N_1} b_{N_2}
  \le C_\varepsilon \left( \sum_{N_1} a_{N_1}^2 \right)^{1/2}
  \left( \sum_{N_2} b_{N_2}^2 \right)^{1/2},
\end{equation}
valid for nonnegative sequences $a_{N_1}$, $b_{N_2}$ and dyadic $N_0,N_1,N_2 \ge 1$, the largest two of which are assumed comparable: By symmetry it suffices to consider $N_0 \lesssim N_1 \sim N_2$ and $N_1 \lesssim N_0 \sim N_2$. First, if $N_0 \lesssim N_1 \sim N_2$, then we sum $N_1 \sim N_2$ by Cauchy-Schwarz, and $N_0$ using $N_0^{-\varepsilon/2}$. Second, if $N_1 \lesssim N_0 \sim N_2$, then we estimate $N_0^{-\varepsilon/2} \lesssim N_0^{-\varepsilon/4}N_1^{-\varepsilon/2}$, so we can sum both $N_1$ and $N_0 \sim N_2$ without problems.

Applying \eqref{DyadicSummation} to the estimate for $\abs{J_{\abs{\xi_0} \ge 1}}$ above, we get
\begin{align*}
  \abs{J_{\abs{\xi_0} \ge 1}}
  &\lesssim \sum_{\boldL} (L_0L_0'L_4)^{-\varepsilon}
  \left( \prod_{j=1}^3 L_j^{1/2-\varepsilon} \norm{\Proj_{K^{\pm_j}_{L_j}} \psi_j} \right)
  L_4^{1/2} \norm{\Proj_{K^{\pm_4}_{L_4}} \psi_4}
  \\
  &\lesssim
  \norm{\psi_1}_{X^{0,1/2-\varepsilon;1}_{\pm_1}}
  \norm{\psi_2}_{X^{0,1/2-\varepsilon;1}_{\pm_2}}
  \norm{\psi_3}_{X^{0,1/2-\varepsilon;1}_{\pm_3}}
  \norm{\psi_4}_{X^{0,1/2;\infty}_{\pm_4}}.
\end{align*}
Since we may replace $\psi_j$ by $\rho_T\psi_j$, we now get \eqref{Quadrilinear} for $J_{\abs{\xi_0} \ge 1}$ by applying \eqref{Cutoff2} to the norms of $\psi_1$, $\psi_2$ and $\psi_3$.

So we have reduced \eqref{Quadrilinear} to proving the dyadic estimate \eqref{MainDyadic}. For this, we need to use the null structure of the quadrilinear form, obtained in \cite{Selberg:2008b}:

\begin{lemma}\label{NullLemma1} \emph{(\cite{Selberg:2008b}.)} Consider the symbol appearing in $J$,
$$
  q_{1234} = \innerprod{\boldsymbol\alpha^\mu\mathbf\Pi(e_1)z_1}{\mathbf\Pi(e_2)z_2}
  \innerprod{\boldsymbol\alpha_\mu\mathbf\Pi(e_3)z_3}{\mathbf\Pi(e_4)z_4},
$$
where the $e_j \in \R^2$ and $z_j \in \C^2$ are unit vectors. Defining the angles
$$
  \theta_{jk} = \vangle(e_j,e_k),
  \qquad
  \phi = \min \left\{ \theta_{13}, \theta_{14}, \theta_{23}, \theta_{24} \right\},
$$
we have
$$
  \abs{q_{1234}}
  \lesssim
  \theta_{12}\theta_{34} + \phi \max(\theta_{12},\theta_{34}) + \phi^2.
$$
\end{lemma}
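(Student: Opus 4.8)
The plan is to exploit the spinorial algebra of the Dirac projections $\mathbf\Pi(e) = \frac12(\mathbf I + e^j\boldsymbol\alpha_j)$ together with the fact that for unit vectors the key cancellation is governed by angular differences. First I would recall the basic identity $\boldsymbol\alpha^\mu\mathbf\Pi(e) = \mathbf\Pi(\hat e)\boldsymbol\alpha^\mu$ in a suitable sense, or more precisely the known bilinear bound (already established in \cite{Selberg:2008b} for the 3d case, and purely algebraic so dimension-independent) of the form
$$
  \bigabs{\innerprod{\boldsymbol\alpha^\mu\mathbf\Pi(e_1)z_1}{\mathbf\Pi(e_2)z_2}}
  \lesssim \theta(e_1,e_2) \quad\text{when the $\mu=0$ component is separated off,}
$$
and the complementary estimate where one keeps the full four-vector $\{\innerprod{\boldsymbol\alpha^\mu\mathbf\Pi(e_1)z_1}{\mathbf\Pi(e_2)z_2}\}_{\mu=0,1,2}$ as an approximately null vector in $\R^{1+2}$. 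The standard device is to write $q_{1234} = \langle v_{12}, v_{34}\rangle_{\R^{1+2}}$ with $v_{12}^\mu = \innerprod{\boldsymbol\alpha^\mu\mathbf\Pi(e_1)z_1}{\mathbf\Pi(e_2)z_2}$, raise/lower indices with $\diag(-1,1,1)$, and observe that each $v_{12}$ is close to being a null vector proportional to $(1, e_{12})$ for a unit vector $e_{12}$ bisecting $e_1,e_2$, up to an error of size $O(\theta_{12})$.

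The core of the argument is then a case analysis on the relative sizes of $\theta_{12}$, $\theta_{34}$, and $\phi$. The decomposition $q_{1234} = -v_{12}^0 v_{34}^0 + v_{12}^j v_{34}^j$ should be rewritten as $-v_{12}^0 v_{34}^0 + \mathbf v_{12}\cdot\mathbf v_{34}$; using $\mathbf v_{12}\cdot\mathbf v_{34} = \abs{\mathbf v_{12}}\abs{\mathbf v_{34}}\cos\theta(\mathbf v_{12},\mathbf v_{34})$ and the near-nullness $\abs{v_{12}^0} = \abs{\mathbf v_{12}}(1 + O(\theta_{12}^2))$, one gets a main term $\abs{\mathbf v_{12}}\abs{\mathbf v_{34}}\bigl(\cos\theta(\mathbf v_{12},\mathbf v_{34}) - 1 + O(\theta_{12}^2 + \theta_{34}^2)\bigr)$. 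The angle $\theta(\mathbf v_{12},\mathbf v_{34})$ between the (approximate) spatial directions $e_{12}$ and $e_{34}$ is comparable to $\max(\theta_{12},\theta_{34},\phi)$ up to the well-separation of the pairs; combined with $\abs{\mathbf v_{12}} \lesssim 1$, $\abs{\mathbf v_{34}}\lesssim 1$ and, crucially, the refined bounds $\abs{v_{12}^\mu}\lesssim \theta_{12}$ available for the spatial components when the two bispinors are "aligned," this yields the claimed bound $\theta_{12}\theta_{34} + \phi\max(\theta_{12},\theta_{34}) + \phi^2$. One has to be careful to split according to whether $\phi$ is realized by an index from $\{1,2\}$ paired with one from $\{3,4\}$, and to track which of $e_1,e_2$ (resp. $e_3,e_4$) is the "near" one; symmetry in $1\leftrightarrow 2$ and $3\leftrightarrow 4$ cuts the number of genuinely distinct cases down to a handful.

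The main obstacle I expect is getting the $\phi^2$ term cleanly, i.e. the regime where $\theta_{12}$ and $\theta_{34}$ are both tiny (much smaller than $\phi$) but $\phi$ itself is not small: here the naive bound $\abs{q_{1234}}\lesssim 1$ is not enough, and one must extract a genuine gain of $\phi^2$ from the obtuseness of the angle between the two null directions. This requires the sharp form of the bispinor identity—that $\innerprod{\mathbf\Pi(e_1)z_1}{\mathbf\Pi(e_2)z_2}$ and $\innerprod{\boldsymbol\alpha_j\mathbf\Pi(e_1)z_1}{\mathbf\Pi(e_2)z_2}$ together form a vector whose "$(1,e)$" structure is exact up to second order in $\theta_{12}$—so that the leading cancellation $-v_{12}^0 v_{34}^0 + \mathbf v_{12}\cdot\mathbf v_{34}$ genuinely produces $1 - \cos\theta(e_{12},e_{34}) \sim \theta(e_{12},e_{34})^2 \sim \phi^2$ rather than merely $O(\phi)$. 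Since this is precisely the computation carried out in \cite{Selberg:2008b} for the 3d analogue and the algebra is identical in 2d (the Pauli matrices behave the same way), I would simply invoke that lemma, checking only that no step used the third spatial dimension; all of it is $2\times 2$ matrix algebra plus elementary planar trigonometry.
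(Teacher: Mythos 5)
The paper does not prove this lemma but cites it from \cite{Selberg:2008b}, so what matters is whether your sketch reconstructs a correct argument. Your general strategy is right: set $v_{12}^\mu = \innerprod{\boldsymbol\alpha^\mu\mathbf\Pi(e_1)z_1}{\mathbf\Pi(e_2)z_2}$, $v_{34}^\mu = \innerprod{\boldsymbol\alpha^\mu\mathbf\Pi(e_3)z_3}{\mathbf\Pi(e_4)z_4}$, observe each is close to a null direction $(1,e_a)$, and estimate the Minkowski pairing $q_{1234}=v_{12}^\mu g_{\mu\nu}v_{34}^\nu$. But the error analysis as you outline it can only yield $\abs{q_{1234}}\lesssim\theta(\mathbf v_{12},\mathbf v_{34})^2+\theta_{12}^2+\theta_{34}^2 \lesssim \max(\theta_{12},\theta_{34},\phi)^2$. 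That is genuinely weaker than the claimed $\theta_{12}\theta_{34}+\phi\max(\theta_{12},\theta_{34})+\phi^2$; for instance, in the degenerate limit $e_3=e_4=e_1$ (so $\theta_{34}=\phi=0$) a direct computation using $\mathbf\Pi(e)\boldsymbol\alpha^\mu\mathbf\Pi(e)=\tilde e^\mu\mathbf\Pi(e)$ with $\tilde e=(1,e)$ shows $q_{1234}=0$, while your bound only gives $O(\theta_{12}^2)$. There is also a loose point in treating $v_{12}^0$ and $\mathbf v_{12}$ as if they were real and phase-aligned (they are complex), which is why the ``$1-\cos\theta$'' heuristic by itself cannot be trusted for the cross terms.

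The missing ingredient is a genuine algebraic cancellation in the cross terms, not an angle heuristic. Write $v_{12}=c_{12}\tilde e_a+r_{12}$ and $v_{34}=c_{34}\tilde e_b+r_{34}$, choosing $a\in\{1,2\}$, $b\in\{3,4\}$ with $\theta_{ab}=\phi$; here $r_{12}^0=r_{34}^0=0$, $\abs{r_{12}}\lesssim\theta_{12}$, $\abs{r_{34}}\lesssim\theta_{34}$, $\abs{c_{12}},\abs{c_{34}}\lesssim1$ (this follows from $\boldsymbol\alpha^\mu\mathbf\Pi(e)=\mathbf\Pi(e)\boldsymbol\alpha^\mu\mathbf\Pi(e)+\mathbf\Pi(-e)\boldsymbol\alpha^\mu\mathbf\Pi(e)$ and $\norm{\mathbf\Pi(\mp e')\mathbf\Pi(\pm e)}\lesssim\theta(e,e')$). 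The diagonal term $c_{12}c_{34}\,\tilde e_a\cdot_g\tilde e_b=-c_{12}c_{34}(1-e_a\cdot e_b)$ gives $O(\phi^2)$, and $r_{12}\cdot_g r_{34}$ gives $O(\theta_{12}\theta_{34})$; the delicate pieces are the cross terms, e.g.\ $c_{12}\tilde e_a\cdot_g r_{34}=c_{12}\,e_a\cdot\mathbf r_{34}$. Naively this is only $O(\theta_{34})$. The gain comes from writing $e_a\cdot\boldsymbol\alpha=2\mathbf\Pi(e_a)-\mathbf I$, so that in $e_a\cdot\mathbf r_{34}=\innerprod{\mathbf\Pi(-e_b)(e_a\cdot\boldsymbol\alpha)\mathbf\Pi(e_b)z_b}{\cdots}$ (with $\{b,b'\}=\{3,4\}$ and the remaining projection $\mathbf\Pi(e_{b'})$ hit on the other side) the $\mathbf I$-term kills against $\mathbf\Pi(-e_b)\mathbf\Pi(e_b)=0$, and the $\mathbf\Pi(e_a)$-term can be replaced by $\mathbf\Pi(e_a)-\mathbf\Pi(e_b)=O(\theta_{ab})$ for the same reason. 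One then reads off $\abs{e_a\cdot\mathbf r_{34}}\lesssim\theta_{ab}\theta_{34}=\phi\,\theta_{34}$ (and symmetrically $\abs{e_b\cdot\mathbf r_{12}}\lesssim\phi\,\theta_{12}$), giving exactly the claimed $\phi\max(\theta_{12},\theta_{34})$ contribution. Without this two-step cancellation your sketch does not close; with it, it does.
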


When applying this, it is natural to distinguish the cases
\begin{gather}
  \label{D:97}
  \phi \lesssim \min(\theta_{12},\theta_{34}),
  \\
  \label{D:98}
   \min(\theta_{12},\theta_{34}) \ll \phi \lesssim \max(\theta_{12},\theta_{34}),
  \\
  \label{D:99}
  \max(\theta_{12},\theta_{34}) \ll \phi.
\end{gather}
In certain situations, the last two cases can be treated simultaneously, by virtue of the following simplified estimate:

\begin{lemma}\label{NullLemma2} \emph{(\cite{Selberg:2008b}.)} In cases \eqref{D:98} and \eqref{D:99},
$
  \abs{q_{1234}}
  \lesssim
  \theta_{13}\theta_{24}.
$
\end{lemma}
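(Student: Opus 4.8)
The plan is to read off Lemma \ref{NullLemma2} from Lemma \ref{NullLemma1}, which already provides $\abs{q_{1234}} \lesssim \theta_{12}\theta_{34} + \phi\max(\theta_{12},\theta_{34}) + \phi^2$, using nothing beyond the triangle inequality for the geodesic (angular) distance on $\mathbb S^1$, $\theta_{jk} \le \theta_{jl} + \theta_{lk}$, together with its corollary that when $\theta_{jk}$ is much smaller than both $\theta_{jl}$ and $\theta_{kl}$ one has $\theta_{jl} \sim \theta_{kl}$. The first step is to observe that the whole statement is invariant under the relabeling $(1\leftrightarrow3,\,2\leftrightarrow4)$: it leaves $\abs{q_{1234}}$ unchanged (the two inner-product factors in $q_{1234}$ may be exchanged because $\sum_\mu \boldsymbol\alpha^\mu \otimes \boldsymbol\alpha_\mu$ is symmetric), it fixes $\phi$ and the target $\theta_{13}\theta_{24}$, and it interchanges $\theta_{12}$ with $\theta_{34}$. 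Hence we may assume $\theta_{12} \le \theta_{34}$. In both regimes \eqref{D:98} and \eqref{D:99} we then have $\theta_{12} = \min(\theta_{12},\theta_{34}) \ll \phi$, so, since $\phi \le \theta_{14}$ and $\phi \le \theta_{24}$, the bound $\abs{\theta_{14}-\theta_{24}} \le \theta_{12} \ll \phi$ forces $\theta_{14} \sim \theta_{24}$; likewise $\theta_{13}\sim\theta_{23}$.

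For case \eqref{D:98}, the normalization $\theta_{12}\le\theta_{34}$ turns the hypothesis into $\theta_{12} \ll \phi \lesssim \theta_{34}$. In the Lemma \ref{NullLemma1} estimate the middle term is then $\phi\theta_{34}$, the first term satisfies $\theta_{12}\theta_{34} \ll \phi\theta_{34}$, and the last satisfies $\phi^2 \lesssim \phi\theta_{34}$; hence $\abs{q_{1234}} \lesssim \phi\theta_{34}$. Applying $\theta_{34} \le \theta_{13}+\theta_{14}$ and then $\phi \le \theta_{14}$, $\phi \le \theta_{13}$ gives $\phi\theta_{34} \le \phi\theta_{13} + \phi\theta_{14} \le \theta_{14}\theta_{13} + \theta_{13}\theta_{14} \lesssim \theta_{13}\theta_{14} \sim \theta_{13}\theta_{24}$, which is the assertion.

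For case \eqref{D:99} we also have $\theta_{34} = \max(\theta_{12},\theta_{34}) \ll \phi$, so in the Lemma \ref{NullLemma1} estimate the first two terms are $\ll \phi^2$ and the last is $\phi^2$, whence $\abs{q_{1234}} \lesssim \phi^2$. Furthermore $\theta_{34} \ll \phi \le \theta_{13},\theta_{14}$ forces $\theta_{13} \sim \theta_{14}$ via $\abs{\theta_{13}-\theta_{14}} \le \theta_{34}$; together with $\theta_{13}\sim\theta_{23}$ and $\theta_{14}\sim\theta_{24}$ from the first paragraph this makes all four mixed angles $\theta_{13},\theta_{14},\theta_{23},\theta_{24}$ mutually comparable, so each is $\sim \phi$. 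Therefore $\abs{q_{1234}} \lesssim \phi^2 \sim \theta_{13}\theta_{24}$.

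I do not expect a genuine obstacle: the argument is purely the interplay of the three-term bound of Lemma \ref{NullLemma1} with spherical-triangle inequalities, and the only delicate point is keeping track of which of $\theta_{13},\theta_{14},\theta_{23},\theta_{24}$ are pinned to be comparable in each regime --- which is exactly what the reduction to $\theta_{12}\le\theta_{34}$ and the relations $\theta_{14}\sim\theta_{24}$, $\theta_{13}\sim\theta_{23}$ are meant to organize. It is worth noting that in both \eqref{D:98} and \eqref{D:99} the hypotheses contain a strict $\ll$ against $\phi$, so $\phi > 0$ and $\theta_{13}\theta_{24} \ge \phi^2 > 0$; consequently the degenerate configurations with two coinciding directions $e_j$ (where $\phi = 0$ and Lemma \ref{NullLemma1} alone does not suffice) do not arise and need no separate treatment.
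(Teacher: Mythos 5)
Your proof is correct, and it is a clean derivation of Lemma~\ref{NullLemma2} directly from Lemma~\ref{NullLemma1}. The paper itself simply cites \cite{Selberg:2008b} for this lemma and does not reproduce a proof, so there is no in-paper argument to compare against; your route — the three-term bound of Lemma~\ref{NullLemma1} combined with the angle triangle inequality $\abs{\theta_{jl}-\theta_{kl}} \le \theta_{jk}$ and the consequent comparability $\theta_{14}\sim\theta_{24}$, $\theta_{13}\sim\theta_{23}$ whenever $\theta_{12}\ll\phi$ — is self-contained, elementary, and requires no information about the internal (matrix) structure of $q_{1234}$ beyond what Lemma~\ref{NullLemma1} already encodes. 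Two small checks worth making explicit: the relabeling $(1\leftrightarrow3,\,2\leftrightarrow4)$ indeed fixes $q_{1234}$ because $\sum_\mu A^\mu B_\mu = \sum_\mu B^\mu A_\mu$ under the metric $\diag(-1,1,1)$, so the WLOG reduction $\theta_{12}\le\theta_{34}$ is legitimate; and the implicit constants in the comparabilities $\theta_{14}\sim\theta_{24}$ etc.\ depend only on the constant hidden in the $\ll$ defining \eqref{D:98}--\eqref{D:99}, which is at the author's disposal, so the conclusion $\abs{q_{1234}}\lesssim\theta_{13}\theta_{24}$ holds with an absolute implicit constant as required.
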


To end this section we prove the dyadic estimate \eqref{MainDyadic} in the case \eqref{D:97}. This particularly simple case essentially corresponds, as discussed in \cite{Selberg:2008b}, to solving the Dirac-Klein-Gordon system instead of Maxwell-Dirac. The cases \eqref{D:98} and \eqref{D:99} are far more difficult and will be handled in the next few sections.

\subsection{The case $\phi \lesssim \min(\theta_{12},\theta_{34})$}\label{SimpleCase}

Then
$$
  \abs{q_{1234}} \lesssim \theta_{12} \theta_{34},
$$
hence
$$
  J_{\boldN,\boldL}
  \lesssim
  \int
  T^{\pm_0}_{L_0,L_0'}\mathcal F \Proj_{K^{\pm_0}_{N_0,L_0'}}\mathfrak B_{\theta_{12}}(u_1,u_2)(X_0)
  \cdot
  \mathcal F \Proj_{K^{\pm_0}_{N_0,L_0'}}\mathfrak B_{\theta_{34}}(u_3,u_4)(-X_0)
  \, dX_0,
$$
where the null form $\mathfrak B_{\theta_{12}}(u_1,u_2)$ is defined on the Fourier transform side by inserting the angle $\theta_{12} = \theta(\pm_1\xi_1,\pm_2\xi_2)$ in the right hand side of the  convolution formula \eqref{Convolution}, and the operator $T^{\pm_0}_{L_0,L_0'}$ is defined by
$$
  T^{\pm_0}_{L_0,L_0'}F(\tau_0,\xi_0)
  =
  \int a^{\pm_0}_{L_0,L_0'}(\tau_0,\tau_0',\xi_0)
  F(\tau_0',\xi_0) \, d\tau_0',
$$
where
$$
  a^{\pm_0}_{L_0,L_0'}(\tau_0,\tau_0',\xi_0) 
  =
  \begin{cases}
  \angles{\tau_0-\tau_0'}^{-2} &\text{if $L_0 \sim L_0'$},
  \\
  (L_0L_0')^{-1/2}
  \chi_{\tau_0\pm_0\abs{\xi_0}=O(L_0)}
  \chi_{\tau_0'\pm_0\abs{\xi_0}=O(L_0')} &\text{otherwise}.
  \end{cases}
$$
This family of operators is uniformly bounded on $L^2$ (see \cite[Lemma 3.3]{Selberg:2008b}):

\begin{lemma}\label{Tlemma} $\bignorm{T^{\pm_0}_{L_0,L_0'}F} \lesssim \norm{F}$ for $F \in L^2(\R^{1+2})$.
\end{lemma}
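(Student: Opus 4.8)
The plan is to exploit the fact that $T^{\pm_0}_{L_0,L_0'}$ acts only in the time variable: for each fixed $\xi_0 \in \R^2$ it is the operator $T_{\xi_0}$ on $L^2(\R_\tau)$ with kernel $a^{\pm_0}_{L_0,L_0'}(\tau_0,\tau_0',\xi_0)$, so that
$$
  (T^{\pm_0}_{L_0,L_0'}F)(\tau_0,\xi_0) = \bigl(T_{\xi_0} F(\cdot,\xi_0)\bigr)(\tau_0).
$$
By Fubini it therefore suffices to prove $\norm{T_{\xi_0}}_{L^2(\R_\tau) \to L^2(\R_\tau)} \lesssim 1$ with an implicit constant independent of $\xi_0$, $L_0$ and $L_0'$; squaring the resulting bound $\norm{T_{\xi_0}F(\cdot,\xi_0)} \lesssim \norm{F(\cdot,\xi_0)}$ and integrating over $\xi_0$ then gives the lemma.

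To bound $T_{\xi_0}$ I would split according to the two cases in the definition of $a^{\pm_0}_{L_0,L_0'}$. If $L_0 \sim L_0'$, the kernel is the translation-invariant $\angles{\tau_0-\tau_0'}^{-2}$, so $T_{\xi_0}$ is convolution with $\angles{\cdot}^{-2} \in L^1(\R)$ and Young's inequality gives $\norm{T_{\xi_0}} \le \norm{\angles{\cdot}^{-2}}_{L^1(\R)} \lesssim 1$. If $L_0 \not\sim L_0'$, the kernel is a tensor product,
$$
  a^{\pm_0}_{L_0,L_0'}(\tau_0,\tau_0',\xi_0) = \Bigl[(L_0L_0')^{-1/2}\chi_{\tau_0\pm_0\abs{\xi_0}=O(L_0)}(\tau_0)\Bigr]\,\chi_{\tau_0'\pm_0\abs{\xi_0}=O(L_0')}(\tau_0'),
$$
so $T_{\xi_0}$ has rank one and its operator norm equals the product of the $L^2(\R)$ norms of the two factors. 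Since each condition $\tau \pm_0 \abs{\xi_0} = O(L)$ describes a $\tau$-interval of length $O(L)$, these norms are $\lesssim (L_0L_0')^{-1/2}L_0^{1/2}$ and $\lesssim L_0'^{1/2}$ respectively, whose product is $O(1)$. (Schur's test works equally well, the row and column sums of $\abs{a^{\pm_0}_{L_0,L_0'}}$ in the variables $\tau_0',\tau_0$ being $O((L_0'/L_0)^{1/2})$ and $O((L_0/L_0')^{1/2})$.)

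There is no genuinely hard step here; the only point to watch is uniformity in $\xi_0$, since that is what is needed to perform the final integration in $\xi_0$. This is automatic: the spatial frequency enters the kernel only through the centers $\mp_0\abs{\xi_0}$ of the $\tau$-intervals (in the case $L_0\not\sim L_0'$), or not at all (in the case $L_0\sim L_0'$), so none of the $L^1$ and $L^2$ norms used above depends on $\xi_0$.
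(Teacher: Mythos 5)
Your argument is correct and is the natural one: since the kernel $a^{\pm_0}_{L_0,L_0'}(\tau_0,\tau_0',\xi_0)$ acts only in the $\tau$ variable for each fixed $\xi_0$, it suffices to get a bound on $L^2(\R_\tau)$ uniform in $\xi_0$, which you do by Young's inequality (convolution kernel $\angles{\cdot}^{-2} \in L^1$) when $L_0 \sim L_0'$ and by the rank-one / Cauchy--Schwarz computation when $L_0 \not\sim L_0'$. The paper itself does not reprove this lemma but cites it from an earlier paper of the authors; your proof is what one would expect that reference to contain, and it is complete.
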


Applying this, we get
$$
  J_{\boldN,\boldL}
  \lesssim
  \norm{\Proj_{K^{\pm_0}_{N_0,L_0'}}\mathfrak B_{\theta_{12}}(u_1,u_2)}
  \norm{\Proj_{K^{\pm_0}_{N_0,L_0'}}\mathfrak B_{\theta_{34}}(u_3,u_4)},
$$
and to finish we use the following null form estimate (proved in the next section):

\begin{lemma}\label{NullFormLemma} For all $u_1,u_2 \in L^2(\R^{1+2})$ such that $\widetilde{u_j}$ is supported in $K^{\pm_j}_{N_j,L_j}$,
$$
  \norm{\Proj_{K^{\pm_0}_{N_0,L_0}}\mathfrak B_{\theta_{12}}(u_1,u_2)}
  \lesssim
  \left( N_0 L_0 L_1 L_2 \right)^{3/8}
  \norm{u_1}\norm{u_2}.
$$
\end{lemma}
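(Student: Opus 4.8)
The plan is to control the null form $\mathfrak B_{\theta_{12}}(u_1,u_2)$ by trading the angular factor $\theta_{12}$ against the modulation weights $L_0,L_1,L_2$ and then reducing to an $L^2$ bilinear estimate with spatially localized pieces. The starting point is the standard transversality identity: if $\xi_0=\xi_1-\xi_2$ with $\angles{\tau_j\pm_j\abs{\xi_j}}\sim L_j$ for $j=0,1,2$, then the angle $\theta_{12}=\vangle(\pm_1\xi_1,\pm_2\xi_2)$ satisfies
$$
  \theta_{12}^2 \lesssim \frac{\max(L_0,L_1,L_2)}{\min(N_1,N_2)} \sim \frac{\Lmax^{012}}{\Nmin^{12}},
$$
which follows from writing $\tau_0\pm_0\abs{\xi_0} - (\tau_1\pm_1\abs{\xi_1}) + (\tau_2\pm_2\abs{\xi_2}) = \pm_0\abs{\xi_0}\mp_1\abs{\xi_1}\pm_2\abs{\xi_2}$ and using the elementary lower bound $\bigabs{\pm_0\abs{\xi_1-\xi_2}\mp_1\abs{\xi_1}\pm_2\abs{\xi_2}} \gtrsim \Nmin^{12}\theta_{12}^2$ (valid for all sign combinations, after possibly also using the low-output case $N_0\ll N_1\sim N_2$). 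Hence $\theta_{12} \lesssim (\Lmax^{012}/\Nmin^{12})^{1/2}$.

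Next I would do an angular Whitney decomposition in $\theta_{12}$: using Lemma \ref{WhitneyLemma2}, decompose $\mathbf 1 = \sum_{\gamma} \sum_{\omega_1,\omega_2}\chi_{\Gamma_\gamma(\omega_1)}(\pm_1\xi_1)\chi_{\Gamma_\gamma(\omega_2)}(\pm_2\xi_2)$ over dyadic $\gamma \lesssim \theta_{12}$ with $\vangle(\omega_1,\omega_2)\lesssim\gamma$; on each piece $\theta_{12}\sim\gamma$. Write $u_j^{\gamma,\omega_j}=\Proj_{\pm_j\xi_j\in\Gamma_\gamma(\omega_j)}u_j$. On each such piece, $\mathfrak B_{\theta_{12}}$ contributes a factor $\gamma$, and by the transversality bound above we only need $\gamma \lesssim (\Lmax^{012}/\Nmin^{12})^{1/2}$. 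The core of the argument is then the $L^2$ bilinear estimate for the product of two such angularly-localized, modulation-localized pieces, which is exactly the type of estimate proved by the second author in \cite{Selberg:2010a}: for $\widetilde{u_j}$ supported in $K^{\pm_j}_{N_j,L_j,\gamma}(\omega_j)$ with transversal $\omega_1,\omega_2$,
$$
  \bignorm{\Proj_{K^{\pm_0}_{N_0,L_0}}\bigl(u_1^{\gamma,\omega_1}\overline{u_2^{\gamma,\omega_2}}\bigr)}
  \lesssim
  (\Nmin^{12})^{1/2}\bigl(\Lmin^{12}\bigr)^{1/2}\bigl(\gamma\,\Lmed^{012}\bigr)^{1/2}
  \norm{u_1^{\gamma,\omega_1}}\norm{u_2^{\gamma,\omega_2}},
$$
or a variant thereof; the precise 2d bilinear estimates one cites will produce a bound of the form $\bigl(N_0L_0L_1L_2\bigr)^{\alpha}$ up to the angular gains, with enough room to absorb the $\gamma$-summation.

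Finally I would assemble the pieces: put together the $\gamma$-factor from the null form, the bilinear $L^2$ bound on each angular sector, and then sum over $\omega_1,\omega_2$ (using \eqref{OmegaSum} and \eqref{OmegaSum0}, i.e.~orthogonality plus the bounded overlap of $\gamma$-separated caps) and over dyadic $\gamma$. Summing over $\gamma$ converges because one gets a positive power of $\gamma$ (from combining the $\mathfrak B$-factor $\gamma$ with whatever negative power, if any, comes from the bilinear estimate; if the bilinear estimate itself is $\gamma$-free then $\gamma$ ranges over dyadics $\lesssim 1$ with a net positive power), and the remaining $N,L$ dependence, after crudely bounding $\Nmin^{12}\le N_0$ (in the high-output case) or using $N_0\ll N_1\sim N_2$ in the low-output case, collapses into the claimed $\bigl(N_0L_0L_1L_2\bigr)^{3/8}$. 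The main obstacle is getting the powers to line up: one must be careful that the angular gain from transversality together with the cap-summation does not overshoot, and that the final exponent on $N_0L_0L_1L_2$ is exactly $3/8$ rather than something larger — this is where invoking the sharp 2d $L^2$ bilinear estimates from \cite{Selberg:2010a} (as opposed to the cruder 3d-style estimates) is essential, and where most of the bookkeeping lies.
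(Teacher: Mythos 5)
The paper's proof is a one-step argument that avoids angular decomposition entirely: since $\mathfrak B_{\theta_{12}}$ is just multiplication by $\theta_{12}$ on the Fourier side, one pulls out $\sup\theta_{12}\lesssim (\Lmax^{012}/\Nmin^{12})^{3/8}$ (which follows from Lemma \ref{AnglesLemma} and $\theta_{12}\lesssim 1$ by interpolation) and applies the global (non-angularly-localized) bilinear estimate \eqref{Bilinear3} to $\Proj_{K^{\pm_0}_{N_0,L_0}}(u_1\overline{u_2})$. The remaining weight $\bigl(N_0\Nmin^{12}\Lmin^{012}\Lmed^{012}\bigr)^{3/8}$ then combines with the angle factor to give exactly $\bigl(N_0L_0L_1L_2\bigr)^{3/8}$, using the identity $\Nmin^{012}\Nmax^{012}\sim N_0\Nmin^{12}$.

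Your sketch takes a substantially more complicated route (Whitney decomposition into angular caps, cap-localized bilinear estimates, summation over $\gamma$, $\omega_1$, $\omega_2$), and it has a genuine gap. The cap-localized bilinear estimate you write down, namely $\bignorm{\Proj_{K^{\pm_0}_{N_0,L_0}}(u_1^{\gamma,\omega_1}\overline{u_2^{\gamma,\omega_2}})}\lesssim(\Nmin^{12})^{1/2}(\Lmin^{12})^{1/2}(\gamma\Lmed^{012})^{1/2}\norm{u_1^{\gamma,\omega_1}}\norm{u_2^{\gamma,\omega_2}}$, does not match any of the estimates in \cite{Selberg:2010a} as recalled in the paper, and you acknowledge this with ``or a variant thereof.'' The angular null-ray estimate actually available (Theorem \ref{NullRay}) gives $(rL_1L_2)^{1/2}$ with $r\sim\Nmax^{12}\gamma$; combined with the dyadic $\gamma$-sum this produces an $(L_1L_2)^{1/2}$ weight, which is \emph{too large} compared to the desired $(L_1L_2)^{3/8}$, and it does not bring $L_0$ into play at all. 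So as sketched the bookkeeping does not close, and you explicitly defer it (``this is where most of the bookkeeping lies''). The cap machinery you invoke is genuinely needed in the harder regimes of the quadrilinear estimate treated in sections \ref{G}--\ref{E}, but for this particular lemma it introduces unnecessary complexity and, without the correct localized estimate, an actual loss.
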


Thus,
$$
  J_{\boldN,\boldL}
  \lesssim
  \left( N_0 L_0' L_1 L_2 \right)^{3/8}
  \left( N_0 L_0 L_1 L_2 \right)^{3/8}
  \norm{u_1}\norm{u_2}\norm{u_3}\norm{u_4},
$$
proving \eqref{MainDyadic} in the case $\phi \lesssim \min(\theta_{12},\theta_{34})$.

In the next section we prepare the ground for the proof of the other cases, by recalling various bilinear and null form estimates proved in \cite{Selberg:2010a}. In particular, we prove Lemma \ref{NullFormLemma}.

For later use we record here the following variation on Lemma \ref{Tlemma}:

\begin{lemma}\label{Tlemma2} \emph{(\cite{Selberg:2008b}.)} Assume that $L_0 \ll L_0'$ or $L_0' \ll L_0$. Let $\omega, \omega' \in \mathbb S^1$, $c,c' \in \R$ and $d,d' > 0$. For $F,G \in L^2(\R^{1+2})$ satisfying
\begin{align*}
  \supp F &\subset \left\{ (\tau_0',\xi_0) \colon
  \tau_0'+\xi_0\cdot\omega' = c' + O(d') \right\},
  \\
  \supp G &\subset \left\{ (\tau_0,\xi_0) \colon
  \tau_0+\xi_0\cdot\omega = c + O(d) \right\},
\end{align*}
we have, for any $0 \le p \le 1/2$,
$$
  \bignorm{T^{\pm_0}_{L_0,L_0'}F}
  \lesssim \left(\frac{d'}{L_0'}\right)^{p} \norm{F},
$$
and
$$
  \abs{\int T^{\pm_0}_{L_0,L_0'}F(\tau_0,\xi_0) \cdot G(\tau_0,\xi_0) \, d\tau_0 \, d\xi_0}
  \lesssim 
  \left(\frac{dd'}{L_0L_0'}\right)^{p} \norm{F} \norm{G}.
$$
\end{lemma}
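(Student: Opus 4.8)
The statement to prove is Lemma~\ref{Tlemma2}, which refines Lemma~\ref{Tlemma} by tracking how the operator norm of $T^{\pm_0}_{L_0,L_0'}$ improves when the inputs are localized to thin slabs $H_{d'}(\omega')$ and $H_d(\omega)$, under the separation assumption $L_0 \ll L_0'$ or $L_0' \ll L_0$.

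\textbf{Approach.} The plan is to work directly with the kernel of $T^{\pm_0}_{L_0,L_0'}$, which in the case $L_0 \ll L_0'$ or $L_0' \ll L_0$ (i.e.\ the ``otherwise'' branch) is
$$
  a^{\pm_0}_{L_0,L_0'}(\tau_0,\tau_0',\xi_0) = (L_0L_0')^{-1/2} \chi_{\tau_0\pm_0\abs{\xi_0}=O(L_0)} \chi_{\tau_0'\pm_0\abs{\xi_0}=O(L_0')}.
$$
Because the $\xi_0$ variable is untouched by the operator (it acts only by integration in $\tau_0'$ for fixed $\xi_0$), I would fix $\xi_0$ and estimate the one-dimensional integral operator in the $\tau$-variable, then integrate over $\xi_0$ using Plancherel. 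For the first inequality: by Cauchy--Schwarz in $\tau_0'$,
$$
  \bigabs{T^{\pm_0}_{L_0,L_0'}F(\tau_0,\xi_0)}^2
  \le (L_0L_0')^{-1} \chi_{\tau_0\pm_0\abs{\xi_0}=O(L_0)} \cdot \abs{\{\tau_0' : \tau_0'\pm_0\abs{\xi_0}=O(L_0'),\ (\tau_0',\xi_0)\in\supp F\}} \cdot \int\abs{F(\tau_0',\xi_0)}^2\,d\tau_0'.
$$
The support condition on $F$ forces $\tau_0' = -\xi_0\cdot\omega' + c' + O(d')$, so the $\tau_0'$-interval in the middle factor has length $\lesssim \min(L_0',d')$; integrating $\chi_{\tau_0\pm_0\abs{\xi_0}=O(L_0)}$ in $\tau_0$ gives a factor $\lesssim L_0$. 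Putting this together,
$$
  \bignorm{T^{\pm_0}_{L_0,L_0'}F}^2 \lesssim (L_0L_0')^{-1} L_0 \min(L_0',d') \norm{F}^2 = \frac{\min(L_0',d')}{L_0'}\norm{F}^2 \le \min\Bigl(1, \frac{d'}{L_0'}\Bigr)\norm{F}^2,
$$
which interpolates to $(d'/L_0')^p$ for $0 \le p \le 1/2$ (the exponent range reflects that this is a bound of the form $\lesssim 1$ improved to $\lesssim (d'/L_0')^{1/2}$, and we may take any intermediate power). I should note the $L_0 \sim L_0'$ branch is excluded by hypothesis, so only this branch is relevant; in that branch the displayed bound $(L_0L_0')^{-1/2}$ on the kernel is exactly what makes the counting above work.

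\textbf{Bilinear form.} For the second inequality, I would not square but instead estimate $\int T^{\pm_0}_{L_0,L_0'}F \cdot G$ directly. Expanding the kernel and applying Cauchy--Schwarz in the pair $(\tau_0,\tau_0')$ for fixed $\xi_0$,
$$
  \Bigabs{\int a^{\pm_0}_{L_0,L_0'}(\tau_0,\tau_0',\xi_0)F(\tau_0',\xi_0)G(\tau_0,\xi_0)\,d\tau_0\,d\tau_0'}
  \le (L_0L_0')^{-1/2}\, \area(S_{\xi_0})^{1/2}\, \Bigl(\int\abs{F}^2d\tau_0'\Bigr)^{1/2}\Bigl(\int\abs{G}^2d\tau_0\Bigr)^{1/2},
$$
where $S_{\xi_0}$ is the set of $(\tau_0,\tau_0')$ with $\tau_0\pm_0\abs{\xi_0}=O(L_0)$, $\tau_0'\pm_0\abs{\xi_0}=O(L_0')$, and additionally $(\tau_0',\xi_0)\in\supp F$, $(\tau_0,\xi_0)\in\supp G$. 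The support constraints pin $\tau_0'$ to an interval of length $\lesssim \min(L_0',d')$ and $\tau_0$ to one of length $\lesssim\min(L_0,d)$, so $\area(S_{\xi_0})\lesssim \min(L_0,d)\min(L_0',d')$, giving a pointwise-in-$\xi_0$ bound
$$
  \lesssim \Bigl(\frac{\min(L_0,d)\min(L_0',d')}{L_0L_0'}\Bigr)^{1/2}\Bigl(\int\abs{F}^2d\tau_0'\Bigr)^{1/2}\Bigl(\int\abs{G}^2d\tau_0\Bigr)^{1/2}.
$$
Integrating in $\xi_0$ with Cauchy--Schwarz yields $\lesssim \bigl(\frac{\min(L_0,d)\min(L_0',d')}{L_0L_0'}\bigr)^{1/2}\norm{F}\norm{G} \le \min\bigl(1,(dd'/L_0L_0')^{1/2}\bigr)\norm{F}\norm{G}$, which again interpolates to $(dd'/L_0L_0')^p$ for $0\le p\le 1/2$.

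\textbf{Main obstacle.} The only subtle point is making sure the counting of the $\tau$- and $\tau'$-intervals is legitimate: one must check that on the support of the relevant characteristic functions, the slab conditions $\tau_0'+\xi_0\cdot\omega'=c'+O(d')$ (from $\supp F$) and $\tau_0+\xi_0\cdot\omega=c+O(d)$ (from $\supp G$) genuinely confine $\tau_0',\tau_0$ to intervals of the stated lengths \emph{uniformly in} $\xi_0$ on the relevant frequency annulus, and that these are the effective lengths even when $d$ or $d'$ exceeds $L_0$ or $L_0'$ (hence the $\min$'s, and hence why the bound is $\lesssim 1$ rather than blowing up). This is precisely the reasoning already used for Lemma~\ref{Tlemma} in \cite{Selberg:2008b} with $d,d'$ replaced by $L_0,L_0'$, so the proof is a routine adaptation and can be stated as such, citing \cite[Lemma 3.3]{Selberg:2008b} for the template.
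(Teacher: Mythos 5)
The paper states this lemma with the citation marker \emph{(\cite{Selberg:2008b})} and gives no proof of its own, so there is nothing in the present paper to compare your argument against. That said, your proof is correct and is the natural Schur/Cauchy–Schwarz argument one would expect. For the linear bound: at fixed $\xi_0$ the kernel is a rank-one tensor $(L_0L_0')^{-1/2}\chi_{\tau_0\pm_0|\xi_0|=O(L_0)}\chi_{\tau_0'\pm_0|\xi_0|=O(L_0')}$, Cauchy–Schwarz in $\tau_0'$ picks up the factor $\min(L_0',d')^{1/2}$ from the intersection of the $L_0'$-interval with the $d'$-slab (both intervals in the single variable $\tau_0'$), and integrating out $\tau_0$ gives the factor $L_0^{1/2}$, yielding $\min(1,d'/L_0')^{1/2}\le(d'/L_0')^p$ for $0\le p\le 1/2$. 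For the bilinear form, Cauchy–Schwarz in $(\tau_0,\tau_0')$ against the area of the rectangle $S_{\xi_0}$ gives $\bigl(\min(L_0,d)\min(L_0',d')/(L_0L_0')\bigr)^{1/2}\le\min\bigl(1,(dd'/L_0L_0')^{1/2}\bigr)$, and then Cauchy–Schwarz in $\xi_0$ finishes. The one point you flag as the potential obstacle — that the $\tau$-intervals genuinely have length $\lesssim\min(L_0',d')$ and $\lesssim\min(L_0,d)$ uniformly in $\xi_0$ — is immediate because each constraint is a one-dimensional interval condition on $\tau_0'$ (resp.\ $\tau_0$) whose center depends on $\xi_0$ but whose length does not, and the intersection of two intervals never exceeds the shorter. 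Your remark that Lemma~\ref{Tlemma} is the $p=0$ case of the same computation is also accurate.
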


\section{Bilinear and null form estimates}\label{BilinearSection}

A key ingredient needed for the proof of Lemma \ref{NullFormLemma} is:

\begin{theorem}\label{BasicBilinearThm} \emph{(\cite{Selberg:2010a}.)} For all $u_1,u_2 \in L^2(\R^{1+2})$ such that $\widetilde{u_j}$ is supported in $K^{\pm_j}_{N_j,L_j}$, the estimate
$$
  \bignorm{\Proj_{K^{\pm_0}_{N_0,L_0}} ( u_1 \overline{u_2} )}
  \le C
  \norm{u_1}
  \norm{u_2}
$$
holds with
\begin{align}
  \label{Bilinear1}
  C
  &\sim \bigl( \Nmin^{012}\Lmin^{12} \bigr)^{1/2} \bigl( \Nmin^{12}\Lmax^{12} \bigr)^{1/4},
  \\
  \label{Bilinear2}
  C
  &\sim \bigl( \Nmin^{012}\Lmin^{0j} \bigr)^{1/2} \bigl( \Nmin^{0j}\Lmax^{0j} \bigr)^{1/4} \qquad (j=1,2),
  \\
  \label{Bilinear3}
  C
  &\sim \bigl( \Nmin^{012} \Nmin^{12} N_0 \Lmed^{012} \bigr)^{1/4} \bigl(\Lmin^{012}\bigr)^{1/2},
  \\
  \label{SobolevType}
  C
  &\sim \bigl( (\Nmin^{012})^2 \Lmin^{012} \bigr)^{1/2},
\end{align}
regardless of the choice of signs $\pm_j$.
\end{theorem}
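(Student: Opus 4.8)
The plan is to follow the standard route for $L^2$ bilinear estimates adapted to the light cone in $\R^{1+2}$. By Plancherel's theorem, $\bignorm{\Proj_{K^{\pm_0}_{N_0,L_0}}(u_1\overline{u_2})}$ is the $L^2_{\tau_0,\xi_0}$-norm of $\chi_{K^{\pm_0}_{N_0,L_0}}$ times the convolution $\widetilde{u_1}\ast\widetilde{\overline{u_2}}$, and applying Cauchy--Schwarz to this convolution in the usual way bounds $C^2$ by the supremum, over one of the three variables $X_0,X_1,X_2$, of the measure of the corresponding \emph{resonance fibre} --- the set of values of the remaining free variable for which the two other Fourier supports are compatible. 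Since the constraint $X_0=X_1-X_2$ is symmetric and $L^2$-duality lets one move $\Proj_{K_0}$ onto either input, there are three interchangeable fibres to work with; in particular \eqref{Bilinear2} is just \eqref{Bilinear1} with the roles of the output factor and one input factor permuted, so only \eqref{Bilinear1}, \eqref{Bilinear3} and \eqref{SobolevType} require a genuinely separate argument.

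For \eqref{SobolevType} a crude fibre bound is enough: with one spatial frequency fixed, the complementary time variable lies in a slab of length comparable to the smaller of two of the $L$'s (the intersection of two characteristic-function slabs in $\tau$), while the two spatial frequencies range over an intersection of two annuli of radii $\sim N_i$, of area $\lesssim(\Nmin^{012})^2$ after taking the best of the three symmetric choices; this yields exactly $\bigl((\Nmin^{012})^2\Lmin^{012}\bigr)^{1/2}$. The same computation records the basic resonance identity that on the joint support $\pm_0\abs{\xi_0}\mp_1\abs{\xi_1}\pm_2\abs{\xi_2}=O(\Lmax^{012})$, which drives the remaining estimates.

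The estimates \eqref{Bilinear1} and \eqref{Bilinear3} are the ones that see the curvature of $\mathbb S^1\subset\R^2$, and a direct fibre estimate does not capture the gain. Here I would carry out an \emph{angular decomposition} of $u_1$ and $u_2$ into pieces with Fourier support in sectors $\Gamma_\gamma(\omega)$, as in Lemmas \ref{WhitneyLemma1}--\ref{WhitneyLemma2}: the resonance identity together with the law of cosines forces only sector-pairs whose directions are constrained by the null structure (separated by an angle $\sim\gamma$, or by $\pi$ minus such an angle, according to the signs) to interact, and within one such pair the relevant level set of the resonance function $\pm_i\abs{\xi_i}\mp_j\abs{\xi_j}$ is transverse with transversality quantitatively comparable to $\gamma$, so the fibre-volume bound for that pair genuinely gains a power of $\gamma$ (equivalently, a power of the ratio of a modulation to a frequency --- this is where the $1/4$ exponent, one half of one half, enters, reflecting the one-dimensional curvature of $\mathbb S^1$). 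One then resums over the $O(1/\gamma)$ sectors by the almost-orthogonality already used in \eqref{OmegaSum0}--\eqref{OmegaSum} and optimizes in the dyadic parameter $\gamma$; bookkeeping through the low-output ($N_0\ll N_1\sim N_2$) and high-output regimes and through the orderings of the $L$'s gives \eqref{Bilinear1}. For \eqref{Bilinear3} one proceeds similarly but first splits off the contribution of the largest modulation --- fixing the variable carrying $\Lmax^{012}$ and estimating that piece by the crude fibre bound --- so that only $\Lmed^{012}$ and $\Lmin^{012}$ survive in the generic regime, where the curvature gain produces the factor $(\Nmin^{012}\Nmin^{12}N_0\Lmed^{012})^{1/4}(\Lmin^{012})^{1/2}$; here one may use the elementary identity $\Nmin^{012}\Nmax^{012}\sim N_0\Nmin^{12}$ to rewrite the frequency factor.

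The main obstacle is the case analysis, not any single inequality. The three signs $\pm_0,\pm_1,\pm_2$ decide which resonance function $\pm_i\abs{\xi_i}\mp_j\abs{\xi_j}$ appears, and for certain sign choices and frequency configurations it degenerates --- its level curve loses curvature, for instance when two of the unit vectors $e_j$ become nearly (anti)parallel, or near the vertices of the level ellipse or hyperbola on its axis. In those regimes the clean transversality lower bound fails, and one must either absorb the loss into the Sobolev-type bound \eqref{SobolevType} or argue directly that the degeneracy confines the frequencies to a thin set; making all branches land on exactly the stated exponents, and checking that the almost-orthogonal summation over sectors closes uniformly in $\boldN$ and $\boldL$, is the bulk of the work. (These $L^2$ bilinear estimates in $2+1$ dimensions are the subject of \cite{Selberg:2010a}, where the details are carried out.)
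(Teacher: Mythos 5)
Your outline of how \eqref{Bilinear1}, \eqref{Bilinear2} and \eqref{SobolevType} would be established from scratch (Cauchy--Schwarz to reduce to fibre measures, permutation/duality to swap output and input, an angular Whitney decomposition to capture the curvature gain, with the $1/4$ exponent reflecting the one-dimensional curvature of $\mathbb S^1$) is a reasonable sketch of the sort of argument used in \cite{Selberg:2010a}, which the paper simply cites for those three bounds. The paper does not re-prove them. The one part of the theorem the paper actually argues itself is \eqref{Bilinear3}, and there your route diverges from the paper's in a way worth flagging.

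The paper observes that \eqref{Bilinear3} is not a new bilinear estimate at all: it follows directly from \eqref{Bilinear1} or \eqref{Bilinear2} together with the elementary frequency identity $\Nmin^{012}\Nmax^{012}\sim N_0\Nmin^{12}$. Concretely, rewrite the target constant as
\[
\bigl(\Nmin^{012}\Nmin^{12}N_0\Lmed^{012}\bigr)^{1/4}\bigl(\Lmin^{012}\bigr)^{1/2}
\sim\bigl(\Nmin^{012}\Lmin^{012}\bigr)^{1/2}\bigl(\Nmax^{012}\Lmed^{012}\bigr)^{1/4},
\]
and then pick the version of \eqref{Bilinear1}/\eqref{Bilinear2} whose index pair excludes the variable carrying $\Lmax^{012}$: its constant is $\bigl(\Nmin^{012}\Lmin^{012}\bigr)^{1/2}$ times a $1/4$-power of $\Nmin^{ij}\Lmed^{012}$ for the surviving pair $\{i,j\}$, and since $\Nmin^{ij}\le\Nmax^{012}$ this is dominated by the displayed expression. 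That is the whole proof. Your proposal instead treats \eqref{Bilinear3} on the same footing as \eqref{Bilinear1}, proposing a fresh angular decomposition with the largest modulation split off --- this could be made to work, but it duplicates the hard bilinear machinery where the paper uses only an index permutation and the relation $\Nmin^{012}\Nmax^{012}\sim N_0\Nmin^{12}$. If you are going to assume \eqref{Bilinear1}--\eqref{Bilinear2} from \cite{Selberg:2010a} anyway (as you do at the end of your write-up), you should notice that \eqref{Bilinear3} is then free, rather than requiring a parallel development.

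One smaller point: in your \eqref{SobolevType} sketch you describe choosing, for the spatial fibre and the temporal fibre separately, ``the best of the three symmetric choices''. Be careful that both optimizations are realized by the \emph{same} choice of which variable to hold fixed; otherwise the Cauchy--Schwarz step does not produce the product $(\Nmin^{012})^2\Lmin^{012}$. The correct statement is that the spatial fibre area is $\lesssim(\Nmin^{012})^2$ no matter which variable is fixed, so one is free to fix the variable whose dual $\tau$-slab is the one giving $\Lmin^{012}$; phrased that way the argument closes.
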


The estimate \eqref{Bilinear3} is not included in \cite{Selberg:2010a}, but follows from either \eqref{Bilinear1} or \eqref{Bilinear2} and the fact that $\Nmin^{012} \Nmax^{012} \sim N_0\Nmin^{12}$.

Motivated by the convolution formula \eqref{Convolution}, a triple $(X_0,X_1,X_2)$ of vectors $X_j = (\tau_j,\xi_j) \in \R^{1+2}$ is said to be a \emph{bilinear interaction} if $X_0 = X_1 - X_2$. Given signs $(\pm_0,\pm_1,\pm_2)$ we also define the hyperbolic weights $\hypwt_j = \tau_j \pm_j \abs{\xi_j}$. If all three hyperbolic weights vanish, we say that the interaction is \emph{null}. If this happens, the vectors $X_j$ all lie on the null cone, and moreover it is clear geometrically that the angle $\theta_{12} = \theta(\pm_1\xi_1,\pm_2\xi_2)$ must vanish. The following more or less standard lemma generalizes this statement. For a proof, see e.g.~\cite{Selberg:2008c}.

\begin{lemma}\label{AnglesLemma} Given a bilinear interaction $(X_0,X_1,X_2)$ with $\xi_j \neq 0$, and signs $(\pm_0,\pm_1,\pm_2)$, define $\hypwt_j = \tau_j \pm_j \abs{\xi_j}$ and $\theta_{12} = \theta(\pm_1\xi_1,\pm_2\xi_2)$. Then
$$
  \max\left( \abs{\hypwt_0}, \abs{\hypwt_1}, \abs{\hypwt_2} \right)
  \gtrsim
  \min\left(\abs{\xi_1},\abs{\xi_2}\right)\theta_{12}^2.
$$
Moreover, we either have
$$
  \abs{\xi_0} \ll \abs{\xi_1} \sim \abs{\xi_2}
  \qquad \text{and} \qquad
  \pm_1 \neq \pm_2,
$$
in which case
$$
  \theta_{12} \sim 1
  \qquad \text{and} \qquad
  \max\left( \abs{\hypwt_0}, \abs{\hypwt_1}, \abs{\hypwt_2} \right)
  \gtrsim
  \min\left(\abs{\xi_1},\abs{\xi_2}\right),
$$
or else we have
$$
  \max\left( \abs{\hypwt_0}, \abs{\hypwt_1}, \abs{\hypwt_2} \right)
  \gtrsim
  \frac{\abs{\xi_1}\abs{\xi_2}\theta_{12}^2}{\abs{\xi_0}}.
$$
\end{lemma}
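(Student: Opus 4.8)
The plan is to reduce the whole statement to an elementary estimate on the \emph{resonance function}. Writing $\hypwt_j = \tau_j \pm_j \abs{\xi_j}$ and using $\tau_0 = \tau_1 - \tau_2$, $\xi_0 = \xi_1 - \xi_2$, one has the identity
\[
  \hypwt_0 - \hypwt_1 + \hypwt_2 = \pm_0\abs{\xi_0} \mp_1\abs{\xi_1} \pm_2\abs{\xi_2},
\]
where $\mp_1$ denotes the sign opposite to $\pm_1$; consequently $\max\bigl(\abs{\hypwt_0},\abs{\hypwt_1},\abs{\hypwt_2}\bigr) \ge \tfrac{1}{3}\bigabs{\pm_0\abs{\xi_0} \mp_1\abs{\xi_1} \pm_2\abs{\xi_2}}$, so it is enough to bound the right-hand side from below in terms of $a=\abs{\xi_1}$, $b=\abs{\xi_2}$, $c=\abs{\xi_0}$ and $\theta_{12}$. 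The tool is the law of cosines $c^2 = a^2+b^2-2ab\cos\varphi$, where $\varphi=\theta(\xi_1,\xi_2)$, together with its two rewritings
\[
  c^2 = (a-b)^2 + 4ab\sin^2(\varphi/2) = (a+b)^2 - 4ab\cos^2(\varphi/2),
\]
and the relation between $\varphi$ and $\theta_{12}$: one has $\varphi = \theta_{12}$ if $\pm_1 = \pm_2$, while $\varphi = \pi - \theta_{12}$ if $\pm_1 \neq \pm_2$, so that $\sin(\varphi/2) = \cos(\theta_{12}/2)$ and $\cos(\varphi/2) = \sin(\theta_{12}/2)$ in the latter case. In all cases $\sin(\varphi/2)\sim\varphi$ and $\sin(\theta_{12}/2)\sim\theta_{12}$ on $(0,\pi]$.

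\emph{Case $\pm_1=\pm_2$.} Flipping the overall sign we may take $\pm_1=\pm_2=+$, so the resonance equals $\pm_0 c + (b-a)$. Going through the two choices of $\pm_0$ and the two orderings of $a,b$, and using $\abs{a-b}\le c\le a+b$ from the triangle inequality, one finds $\bigabs{\pm_0 c + (b-a)} \ge c - \abs{a-b}$. Then
\[
  c - \abs{a-b} = \frac{c^2-(a-b)^2}{c+\abs{a-b}} = \frac{4ab\sin^2(\varphi/2)}{c+\abs{a-b}} \ge \frac{2ab\sin^2(\varphi/2)}{c} \gtrsim \frac{ab\,\theta_{12}^2}{c} = \frac{\abs{\xi_1}\abs{\xi_2}\theta_{12}^2}{\abs{\xi_0}},
\]
which is the second alternative of the lemma; and since $c\le a+b$ gives $ab/c\gtrsim\min(a,b)$, the first displayed bound of the lemma follows as well.

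\emph{Case $\pm_1\neq\pm_2$.} Taking $\pm_1=+$, $\pm_2=-$, the resonance equals $\pm_0 c - a - b$, and since $c\le a+b$ a check of both signs of $\pm_0$ gives $\bigabs{\pm_0 c - a - b} \ge a+b-c$. Now split on whether $c\gtrsim\max(a,b)$. If so, then $a+b+c\sim c$ and
\[
  a+b-c = \frac{(a+b)^2-c^2}{a+b+c} = \frac{4ab\cos^2(\varphi/2)}{a+b+c} = \frac{4ab\sin^2(\theta_{12}/2)}{a+b+c} \gtrsim \frac{ab\,\theta_{12}^2}{c},
\]
which again gives both the second alternative and (via $ab/c\gtrsim\min(a,b)$) the first displayed bound. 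If instead $c\ll\max(a,b)$, then $c\ge\abs{a-b}$ forces $a\sim b$, i.e.\ $\abs{\xi_0}\ll\abs{\xi_1}\sim\abs{\xi_2}$; moreover $c^2\ge 4ab\sin^2(\varphi/2)\gtrsim a^2\varphi^2$ forces $\varphi\lesssim c/a\ll1$, so $\theta_{12}=\pi-\varphi\sim1$; and then $\cos^2(\varphi/2)\sim1$ and $a+b+c\sim a$, whence $a+b-c\gtrsim a\sim\min(a,b)$. This is exactly the first alternative of the lemma (and, since $\theta_{12}\sim1$, it also yields the first displayed bound).

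The only genuine work is the sign- and ordering-bookkeeping needed to extract the two elementary inequalities $\bigabs{\pm_0 c+(b-a)}\ge c-\abs{a-b}$ and $\bigabs{\pm_0 c-a-b}\ge a+b-c$; once these are in place, the two law-of-cosines factorizations carry the argument, so I do not anticipate any substantive obstacle.
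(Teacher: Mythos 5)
Your proof is correct and complete. A few things worth noting.

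The paper does not actually give its own proof of this lemma; it refers to an external reference (\cite{Selberg:2008c}). Your argument is self-contained and follows the standard route for such statements: reduce to the resonance identity
\[
  \hypwt_0 - \hypwt_1 + \hypwt_2 = \pm_0\abs{\xi_0} \mp_1\abs{\xi_1} \pm_2\abs{\xi_2},
\]
then control the right side via the law of cosines and the two factorizations $c^2-(a-b)^2=4ab\sin^2(\varphi/2)$ and $(a+b)^2-c^2=4ab\cos^2(\varphi/2)$. This is essentially the same approach one finds in the cited reference, so yours is not a genuinely different route, just a fully written-out version.

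I checked the details carefully and found no gaps: the WLOG reduction of signs is legitimate (the absolute value of the resonance and the angle $\theta_{12}$ are both invariant under flipping all three signs); the elementary inequalities $\abs{\pm_0 c+(b-a)}\ge c-\abs{a-b}$ and $\abs{\pm_0 c - a - b}\ge a+b-c$ hold by the triangle inequality $\abs{a-b}\le c\le a+b$; the chain $c-\abs{a-b}\ge 4ab\sin^2(\varphi/2)/(2c)\gtrsim ab\,\theta_{12}^2/c$ is correct, and $ab/c\gtrsim\min(a,b)$ follows from $c\le a+b$; in the $\pm_1\ne\pm_2$, $c\ll\max(a,b)$ subcase, the deductions $a\sim b$, $\varphi\lesssim c/a\ll1$, hence $\theta_{12}\sim1$ and $a+b-c\gtrsim\min(a,b)$, are all sound (indeed $a+b-c>b=\min(a,b)$ directly, since $c<a$). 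The case split by $c\gtrsim\max(a,b)$ vs.\ $c\ll\max(a,b)$ matches the lemma's dichotomy because, as you observe, $\pm_1\ne\pm_2$ together with $c\ll\max(a,b)$ forces $\abs{\xi_0}\ll\abs{\xi_1}\sim\abs{\xi_2}$, and conversely. The unconditional first bound $\max_j\abs{\hypwt_j}\gtrsim\min(\abs{\xi_1},\abs{\xi_2})\theta_{12}^2$ is verified in each branch.
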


With this information in hand, we can prove Lemma \ref{NullFormLemma}. By Lemma \ref{AnglesLemma} we have $\theta_{12} \lesssim (\Lmax^{012}/\Nmin^{12})^p$ for $0 \le p \le 1/2$. Taking $p=3/8$ and using \eqref{Bilinear3},
$$
  \norm{\Proj_{K^{\pm_0}_{N_0,L_0}}\mathfrak B_{\theta_{12}}(u_1,u_2)}
  \lesssim
  \left(\frac{\Lmax^{012}}{\Nmin^{12}}\right)^{3/8}
  \left( N_0 \Nmin^{12} \Lmin^{012}\Lmed^{012} \right)^{3/8}
  \norm{u_1}\norm{u_2},
$$
proving Lemma \ref{NullFormLemma}.

The following improves the estimate \eqref{Bilinear1} in certain situations.

\begin{theorem}\label{Anisotropic} \emph{(\cite{Selberg:2010a}.)}
Let $\omega \in \mathbb S^1$, $0 < \alpha \ll 1$ and $I \subset \R$ a compact interval. Then for all $u_1,u_2 \in L^2(\R^{1+2})$ such that $\widetilde{u_j}$ is supported in $K^{\pm_j}_{N_j,L_j}$,
and assuming in addition that
$$
  \supp \widehat{u_1} \subset
  \left\{ (\tau,\xi) \colon
  \theta(\xi,\omega^\perp) \ge \alpha \right\},
$$
we have
$$
  \norm{\Proj_{\xi_0 \cdot \omega \in I}
  (u_1 u_2)}
  \lesssim \left( \frac{\abs{I}(\Nmin^{12})^{1/2}(L_1L_2)^{3/4}}{\alpha} \right)^{1/2}
  \norm{u_1}
  \norm{u_2}.
$$
The same estimate holds for $\norm{\Proj_{\xi_1 \cdot \omega \in I} u_1 \cdot u_2}$ and $\norm{u_1 \cdot \Proj_{\xi_2 \cdot \omega \in I} u_2}$. 
\end{theorem}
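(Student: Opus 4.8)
\medskip\noindent\emph{Proof strategy.} The plan is to run the standard bilinear $L^2$ argument --- reduce to a pointwise bound on the size of the set of admissible frequency interactions --- and then to extract the improvement over \eqref{Bilinear1} from the transversality measured by $\alpha$. By Plancherel's theorem, the convolution identity \eqref{Convolution} (applied to $u_1\overline{u_2}$, which is legitimate since conjugating $u_2$ does not affect the hypotheses), and the Cauchy--Schwarz inequality in the convolution variable, one has
$$
  \norm{\Proj_{\xi_0\cdot\omega\in I}(u_1u_2)}^2
  \le
  \Bigl(\,\sup_{X_0\,:\,\xi_0\cdot\omega\in I}\mathcal M(X_0)\Bigr)\norm{u_1}^2\norm{u_2}^2,
$$
where $\mathcal M(X_0)$ is the Lebesgue measure of the set of $X_1=(\tau_1,\xi_1)\in\R^{1+2}$ with $\angles{\xi_1}\sim N_1$, $\angles{\tau_1\pm_1\abs{\xi_1}}\sim L_1$, $\theta(\xi_1,\omega^\perp)\ge\alpha$, $\angles{\xi_0-\xi_1}\sim N_2$ and $\angles{(\tau_0-\tau_1)\pm_2\abs{\xi_0-\xi_1}}\sim L_2$. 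Thus it suffices to prove
$$
  \mathcal M(X_0)\ \lesssim\ \frac{\abs{I}\,(\Nmin^{12})^{1/2}(L_1L_2)^{3/4}}{\alpha}
  \qquad\text{whenever}\ \xi_0\cdot\omega\in I.
$$
The remaining two variants follow identically, parametrizing the convolution so that the slab condition is imposed directly on $\xi_1\cdot\omega$, resp.\ on $\xi_2\cdot\omega$.

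To generate the factor $\abs{I}$ I would first decompose $u_2$ into the pieces supported in the slabs $\{\xi_2\cdot\omega\in[k\abs{I},(k+1)\abs{I})\}$; since $\xi_2\cdot\omega=\xi_0\cdot\omega-\xi_1\cdot\omega$ with $\xi_0\cdot\omega\in I$, the corresponding piece of $u_1$ is forced to have $\xi_1\cdot\omega$ in an interval of length $\lesssim\abs{I}$. These pieces are almost orthogonal, and a Cauchy--Schwarz summation over $k$ --- exactly as in the passage from \eqref{Bilinear1} to its $\omega$-localized form --- reduces the estimate to the case where, in addition to the support conditions above, $\xi_1\cdot\omega$ is confined to a fixed interval $J$ with $\abs{J}\lesssim\abs{I}$.

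Now fix $X_0$ with $\xi_0\cdot\omega\in I$, freeze $s=\xi_1\cdot\omega\in J$ (a set of measure $\lesssim\abs{I}$), and write $t=\xi_1\cdot\omega^\perp$, so that $\xi_1=s\omega+t\omega^\perp$ and $\xi_2=\xi_0-\xi_1$ depend only on $t$. In the remaining $(\tau_1,t)$-integration I would change variables to the hyperbolic weights $h_1=\tau_1\pm_1\abs{\xi_1}$ and $h_2=(\tau_0-\tau_1)\pm_2\abs{\xi_2}$: the Jacobian determinant equals $(e_1-e_2)\cdot\omega^\perp$ with $e_j=\pm_j\xi_j/\abs{\xi_j}$, and on holding fixed whichever of $h_1,h_2$ carries $\Lmin^{12}$, the other weight equals $g(t)-c$ for a constant $c=c(X_0)$ and the fixed weight, where
$$
  g(t):=\pm_1\abs{\xi_1}\pm_2\abs{\xi_2},
  \quad
  g'(t)=(e_1-e_2)\cdot\omega^\perp,
  \quad
  g''(t)=\pm_1\frac{(\xi_1\cdot\omega)^2}{\abs{\xi_1}^3}\pm_2\frac{(\xi_2\cdot\omega)^2}{\abs{\xi_2}^3}.
$$
The key point is that the curvature of $g$ is controlled below by the transversality: in the principal case $\pm_1=\pm_2$ the two terms of $g''$ have the same sign, and the angular hypothesis $\theta(\xi_1,\omega^\perp)\ge\alpha$, which gives $\abs{\xi_1\cdot\omega}\gtrsim\alpha N_1$, forces $\abs{g''(t)}\gtrsim\alpha^2/N_1\gtrsim\alpha^2/\Nmin^{12}$ (at least when $N_1\le N_2$). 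A standard sublevel-set bound (van der Corput's lemma) then shows that the set of $t$ with $\angles{g(t)-c}\sim\Lmax^{12}$ has measure $\lesssim(\Lmax^{12}\Nmin^{12})^{1/2}/\alpha$, while the free weight ranges over a set of measure $\lesssim\Lmin^{12}$; multiplying by the $\lesssim\abs{I}$ worth of $s$ and using $\Lmin^{12}(\Lmax^{12})^{1/2}\le(L_1L_2)^{3/4}$ yields the claimed bound for $\mathcal M(X_0)$ in this case.

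The hard part is the remaining degenerate configurations, which is where the technical work of \cite{Selberg:2010a} lies. When $\pm_1=\pm_2$ but $N_2\ll N_1$ the curvature bound weakens; but then, with $\xi_0\cdot\omega^\perp$ frozen, $t=\xi_1\cdot\omega^\perp=\xi_0\cdot\omega^\perp-\xi_2\cdot\omega^\perp$ is already confined by the support conditions alone to a set of measure $\lesssim\abs{\xi_2\cdot\omega^\perp}\lesssim\Nmin^{12}$, and if moreover $\xi_2$ is nearly parallel to $\omega^\perp$ --- the only case in which the $\abs{\xi_2}$-contribution to the curvature does not help --- then $\abs{\xi_2}$ is essentially an affine function of $t$, so the $L_2$ modulation constraint is essentially linear in $(\tau_1,t)$; combining these two observations recovers the bound. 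When $\pm_1\ne\pm_2$ the two terms of $g''$ may cancel, and one instead either passes to $g'''$ (again kept away from zero by $\theta(\xi_1,\omega^\perp)\ge\alpha$), or, in the range where $\xi_1$ and $\xi_2$ are close to antipodal, invokes Lemma~\ref{AnglesLemma} together with the sign-independence of the estimates in Theorem~\ref{BasicBilinearThm}. Carrying out this casework finishes the proof, and the other two variants follow in the same way.
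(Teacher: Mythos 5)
This theorem is cited as a black box from \cite{Selberg:2010a} and is not proved in the present paper, so there is no in-paper proof to compare against; I can only assess your sketch on its own merits. Your overall strategy --- the Cauchy--Schwarz reduction to a supremum bound on the convolution-support measure $\mathcal M(X_0)$, the slab decomposition in the $\omega$-direction to produce the factor $\abs{I}$, and the change to coordinates $(s,t)=(\xi_1\cdot\omega,\xi_1\cdot\omega^\perp)$ with a curvature analysis of $g(t)=\pm_1\abs{\xi_1}\pm_2\abs{\xi_2}$ --- is the natural route and in the spirit of the arguments of \cite{Selberg:2010a}. The computations of $g'$ and $g''$ are correct, and in the principal case $\pm_1=\pm_2$ with $N_1=\Nmin^{12}$, the sublevel-set bound with $\abs{g''}\gtrsim\alpha^2/\Nmin^{12}$ does close the estimate once the smaller weight is fixed first and one uses $\Lmin^{12}(\Lmax^{12})^{1/2}\le(L_1L_2)^{3/4}$.

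The genuine gap is in the degenerate cases you defer. For $\pm_1=\pm_2$ with $N_2\ll N_1$ and $\xi_2$ near $\omega^\perp$, neither $\abs{g''}\gtrsim\alpha^2/N_1$ nor the trivial localisation $M_t\lesssim\Nmin^{12}$ alone suffices; what works is a geometric-mean interpolation between the area bound $\Lmin^{12}\Nmin^{12}$ and the Jacobian bound $L_1L_2/\alpha^2$ (here $\abs{g'}\gtrsim\alpha^2$ since $\abs{e_2\cdot\omega^\perp}\approx 1$ while $\abs{e_1\cdot\omega^\perp}\le\cos\alpha$), and ``combining these two observations recovers the bound'' does not actually do this. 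More seriously, the fallback for $\pm_1\ne\pm_2$ is flawed: with $\pm_1=+$, $\pm_2=-$ one finds $g'''=-3\bigl(s^2t/\abs{\xi_1}^5+s_2^2t_2/\abs{\xi_2}^5\bigr)$, which can vanish simultaneously with $g''$ (e.g.\ $\abs{\xi_1}=\abs{\xi_2}$, $s^2=s_2^2$, $t=-t_2$), and the angular hypothesis on $\xi_1$ does not exclude this. Nor can Lemma~\ref{AnglesLemma} be ``invoked'' in the way you suggest: with no output modulation $L_0$ imposed in this theorem, Lemma~\ref{AnglesLemma} gives no lower bound on $\Lmax^{12}$ in terms of $\theta_{12}$. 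The opposite-sign configuration therefore remains genuinely open in your write-up, and that is precisely where the substantive technical content of \cite{Selberg:2010a} lies.
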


Here $\omega^\perp \subset \R^2$ is the orthogonal complement of $\omega$, and $\abs{I}$ is the length of $I$.

The next result is a null form estimate. Recall that $T_r(\omega) \subset \R^2$, for $r > 0$ and $\omega \in \mathbb S^1$, denotes a tube (actually a strip, since we are in the plane) of radius comparable to $r$ around $\R\omega$.

\begin{theorem}\label{NullRay} \emph{(\cite{Selberg:2010a}.)} Let $r > 0$ and $\omega \in \mathbb S^1$. Then for all $u_1,u_2 \in L^2(\R^{1+2})$ such that $\widetilde{u_j}$ is supported in $K^{\pm_j}_{N_j,L_j}$,
$$
  \bignorm{\mathfrak B_{\theta_{12}}(\Proj_{\R \times T_r(\omega)} u_1,u_2)}
  \lesssim \left( r L_1 L_2 \right)^{1/2}
  \norm{u_1}\norm{u_2}.
$$
\end{theorem}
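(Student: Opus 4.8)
\emph{Approach.} I would prove this by a Cauchy--Schwarz (Schur test) argument combined with the geometry forced by the tube restriction; an angular Whitney decomposition of the symbol (as in Lemmas \ref{WhitneyLemma1}--\ref{WhitneyLemma2}) would be an alternative route, but the direct one is cleaner to state. Writing $v_1 = \Proj_{\R\times T_r(\omega)}u_1$, the convolution formula \eqref{Convolution} defining $\mathfrak B_{\theta_{12}}$ together with Cauchy--Schwarz in the inner variable gives
$$
  \bignorm{\mathfrak B_{\theta_{12}}(v_1,u_2)}^2
  \lesssim \Bigl(\sup_{X_0}\int \theta_{12}^2\,\chi_{\supp\widetilde{v_1}}(X_1)\,\chi_{\supp\widetilde{u_2}}(X_1-X_0)\,dX_1\Bigr)\norm{u_1}^2\norm{u_2}^2 ,
$$
so, since $L_1L_2=\Lmax^{12}\Lmin^{12}$, it suffices to bound the supremum by $\Lmin^{12}\cdot r\Lmax^{12}=rL_1L_2$.

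\emph{Reduction of the overlap integral.} Fix $X_0$. Since $\theta_{12}=\theta(\pm_1\xi_1,\pm_2\xi_2)$ (with $\xi_2=\xi_1-\xi_0$) does not depend on $\tau_1$, the $\tau_1$-integration contributes at most $\Lmin^{12}$: the conditions $\angles{\tau_1\pm_1|\xi_1|}\sim L_1$ and $\angles{\tau_1-\tau_0\pm_2|\xi_2|}\sim L_2$ confine $\tau_1$ to a set of that measure, while their joint solvability forces $\xi_1$ into an $O(\Lmax^{12})$-neighborhood of the level set $\{g(\xi_1)=-\tau_0\}$, where $g(\xi_1)=\pm_1|\xi_1|-\pm_2|\xi_1-\xi_0|$. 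Hence it remains to show
$$
  \int \theta_{12}^2\,\chi_{|\xi_1|\sim N_1}\,\chi_{|\xi_1-\xi_0|\sim N_2}\,\chi_{\xi_1\in T_r(\omega)}\,\chi_{|g(\xi_1)+\tau_0|\lesssim \Lmax^{12}}\,d\xi_1 \lesssim r\,\Lmax^{12} .
$$

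\emph{The geometric estimate and summation.} A direct computation gives $\nabla g(\xi_1)=e_1-e_2$ with $e_j=\pm_j\xi_j/|\xi_j|$, so $|\nabla g|=2\sin(\theta_{12}/2)\sim\theta_{12}$. Decompose dyadically by $\theta_{12}\sim\gamma$. On that piece the level set is a curve whose $\Lmax^{12}$-neighborhood is a strip of width $\sim\Lmax^{12}/\gamma$ transverse to it; and since $\xi_1\in T_r(\omega)$ forces $e_1$ to lie within $O(r/N_1)$ of $\pm\omega$, the curve runs within angle $O(\gamma)$ of $\R\omega$. Intersecting with $T_r(\omega)$ (which confines $\xi_1\cdot\omega^\perp$ to an interval of length $\lesssim r$ and limits the arc length of the curve spent in the tube), with $\{|\xi_1|\sim N_1\}$, and with $\{|\xi_1-\xi_0|\sim N_2\}$ (which limits the radial extent by $\Nmin^{12}$), the $\gamma$-piece contributes $\lesssim\gamma^2\min\!\bigl(r,\Lmax^{12}/\gamma\bigr)\cdot(\text{length})$, the length being $\lesssim\min(r/\gamma,\Nmin^{12})$ in the collinear range and $\lesssim r$ once $\gamma$ is of order one (where the curve crosses the tube transversally). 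Summing over dyadic $\gamma$ and invoking Lemma \ref{AnglesLemma} — whose inequality $\max(|h_0|,L_1,L_2)\gtrsim\Nmin^{12}\theta_{12}^2$ caps $\gamma$ by $(\Lmax^{12}/\Nmin^{12})^{1/2}$ when $|h_0|\lesssim\Lmax^{12}$, and otherwise pins $|h_0|\sim\bigl|\pm_0|\xi_0|-g(\xi_1)\bigr|$ and the geometry via the resonance identity $h_0=h_1-h_2+(\pm_0|\xi_0|-g(\xi_1))$ — one checks that the total is $\lesssim r\,\Lmax^{12}$, with no surviving power of any $N$. The special case $\theta_{12}\sim1$, which by Lemma \ref{AnglesLemma} forces $|\xi_0|\ll\Nmin^{12}\sim\Nmax^{12}$ and hence $\xi_1\approx\xi_2$ (so that the relevant part of $u_2$ also sits in a tube around $\R\omega$), fits this framework and can also be checked directly.

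\emph{Main obstacle.} The crux is the bookkeeping in the last step: across the dyadic ranges of $\gamma$ one must correctly identify which of the competing lengths ($\Lmax^{12}/\gamma$ versus $r$ for the transverse width; $r/\gamma$, $\Nmin^{12}$, or $r$ for the extent along the curve) is active, and in the regime where the output modulation $|h_0|$ is comparable to or larger than $\Lmax^{12}$ — equivalently, where the resonance function dominates — one must use the resonance identity to prevent the dyadic $\gamma$-sum from accumulating a power of $\Nmin^{12}$ rather than collapsing to the clean factor $r\,\Lmax^{12}$.
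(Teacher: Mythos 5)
Your two reduction steps are sound: the Cauchy--Schwarz (Schur-test) reduction to the overlap bound
$$
\sup_{X_0}\int \theta_{12}^2\,\chi_{\supp\widetilde{v_1}}(X_1)\,\chi_{\supp\widetilde{u_2}}(X_1-X_0)\,dX_1
\ \lesssim\ rL_1L_2,
$$
is correct, and the $\tau_1$-integration correctly yields the factor $\Lmin^{12}$ together with the level-set constraint $|g(\xi_1)+\tau_0|\lesssim\Lmax^{12}$ (this is the resonance identity $h_1-h_2=g(\xi_1)+\tau_0$). But the statement that carries the whole weight of the proof---the bound $\int\theta_{12}^2\,\chi_{\xi_1\in T_r(\omega)}\chi_{|g+\tau_0|\lesssim\Lmax^{12}}\chi_{\text{balls}}\,d\xi_1\lesssim r\Lmax^{12}$---is asserted, not proved, and your sketch of it does not identify the mechanism that makes it work. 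You flag this yourself as the obstacle, which is exactly right: this is the theorem, and the gap is genuine.

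The specific issue the sketch glosses over is the anisotropy of $\nabla g$ relative to the tube. Writing $\xi_1=s\omega+t\omega^\perp$ and $\nabla g=e_1-e_2$: because the tube forces $e_1$ to be within $O(r/N_1)$ of $\pm\omega$, one has $\nabla g\cdot\omega^\perp\sim\theta_{12}$ but $\nabla g\cdot\omega\sim\theta_{12}^2$. Hence along the tube axis the level-set constraint confines $s$ to a set of measure $\sim\Lmax^{12}/\theta_{12}^2$---and it is this quadratic cancellation against the symbol $\theta_{12}^2$, together with the fact that $t\mapsto\theta_{12}$ has slope $\sim 1/\Nmin^{12}$ (so that the set $\{\theta_{12}\sim\gamma\}\cap T_r(\omega)$ has $t$-measure $\sim\gamma\Nmin^{12}$, and is in fact empty once $\gamma\gtrsim r/\Nmin^{12}$), which makes the dyadic $\gamma$-sum geometric rather than logarithmic. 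Your expression ``$\gamma^2\min(r,\Lmax^{12}/\gamma)\cdot(\text{length})$'' has the wrong transverse width ($\Lmax^{12}/\gamma$ rather than $\Lmax^{12}/\gamma^2$ in the $\omega$-direction) and, taken at face value in the regime where both the tube width and the strip width are simultaneously active, it gives $r\Lmax^{12}$ per dyadic scale and hence a $\log$ that should not be there. So this is not just bookkeeping; there is a concrete cancellation to exhibit, and the argument as written does not exhibit it.

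Finally, note that the cited proof in \cite{Selberg:2010a} does not appear to proceed by a Schur test at all. Given the machinery this paper imports from that source (Theorems \ref{Anisotropic} and \ref{ImprovedNullRay}, the Whitney lemmas \ref{WhitneyLemma1}--\ref{WhitneyLemma2}, and the almost-orthogonality bound \eqref{OmegaSum}), the natural proof is: decompose $u_1,u_2$ into angularly separated sectors via Whitney, apply a transverse bilinear $L^2$ estimate on each sector pair (which for sectors at angle $\gamma$ with one factor in a tube of width $r$ produces the factor $(rL_1L_2)^{1/2}$ after the symbol $\theta_{12}\sim\gamma$ cancels the $\gamma^{-1}$ from transversality), and then sum the sector pairs by Cauchy--Schwarz using $\ell^2$-almost-orthogonality as in \eqref{OmegaSum}. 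That route avoids the delicate level-set geometry entirely because the angular parameter lives on the input side of an $\ell^2$ sum rather than inside a $\sup_{X_0}$. If you want to push your Schur-test route through, you will need to carry out the $(s,t)$-integral explicitly with the quadratic directional derivative above.
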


The key point here is that we are able to exploit concentration of the Fourier supports near a null ray, which is not possible for the standard product $u_1 \overline{u_2}$. We remark that in \cite{Selberg:2010a}, the theorem is proved for $\abs{\xi_j} \sim N_j$ on the support of $\widetilde{u_j}$ instead of $\angles{\xi_j} \sim N_j$ as we have here. This only makes a difference if $\Nmin^{12} \sim 1$, but then the trivial estimate
$\bignorm{\Proj_{\R \times T_r(\omega)} u_1 \cdot \overline{u_2}} \lesssim \left( r \Nmin^{12} \Lmin^{12} \right)^{1/2} \norm{u_1}\norm{u_2}$ is stronger.

In the following refinement of Theorem \ref{NullRay} we limit attention to interactions which are nearly null, by restricting the angle to $\theta_{12} \ll 1$; the correspondingly modified null form is denoted $\mathfrak B_{\theta_{12} \ll 1}$.
 
\begin{theorem}\label{ImprovedNullRay} \emph{(\cite{Selberg:2010a}.)}
Let $r > 0$, $\omega \in \mathbb S^1$ and $I \subset \R$ a compact interval. Assume that $N_1, N_2 \gg 1$ and $r \ll \Nmin^{12}$. Then for all $u_1,u_2 \in L^2(\R^{1+2})$ such that $\widetilde{u_j}$ is supported in $K^{\pm_j}_{N_j,L_j}$,
$$
  \norm{\Proj_{\xi_0 \cdot \omega \in I} \mathfrak B_{\theta_{12} \ll 1}(\Proj_{\R \times T_r(\omega)} u_1,u_2)}
  \lesssim \left( r L_1 L_2 \right)^{1/2} 
  \left( \sup_{I_1} \norm{\Proj_{\xi_1 \cdot \omega \in I_1} u_1} \right)
  \norm{u_2},
$$
where the supremum is over all translates $I_1$ of $I$.
\end{theorem}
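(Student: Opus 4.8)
The plan is to deduce the estimate from Theorem~\ref{NullRay} by an almost-orthogonal decomposition of $u_1$ in the $\omega$-direction. By $L^2$-duality it suffices to bound, for every $v$ with $\norm{v}=1$ and $\widetilde v$ supported in $\{\xi_0\cdot\omega\in I\}$, the pairing $\bigabs{\biginnerprod{\mathfrak B_{\theta_{12}\ll1}(\Proj_{\R\times T_r(\omega)}u_1, u_2)}{v}}$ (one may equally work with the $L^2$ norm directly). Partition $\R$ into consecutive translates $\{I_1^k\}_{k\in\Z}$ of $I$ and write $\Proj_{\R\times T_r(\omega)}u_1=\sum_k u_1^k$ with $u_1^k=\Proj_{\xi_1\cdot\omega\in I_1^k}\Proj_{\R\times T_r(\omega)}u_1$, so that $\sum_k\norm{u_1^k}^2=\bignorm{\Proj_{\R\times T_r(\omega)}u_1}^2$ and $\sup_k\norm{u_1^k}\le\sup_{I_1}\norm{\Proj_{\xi_1\cdot\omega\in I_1}u_1}$, the supremum over translates $I_1$ of $I$.

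The geometric input is that, on the Fourier support of the $k$-th piece, the interaction concentrates near a single null ray. Indeed, since $\widetilde{u_1^k}$ lives in the thin tube $T_r(\omega)$ with $N_1,N_2\gg1$ and $r\ll\Nmin^{12}$, and since $\theta_{12}\ll1$, one finds $\bigabs{\xi_j}=\bigabs{\xi_j\cdot\omega}+O\bigl(r^2/N_1+N_2\theta_{12}^2\bigr)$ for $j=1,2$, the sign of $\xi_j\cdot\omega$ is determined, and (using the modulation conditions $\angles{\tau_j\pm_j\abs{\xi_j}}\sim L_j$ together with Lemma~\ref{AnglesLemma}) $\tau_j$ is pinned to within $O\bigl(L_1+L_2+N_2\theta_{12}^2+r^2/N_1\bigr)$ of a fixed linear function of $\xi_j\cdot\omega$. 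Since $\xi_0\cdot\omega=\xi_1\cdot\omega-\xi_2\cdot\omega\in I$ while $\xi_1\cdot\omega\in I_1^k$, this forces $\xi_2\cdot\omega$ into an interval of length $\sim\abs{I}$ (depending on $k$) and $\tau_2$ into a window of comparable length; writing $u_2^{(k)}$ for the corresponding localization of $u_2$, the windows have bounded overlap in $k$ in the main regime $\abs{I}\gtrsim L_1+L_2+N_2\theta_{12}^2+r^2/N_1$, so $\sum_k\norm{u_2^{(k)}}^2\lesssim\norm{u_2}^2$; the complementary regime is less delicate and is handled separately.

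With this reduction in place I would apply Theorem~\ref{NullRay} to each pair $(u_1^k,u_2^{(k)})$ — legitimate because $\widetilde{u_1^k}$ is still supported in $\R\times T_r(\omega)$ — obtaining $\bignorm{\Proj_{\xi_0\cdot\omega\in I}\mathfrak B_{\theta_{12}\ll1}(u_1^k,u_2^{(k)})}\lesssim(rL_1L_2)^{1/2}\norm{u_1^k}\norm{u_2^{(k)}}$, and then sum over $k$. Granting that the outputs $\Proj_{\xi_0\cdot\omega\in I}\mathfrak B_{\theta_{12}\ll1}(u_1^k,u_2^{(k)})$ are almost orthogonal in $k$, Cauchy--Schwarz yields $\lesssim(rL_1L_2)^{1/2}\bigl(\sup_k\norm{u_1^k}\bigr)\bigl(\sum_k\norm{u_2^{(k)}}^2\bigr)^{1/2}\lesssim(rL_1L_2)^{1/2}\bigl(\sup_{I_1}\norm{\Proj_{\xi_1\cdot\omega\in I_1}u_1}\bigr)\norm{u_2}$, which is the claim.

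The step I expect to be the main obstacle is precisely this last summation: one must show that for each output frequency only $O(1)$ of the slabs $k$ contribute, or, failing a clean orthogonality statement, control the off-diagonal pairings by a Cotlar--Stein-type argument. This is where the concentration near the null ray — hence the standing hypotheses $N_1,N_2\gg1$, $r\ll\Nmin^{12}$ and $\theta_{12}\ll1$ — must be used in an essential way; without them the per-slab gain cannot be summed and one recovers only Theorem~\ref{NullRay}. The remaining ingredients (the duality reduction, the bounded-overlap bookkeeping for $\{u_2^{(k)}\}$, and the separate treatment of the degenerate regime $\abs{I}\lesssim L_1+L_2+\dots$) are routine.
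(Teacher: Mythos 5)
The paper does not prove this statement; it quotes Theorem~\ref{ImprovedNullRay} directly from \cite{Selberg:2010a}, so there is no in-paper proof to compare against. Evaluated on its own terms, your proposal reduces the claim to Theorem~\ref{NullRay} applied slab-by-slab and leans on an almost-orthogonality of the outputs which, as you yourself flag, is the crux -- and unfortunately this orthogonality fails. Writing $F_k = \Proj_{\xi_0\cdot\omega\in I}\mathfrak B_{\theta_{12}\ll 1}(u_1^k,u_2^{(k)})$, the Fourier support of every $F_k$ lies in essentially the same region of $(\tau_0,\xi_0)$: by construction $\xi_0\cdot\omega\in I$ for all $k$, and since $\tau_j = \mp_j\abs{\xi_j}+O(L_j)$ with $\xi_1\in T_r(\omega)$ and $\theta_{12}\ll 1$ one finds $\tau_0 = -\xi_0\cdot\omega + O\bigl(L_1+L_2+r^2/N_1+\abs{\xi_0\cdot\omega^\perp}^2/N_2\bigr)$ regardless of which slab $I_1^k$ the variable $\xi_1\cdot\omega$ lies in, while $\xi_0\cdot\omega^\perp = \xi_1\cdot\omega^\perp - \xi_2\cdot\omega^\perp$ is likewise independent of $k$. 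So the $F_k$ are far from Fourier-disjoint; a triangle-inequality or Cauchy--Schwarz summation then produces $\sum_k\fixednorm{u_2^{(k)}}$, an $\ell^1$ sum that $\fixednorm{u_2}$ does not control.

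Your fallback suggestions do not close the gap either. The claim that ``only $O(1)$ slabs contribute per output frequency'' is false in general: for fixed $X_0=(\tau_0,\xi_0)$ the modulation constraint $\bigabs{\tau_0 + \pm_1\abs{\xi_1} - \pm_2\abs{\xi_1-\xi_0}}\lesssim L_1+L_2$ restricts $\xi_1\cdot\omega$ only to an interval of length on the order of $(L_1+L_2)\,N^2/\abs{\xi_0\cdot\omega^\perp}^2$ (the $\xi_1\cdot\omega$-derivative of $\pm_1\abs{\xi_1}-\pm_2\abs{\xi_1-\xi_0}$ is of size $\abs{\xi_0\cdot\omega^\perp}^2/N^2$ near the null ray), and this can vastly exceed $\abs{I}$ when $\abs{\xi_0\cdot\omega^\perp}$ is small. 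Cotlar--Stein does show $T_jT_k^*=0$ for $\abs{j-k}\ge 2$ (because $T_k^*$ lands in the $\xi_2\cdot\omega$-slab $I_1^k-I$ and $T_j$ reads from $I_1^j-I$), but the companion condition on $\sup_j\sum_k\fixednorm{T_j^*T_k}^{1/2}$ is precisely where the difficulty sits: the operators $T_j^*T_k$ factor through the shared output region described above, and their off-diagonal decay is governed by a delicate trade-off between the large slab-overlap count $\sim (L_1+L_2)N^2/(\abs{\xi_0\cdot\omega^\perp}^2\abs{I})$ and the smallness of the angular weight $\theta_{12}\sim\abs{\xi_0\cdot\omega^\perp}/\Nmin^{12}$. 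Extracting that cancellation requires opening up the proof of Theorem~\ref{NullRay} rather than invoking it as a black box, so the essential idea of the proof is still missing from the proposal.
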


We end this section by recalling some facts, proved in \cite{Selberg:2008b}, about the bilinear interaction $X_0=X_1-X_2$, where we assume $\xi_j \neq 0$.  Given signs $(\pm_0,\pm_1,\pm_2)$, we  define as before the hyperbolic weights $\hypwt_j = \tau_j \pm_j \abs{\xi_j}$ and the angles $\theta_{jk} = \theta(\pm_j\xi_j,\pm_k\xi_k)$ for $j,k=0,1,2$.

In Lemma \ref{AnglesLemma} we related $\theta_{12}$ to the size of the weights $\hypwt_j$ and $\abs{\xi_j}$. The sign $\pm_0$ was arbitrary, but by keeping track of the sign we can get more. In fact, since $\tau_0=\tau_1-\tau_2$, we have
$
  \hypwt_0 - \hypwt_1 + \hypwt_2
  = \pm_0\abs{\xi_0}-\pm_1\abs{\xi_1} \pm_2\abs{\xi_2},
$
so defining
\begin{equation}\label{SignDef}
  \pm_{12}
  \equiv
  \begin{cases}
  + \qquad &\text{if $(\pm_1,\pm_2) = (+,+)$ and $\abs{\xi_1} > \abs{\xi_2}$},
  \\
  - \qquad &\text{if $(\pm_1,\pm_2) = (+,+)$ and $\abs{\xi_1} \le \abs{\xi_2}$},
  \\
  + \qquad &\text{if $(\pm_1,\pm_2) = (+,-)$},
  \end{cases}
\end{equation}
and correspondingly in the remaining cases $(\pm_1,\pm_2) = (-,-), (-,+)$ by reversing all three signs $(\pm_{12},\pm_1,\pm_2)$ above, it is clear that the following holds:

\begin{lemma}\label{F:Lemma2}
If $\pm_0 \neq \pm_{12}$, then $\max\left(\abs{\hypwt_0},\abs{\hypwt_1},\abs{\hypwt_2}\right)
  \gtrsim \abs{\xi_0}$.
\end{lemma}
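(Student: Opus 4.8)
The plan is to derive the estimate from an elementary sign analysis of the algebraic identity recorded just above the statement. Since $\tau_0 = \tau_1 - \tau_2$, the $\tau$-contributions cancel in $\hypwt_0 - \hypwt_1 + \hypwt_2$, leaving
$$
  \hypwt_0 - \hypwt_1 + \hypwt_2 = \pm_0\abs{\xi_0} - \pm_1\abs{\xi_1} \pm_2\abs{\xi_2},
$$
so it suffices to show that the right-hand side has modulus $\ge \abs{\xi_0}$ whenever $\pm_0 \neq \pm_{12}$; the lemma then follows at once from the triangle inequality, since $\abs{\xi_0} \le \abs{\hypwt_0} + \abs{\hypwt_1} + \abs{\hypwt_2} \le 3\max(\abs{\hypwt_0},\abs{\hypwt_1},\abs{\hypwt_2})$.

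To prove the claim I would go through the cases in the definition \eqref{SignDef}. The simultaneous reversal $(\pm_0,\pm_1,\pm_2,\pm_{12}) \mapsto (-\pm_0,-\pm_1,-\pm_2,-\pm_{12})$ negates the right-hand side of the identity, hence preserves its modulus, and it preserves the hypothesis $\pm_0 \neq \pm_{12}$, so it is enough to treat $(\pm_1,\pm_2) = (+,-)$ and $(\pm_1,\pm_2) = (+,+)$. If $(\pm_1,\pm_2) = (+,-)$ then $\pm_{12} = +$, so $\pm_0 = -$ and the right-hand side equals $-(\abs{\xi_0}+\abs{\xi_1}+\abs{\xi_2})$, of modulus $\ge \abs{\xi_0}$. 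If $(\pm_1,\pm_2) = (+,+)$ and $\abs{\xi_1} > \abs{\xi_2}$ then $\pm_{12} = +$, so $\pm_0 = -$ and the right-hand side equals $-\abs{\xi_0} - (\abs{\xi_1}-\abs{\xi_2}) \le -\abs{\xi_0}$; if instead $\abs{\xi_1} \le \abs{\xi_2}$ then $\pm_{12} = -$, so $\pm_0 = +$ and the right-hand side equals $\abs{\xi_0} + (\abs{\xi_2}-\abs{\xi_1}) \ge \abs{\xi_0}$. In every case the term $\pm_0\abs{\xi_0}$ lines up in sign with the sum of the remaining two terms (or dominates it), so no cancellation can occur.

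There is no real analytic obstacle here; the entire content is the bookkeeping of signs. The only point needing care is the reduction to the two representative cases: one must check that the definition \eqref{SignDef} is genuinely invariant under the global sign flip — this is precisely how the remaining cases $(\pm_1,\pm_2)=(-,-),(-,+)$ are defined — and that flipping $\pm_0$ together with $\pm_{12}$ leaves the hypothesis $\pm_0 \neq \pm_{12}$ intact. Once this is verified, the case check above is immediate and completes the proof.
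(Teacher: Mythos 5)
Your proof is correct and fills in exactly the sign-bookkeeping that the paper treats as immediate (the paper simply states ``it is clear that the following holds'' right after introducing the identity $\hypwt_0 - \hypwt_1 + \hypwt_2 = \pm_0\abs{\xi_0} - \pm_1\abs{\xi_1} \pm_2\abs{\xi_2}$ and the definition of $\pm_{12}$). The global sign-flip reduction and the three-case check are the intended argument, and the triangle-inequality step at the end is handled properly.
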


In the remaining case $\pm_0 = \pm_{12}$ we have the following estimates.

\begin{lemma}\label{F:Lemma1} \emph{(\cite{Selberg:2008b}.)}
If $\pm_0 = \pm_{12}$, then
$$
  \min\left(\theta_{01},\theta_{02}\right)
  \sim
  \frac{\min\left(\abs{\xi_1},\abs{\xi_2}\right)}{\abs{\xi_0}} \sin \theta_{12}.
$$
Moreover, if $\pm_0 = \pm_{12}$ and $\pm_1 \neq \pm_2$, then
$$
  \max\left(\theta_{01},\theta_{02}\right)
  \sim
  \theta_{12}.
$$
\end{lemma}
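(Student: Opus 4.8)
The statement is purely about the Euclidean geometry of the triangle $\Delta$ with vertices $0$, $\xi_1$, $\xi_2$, whose side lengths are $\abs{\xi_1}$, $\abs{\xi_2}$ and $\abs{\xi_1-\xi_2}=\abs{\xi_0}$; so the plan is to translate $\theta_{01}$, $\theta_{02}$, $\theta_{12}$ into the interior angles of $\Delta$ and then invoke the law of sines. Write $\vartheta_0,\vartheta_1,\vartheta_2$ for the interior angles of $\Delta$ at the vertices $0,\xi_1,\xi_2$, so that $\vartheta_0+\vartheta_1+\vartheta_2=\pi$ and $\sin\vartheta_0/\abs{\xi_0}=\sin\vartheta_1/\abs{\xi_2}=\sin\vartheta_2/\abs{\xi_1}$. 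Inspecting the directions of the sides at each vertex gives $\theta(\xi_1,\xi_2)=\vartheta_0$, $\theta(\xi_0,\xi_1)=\vartheta_1$ and $\theta(\xi_0,-\xi_2)=\vartheta_2$, hence, using $\theta(-v,w)=\pi-\theta(v,w)$, every $\theta_{jk}=\theta(\pm_j\xi_j,\pm_k\xi_k)$ equals one of $\vartheta_0,\vartheta_1,\vartheta_2$ or its supplement, according to whether $\pm_j=\pm_k$.

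The hypothesis $\pm_0=\pm_{12}$ then pins down which combination occurs. First I would use the symmetry under reversing all three signs $(\pm_0,\pm_1,\pm_2)$ — which leaves every $\theta_{jk}$ unchanged, flips $\pm_{12}$ together with $(\pm_1,\pm_2)$, and hence preserves $\pm_0=\pm_{12}$ — to reduce to $\pm_1=+$. If $(\pm_1,\pm_2)=(+,-)$, then $\pm_{12}=+$ by \eqref{SignDef}, so $\pm_0=+$ and $\theta_{01}=\vartheta_1$, $\theta_{02}=\vartheta_2$, $\theta_{12}=\pi-\vartheta_0=\vartheta_1+\vartheta_2$. If $(\pm_1,\pm_2)=(+,+)$, the definition \eqref{SignDef} splits into $\abs{\xi_1}>\abs{\xi_2}$ (then $\pm_0=+$, so $\theta_{01}=\vartheta_1$, $\theta_{02}=\pi-\vartheta_2$, $\theta_{12}=\vartheta_0$) and $\abs{\xi_1}\le\abs{\xi_2}$ (then $\pm_0=-$, so $\theta_{01}=\pi-\vartheta_1$, $\theta_{02}=\vartheta_2$, $\theta_{12}=\vartheta_0$).

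In all three cases, using $\pi-\vartheta_1=\vartheta_0+\vartheta_2\ge\vartheta_2$ and $\pi-\vartheta_2=\vartheta_0+\vartheta_1\ge\vartheta_1$ to discard the supplementary alternative, one reads off $\min(\theta_{01},\theta_{02})=\min(\vartheta_1,\vartheta_2)$, which is precisely the angle of $\Delta$ opposite the shorter of $\abs{\xi_1},\abs{\xi_2}$. Since a triangle has at most one non-acute angle and the smaller of $\vartheta_1,\vartheta_2$ is never that one, $\min(\vartheta_1,\vartheta_2)$ is acute and is therefore comparable to its own sine; and the law of sines gives $\sin\bigl(\min(\vartheta_1,\vartheta_2)\bigr)=\frac{\min(\abs{\xi_1},\abs{\xi_2})}{\abs{\xi_0}}\sin\vartheta_0=\frac{\min(\abs{\xi_1},\abs{\xi_2})}{\abs{\xi_0}}\sin\theta_{12}$, since $\sin\theta_{12}=\sin\vartheta_0$ whether $\theta_{12}=\vartheta_0$ or $\theta_{12}=\pi-\vartheta_0$. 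This is the first assertion. For the second, when $\pm_1\neq\pm_2$ we are in the case $(+,-)$ above, where $\max(\theta_{01},\theta_{02})=\max(\vartheta_1,\vartheta_2)$ and $\theta_{12}=\vartheta_1+\vartheta_2$, whence $\tfrac12\theta_{12}\le\max(\theta_{01},\theta_{02})\le\theta_{12}$, i.e.\ $\max(\theta_{01},\theta_{02})\sim\theta_{12}$.

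The only point requiring genuine care is the sign bookkeeping — correctly matching the definition \eqref{SignDef} of $\pm_{12}$ to the configuration of $\Delta$ in each case — together with the elementary observation that the minimal of the two angles opposite $\xi_1$ and $\xi_2$ is acute, so that it is comparable to its sine. No nontrivial estimate enters; everything reduces to the law of sines and $\vartheta_0+\vartheta_1+\vartheta_2=\pi$.
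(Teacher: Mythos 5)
Your proof is correct. The paper states Lemma~\ref{F:Lemma1} as a citation to \cite{Selberg:2008b} and gives no proof of its own, so there is nothing to compare against; but your argument---identifying $\theta_{01},\theta_{02},\theta_{12}$ with the interior angles $\vartheta_1,\vartheta_2,\vartheta_0$ of the triangle $(0,\xi_1,\xi_2)$ or their supplements, using the hypothesis $\pm_0=\pm_{12}$ together with \eqref{SignDef} to pin down which supplement occurs, then discarding supplements via $\pi-\vartheta_j=\vartheta_0+\vartheta_k\ge\vartheta_k$, observing that $\min(\vartheta_1,\vartheta_2)$ is acute, and finishing with the law of sines---is a complete and correct elementary derivation. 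Every step of the sign case-work checks out, including the fact (used in \eqref{E:3:2}--\eqref{E:3:3}) that $\theta_{02}\le\theta_{01}$ when $\abs{\xi_1}\le\abs{\xi_2}$, which also follows from your table. One small prose quibble: your blanket statement that ``$\theta_{jk}$ equals $\vartheta_\ell$ or its supplement according to whether $\pm_j=\pm_k$'' is not quite right for $\theta_{02}$, since $\vartheta_2=\theta(\xi_0,-\xi_2)$ rather than $\theta(\xi_0,\xi_2)$, so there equal signs give the supplement; but your explicit case-by-case identifications immediately afterward are all correct, so this is only a matter of phrasing.
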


\begin{lemma}\label{F:Lemma4} \emph{(\cite{Selberg:2008b}.)}
For all signs,
$$
  \max\left( \abs{\hypwt_0}, \abs{\hypwt_1}, \abs{\hypwt_2} \right)
  \gtrsim \abs{\xi_0} \min\left(\theta_{01},\theta_{02}\right)^2.
$$
\end{lemma}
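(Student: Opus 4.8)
The plan is to deduce this from Lemma \ref{AnglesLemma}, applied not to the given bilinear interaction $X_0 = X_1 - X_2$ itself but to two rearranged copies of it, so as to replace the ``factor angle'' $\theta_{12}$ appearing there by $\theta_{01}$ and by $\theta_{02}$, respectively.

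First I would rewrite $X_0 = X_1 - X_2$ as $X_2 = X_1 - X_0$; thus $(X_2, X_1, X_0)$ is again a bilinear interaction, now with ``output'' $X_2$ and factors $X_1, X_0$. Applying Lemma \ref{AnglesLemma} to this interaction with the sign triple $(\pm_2, \pm_1, \pm_0)$ --- which produces exactly the same hyperbolic weights $\hypwt_j = \tau_j \pm_j \abs{\xi_j}$ and whose factor angle is $\theta(\pm_1\xi_1,\pm_0\xi_0) = \theta_{01}$ --- gives
$$
  \max\left(\abs{\hypwt_0},\abs{\hypwt_1},\abs{\hypwt_2}\right)
  \gtrsim \min\left(\abs{\xi_1},\abs{\xi_0}\right)\theta_{01}^2 .
$$
Likewise, writing $X_1 = X_0 - (-X_2)$ and setting $Y = -X_2$, the triple $(X_1, X_0, Y)$ is a bilinear interaction, to which I would apply Lemma \ref{AnglesLemma} with the sign triple $(\pm_1, \pm_0, \mp_2)$: the reversed sign on the reflected vector $Y$ makes its hyperbolic weight equal to $-\hypwt_2$ (hence of the same modulus) and makes the factor angle equal to $\theta(\pm_0\xi_0, \mp_2(-\xi_2)) = \theta_{02}$, so this yields
$$
  \max\left(\abs{\hypwt_0},\abs{\hypwt_1},\abs{\hypwt_2}\right)
  \gtrsim \min\left(\abs{\xi_0},\abs{\xi_2}\right)\theta_{02}^2 .
$$

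To conclude, I would combine these with the triangle inequality $\abs{\xi_0} \le \abs{\xi_1} + \abs{\xi_2}$, which forces at least one of $\abs{\xi_1},\abs{\xi_2}$ to be $\gtrsim \abs{\xi_0}$. If $\abs{\xi_1}\gtrsim\abs{\xi_0}$ then $\min(\abs{\xi_1},\abs{\xi_0})\sim\abs{\xi_0}$ and the first estimate gives $\max(\abs{\hypwt_j}) \gtrsim \abs{\xi_0}\theta_{01}^2 \ge \abs{\xi_0}\min(\theta_{01},\theta_{02})^2$; if instead $\abs{\xi_2}\gtrsim\abs{\xi_0}$, the second estimate gives the same bound with $\theta_{02}$ in place of $\theta_{01}$. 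One of the two cases always applies, which proves the lemma.

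Since everything rests on Lemma \ref{AnglesLemma}, no new harmonic analysis is required; the only delicate point is the sign bookkeeping, i.e.\ checking carefully that permuting the roles of $X_0,X_1,X_2$ and reflecting $X_2 \mapsto -X_2$ (with the compensating flip $\pm_2 \mapsto \mp_2$) leaves all three $\abs{\hypwt_j}$ unchanged while converting the factor angle into $\theta_{01}$ or $\theta_{02}$. I expect that verification to be the main, though modest, obstacle.
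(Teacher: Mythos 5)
Your proposal is correct. The paper itself does not prove Lemma~\ref{F:Lemma4} (nor Lemma~\ref{AnglesLemma}); both are imported from \cite{Selberg:2008b} and \cite{Selberg:2008c} respectively, so there is no in-paper argument to compare against. Your derivation fills that gap cleanly and self-containedly: the two permuted triples $(X_2,X_1,X_0)$ and $(X_1,X_0,-X_2)$ are indeed bilinear interactions in the paper's sense, the sign assignment $(\pm_2,\pm_1,\pm_0)$ reproduces the same weights $\hypwt_j$ verbatim, and the reflection $X_2\mapsto -X_2$ paired with $\pm_2\mapsto\mp_2$ sends $\hypwt_2\mapsto-\hypwt_2$ (preserving the modulus) while turning the factor angle into $\theta(\pm_0\xi_0,\pm_2\xi_2)=\theta_{02}$, exactly as you claim. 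The concluding step is also tight: since $\xi_0=\xi_1-\xi_2$ gives $\abs{\xi_0}\le\abs{\xi_1}+\abs{\xi_2}$, at least one of $\abs{\xi_1},\abs{\xi_2}$ is $\ge\abs{\xi_0}/2$, in which case the corresponding bound $\min(\abs{\xi_j},\abs{\xi_0})\theta_{0j}^2\gtrsim\abs{\xi_0}\theta_{0j}^2\ge\abs{\xi_0}\min(\theta_{01},\theta_{02})^2$ finishes the argument. The only thing worth adding is that you use only the first (unconditional) conclusion of Lemma~\ref{AnglesLemma}, not the subsequent dichotomy, so the argument is as economical as it can be.
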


\begin{lemma}\label{F:Lemma3} \emph{(\cite{Selberg:2008b}.)} If $\pm_0 = \pm_{12}$ and $\pm_1=\pm_2$, then
$$
  \frac{\abs{\xi_1}\abs{\xi_2}\theta_{12}^2}{\abs{\xi_0}} \sim \min\left( \abs{\xi_0}, \abs{\xi_1}, \abs{\xi_2}\right) \max(\theta_{01},\theta_{02})^2
  \lesssim
  \max\left( \abs{\hypwt_0}, \abs{\hypwt_1}, \abs{\hypwt_2} \right),
$$
whereas if $\pm_0 = \pm_{12}$ and $\pm_1 \neq \pm_2$, then
$$
  \max\left(\theta_{01},\theta_{02}\right)
  \sim
  \theta_{12}.
$$
\end{lemma}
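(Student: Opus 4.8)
This lemma is quoted from \cite{Selberg:2008b}; the plan to prove it is as follows. The statement splits according to whether $\pm_1=\pm_2$ or not. If $\pm_1\neq\pm_2$ there is nothing new to do: the asserted relation $\max(\theta_{01},\theta_{02})\sim\theta_{12}$ is precisely the second conclusion of Lemma \ref{F:Lemma1}, which already presupposes $\pm_0=\pm_{12}$. So the content is the chain of estimates in the case $\pm_1=\pm_2$, and for that case I would first normalize. The quantities $|\xi_0|$, $|\xi_1|$, $|\xi_2|$, $\theta_{12}$, the multiset $\{\theta_{01},\theta_{02}\}$ and $\max_j|\hypwt_j|$ are all invariant under simultaneously reversing $(\pm_0,\pm_1,\pm_2)$, and also under the substitution $(X_0,X_1,X_2)\mapsto(-X_0,X_2,X_1)$ together with reversal of $\pm_0$ and interchange of $\pm_1,\pm_2$, which swaps $\theta_{01}\leftrightarrow\theta_{02}$ and preserves the hypothesis $\pm_0=\pm_{12}$ so long as $|\xi_1|\neq|\xi_2|$. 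Hence one may assume $\pm_1=\pm_2=+$ and $|\xi_1|>|\xi_2|$, in which case \eqref{SignDef} forces $\pm_0=+$ as well; the degenerate case $|\xi_1|=|\xi_2|$ (where $\pm_0=-$) is disposed of directly using $|\xi_0|=2|\xi_1|\sin(\theta_{12}/2)$ and $\theta_{01},\theta_{02}\sim 1$. So assume $\xi_0=\xi_1-\xi_2$, $|\xi_1|>|\xi_2|$, $\theta_{12}=\theta(\xi_1,\xi_2)$ and $\theta_{0j}=\theta(\xi_0,\xi_j)$.

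The key step is then elementary planar trigonometry in the triangle with vertices $0$, $\xi_1$, $\xi_2$: its interior angles are $\theta_{12}$ at $0$, $\theta_{01}$ at $\xi_1$, and $\pi-\theta_{02}$ at $\xi_2$, and the fact that they sum to $\pi$ gives the identity $\theta_{02}=\theta_{01}+\theta_{12}$; consequently $\min(\theta_{01},\theta_{02})=\theta_{01}$ and $\max(\theta_{01},\theta_{02})\sim\max(\theta_{01},\theta_{12})$. Now Lemma \ref{F:Lemma1} identifies $\theta_{01}\sim(|\xi_2|/|\xi_0|)\sin\theta_{12}$, and the law of cosines gives $|\xi_0|^2\sim(|\xi_1|-|\xi_2|)^2+|\xi_1||\xi_2|\sin^2(\theta_{12}/2)$, from which one reads off $|\xi_0|\lesssim|\xi_1|$, $\max(|\xi_0|,|\xi_2|)\sim|\xi_1|$ (since $|\xi_2|\ll|\xi_1|$ would force $|\xi_0|\ge|\xi_1|-|\xi_2|\sim|\xi_1|$), and that $|\xi_0|\ll|\xi_1|$ can only occur when $|\xi_1|\sim|\xi_2|$. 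Feeding these into the previous display and separating the regimes $\theta_{12}\gtrsim 1$ (where $\sin\theta_{12}\sim\sin(\theta_{12}/2)\sim 1$, $|\xi_0|\sim|\xi_1|$, and all three members of the chain collapse to $\min(|\xi_1|,|\xi_2|)$) and $\theta_{12}\ll 1$ (where $\max(\theta_{01},\theta_{02})\sim\theta_{12}|\xi_1|/|\xi_0|$), a short case analysis according to the relative sizes of $|\xi_0|$, $|\xi_1|$, $|\xi_2|$ yields
$$
  \min(|\xi_0|,|\xi_1|,|\xi_2|)\,\max(\theta_{01},\theta_{02})^2 \sim \frac{|\xi_1||\xi_2|\theta_{12}^2}{|\xi_0|},
$$
which is the equivalence $\sim$ in the lemma.

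It remains to check $|\xi_1||\xi_2|\theta_{12}^2/|\xi_0|\lesssim\max_j|\hypwt_j|$, and this is precisely the ``or else'' alternative in Lemma \ref{AnglesLemma}, forced here because the other alternative there carries the hypothesis $\pm_1\neq\pm_2$. I expect the only genuine obstacle to be the trigonometric step of the second paragraph --- in particular keeping the signed angles $\theta_{01},\theta_{02}$ straight against the triangle's interior angles, and correctly sorting the $\theta_{12}\sim 1$ and $\theta_{12}\ll 1$ regimes (the $\theta_{12}$ versus $\sin\theta_{12}$ discrepancy being harmless, since once $\theta_{12}\sim 1$ everything reduces to $\min(|\xi_1|,|\xi_2|)$); everything else is a citation.
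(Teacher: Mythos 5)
The paper itself offers no proof of Lemma \ref{F:Lemma3}: it is stated with a citation to \cite{Selberg:2008b}, so there is no in-paper argument to compare against. What you propose is nonetheless a correct and self-contained reconstruction. The $\pm_1\neq\pm_2$ branch is indeed a verbatim restatement of the second conclusion of Lemma \ref{F:Lemma1}. In the $\pm_1=\pm_2$ branch, the normalization by the two symmetries is legitimate (in particular the involution $(X_0,X_1,X_2)\mapsto(-X_0,X_2,X_1)$ with the accompanying sign changes preserves $\pm_0=\pm_{12}$ precisely because \eqref{SignDef} flips $\pm_{12}$ when $\abs{\xi_1}$ and $\abs{\xi_2}$ trade places), and the triangle identity $\theta_{02}=\theta_{01}+\theta_{12}$ is exactly right: with $\pm_0=\pm_1=\pm_2=+$, the interior angles at $0$, $\xi_1$, $\xi_2$ are $\theta_{12}$, $\theta_{01}$, $\pi-\theta_{02}$. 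This gives $\min=\theta_{01}$, $\max=\theta_{02}\sim\max(\theta_{01},\theta_{12})$, which also agrees with the fact quoted in the text after \eqref{E:3:3} that $\theta_{02}\le\theta_{01}$ when $\abs{\xi_1}\le\abs{\xi_2}$. Combining Lemma \ref{F:Lemma1} for $\theta_{01}$ with the law of cosines for $\abs{\xi_0}$ and the observations $\max(\abs{\xi_0},\abs{\xi_2})\sim\abs{\xi_1}\gtrsim\abs{\xi_0}$ does reduce the $\sim$ to a finite case check on the relative sizes of $\abs{\xi_0},\abs{\xi_1},\abs{\xi_2}$, and I verified the cases close; the $\lesssim$ is, as you say, exactly the ``or else'' branch of Lemma \ref{AnglesLemma}, available here because the excluded branch requires $\pm_1\neq\pm_2$. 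I see no gap; the only place that still requires writing out is the case check in the second paragraph, which you have correctly flagged.
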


We now have at our disposal all the tools required to finish the proof of the main dyadic estimate \eqref{MainDyadic}. Recall that the DKG case \eqref{D:97} has been completely dealt with, so the remaining null regimes are \eqref{D:98} and \eqref{D:99}.

\section{Proof of the dyadic quadrilinear estimate, Part I}\label{G}

By symmetry, we may assume
$$
  L_1 \le L_2,
  \qquad
  L_3 \le L_4,
$$
We distinguish the cases (i) $L_2 \le L_0'$, (ii) $L_4 \le L_0$ and (iii) $L_2 > L_0'$, $L_4 > L_0$, but in this section we further restrict (i) and (ii) to $L_0 \sim L_0'$, leaving the remaining cases for the next section. By symmetry it suffices to consider
\begin{subequations}\label{G:8}
\begin{alignat}{2}
  \label{G:8a}
  &\theta_{12} \ll \phi \lesssim \theta_{34},&
  \qquad &\bigl( \implies \abs{q_{1234}} \lesssim \phi\theta_{34} \bigr)
  \\
  \label{G:8b}
  &\theta_{12}, \theta_{34} \ll \phi,&
  \qquad &\bigl( \implies \abs{q_{1234}} \lesssim \phi^2 \bigr),
\end{alignat}
\end{subequations}
where the estimates on the right hold by Lemma \ref{NullLemma1}. By Lemma \ref{AnglesLemma},
\begin{equation}\label{D:134:10}
  \theta_{12} \lesssim \gamma \equiv \min \left( \gamma^*, \biggl(\frac{N_0\Lmax^{0'12}}{N_1N_2}\biggr)^{1/2} \right),
  \qquad \text{for some $0 < \gamma^* \ll 1$}.
\end{equation}
In fact, here we can choose any $0 < \gamma^* \ll 1$ that we want, by adjusting the implicit constant in \eqref{G:8a}. By Lemma \ref{AnglesLemma} we also have
\begin{equation}\label{E:4}
  \theta_{34} \lesssim \gamma' \equiv \min \left(1, \frac{\Lmax^{034}}{\Nmin^{34}} \right)^{1/2}.
\end{equation}
Observe that
\begin{equation}\label{E:2}
  \phi \le \min(\theta_{01},\theta_{02}) + \min(\theta_{03},\theta_{04}),
\end{equation}
since $\theta_{jk} \le \theta_{0j} + \theta_{0k}$. By Lemma \ref{F:Lemma4},
\begin{equation}\label{E:2:2}
  \min(\theta_{01},\theta_{02})
  \lesssim \biggl(\frac{\Lmax^{0'12}}{N_0} \biggr)^{1/2},
  \qquad
  \min(\theta_{03},\theta_{04})
  \lesssim \biggl(\frac{\Lmax^{034}}{N_0} \biggr)^{1/2}.
\end{equation}
We assume that $u_j \in L^2(\R^{1+2})$ for $j=1,2,3,4$ has nonnegative Fourier transform $\widetilde{u_j}$ supported in $K^{\pm_j}_{N_j,L_j}$. To simplify, we introduce the shorthand
\begin{equation}
  \label{E:3:1}
  u_{0'12}
  = \Proj_{K^{\pm_0}_{N_0,L_0'}}\left( u_1 \overline{u_2} \right),
  \qquad
  u_{043}
  = \Proj_{K^{\pm_0}_{N_0,L_0}}\left( u_4 \overline{u_3} \right).
\end{equation}
We define $\pm_{12}$ and $\pm_{43}$ as in \eqref{SignDef}, recalling that $\xi_0 = \xi_1-\xi_2 = \xi_4-\xi_3$. Note the following important relations:
\begin{alignat}{2}
  \label{E:3:2}
  &\pm_0=\pm_{12}, \; \theta_{12} \ll 1, \; N_0 \ll N_1 \sim N_2&
  \quad \implies \quad
  &\theta_{01} \sim \theta_{02} \sim \frac{N_1}{N_0} \theta_{12},
  \\
  \label{E:3:3}
  &\pm_0=\pm_{12}, \; \theta_{12} \ll 1, \; N_1 \lesssim N_0 \sim N_2&
  \quad \implies \quad
  &\theta_{12} \sim \theta_{01} \sim \frac{N_0}{N_1} \theta_{02}.
\end{alignat}
This follows from Lemmas \ref{F:Lemma1} and \ref{F:Lemma3}, and the fact, from the proof of Lemma \ref{F:Lemma1} in \cite{Selberg:2008b}, that $\theta_{02} \le \theta_{01}$ if $\abs{\xi_1} \le \abs{\xi_2}$. Note also that \eqref{E:3:2} can only happen if $\pm_1=\pm_2$, by Lemma \ref{AnglesLemma}. Of course, \eqref{E:3:3} applies symmetrically if $N_2 \lesssim N_0 \sim N_1$. Analogous estimates apply to the index 043.

\subsection{The case $L_2 \le L_0' \sim L_0$}

Then we treat the cases \eqref{G:8a} and \eqref{G:8b} simultaneously by using Lemma \ref{NullLemma2} to estimate $\abs{q_{1234}} \lesssim \theta_{13}\theta_{24}$, and pairing up $u_1$ with $u_3$, and $u_2$ with $u_4$, by changing variables from $(\tau_0',\tau_0,\xi_0)$ to
$$
  \tilde\tau_0' = \tau_1+\tau_3,
  \qquad
  \tilde\tau_0 = \tau_2+\tau_4,
  \qquad
  \tilde\xi_0 = \xi_1+\xi_3 = \xi_2+\xi_4.
$$
Then $\tilde\tau_0' - \tilde\tau_0 = \tau_0' - \tau_0$, so the symbol of $T_{L_0,L_0'}$ is invariant under the change of variables: $a_{L_0,L_0'}^{\pm_0}(\tau_0,\tau_0',\xi_0) =
  a_{L_0,L_0'}^{\pm_0}(\tilde\tau_0,\tilde\tau_0',\tilde\xi_0)$. This is where we use the assumption $L_0 \sim L_0'$. Using  Lemma \ref{Tlemma} we conclude that
\begin{align*}
  J_{\boldN,\boldL}
  &\lesssim
  \int
  T_{L_0,L_0'} \mathcal F\mathfrak B'_{\theta_{13}}(u_1,u_3)(\tilde X_0)
  \cdot \mathcal F \mathfrak B'_{\theta_{24}}(u_2,u_4)(\tilde X_0)
  \, d\tilde X_0
  \\
  &\lesssim
  \norm{\mathfrak B'_{\theta_{13}}(u_1,u_3)}
  \norm{\mathfrak B'_{\theta_{24}}(u_2,u_4)},
\end{align*}
where the null form $\mathfrak B'_{\theta_{13}}(u_1,u_3)$ is defined by inserting $\theta_{13}$ in the convolution formula $\widetilde{u_1u_3}(X_0) \simeq \int \widetilde{u_1}(X_1)\widetilde{u_3}(X_3) \delta(X_0 - X_1 - X_3) \, dX_1 \, dX_3$. The estimates for $\mathfrak B_{\theta_{12}}$ in the previous section hold also for this null form.

Recalling \eqref{D:134:10} and applying Lemma \ref{WhitneyLemma2} to the pair $(\pm_1\xi_1,\pm_2\xi_2)$ before making the above change of variables, we obtain similarly
$$
  J_{\boldN,\boldL}
  \lesssim
  \sum_{\omega_1,\omega_2}
  \norm{\mathfrak B'_{\theta_{13}}(u_1^{\gamma,\omega_1},u_3)}
  \norm{ \mathfrak B'_{\theta_{24}}(u_2^{\gamma,\omega_2},u_4)},
$$
where the sum is over $\omega_1,\omega_2 \in \Omega(\gamma)$ satisfying $\theta(\omega_1,\omega_2) \lesssim \gamma$ and we write
$$
  u_j^{\gamma,\omega_j} = \Proj_{\pm_j\xi_j \in \Gamma_{\gamma}(\omega_j)} u.
$$
Since the spatial frequency $\xi_j$ of $u_j^{\gamma,\omega_j}$ is restricted to a tube of radius comparable to $N_j\gamma$ about $\R\omega_j$, we can apply Theorem \ref{NullRay}, obtaining
\begin{align*}
  J_{\boldN,\boldL}^{\mathbf\Sigma}
  &\lesssim
  \left( N_1 N_2 \gamma^2 L_1 L_2 L_3 L_4 \right)^{1/2}
  \sum_{\omega_1,\omega_2}\norm{u_1^{\gamma,\omega_1}}\norm{u_2^{\gamma,\omega_2}}\norm{u_3}\norm{u_4}
  \\
  &\lesssim
  \left( N_0 L_0' L_1 L_2 L_3 L_4 \right)^{1/2}
  \norm{u_1}\norm{u_2}\norm{u_3}\norm{u_4},
\end{align*}
where we summed $\omega_1,\omega_2$ as in \eqref{OmegaSum}, and used the definition\eqref{D:134:10} of $\gamma$, taking into account the assumption $L_2 \le L_0'$. Interpolating with the crude estimate\begin{equation}\label{TrivialEstimate}
  J_{\boldN,\boldL}
  \lesssim
  \norm{u_{0'12}}
  \norm{u_{043}}
  \lesssim
  \bigl( N_0^2 \Lmin^{0'12} \bigr)^{1/2}
  \left(N_0^2 \Lmin^{034} \right)^{1/2}
  \norm{u_1} \norm{u_2} \norm{u_3} \norm{u_4},
\end{equation}
which follows from \eqref{SobolevType}, we get the desired estimate \eqref{MainDyadic}, recalling that $L_0 \sim L_0'$.

\subsection{The case $L_4 \le L_0 \sim L_0'$}

If $\theta_{34} \ll 1$, then we have the analog of \eqref{D:134:10}, so by symmetry the argument in the previous subsection applies, with the roles of the indices 12 and 34 reversed. We therefore assume $\theta_{34} \sim 1$. Then $\Nmin^{34} \lesssim L_0$, by Lemma \ref{AnglesLemma}. Moreover, we may assume $L_2 > L_0'$, since the case $L_2 \le L_0'$ is done. Now trivially estimate $\abs{q_{1234}} \lesssim 1$. Then with notation as in \eqref{E:3:1},
\begin{align*}
  J_{\boldN,\boldL}
  &\lesssim
  N_0^{3/4}(L_0'L_1)^{3/8} (\Nmin^{34})^{3/4}(L_3L_4)^{3/8}
  \norm{u_1} \norm{u_2} \norm{u_3} \norm{u_4}
  \\
  &\lesssim
  N_0^{3/4} \left( L_0 L_0' L_1 L_2 L_3 L_4 \right)^{3/8}
  \norm{u_1} \norm{u_2} \norm{u_3} \norm{u_4},
\end{align*}
where we used Lemma \ref{Tlemma}, Theorem \ref{BasicBilinearThm}, the assumption $L_2 > L_0'$ and the fact that $\Nmin^{34} \lesssim L_0 \sim L_0'$.

\subsection{The case $L_2 > L_0'$ and $L_4 > L_0$}

So far we could treat \eqref{G:8a} and \eqref{G:8b} simultaneously, but from now on we need to separate the two, and we divide into subcases depending on which term dominates in the right hand side of \eqref{E:2}:
\begin{subequations}\label{G:10}
\begin{alignat}{2}
  \label{G:10a}
  &\theta_{12} \ll \phi \lesssim \theta_{34},&
  \qquad
  &\min(\theta_{01},\theta_{02}) \ge \min(\theta_{03},\theta_{04}),
  \\
  \label{G:10b}
  &\theta_{12} \ll \phi \lesssim \theta_{34},&
  \qquad
  &\min(\theta_{01},\theta_{02}) < \min(\theta_{03},\theta_{04}),
  \\
  \label{G:10c}
  &\theta_{12}, \theta_{34} \ll \phi,&
  \qquad
  &\min(\theta_{01},\theta_{02}) < \min(\theta_{03},\theta_{04}),
  \\
  \label{G:10d}
  &\theta_{12}, \theta_{34} \ll \phi,&
  \qquad
  &\min(\theta_{01},\theta_{02}) \ge \min(\theta_{03},\theta_{04}).
\end{alignat}
\end{subequations}
Note that the last two are symmetric, so we only consider the first three. Subcase \eqref{G:10b} is by far the most difficult, and will be split further into  subcases.

\subsection{Subcase $\theta_{12} \ll \phi \lesssim \theta_{34}$, $\min(\theta_{01},\theta_{02}) \ge \min(\theta_{03},\theta_{04})$}\label{G:12:0}

By \eqref{E:4}--\eqref{E:2:2},
$$
  \abs{q_{1234}}
  \lesssim \phi\theta_{34}
  \lesssim (\phi\theta_{34})^{3/4}
  \lesssim
  \left( \frac{L_2}{N_0} \right)^{3/8}
  \left( \frac{L_4}{\Nmin^{34}} \right)^{3/8},
$$
hence
\begin{align*}
  J_{\boldN,\boldL}
  &\lesssim
  \left( \frac{L_2}{N_0} \frac{L_4}{\Nmin^{34}} \right)^{3/8} 
  \left( N_0^2 L_0'L_1 \right)^{3/8} \left( N_0\Nmin^{034} L_0L_3 \right)^{3/8}
  \prod_{j=1}^4 \norm{u_j}
  \\
  &=
  N_0^{3/4} (L_0L_0' L_1L_2L_3L_4)^{3/8} \norm{u_1} \norm{u_2} \norm{u_3} \norm{u_4},
\end{align*}
where we used Lemma \ref{Tlemma} and Theorem \ref{BasicBilinearThm}.

\subsection{Subcase $\theta_{12} \ll \phi \lesssim \theta_{34}$, $\min(\theta_{01},\theta_{02}) < \min(\theta_{03},\theta_{04})$}\label{G:14:0}

Then
\begin{equation}\label{G:14:2}
  \abs{q_{1234}} \lesssim \phi\theta_{34}
  \lesssim \min(\theta_{03},\theta_{04}) \left( \frac{L_4}{\Nmin^{34}} \right)^{p}
\end{equation}
for $0 \le p \le 1/2$. By \eqref{D:134:10} and Lemma \ref{WhitneyLemma2} applied to $(\pm_1\xi_1,\pm_2\xi_2)$,
\begin{equation}\label{G:14:4}
  J_{\boldN,\boldL}
  \lesssim
  \sum_{\omega_1,\omega_2} \left( \frac{L_4}{\Nmin^{34}} \right)^{1/4}
  \norm{\mathfrak B_{\theta_{03}}' \left( \Proj_{K^{\pm_0}_{N_0,L_0}} \mathcal F^{-1} T_{L_0,L_0'}^{\pm_0} \mathcal F u_{0'12}^{\gamma,\omega_1,\omega_2}, u_3 \right)} \norm{u_4},
\end{equation}
where
\begin{equation}\label{G:14:6}
  u^{\gamma,\omega_1,\omega_2}_{0'12}
  =
  \Proj_{K^{\pm_0}_{N_0,L_0'}} 
  \left( u_1^{\gamma,\omega_1}
  \overline{ u_2^{\gamma,\omega_2} } \right)
\end{equation}
and the sum is over $\omega_1,\omega_2 \in \Omega(\gamma)$ with $\theta(\omega_1,\omega_2) \lesssim \gamma$. The spatial Fourier support of $u^{\gamma,\omega_1,\omega_2}_{0'12}$ is contained in a tube of radius comparable to $\Nmax^{12}\gamma$ around $\R\omega_1$. Therefore, by Theorem \ref{NullRay}, Lemma \ref{Tlemma} and Theorem \ref{BasicBilinearThm},
\begin{equation}\label{G:14:8}
\begin{aligned}
  J_{\boldN,\boldL}
  &\lesssim
  \sum_{\omega_1,\omega_2} \left( \frac{L_4}{\Nmin^{34}} \right)^{1/4}
  \left(\Nmax^{12}\gamma L_0L_3\right)^{1/2}
  \norm{u_{0'12}^{\gamma,\omega_1,\omega_2}}
  \norm{u_3} \norm{u_4}
  \\
  &\lesssim
  \left( \frac{L_4^{1/2}}{(\Nmin^{34})^{1/2}}
  \Nmax^{12} \left( \frac{N_0L_2}{N_1N_2} \right)^{1/2} L_0L_3
  N_0 (\Nmin^{012})^{1/2} (L_0'L_1)^{3/4} \right)^{1/2} 
  \prod_{j=1}^4 \norm{u_j}
  \\
  &\lesssim
  \left( \frac{N_0}{\Nmin^{34}} \right)^{1/4}
  N_0^{3/4} L_0^{1/2}(L_0')^{3/8}(L_1L_2)^{5/16}(L_3L_4)^{3/8}
  \prod_{j=1}^4 \norm{u_j}.
\end{aligned}
\end{equation} 
Here we summed $\omega_1,\omega_2$ as in \eqref{OmegaSum} and used \eqref{D:134:10} (recalling $L_0' < L_2$), the fact that $\Nmin^{012} \Nmax^{012} \sim N_0\Nmin^{12}$, and the assumptions $L_1 \le L_2$, $L_3 \le L_4$.

Interpolating with the trivial estimate \eqref{TrivialEstimate} we then obtain \eqref{MainDyadic} if $N_0 \lesssim \Nmin^{34}$, but also whenever we are able to gain an extra factor $(\Nmin^{34}/N_0)^{1/4}$. In particular, this happens if $\pm_0 \neq \pm_{43}$, since then $N_0 \lesssim L_4$ by Lemma \ref{F:Lemma2}, so instead of \eqref{E:4} we can use $\theta_{34} \lesssim 1 \lesssim (L_4/N_0)^{1/4}$ in \eqref{G:14:2}, thereby gaining  the desired factor. Thus, we may assume $\pm_0 = \pm_{43}$, and the same argument shows that we may assume $\theta_{34} \ll 1$. Moreover, we can assume $\pm_0=\pm_{12}$, since otherwise Lemma \ref{F:Lemma2} implies $N_0 \lesssim L_2$, hence the argument in section \ref{G:12:0} applies. Next observe that by \eqref{E:3:3} and \eqref{G:14:2}, since $\pm_0=\pm_{43}$ and $\theta_{34} \ll 1$,
\begin{equation}\label{G:14:10}
  N_3 \ll N_0 \sim N_4 \implies
  \theta_{04} \lesssim \frac{N_3}{N_0} \theta_{34}, \quad \theta_{03} \sim \theta_{34},
  \quad
  \abs{q_{1234}} \lesssim \frac{N_3}{N_0} \theta_{03} \theta_{34},
\end{equation}
so we gain a factor $N_3/N_0$ in \eqref{G:14:8}, which is more than enough. We are therefore left with $N_4 \ll N_0 \sim N_3$, which is hard; we split further into $N_0 \lesssim N_2$ and $N_2 \ll N_0$, treated in the next two subsections. Here one should keep in mind that $\pm_0=\pm_{12}=\pm_{43}$, $L_1 \le L_2$, $L_3 \le L_4$, $L_2 > L_0'$ and $L_4 > L_0$.

\subsubsection{Subcase $N_0 \lesssim N_2$}\label{G:20:0}

Inserting $\Proj_{\abs{\xi_4} \lesssim N_4}$ in front of $\mathfrak B_{\theta_{03}}'$ in \eqref{G:14:8}, then instead of Theorem \ref{NullRay} we apply Theorem \ref{ImprovedNullRay}, the hypotheses of which are satisfied: First, since $N_4 \ll N_0 \sim N_3$, we have $N_0,N_3 \gg 1$ and $\theta_{03} \ll 1$ [by the analog of \eqref{G:14:10}]. Second, the hypothesis $r \ll \Nmin^{12}$  in the theorem now becomes
\begin{equation}\label{G:20:2}
  \Nmax^{12} \gamma \ll N_0,
\end{equation}
with $\gamma$ as in \eqref{D:134:10}. But if \eqref{G:20:2} fails, then $N_0 \ll N_1 \sim N_2$, and $N_0 \lesssim L_2$ in view of the definition \eqref{D:134:10} of $\gamma$, so the argument in section \ref{G:12:0} applies. Thus, we can assume that \eqref{G:20:2} holds, hence Theorem \ref{ImprovedNullRay} applies, so in \eqref{G:14:8} we can replace $\norm{u_{0'12}^{\gamma,\omega_1,\omega_2}}$ by
$$
  \sup_{I} \norm{\Proj_{\xi_0 \cdot \omega_1 \in I} u_{0'12}^{\gamma,\omega_1,\omega_2}},
$$
where the supremum is over all intervals $I \subset \R$ with $\abs{I} = N_4$. But since $\gamma \ll 1$, Theorem \ref{Anisotropic} implies, via duality,
\begin{equation}\label{G:20:6}
  \sup_I \norm{\Proj_{\xi_0 \cdot \omega_1 \in I}
  u_{0'12}^{\gamma,\omega_1,\omega_2}}
  \lesssim
  \left[N_4 (\Nmin^{01})^{1/2} (L_0'L_1)^{3/4} \right]^{1/2}
  \norm{u_1^{\gamma,\omega_1}} \norm{u_2^{\gamma,\omega_2}},
\end{equation}
so in the second line of \eqref{G:14:8}, $N_0 (\Nmin^{012})^{1/2} (L_0'L_1)^{3/4}$ inside the square root is replaced by $N_4 (\Nmin^{01})^{1/2} (L_0'L_1)^{3/4}$, so in effect we gain a factor $(N_4/N_0)^{1/2}$, recalling that $N_0 \lesssim N_2$.

\subsubsection{Subcase $N_0 \gg N_2$}

If $N_2 \sim 1$, we simply estimate
\begin{equation}\label{G:20:8:2}
  J_{\boldN,\boldL}
  \lesssim
  \norm{u_{0'12}} \bignorm{u_{043}}
  \lesssim
  \left( N_2^{3/2} (L_1 L_2)^{3/4}
  \cdot N_0^{3/2} (L_0 L_3)^{3/4} \right)^{1/2}
  \prod_{j=1}^4 \norm{u_j},
\end{equation}
by Lemma \ref{Tlemma} and Theorem \ref{BasicBilinearThm}. We therefore assume $1 \ll N_2 \ll N_0 \sim N_1$. This ensures that $\angles{\xi_2} \sim N_2$ can be replaced by $\abs{\xi_2} \sim N_2$. By \eqref{E:3:3} and \eqref{D:134:10},
\begin{equation}\label{G:20:8}
  \theta_{12} \sim \theta_{02} \sim \frac{N_0}{N_2} \theta_{01},
  \qquad \text{hence}
  \qquad
  \theta_{01} \lesssim \alpha \equiv \frac{N_2}{N_0} \gamma.
\end{equation}
Now modify \eqref{G:14:4} by applying Lemma \ref{WhitneyLemma2} again, this time to $(\pm_0\xi_0,\pm_1\xi_1)$:
\begin{multline}\label{G:20:10}
  J_{\boldN,\boldL}
  \lesssim
  \sum_{\omega_1,\omega_2} \sum_{\omega_0',\omega_1'} \left( \frac{L_4}{N_4} \right)^{1/2}
  \\
  \times
  \norm{\Proj_{\abs{\xi_4} \lesssim N_4}
  \mathfrak B_{\theta_{03} \ll 1}' \left( \Proj_{K^{\pm_0}_{N_0,L_0}} \mathcal F^{-1} T_{L_0,L_0'}^{\pm_0} \mathcal F u_{0'12}^{\gamma,\omega_1,\omega_2;\alpha,\omega_0',\omega_1'}, u_3 \right)} \norm{u_4},
\end{multline}
where the second sum is over $\omega_0',\omega_1' \in \Omega(\alpha)$ satisfying $\theta(\omega_0',\omega_1') \lesssim \alpha$, and
\begin{align}
  \label{G:20:12}
  u_{0'12}^{\gamma,\omega_1,\omega_2;\alpha,\omega_0',\omega_1'}
  &= \Proj_{\pm_0\xi_0 \in \Gamma_{\alpha}(\omega_0')} 
  \Proj_{K^{\pm_0}_{N_0,L_0'}} \left( u_1^{\gamma,\omega_1;\alpha,\omega_1'}
  u_2^{\gamma,\omega_2} \right)
  \\
  \label{G:20:14}
  u_1^{\gamma,\omega_1;\alpha,\omega_1'}
  &=
  \Proj_{\pm_1\xi_1 \in \Gamma_{\alpha}(\omega_1')} u_1^{\gamma,\omega_1}.
\end{align}
The spatial Fourier support of \eqref{G:20:12} is contained in a tube of radius comparable to $N_0\alpha \sim N_2\gamma$ around $\R\omega_0'$, whereas the one for \eqref{G:14:6} is of radius comparable to $N_1\gamma$, so we gain a factor $(N_2/N_0)^{1/2}$ when applying Theorem \ref{ImprovedNullRay}, compared to our estimates in the previous subsection. On the other hand, we now have the additional sum over $\omega_0',\omega_1'$. To come out on top, we have to make sure that this sum does not cost us more than a factor $(N_0/N_2)^{1/4}$. For the bilinear interaction $X_0'=X_1-X_2$ in \eqref{G:20:12} we have, by \eqref{B:112}, recalling also $\theta(\omega_0',\omega_1') \lesssim \alpha$ and $N_1 \sim N_0$,
$$
  X_0' \in H_{\max(L_0',N_0\alpha^2)}(\omega_1'),
  \qquad
  X_1 \in H_{\max(L_1,N_0\alpha^2)}(\omega_1').
$$
Therefore,
\begin{equation}\label{G:20:16}
  X_2 = X_1-X_0' \in H_d(\omega_1'),
  \qquad \text{where}
  \qquad
  d = \max\bigl(\Lmax^{0'1},N_0\alpha^2\bigr),
\end{equation}
so we can insert $\Proj_{H_d(\omega_1')}$ in front of $u_2^{\gamma,\omega_2}$ in \eqref{G:20:12}.
Adapting the argument from the previous subsection we then get
\begin{equation}\label{G:20:18}
\begin{aligned}
  J_{\boldN,\boldL}
  &\lesssim
  \left( \frac{L_4}{N_4} \right)^{1/4}
  \left(N_2\gamma L_0L_3 N_4 N_0^{1/2} (L_0'L_1)^{3/4} \right)^{1/2}
  \\
  &\qquad
  \times
  \sum_{\omega_1,\omega_2} \sum_{\omega_0',\omega_1'}
  \bignorm{u_1^{\gamma,\omega_1;\alpha,\omega_1'}}
  \norm{\Proj_{H_d(\omega_1')} u_2^{\gamma,\omega_2}}
  \bignorm{u_3} \bignorm{u_4}
  \\
  &\lesssim
  \left( \frac{L_4^{1/2}}{N_4^{1/2}}
  N_2 \left(\frac{L_2}{N_2}\right)^{1/2} L_0L_3
  N_4 N_0^{1/2} (L_0'L_1)^{3/4}  \right)^{1/2}
  \\
  &\qquad
  \times
  B^{1/2}
  \sum_{\omega_1,\omega_2}
  \bignorm{u_1^{\gamma,\omega_1}}
  \norm{u_2^{\gamma,\omega_2}}
  \bignorm{u_3} \bignorm{u_4},
\end{aligned}
\end{equation}
where
\begin{equation}\label{G:20:20}
  B
  =
  \sup_{(\tau,\xi), \; \abs{\xi} \sim N_2}
  \sum_{\omega_1' \in \Omega(\alpha)} \chi_{H_d(\omega_1')}(\tau,\xi).
\end{equation}
If we can prove that
\begin{equation}\label{G:20:22}
  B
  \lesssim
  \left(\frac{N_0}{N_2}\right)^{1/2},
\end{equation}
then summing $\omega_1,\omega_2$ as in \eqref{OmegaSum} we get the desired estimate.

By Lemma \ref{HyperLemma},
$
  B
  \lesssim
  1 + (d/N_2\alpha^2)^{1/2},
$
where $d = \max\bigl(\Lmax^{0'1},N_0\alpha^2\bigr)$, so if $d = N_0\alpha^2$, we get \eqref{G:20:22}. The other possibility is $d=\Lmax^{0'1}$, which happens when $N_0\alpha^2 \le \Lmax^{0'1}$. Then instead of \eqref{G:20:22} we only get
\begin{equation}\label{G:20:24}
  B
  \lesssim
  \left( \frac{\Lmax^{0'1}}{N_2\alpha^2} \right)^{1/2},
\end{equation}
but to compensate we can use the following replacement for \eqref{G:20:6}:
\begin{equation}\label{G:20:26}
  \norm{\Proj_{\xi_0 \cdot \omega_1 \in I}
  u_{0'12}^{\gamma,\omega_1,\omega_2}}
  \lesssim
  \bigl(N_4 (N_2\gamma) \Lmin^{0'1} \bigr)^{1/2}
  \norm{u_1^{\gamma,\omega_1}} \norm{u_2^{\gamma,\omega_2}},
\end{equation}
which by \cite[Lemma 1.2]{Selberg:2010a} reduces to the trivial fact that the intersection of the strips $\{ \xi_0 \colon \xi_0 \cdot \omega_1 \in I \}$ and $T_r(\omega_2)$ has area $O(r\abs{I})$, where in the present case $r \sim N_2\gamma$ and $\abs{I} = N_4$. Modifying \eqref{G:20:18} accordingly, we again get the desired estimate.

\subsection{Subcase $\theta_{12}, \theta_{34} \ll \phi$, $\min(\theta_{01},\theta_{02}) < \min(\theta_{03},\theta_{04})$}

Then
\begin{equation}\label{G:22:2}
  \abs{q_{1234}} \lesssim \phi^2
  \lesssim \min(\theta_{03},\theta_{04}) \left(\frac{L_4}{N_0} \right)^{p}
  \qquad (0 \le p \le 1/2).
\end{equation}
Comparing with \eqref{G:14:2}, we then we get \eqref{G:14:8} with an extra factor $(\Nmin^{34}/N_0)^{1/4}$, implying the desired estimate.

\section{Proof of the dyadic quadrilinear estimate, Part II}\label{E}

It remains to consider the cases where
$$
  L_0 \ll L_0' \qquad \text{or} \qquad L_0 \gg L_0'
$$
and either $L_2 \le L_0'$ or $L_4 \le L_0$ (as before we assume $L_1 \le L_2$ and $L_3 \le L_4$ by symmetry). It suffices to consider the cases \eqref{G:10a}--\eqref{G:10c}, the last two of which we split further into
\begin{subequations}\label{E:0}
\begin{alignat}{2}  
  \label{E:0b}
  &L_2 \le L_0',&
  \qquad 
  &L_4 > L_0,
  \\
  \label{E:0c}
  &L_2 \le L_0',&
  \qquad
  &L_4 \le L_0,
  \\
  \label{E:0d}
  &L_2 > L_0',&
  \qquad
  &L_4 \le L_0.
\end{alignat}
\end{subequations}
We may assume $\Nmin^{12}, \Nmin^{34} \gg 1$, as otherwise trivial estimates analogous to \eqref{G:20:8:2} apply.

\subsection{Subcase $\theta_{12} \ll \phi \lesssim \theta_{34}$, $\min(\theta_{01},\theta_{02}) \ge \min(\theta_{03},\theta_{04})$}

By \eqref{E:4}--\eqref{E:2:2},
\begin{equation}\label{E:6}
  \abs{q_{1234}}
  \lesssim \phi\theta_{34}
  \lesssim (\phi\theta_{34})^{3/4}
  \lesssim
  \biggl( \frac{\Lmax^{0'12}}{N_0} \biggr)^{3/8}
  \left( \frac{\Lmax^{034}}{\Nmin^{34}} \right)^{3/8},
\end{equation}
so with notation as in \eqref{E:3:1},
$$
  J_{\boldN,\boldL}
  \lesssim
  \biggl( \frac{\Lmax^{0'12}}{N_0} \biggr)^{3/8}
  \left( \frac{\Lmax^{034}}{\Nmin^{34}} \right)^{3/8}
  \bignorm{T_{L_0,L_0'}^{\pm_0}\mathcal F u_{0'12}} \bignorm{u_{043}}.
$$
If we apply Lemma \ref{Tlemma} and \eqref{Bilinear3}, we get the desired estimate except in the case $N_0 \ll N_1 \sim N_2$, but then we can apply the following:

\begin{lemma}\label{E:Lemma1} If $L_0' \ll L_0$ or $L_0' \gg L_0$,
$$
  \bignorm{T_{L_0,L_0'}^{\pm_0}\mathcal F u_{0'12}}
  \lesssim
  \left(N_0^{1/2} \Nmin^{012} \Lmin^{0'12} (\Lmed^{0'12})^{1/2} \right)^{1/2} 
  \norm{u_1} \norm{u_2}.
$$
\end{lemma}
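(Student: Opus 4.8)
\subsection*{Proof proposal for Lemma~\ref{E:Lemma1}}

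The plan is a case analysis, organized by the position of $L_0'$ among the three hyperbolic weights $L_0',L_1,L_2$ (write $h_j = \tau_j \pm_j \abs{\xi_j}$, so $L_j \sim \angles{h_j}$) and by whether the output frequency $N_0$ is comparable to $\Nmin^{12}$. Throughout, $u_{0'12} = \Proj_{K^{\pm_0}_{N_0,L_0'}}(u_1\overline{u_2})$ is the bilinear interaction $X_0' = X_1 - X_2$ with $\angles{\xi_j}\sim N_j$, $\angles{h_j}\sim L_j$ for $j=1,2$. The only gain available comes from Lemma~\ref{Tlemma2}: we are in the regime $L_0 \ll L_0'$ or $L_0' \ll L_0$, so whenever an angular piece of $\mathcal F u_{0'12}$ is concentrated in a slab $H_{d'}(\omega)$ the operator $T^{\pm_0}_{L_0,L_0'}$ improves the trivial bound by a factor $(d'/L_0')^{1/2}$; absent such concentration one only has $\norm{T^{\pm_0}_{L_0,L_0'}\mathcal F u_{0'12}}\lesssim\norm{u_{0'12}}$ from Lemma~\ref{Tlemma}. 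By the symmetry $u_1\overline{u_2}\leftrightarrow u_2\overline{u_1}$, which preserves the claimed bound, we may assume $L_1\le L_2$.

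First I would dispose of the easy cases, with no gain from $T$. If $N_0\gtrsim\Nmin^{12}$ (``high output''), then $\Nmin^{012}\sim\Nmin^{12}$ and the estimate \eqref{Bilinear3} applied to $u_{0'12}$ already gives the claim, since $(\Nmin^{012}\Nmin^{12}N_0)^{1/4}\sim N_0^{1/4}(\Nmin^{012})^{1/2}$ in this regime. If $N_0\ll N_1\sim N_2$ (``low output'') but $L_0'\lesssim L_2$, so that $L_0'$ is not the largest weight, then $\Nmin^{012}=\Nmin^{01}=N_0$ and the estimate \eqref{Bilinear2} with $j=1$, whose constant then reads $(N_0\Lmin^{01})^{1/2}(N_0\Lmax^{01})^{1/4} = N_0^{3/4}(\Lmin^{0'12})^{1/2}(\Lmed^{0'12})^{1/4}$, gives the claim. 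So everything reduces to $N_0\ll N_1\sim N_2$ with $L_0' = \Lmax^{0'12}\gg L_2$, where $\Nmin^{012}=N_0$ and \eqref{Bilinear3} must be sharpened by the factor $(N_0/N_1)^{1/4}$. Here $\pm_1\neq\pm_2$ cannot occur: by Lemma~\ref{AnglesLemma} it would force $\Lmax^{0'12}\gtrsim N_1\gg N_0$, whereas the near-resonance identity $\abs{h_0'-h_1+h_2}=\abs{\pm_0\abs{\xi_0}\mp_1(\abs{\xi_1}-\abs{\xi_2})}\lesssim N_0$, valid for $\pm_1=\pm_2$, together with $L_0'\gg L_2\ge L_1$ forces $L_0'\lesssim N_0$; thus in the reduced case $\pm_1=\pm_2$ and $L_2\ll L_0'\lesssim N_0$. (When $\pm_1\neq\pm_2$ and $L_0'=\Lmax^{0'12}$ one has $\Lmax^{0'12}\gtrsim N_1$, and then either $\Lmed^{0'12}\gtrsim N_0$ and \eqref{SobolevType} beats the target, or two of the weights are $\ll N_0$ and \eqref{Bilinear1}, resp.\ \eqref{Bilinear2}, with those two small weights does.)

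In the reduced case I would exploit the angular concentration. By Lemma~\ref{AnglesLemma} we have $\theta_{12}\lesssim\gamma^*\equiv(N_0\Lmax^{0'12}/(N_1N_2))^{1/2}\le N_0/N_1$, and I would run the angular Whitney decomposition (Lemma~\ref{WhitneyLemma2}) over dyadic $\gamma\le\gamma^*$. Each $u_j^{\gamma,\omega_j}$ has spatial Fourier support in a tube of radius $\sim N_j\gamma\le N_0$ about $\R\omega_1$, so by \eqref{B:112} the Fourier support of $\Proj_{K^{\pm_0}_{N_0,L_0'}}(u_1^{\gamma,\omega_1}\overline{u_2^{\gamma,\omega_2}})$ lies in $H_{d'}(\omega_1)$ with $d'\sim\max(L_2,N_1\gamma^2)$, and $N_1\gamma^2\le N_0L_0'/N_1\ll L_0'$. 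Feeding this into Lemma~\ref{Tlemma2}, combining with \eqref{Bilinear3} per dyadic piece (and with the cruder tube bound $\lesssim(N_1\gamma L_1)^{1/2}\norm{u_1^{\gamma,\omega_1}}\norm{u_2^{\gamma,\omega_2}}$ on the small-$\gamma$ pieces so the $\gamma$-sum converges), summing the sectors by \eqref{OmegaSum}, and interpolating with \eqref{SobolevType} to absorb the ``fat tube'' regime $N_1\gamma\gtrsim N_0$, one recovers the factor $(N_0/N_1)^{1/4}$ whenever $\max(L_2,N_0L_0'/N_1)\lesssim(N_0/N_1)^{1/2}L_0'$. The main obstacle is the complementary sub-regime $(N_0/N_1)^{1/2}L_0'\ll L_2\ll L_0'$ (with $L_1$ possibly much smaller than $L_2$): there the single angular decomposition is insufficient, and one must perform a second, finer angular decomposition of the pair $(\pm_0\xi_0,\pm_1\xi_1)$ at scale $\alpha\sim(N_2/N_1)\gamma$, insert $\Proj_{H_d(\omega_1')}$ in front of the $u_2$-factor via \eqref{B:112}, and control the resulting overlap multiplicity by Lemma~\ref{HyperLemma}, exactly as in the treatment of the analogous quadrilinear subcase around \eqref{G:20:10}; the sharper tube radius $\sim N_2\gamma$ together with the bounded overlap then close the estimate, after a final interpolation against \eqref{SobolevType}.
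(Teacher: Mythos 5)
Your opening reductions (high output via \eqref{Bilinear3}; then $N_0\ll N_1\sim N_2$ with $L_0'\le\Lmax^{12}$ via \eqref{Bilinear2}) are sound and correspond to the paper's first case $\Lmax^{0'12}=\Lmax^{12}$. But in the remaining case, $N_0\ll N_1\sim N_2$ with $L_0'=\Lmax^{0'12}$, your plan is considerably more elaborate than necessary, and the extra machinery you bring in does not actually close the argument. The paper's proof is: do a \emph{single} Whitney decomposition at the one scale $\gamma$ of \eqref{D:134:10}, place $\mathcal F u^{\gamma,\omega_1,\omega_2}_{0'12}$ in $H_{d'}(\omega_1)$ with $d'=\max(\Lmax^{12},\Nmax^{12}\gamma^2)$ via \eqref{B:112}, apply Lemma~\ref{Tlemma2} with $p=1/4$, and then split on which term in $d'$ dominates. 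When $d'=\Lmax^{12}$ one uses \eqref{Bilinear2} (not \eqref{Bilinear3}), and (with $L_1\le L_2$, so $\Lmin^{0'1}=L_1$, $\Lmax^{0'1}=L_0'$) the product of the Tlemma2 gain and \eqref{Bilinear2} is
$$
\left(\frac{L_2}{L_0'}\right)^{1/4}\bigl(\Nmin^{012}L_1\bigr)^{1/2}\bigl(\Nmin^{01}L_0'\bigr)^{1/4}
=(\Nmin^{012})^{1/2}(\Nmin^{01})^{1/4}L_1^{1/2}L_2^{1/4}
\le N_0^{1/4}(\Nmin^{012})^{1/2}L_1^{1/2}L_2^{1/4},
$$
exactly the target; when $d'=\Nmax^{12}\gamma^2\lesssim N_0L_0'/\Nmin^{12}$ one uses \eqref{Bilinear1}, which again gives precisely the target. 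No interpolation with \eqref{SobolevType}, no sum over dyadic $\gamma$, and no secondary angular decomposition are needed.

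The reason you get stuck is your insistence on \eqref{Bilinear3} in the angular step: \eqref{Bilinear3} is derived from \eqref{Bilinear1} or \eqref{Bilinear2} via $\Nmin^{012}\Nmax^{012}\sim N_0\Nmin^{12}$ and is strictly weaker in this configuration, which is what manufactures your ``complementary sub-regime'' $\max(L_2,N_0L_0'/N_1)\gg(N_0/N_1)^{1/2}L_0'$. Your proposed repair there—a second decomposition at scale $\alpha\sim(N_2/N_1)\gamma$ plus Lemma~\ref{HyperLemma}—does not help, because in the regime you have reduced to $N_1\sim N_2$, so $\alpha\sim\gamma$: the ``finer'' decomposition is at the same scale as the first one. (In the paper, the device around \eqref{G:20:10} is invoked only in the different balance $N_2\ll N_0\sim N_1$, where $\alpha\ll\gamma$.) Finally, your parenthetical disposing of $\pm_1\ne\pm_2$ when $L_1\le L_2\ll N_0$ by \eqref{Bilinear1} is false as stated: \eqref{Bilinear1} gives $(N_0L_1)^{1/2}(N_1L_2)^{1/4}$, which is the target times $(N_1/N_0)^{1/4}\gg 1$. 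This subcase has $\theta_{12}\sim 1$ by Lemma~\ref{AnglesLemma}, and is excluded in the paper's use of the lemma (which always occurs with $\theta_{12}\ll 1$), so no separate argument is required—but the one you offer would not work.
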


\begin{proof}[Proof of Lemma \ref{E:Lemma1}]
If $\Lmax^{0'12} = \Lmax^{12}$, this holds by Lemma \ref{Tlemma} and \eqref{Bilinear2}, so we assume $\Lmax^{0'12} = L_0'$. Since $\theta_{12} \ll 1$, we have $\theta_{12} \lesssim \gamma$ with $\gamma$ as in \eqref{D:134:10}, and we reduce to estimating $S = \sum_{\omega_1,\omega_2}
  \bignorm{T_{L_0,L_0'}^{\pm_0} \mathcal F u^{\gamma,\omega_1,\omega_2}_{0'12}},
$
where $\omega_1,\omega_2 \in \Omega(\gamma)$ with $\theta(\omega_1,\omega_2) \lesssim \gamma$. By \eqref{B:112},
\begin{equation}\label{E:12:4}
  \supp \mathcal F u^{\gamma,\omega_1,\omega_2}_{0'12}
  \subset H_{d'}(\omega_1),
  \qquad \text{where}
  \qquad
  d' = \max\left(\Lmax^{12},\Nmax^{12}\gamma^2\right),
\end{equation}
so by Lemma \ref{Tlemma2},
$$
  S
  \lesssim
  \sum_{\omega_1,\omega_2}
  \left(\frac{d'}{L_0'}\right)^{p}
  \norm{u^{\gamma,\omega_1,\omega_2}_{0'12}} \qquad (0 \le p \le 1/2).
$$
Taking $p=1/4$, we note that if $d' = \Lmax^{12}$, we get the desired estimate by using \eqref{Bilinear2} and summing $\omega_1,\omega_2$ as in \eqref{OmegaSum}. If $d' = \Nmax^{12}\gamma^2 \sim  N_0L_0'/\Nmin^{12}$, on the other hand, then \eqref{Bilinear1} implies the estimate we need.
\end{proof}

\subsection{Subcase $\theta_{12} \ll \phi \lesssim \theta_{34}$, $\min(\theta_{01},\theta_{02}) < \min(\theta_{03},\theta_{04})$, $L_2 \le L_0'$, $L_4 > L_0$}\label{E:0b:0}

Observe that \eqref{G:14:2} holds. Now repeat the argument leading to \eqref{G:14:8}, but use Lemma \ref{E:Lemma1} instead of Lemma \ref{Tlemma} and Theorem \ref{BasicBilinearThm}, hence\begin{equation}\label{G:14:8:2}
\begin{aligned}
  J_{\boldN,\boldL}
  &\lesssim
  \sum_{\omega_1,\omega_2} \left( \frac{L_4}{\Nmin^{34}} \right)^{1/4}
  \left(\Nmax^{12}\gamma L_0L_3\right)^{1/2}
  \bignorm{T_{L_0,L_0'}^{\pm_0} \mathcal F u_{0'12}^{\gamma,\omega_1,\omega_2}}
  \norm{u_3} \norm{u_4}
  \\
  &\lesssim
  \left( \frac{L_4^{1/2}}{(\Nmin^{34})^{1/2}}\Nmax^{12} \left( \frac{N_0L_0'}{N_1N_2} \right)^{1/2} L_0L_3
  N_0 (\Nmin^{012})^{1/2} (L_1L_2)^{3/4} \right)^{1/2} 
  \prod_{j=1}^4 \norm{u_j}
  \\
  &\lesssim
  \left( \frac{N_0}{\Nmin^{34}} \right)^{1/4}
  N_0^{3/4} L_0^{1/2}(L_0')^{1/4}(L_1L_2L_3L_4)^{3/8}
  \prod_{j=1}^4 \norm{u_j},
\end{aligned}
\end{equation}
so interpolating with the trivial estimate \eqref{TrivialEstimate} we obtain \eqref{MainDyadic} if $N_0 \lesssim \Nmin^{34}$, but also whenever we are able to gain an extra factor $(\Nmin^{34}/N_0)^{1/4}$. Now we continue as in section \ref{G:14:0}, reducing finally to the difficult case $N_4 \ll N_0 \sim N_3$. Then we proceed as in section \ref{G:20:0}. We may assume \eqref{G:20:2} [otherwise $N_0 \lesssim L_0'$, and then \eqref{E:6} holds], hence Theorem \ref{ImprovedNullRay} applies, so in \eqref{G:14:8:2} we can replace $\bignorm{T_{L_0,L_0'}^{\pm_0} \mathcal F u_{0'12}^{\gamma,\omega_1,\omega_2}}$ by
\begin{equation}\label{G:14:8:4}
  \sup_{I} \norm{T_{L_0,L_0'}^{\pm_0} \mathcal F
  \Proj_{\xi_0 \cdot \omega_1 \in I}  
  u_{0'12}^{\gamma,\omega_1,\omega_2}},
\end{equation}
where the supremum is over $I \subset \R$ with $\abs{I} = N_4$. By Theorem \ref{Anisotropic},
\begin{equation}\label{E:42}
  \sup_{I} \norm{\Proj_{\xi_0 \cdot \omega_1 \in I} u_{0'12}^{\gamma,\omega_1,\omega_2}}
  \lesssim
  \left(N_4 (\Nmin^{12})^{1/2} (L_1L_2)^{3/4} \right)^{1/2}
  \norm{u_1^{\gamma,\omega_1}} \norm{u_2^{\gamma,\omega_2}}.
\end{equation}
If we combine this with Lemma \ref{Tlemma}, we get
\begin{equation}\label{E:42:0}
  \text{l.h.s.\eqref{G:14:8:4}}
  \lesssim
  \left(N_4 (\Nmin^{12})^{1/2} (L_1L_2)^{3/4} \right)^{1/2}
  \norm{u_1^{\gamma,\omega_1}} \norm{u_2^{\gamma,\omega_2}},
\end{equation}
but we need
\begin{equation}\label{E:42:1}
  \text{l.h.s.\eqref{G:14:8:4}}
  \lesssim
  \left(N_4 (\Nmin^{012})^{1/2} (L_1L_2)^{3/4} \right)^{1/2}
  \norm{u_1^{\gamma,\omega_1}} \norm{u_2^{\gamma,\omega_2}}.
\end{equation}
If this holds, then we gain the necessary factor $(N_4/N_0)^{1/4}$ in \eqref{G:14:8:2}.

We prove \eqref{E:42:1} for $N_0 \ll N_1 \sim N_2$, as otherwise it reduces to \eqref{E:42:0}. Recalling \eqref{E:12:4} from the proof of Lemma \ref{E:Lemma1}, we use Lemma \ref{Tlemma2} followed by either \eqref{E:42} or
\begin{equation}\label{E:42:2}
  \sup_I \norm{\Proj_{\xi_0 \cdot \omega_1 \in I}
  u_{0'12}^{\gamma,\omega_1,\omega_2}}
  \lesssim
  \left(N_4 (\Nmin^{01})^{1/2} (L_0'L_1)^{3/4} \right)^{1/2}
  \norm{u_1^{\gamma,\omega_1}} \norm{u_2^{\gamma,\omega_2}},
\end{equation}
which follows from Theorem \ref{Anisotropic} via duality, recalling $\gamma \ll 1$. Specifically, if $d'=\Lmax^{12}$, we use \eqref{E:42:2}, whereas \eqref{E:42} is used if $d' =\Nmax^{12}\gamma^2$. Then \eqref{E:42:1} follows.

\subsection{Subcase $\theta_{12} \ll \phi \lesssim \theta_{34}$, $\min(\theta_{01},\theta_{02}) < \min(\theta_{03},\theta_{04})$, $L_2 \le L_0'$, $L_4 \le L_0$}\label{G:40:0:0}

For the remainder of section \ref{E}, we change the notation from \eqref{D:134:10}, writing now
\begin{equation}\label{D:134:10:2}
  \theta_{12} \lesssim \gamma \equiv \biggl(\frac{N_0\Lmax^{0'12}}{N_1N_2}\biggr)^{1/2}.
\end{equation}
By \eqref{E:2} and \eqref{E:2:2},
\begin{equation}\label{G:40:0:2}
  \phi
  \lesssim \min(\theta_{03},\theta_{04})
  \lesssim
  \left( \frac{L_0}{N_0} \right)^{p} \qquad (0 \le p \le 1/2),
\end{equation}
hence $\abs{q_{1234}} \lesssim \phi\theta_{34} \lesssim (L_0/N_0)^{p} \theta_{34}$, so applying Lemma \ref{WhitneyLemma2} to $(\pm_1\xi_1,\pm_2\xi_2)$ and Lemma \ref{WhitneyLemma1} to $(\pm_3\xi_3,\pm_4\xi_4)$, and recalling \eqref{E:4},
\begin{multline}\label{G:40:2}
  J_{\boldN,\boldL}
  \lesssim
  \sum_{\omega_1,\omega_2}
  \sum_{0 < \gamma_{34} \lesssim \gamma'}
  \sum_{\omega_3,\omega_4}
  \left( \frac{L_0}{N_0} \right)^{p}
  \gamma_{34}
  \\
  \times
  \int T_{L_0,L_0'}^{\pm_0} \mathcal F u_{0'12}^{\gamma,\omega_1,\omega_2}(X_0)
  \cdot
  \mathcal F u_{043}^{\gamma_{34},\omega_4,\omega_3}(X_0) \, dX_0,
\end{multline}
where $\gamma'$ is defined as in \eqref{E:4}, $u_{0'12}^{\gamma,\omega_1,\omega_2}$ is defined as in \eqref{G:14:6}, $u^{\gamma_{34},\omega_4,\omega_3}_{043}$ is similarly defined, and the sum is over $\omega_1,\omega_2 \in \Omega(\gamma)$ with $\theta(\omega_1,\omega_2) \lesssim \gamma$, dyadic $\gamma_{34}$ and $\omega_3,\omega_4 \in \Omega(\gamma_{34})$ satisfying $3\gamma_{34} \le \theta(\omega_3,\omega_4) \le 12\gamma_{34}$, hence $\theta_{34} \sim \gamma_{34}$ in $u^{\gamma_{34},\omega_4,\omega_3}_{043}$.

Recall that the spatial Fourier support of $u^{\gamma,\omega_1,\omega_2}_{0'12}$ is contained in a tube of radius $r \sim \Nmax^{12} \gamma$ around $\R\omega_1$. Covering $\R$ by almost disjoint intervals $I$ of length $r$,
$$
  u_{0'12}^{\gamma,\omega_1,\omega_2}
  =
  \sum_I
  \Proj_{\xi_0 \cdot \omega_1 \in I} u_{0'12}^{\gamma,\omega_1,\omega_2},
$$
where the sum has cardinality $O(N_0/r)$. Fix $I$. Then $\xi_0$ is restricted to a cube of sidelength $r$, and tiling by translates of this cube we may assume without loss of generality that the $\xi_j$ are restricted to such cubes $Q_j$, for $j=1,2,3,4$.

By \eqref{E:12:4},
\begin{equation}\label{G:40:14}
  \tau_0' + \xi_0 \cdot \omega_1 = O(d'),
  \qquad
  \text{where}
  \qquad
  d'=\max\left(L_2,\Nmax^{12}\gamma^2\right).
\end{equation}
Moreover, as proved in \cite[Section 9.4]{Selberg:2008b},
\begin{equation}\label{G:40:16}
  \tau_0 + \xi_0 \cdot \omega_3 = c + O(d),
  \qquad
  \text{where}
  \qquad
  d=\max\left(L_4,\frac{r^2}{\Nmin^{34}},r\gamma_{34}\right),
\end{equation}
and $c \in \R$ depends on $(Q_3,Q_4)$ and $(\omega_3,\omega_4)$. So by Lemmas \ref{Tlemma} and \ref{Tlemma2}, we can dominate the integral in \eqref{G:40:2} by the product of
\begin{equation}\label{G:40:30:2}
  \left(\min\left(1,\frac{d'}{L_0'}\right) \right)^{3/8}
  \norm{u_{0'12}^{\gamma,\omega_1,\omega_2}}
\end{equation}
and
\begin{equation}\label{G:40:30:4}
  \left(\frac{d}{L_0}\right)^{1/4}
  \norm{u_{043}^{\gamma_{34},\omega_4,\omega_3}}.
\end{equation}
By Theorem \ref{Anisotropic},
\begin{equation}\label{G:40:32}
  \norm{u_{0'12}^{\gamma,\omega_1,\omega_2}}
  \lesssim C\,
  \bignorm{u_1^{\gamma,\omega_1}}
  \bignorm{u_2^{\gamma,\omega_2}}
\end{equation}
holds with
\begin{align}
  \label{G:40:32:4}
  C^2 &\sim r(\Nmin^{01})^{1/2} (L_0'L_1)^{3/4},
  \\
  \label{G:40:32:2}
  C^2 &\sim r(\Nmin^{12})^{1/2}(L_1L_2)^{3/4}.
\end{align}
Noting that
\begin{equation}\label{G:40:32:6}
  \frac{d'}{L_0'} 
  \sim
  \max\left(\frac{L_2}{L_0'},\frac{N_0}{\Nmin^{12}}\right),
\end{equation}
we use \eqref{G:40:32:2} if $N_0 \sim \Nmax^{12}$, and otherwise the minimum of \eqref{G:40:32:4} and \eqref{G:40:32:2}, hence
\begin{equation}\label{G:40:32:8}
  \eqref{G:40:30:2} \lesssim \left(r(\Nmin^{012})^{1/2}(L_1L_2)^{3/4}\right)^{1/2}
  \bignorm{u_1^{\gamma,\omega_1}}
  \bignorm{u_2^{\gamma,\omega_2}}.
\end{equation}
Next we claim that
\begin{equation}\label{G:40:34}
  \norm{u_{043}^{\gamma_{34},\omega_4,\omega_3}}
  \lesssim C\,
  \bignorm{u_3^{\gamma_{34},\omega_3}}
  \bignorm{u_4^{\gamma_{34},\omega_4}}
\end{equation}
holds with
\begin{align}
  \label{G:40:36}
  C^2
  &\sim
  r^2 L_3,
  \\
  \label{G:40:38}
  C^2
  &\sim
  r (\Nmin^{34})^{1/2} (L_3 L_4)^{3/4},
  \\
  \label{G:40:40}
  C^2
  &\sim
  \frac{r L_3 L_4}{\gamma_{34}}.
\end{align}
In fact, \eqref{G:40:40} holds due to the assumption $\theta(\omega_3,\omega_4) \ge 3\gamma_{34}$, by the argument in \cite[Section 3.3]{Selberg:2010a}; \eqref{G:40:38} holds by Theorem \ref{Anisotropic}, and \eqref{G:40:36} reduces to a trivial volume estimate (see \cite[Lemma 1.1]{Selberg:2010a}). Interpolating \eqref{G:40:36} and \eqref{G:40:40} we also get
\begin{equation}\label{G:40:41}
  C^2
  \sim
  \frac{r^2 L_3 L_4^{1/2}}{(r\gamma_{34})^{1/2}}
  \le
  \frac{r^2 (L_3 L_4)^{3/4}}{(r\gamma_{34})^{1/2}},
\end{equation}
and since $d^{1/2}$ times the minimum of \eqref{G:40:36}, \eqref{G:40:38} and \eqref{G:40:41} is $\lesssim r^2 (L_3 L_4)^{3/4}$,
\begin{equation}\label{G:40:42}
  \eqref{G:40:30:4}
  \lesssim
  \left(\frac{r^2(L_3 L_4)^{3/4}}{L_0^{1/2}}\right)^{1/2}
  \bignorm{u_3^{\gamma_{34},\omega_3}}
  \bignorm{u_4^{\gamma_{34},\omega_4}}.
\end{equation}
Estimating the integral in \eqref{G:40:2} by the product of \eqref{G:40:32:8} and \eqref{G:40:42}, summing the $\omega$'s as in \eqref{OmegaSum}, estimating $\gamma_{34} \lesssim \gamma_{34}^{1/2}$ and using $\sum\gamma_{34}^{1/2} \sim (\gamma')^{1/2}$, where the sum is over dyadic $0 < \gamma_{34} \lesssim \gamma'$, we conclude, taking $p=3/8$,
\begin{equation}\label{G:40:42:2}
\begin{aligned}
  J_{\boldN,\boldL}
  &\lesssim \left( \sum_I \frac{r}{N_0} \right)
  (r\gamma')^{1/2}
  \left( \frac{\Nmin^{012}}{L_0} \right)^{1/4}
  N_0^{5/8} (L_0L_1L_2L_3L_4)^{3/8}
  \prod_{j=1}^4 \norm{u_j}
  \\
  &\lesssim
  \frac{(\Nmax^{12}\gamma\gamma')^{1/2}}{(L_0L_0')^{1/4}}
  \left( \frac{\Nmin^{012}}{N_0} \right)^{1/4}
  N_0^{7/8} \left(L_0L_0'L_1L_2L_3L_4\right)^{3/8}
  \prod_{j=1}^4 \norm{u_j},
\end{aligned}
\end{equation}
where we summed $I$ using the fact that the index set has cardinality $O(N_0/r)$, and used the definition $r \sim \Nmax^{12}\gamma$. Thus, if the expression
\begin{equation}\label{G:40:52}
  A = \frac{(\Nmax^{12}\gamma\gamma')^2\Nmin^{012}}{N_0L_0L_0'}
\end{equation}
is $O(1$), we get the desired estimate. In view of \eqref{D:134:10:2}, \eqref{E:4} and the assumptions $L_2 \le L_0'$ and $L_4 \le L_0$,
\begin{equation}\label{G:40:54}
  A
  \lesssim
  \frac{(\Nmax^{12})^2\Nmin^{012}}{N_0L_0L_0'}
  \frac{N_0L_0'}{N_1N_2}
  \min\left(1,\frac{L_0}{\Nmin^{34}}\right).
\end{equation}
In particular,
\begin{equation}\label{G:40:56}
  A 
  \lesssim
  \frac{\Nmax^{12}\Nmin^{012}}{\Nmin^{12}\Nmin^{34}}
  \lesssim \frac{N_0}{\Nmin^{34}},
\end{equation}
where we used the fact that $\Nmin^{012} \Nmax^{012} \sim N_0\Nmin^{12}$. The only remaining case is then $\Nmin^{34} \ll N_0$. If $\pm_0\neq\pm_{43}$, then $N_0 \lesssim L_0$ by Lemma \ref{F:Lemma2}, so we can estimate the minimum in \eqref{G:40:54} by $1 \lesssim L_0/N_0$, gaining a factor $\Nmin^{34}/N_0$ compared to \eqref{G:40:56}. If, on the other hand, $\pm_0=\pm_{43}$, then by Lemma \ref{F:Lemma1} and \eqref{E:4},
$$
  \min(\theta_{03},\theta_{04}) \lesssim \frac{\Nmin^{34}}{N_0} \theta_{34}
  \lesssim
  \frac{\Nmin^{34}}{N_0} \left( \frac{L_0}{\Nmin^{34}} \right)^{1/2}
  = \left(\frac{\Nmin^{34}}{N_0} \right)^{1/2} \left( \frac{L_0}{N_0} \right)^{1/2},
$$
which means that compared to \eqref{G:40:0:2} we gain a factor $(\Nmin^{34}/N_0)^{3/8}$ (since we took $p=3/8$ above), which then appears to the fourth power in $A$, so we have more than enough improvement.

\subsection{Subcase $\theta_{12} \ll \phi \lesssim \theta_{34}$, $\min(\theta_{01},\theta_{02}) < \min(\theta_{03},\theta_{04})$, $L_2 > L_0'$, $L_4 \le L_0$}\label{G:40:51}

The only difference from the previous case is that now $L_2 > L_0'$, instead of $L_2 \le L_0'$. This difference only shows up in the expression \eqref{D:134:10:2} for $\gamma$, however, and this expression is not used explicitly until the estimate \eqref{G:40:32:6}. But in the present case, $d'/L_0' > 1$, so the minimum in \eqref{G:40:30:2} is equal to one, and instead of \eqref{G:40:32:8} we use \eqref{G:40:32} with $C$ as in \eqref{G:40:32:4}. The argument then goes through without problems except when $N_2 \ll N_0 \sim N_1$. To be precise, instead of \eqref{G:40:52} we will now have
\begin{equation}\label{G:40:52:2}
  A = \frac{(\Nmax^{12}\gamma\gamma')^2\Nmin^{01}}{N_0L_0L_2},
\end{equation}
leading to
\begin{equation}\label{G:40:54:2}
  A
  \lesssim
  \frac{(\Nmax^{12})^2\Nmin^{01}}{N_0L_0L_2}
  \cdot \frac{N_0L_2}{N_1N_2}
  \cdot \min\left(1,\frac{L_0}{\Nmin^{34}}\right)
  = \frac{\Nmin^{01}}{\Nmin^{012}} \times \text{r.h.s.}\eqref{G:40:54},
\end{equation}
so we are done except for $N_2 \ll N_0 \sim N_1$. Then we must gain a factor $N_2/N_0$ in \eqref{G:40:52:2}. We assume $N_2 \gg 1$, since otherwise \eqref{G:20:8:2} applies, and we assume $\pm_0=\pm_{12}$, as otherwise \eqref{E:6} applies. Thus \eqref{G:20:8} holds, and we use this to make an extra angular decomposition for $(\pm_0\xi_0,\pm_1\xi_1)$. In view of \eqref{G:20:16}, we then replace $u_1^{\gamma,\omega_1}$ and $u_2^{\gamma,\omega_2}$ by $u_1^{\gamma,\omega_1;\alpha,\omega_1'}$ and $\Proj_{H_d(\omega_1')} u_2^{\gamma,\omega_2}$, with $d$ as in  \eqref{G:20:16} and $\omega_1' \in \Omega(\alpha)$.
The spatial output $\xi_0$ is restricted to a tube of radius $r' \sim N_0\alpha \sim N_2\gamma$
around $\R\omega_1'$, replacing $r \sim N_0 \gamma$ used in the previous section. Decomposing into cubes as in the previous section, applying \eqref{G:40:32}, \eqref{G:40:32:4} and \eqref{G:40:42}, with $r$ replaced by $r'$, we get
$$
  J_{\boldN,\boldL}
  \lesssim
  \frac{(r'\gamma')^{1/2}}{(L_0L_2)^{1/4}}
  N_0^{7/8} \left(L_0L_0'L_1L_2L_3L_4\right)^{3/8}
  B^{1/2} 
  \prod_{j=1}^4 \norm{u_j},
$$
where $B$ is given by \eqref{G:20:20}. So now instead of \eqref{G:40:52} we have
$$
  A = \frac{(r'\gamma')^2}{L_0L_2} B^2,
$$
and \eqref{G:40:54} is replaced by
$$
  A
  \lesssim
  \frac{N_2^2}{L_0L_2}
  \cdot \frac{N_0L_2}{N_1N_2}
  \min\left(1,\frac{L_0}{\Nmin^{34}}\right) B^2
  \lesssim
  \frac{N_2}{L_0}
  \min\left(1,\frac{L_0}{\Nmin^{34}}\right) B^2.
$$
When \eqref{G:20:22} holds we are done, since then we get
\begin{equation}\label{G:40:56:2}
  A \lesssim \frac{N_0}{L_0} \min\left(1,\frac{L_0}{\Nmin^{34}}\right).
\end{equation}
and by the same argument as at the end of the previous subsection we also know how to deal with the case $\Nmin^{34} \ll N_0$. If \eqref{G:20:22} fails, we only have \eqref{G:20:24}. But to compensate we can use the fact that \eqref{G:40:32} holds with $C^2 \sim r' (N_2\gamma) \Lmin^{0'1}$, as follows from \eqref{G:20:26}. In effect we then get \eqref{G:40:56:2}.

\subsection{Subcase $\theta_{12},\theta_{34} \ll \phi$, $\min(\theta_{01},\theta_{02}) < \min(\theta_{03},\theta_{04})$, $L_2 \le L_0'$, $L_4 > L_0$}

Then
$$
  \abs{q_{1234}} \lesssim \phi^2
  \lesssim \min(\theta_{03},\theta_{04})^2
  \lesssim \min(\theta_{03},\theta_{04}) \left( \frac{L_4}{N_0} \right)^{1/2},
$$
and we proceed as in section \ref{G:14:0}, but recalling also that we have Lemma \ref{E:Lemma1} at our disposal. The result is that we can dominate $J_{\boldN,\boldL}$ by the last line of \eqref{G:14:8}, but without the factor $(N_0/\Nmin^{34})^{1/4}$, so interpolating with the trivial estimate \eqref{TrivialEstimate} we obtain \eqref{MainDyadic}.

\subsection{Subcase $\theta_{12},\theta_{34} \ll \phi$, $\min(\theta_{01},\theta_{02}) < \min(\theta_{03},\theta_{04})$, $L_2 \le L_0'$, $L_4 \le L_0$}

We modify the argument from subsection \ref{G:40:0:0}. Since $\theta_{12},\theta_{34} \ll 1$, \eqref{D:134:10:2} holds, and
\begin{equation}\label{D:134:10:4}
  \theta_{34} \lesssim \gamma' \equiv \left(\frac{N_0L_0}{N_3N_4}\right)^{1/2}.
\end{equation}
Now $\abs{q_{1234}} \lesssim \phi^2$, and \eqref{G:40:0:2} holds, hence the factor $\gamma_{34}$ in \eqref{G:40:2} must be replaced by $(L_0/N_0)^q$ for some $0 \le q \le 1/2$. Taking $q=0$ or $q=1/4$ we get \eqref{G:40:42:2}, but with the factor
$$
  (\gamma')^{1/2}
  \sim \min\left(1,\frac{L_0}{\Nmin^{34}}\right)^{1/4}
$$  
replaced by
$$
  \min\left(1,\frac{L_0}{N_0}\right)^{1/4} \sum_{0 < \gamma_{34} \lesssim \gamma'} 1,
$$
bur of course the sum diverges.

To fix the problem, observe that the separation assumption $\theta(\omega_3,\omega_4) \ge 3\gamma_{34}$ is only needed when we apply the null form estimate \eqref{G:40:40}, i.e.~when $r\gamma_{34}$ dominates in the definition of $d$ in \eqref{G:40:16}, but then
$$
  \gamma_{34} \gtrsim \frac{r}{\Nmin^{34}} \sim \frac{\Nmax^{12}}{\Nmin^{34}}\gamma
  = \frac{\Nmax^{12}}{\Nmin^{34}} \left( \frac{N_0L_0'}{N_1N_2} \right)^{1/2}.
$$
On the other hand, we also have the upper bound \eqref{D:134:10:4} for $\gamma_{34}$. The cardinality of the this set of dyadic numbers $\gamma_{34}$ is $O(\log \angles{L_0})$. Recall that we used symmetry to assume that the second term in \eqref{E:2} dominates, hence we will also pick up the symmetric factor $O(\log\angles{L_0'})$ in the final estimate. So to summarize, if $\theta_{34} \gtrsim r/\Nmin^{34}$, then effectively the factor $(\gamma')^{1/2}$ in the last line of \eqref{G:40:42:2} is replaced by $\min\left(1,L_0/N_0\right)^{1/4} \log\angles{L_0} \log\angles{L_0'}$, hence we gain a factor $\Nmin^{34}/N_0$ in the right hand side of \eqref{G:40:56}, so we get the desired estimate \eqref{MainDyadic}.

It remains to consider $\theta_{34} \ll r/\Nmin^{34}$, but then we do not need the separation, so here we can avoid summation over $\gamma_{34}$ altogether by using Lemma \ref{WhitneyLemma2} instead of Lemma \ref{WhitneyLemma1}, hence we do not pick up any logarithmic factors.

\subsection{Subcase $\theta_{12},\theta_{34} \ll \phi$, $\min(\theta_{01},\theta_{02}) < \min(\theta_{03},\theta_{04})$, $L_2 > L_0'$, $L_4 \le L_0$}

This follows by the argument from section \ref{G:40:51} with the same modifications as in the previous subsection.

\section{Proof of the trilinear estimate}\label{S}

Here we prove \eqref{Trilinear} for given signs $\pm_1,\pm_2$:
\begin{equation}\label{TrilinearNew}
  \abs{I}
  \lesssim T^{1/4} [\norm{\psi_0}^2+D_T(0)] \norm{\psi_1}_{X_{\pm_1}^{0,1/2;1}}
  \norm{\psi_2}_{X_{\pm_2}^{0,1/4;1}},
\end{equation}
where
$$
  D_T(0) =
  T^{1/2} \sum_{0 < N_0 < 1/T}
  \norm{\Proj_{\abs{\xi_0} \sim N_0} (\mathbf E_0^{\text{df}},B_0^3)}
  +
  \norm{\Proj_{\abs{\xi} \ge 1/T} (\mathbf E_0^{\text{df}},B_0^3)}_{H^{-1/2}}
$$
and
\begin{align*}
  I &=
  \int \rho A_\mu^{\mathrm{hom.}}
  \Innerprod{\boldsymbol\alpha^\mu \mathbf\Pi_{\pm_1} \psi_1}{\mathbf\Pi_{\pm_2} \psi_2} \, dt \, dx
  \\
  &\simeq
  \int
  \widetilde{\rho A_{j}^{\mathrm{hom.}}}(X_0)
  \sigma^j(X_1,X_2)
  \abs{\widetilde{\psi_1}(X_1)}
  \abs{\widetilde{\psi_2}(X_2)}
  \, d\mu^{12}_{-X_0}
  \, dX_0
\end{align*}
with
$$
  \sigma^j(X_1,X_2) = \innerprod{\boldsymbol\alpha^j\mathbf\Pi(\pm_1\xi_1)z_1(X_1)}{\mathbf\Pi(\pm_2\xi_2)z_2(X_2)}.
$$
Here $X_j=(\tau_j,\xi_j)$ and we write $\widetilde{\psi_j} = z_j \bigabs{\widetilde{\psi_j}}$ with $\abs{z_j} = 1$. The convolution measure $d\mu^{12}_{-X_0}$ is given by the rule in \eqref{Convolution}, hence $X_0 = X_2 - X_1$. Recall also that we can insert the time cut-off $\rho_T$ in front of the $\psi_j$ in $I$ whenever needed.

Corresponding to the regions $\abs{\xi_0} < 1/T$ and $\abs{\xi_0} \ge 1/T$ we split
$$
  I =  I_{\abs{\xi_0} < 1/T} + I_{\abs{\xi_0} \ge 1/T}.
$$
and claim that
\begin{equation}\label{LowFreq}
  I_{\abs{\xi_0} < 1/T}
  \lesssim
  \left( \sum_{0 < N_0 \le 1/T}
  \norm{\Proj_{\abs{\xi_0} \sim N_0} (\mathbf E_0^{\text{df}},B_0^3)}
  +
  T^{-1/2}
  \norm{\psi_0}^2
  \right)
  \norm{\psi_1}\norm{\psi_2}
\end{equation}
and
\begin{equation}\label{HighFreq0}
  I_{\abs{\xi_0} \ge 1/T}
  \lesssim
  \left( \norm{\Proj_{\abs{\xi} \ge 1/T} (\mathbf E_0^{\text{df}},B_0^3)}_{H^{-1/2}} + \norm{\psi_0}^2 \right)
  \norm{\psi_1}_{X^{0,1/4;1}_{\pm_1}}
  \norm{\psi_2}_{X^{0,1/4;1}_{\pm_2}}.
\end{equation}
But we are allowed to insert $\rho_T$ in front of the $\psi$'s, and in \eqref{LowFreq} we use \eqref{Cutoff1} to get
\begin{equation}\label{Tgain1}
  \norm{\rho_T\psi_1}\norm{\rho_T\psi_2}
  \lesssim
  T^{1/2}\norm{\psi_1}_{X^{0,1/2;1}_{\pm_1}}
  T^{1/4}\norm{\psi_2}_{X^{0,1/4;1}_{\pm_2}},
\end{equation}
whereas in \eqref{HighFreq0} we get from \eqref{Cutoff2},
\begin{equation}\label{Tgain2}
  \norm{\rho_T\psi_1}_{X^{0,1/4;1}_{\pm_1}}
  \lesssim
  T^{1/4}
  \norm{\psi_1}_{X^{0,1/2;1}_{\pm_1}}.
\end{equation}
Combining \eqref{LowFreq}--\eqref{Tgain2} we obtain \eqref{TrilinearNew}, hence it suffices to prove the claimed estimates \eqref{LowFreq} and \eqref{HighFreq0}.

For convenience we shall denote by $c = 1/T \gg 1$ the cutoff point between low and high frequencies.

By our choice of data, $A_0^{\mathrm{hom.}} = 0$. Using \eqref{WaveSplitting} we split $A_j^{\mathrm{hom.}} = A_{j,+}^{\mathrm{hom.}} + A_{j,-}^{\mathrm{hom.}}$ for $j=1,2$, and we split $I$ accordingly. Note that
$$
  \widetilde{A_{j,\pm_0}^{\mathrm{hom.}}}(X_0)
  =
  \delta(\tau_0\pm_0\abs{\xi_0}) \left( \frac{\widehat{a_j}(\xi_0)}{2} \pm_0 \frac{i\widehat{\dot a_j}(\xi_0)}{2\abs{\xi_0}} \right)
  =
  \delta(\tau_0\pm_0\abs{\xi_0}) \frac{g_j^{\pm_0}(\xi_0)}{\abs{\xi_0}^{1/2}},
$$
where
$$
  g_j^{\pm_0}(\xi_0)
  =
  \abs{\xi_0}^{1/2}
  \left( \frac{\widehat{a_j}(\xi_0)}{2} \pm_0 \frac{i\widehat{\dot a_j}(\xi_0)}{2\abs{\xi_0}} \right).
$$
Since $\mathbf a = - \Delta^{-1} (\partial_2 B^3_0, - \partial_1 B^3_0, 0)$ and $\dot{\mathbf a} = - \mathbf E_0 = - \mathbf E^{\mathrm{df}}_0 - \Delta^{-1}\nabla(\abs{\psi_0}^2)$,
\begin{equation}\label{S:6}
\begin{aligned}
  \norm{\chi_{\abs{\xi_0} \ge c} g^{\pm_0}}
  &\lesssim \norm{\Proj_{\abs{\xi} \ge c} (\mathbf E_0^{\text{df}},B_0^3)}_{H^{-1/2}} + \norm{\abs{\psi_0}^2}_{H^{-3/2}}
  \\
  &\lesssim \norm{\Proj_{\abs{\xi} \ge c} (\mathbf E_0^{\text{df}},B_0^3)}_{H^{-1/2}} + \norm{\psi_0}^2,
\end{aligned}
\end{equation}
where $\bignorm{\abs{\psi_0}^2}_{H^{-3/2}} \lesssim \norm{\psi_0}^2$ by Lemma \ref{SobolevLemma}. 

\subsection{Estimate for $I=I_{\abs{\xi_0} \ge c}$}

We want \eqref{HighFreq0}, but in view of \eqref{S:6} it suffices to prove
\begin{equation}\label{HighFreq}
  I
  \lesssim
  \left( \norm{\chi_{\abs{\xi_0} \ge c} g^{\pm_0}} + \norm{\psi_0}^2 \right)
  \norm{\psi_1}_{X^{0,1/4;1}_{\pm_1}}
  \norm{\psi_2}_{X^{0,1/4;1}_{\pm_2}}
\end{equation}
for
$$
  I
  =
  \int \chi_{\abs{\xi_0} \ge c}
  \widehat{\rho}(\tau_0\pm_0\abs{\xi_0})
  \frac{g_j^{\pm_0}(\xi_0)}
  {\abs{\xi_0}^{1/2}}\sigma^j(X_1,X_2)
  \abs{\widetilde{\psi_1}(X_1)}
  \abs{\widetilde{\psi_2}(X_2)}
  \, d\mu^{12}_{-X_0}
  \, dX_0
$$
with any combination of signs $\pm_0,\pm_1,\pm_2$. Taking the absolute value and using dyadic decomposition we get, since $\widehat\rho$ is rapidly decreasing,
\begin{equation}\label{S:20}
  \abs{I}
  \lesssim
  \sum_{\boldN,\boldL} \frac{I_{\boldN,\boldL}}{N_0^{1/2}L_0},
\end{equation}
where $\boldN = (N_0,N_1,N_2)$ with $N_k \sim \angles{\xi_j}$, $\boldL = (L_0,L_1,L_2)$ with $L_k = \angles{\tau_k\pm_k\abs{\xi_k}}$ and
$$
  I_{\boldN,\boldL}
  =
  \int \chi_{\abs{\xi_0} \ge c} 
  \frac{\chi_{K^{\pm_0}_{N_0,L_0}}(X_0)}{\angles{\tau_0\pm_0\abs{\xi_0}}}
  \abs{\sigma^j(X_1,X_2)g_j^{\pm_0}(\xi_0)}  \widetilde{u_1}(X_1)
  \widetilde{u_2}(X_2)
  \, d\mu^{12}_{-X_0}
  \, dX_0
$$
with $\widetilde{u_k} = \chi_{K^{\pm_k}_{N_k,L_k}} \bigabs{\widetilde{\psi_k}}$. Note the implicit summation over $j=1,2$.

Since $\nabla \cdot \mathbf a = 0$ and $\nabla \cdot \dot{\mathbf a} = - \abs{\psi_0}^2$, we observe that
\begin{equation}\label{S:10}
  \xi_0^j g_j^{\pm_0}(\xi_0) \simeq \abs{\xi_0}^{-1/2} \widehat{\abs{\psi_0}^2}(\xi_0).
\end{equation}
Using this property, it was proved in \cite{Selberg:2008b} that $\sigma^j = \sigma^j(X_1,X_2)$ satisfies
\begin{equation}\label{S:40}
  \abs{\sigma^jg_j^{\pm_0}(\xi_0)}
  \lesssim \theta_{12} \abs{g^{\pm_0}(\xi_0)}
  +
  \min(\theta_{01},\theta_{02}) \abs{g^{\pm_0}(\xi_0)}
  +
  \abs{\xi_0}^{-3/2} \Bigabs{\widehat{\abs{\psi_0}^2}(\xi_0)}
\end{equation}
where
$$
  \theta_{kl} = \theta(\pm_k\xi_k,\pm_l\xi_l).
$$
Correspondingly we split
$$
  I_{\boldN,\boldL}
  \lesssim
  I^1_{\boldN,\boldL}
  +
  I^2_{\boldN,\boldL}
  +
  I^3_{\boldN,\boldL}.
$$

\subsection{Estimate for $I^1_{\boldN,\boldL}$}

Defining
$$
  \widetilde{u_0}(X_0)
  =
  \chi_{K^{\pm_0}_{N_0,L_0}}(X_0)
  \frac{\chi_{\abs{\xi_0} \ge c} \abs{g^{\pm_0}(\xi_0)}}{\angles{\tau_0\pm_0\abs{\xi_0}}}
$$
and using \eqref{Bilinear3} and Lemma \ref{AnglesLemma} we get, for $0 \le p \le 1/2$,
\begin{align*}
  I^1_{\boldN,\boldL}
  &=
  \int
  \theta_{12}
  \widetilde{u_0}(X_0) \widetilde{u_1}(X_1) \widetilde{u_2}(X_2)  \, d\mu^{12}_{-X_0}
  \, dX_0
  \\
  &\lesssim
  \left(\frac{\Lmax^{012}}{\Nmin^{12}}\right)^{p}
  \left(\Nmin^{012}\Nmin^{12}N_0\Lmed^{012}\right)^{1/4} L_0^{1/2}
  \norm{u_0}\norm{u_1}\norm{u_2},
\end{align*}
and estimating $\Lmed^{012}\Lmax^{012} \le L_0L_1L_2$,
\begin{equation}\label{Trilinear1}
  \sum_{\boldN,\boldL} \frac{I^1_{\boldN,\boldL}}{N_0^{1/2}L_0}
  \lesssim
  \sum_{\boldN,\boldL}
  \left(\frac{\Lmax^{012}}{\Nmin^{12}}\right)^{p-1/4}
  \left(\frac{\Nmin^{012}}{N_0}\right)^{1/4}
  \frac{\left(L_1 L_2\right)^{1/4}}{L_0^{1/4}}
  \norm{u_0}\norm{u_1}\norm{u_2}.
\end{equation}

If we exclude for the moment the case $N_0 \ll N_1 \sim N_2$, and take $p=1/4$, \eqref{Trilinear1} gives the desired estimate: We first sum the $N$'s using the factor $(\Nmin^{012}/N_0)^{1/4}$ for the smallest $N$ and Cauchy-Schwarz for the two largest $N$'s. Then we sum $L_0$, and finally we sum $L_1$ and $L_2$ using the definition of the norm on $X^{0,1/4;1}_{\pm}$, obtaining
\begin{align*}
  \sum_{\boldN,\boldL} \frac{I^1_{\boldN,\boldL}}{N_0^{1/2}L_0}
  &\lesssim
  \norm{\frac{\chi_{\abs{\xi_0} \ge c} g^{\pm_0}(\xi_0)}{\angles{\tau_0\pm_0\abs{\xi_0}}}}
  \norm{\psi_1}_{X^{0,1/4;1}_{\pm_1}}
  \norm{\psi_2}_{X^{0,1/4;1}_{\pm_2}}
  \\
  &\lesssim
  \norm{\chi_{\abs{\xi_0} \ge c} g^{\pm_0}}
  \norm{\psi_1}_{X^{0,1/4;1}_{\pm_1}}
  \norm{\psi_2}_{X^{0,1/4;1}_{\pm_2}}
\end{align*}
as required for \eqref{HighFreq}.

There remains the interaction $N_0 \ll N_1 \sim N_2$. Then we need to find a way to sum $N_0$. If $N_0 \ge \Lmax^{012}$, there is no problem, since we can take $p=1/2$ instead of $p=1/4$ in \eqref{Trilinear1}, thereby gaining an extra factor
$$
  \left(\frac{\Lmax^{012}}{\Nmin^{12}}\right)^{1/4}
  \le
  \left(\frac{\Lmax^{012}}{N_0}\right)^{1/4}
$$
which can be used to sum $N_0$ if $N_0 \gtrsim \Lmax^{012}$. But what if $N_0 < \Lmax^{012}$? Then instead of \eqref{Bilinear3} we use \eqref{Bilinear2}, obtaining (estimating trivially $\theta_{12} \lesssim 1$),
$$
  I^1_{\boldN,\boldL}
  \lesssim
  \left(N_0^{3/2} L_0 (\Lmin^{12})^{1/2} \right)^{1/2}
  \norm{u_0}\norm{u_1}\norm{u_2},
$$
hence
\begin{equation}\label{Trilinear2}
  \frac{I^1_{\boldN,\boldL}}{N_0^{1/2}L_0}
  \lesssim
  \frac{N_0^{1/4}  (\Lmin^{12})^{1/4}}{L_0^{1/2}} 
  \norm{u_0}\norm{u_1}\norm{u_2}.
\end{equation}
First sum $N_1 \sim N_2$ using Cauchy-Schwarz, then sum $N_0$ using
\begin{equation}\label{NzeroSum}
  \sum_{N_0 < \Lmax^{012}} N_0^{1/4}
  \sim (\Lmax^{012})^{1/4} \le (L_0\Lmax^{12})^{1/4},
\end{equation}
then sum $L_0$ using the remaining factor $L_0^{-1/4}$, and finally sum $L_1$ and $L_2$ as above.

\subsection{Estimate for $I^2_{\boldN,\boldL}$}

The difference from the previous subsection is that $\theta_{12}$ is replaced by
$$
  \min(\theta_{01},\theta_{02})
  \lesssim
  \left( \frac{\Lmax^{012}}{N_0} \right)^{p} \qquad (0 \le p \le 1/2),
$$
and this is better than the estimate we used for $\theta_{12}$ except if $N_0 \ll N_1 \sim N_2$. But in that case, by \eqref{Bilinear2},
$$
  \frac{I^2_{\boldN,\boldL}}{N_0^{1/2}L_0}
  \lesssim
  \frac{1}{N_0^{1/2}L_0}
  \left( \frac{\Lmax^{012}}{N_0} \right)^{p}
  \left(N_0^{3/2} L_0 (\Lmin^{12})^{1/2} \right)^{1/2}
  \norm{u_0}\norm{u_1}\norm{u_2}.
$$
If $N_0 \ge \Lmax^{012}$, we take $p=1/2$, obtaining (since $\Lmin^{12}\Lmax^{012} \le L_0L_1L_2$)
$$
  \frac{I^2_{\boldN,\boldL}}{N_0^{1/2}L_0}
  \lesssim
  \left(\frac{\Lmax^{012}}{N_0}\right)^{1/4}
  \frac{L_1^{1/4}L_2^{1/4}}{L_0^{1/4}}
  \norm{u_0}\norm{u_1}\norm{u_2},
$$
so summing is no problem. If $N_0 < \Lmax^{012}$, then with $p=0$ we get \eqref{Trilinear2} for $I^2_{\boldN,\boldL}$, and using \eqref{NzeroSum} we can again sum.

\subsection{Estimate for $I^3_{\boldN,\boldL}$}

We may assume $\theta_{12} \ll 1$, since otherwise the estimate for $I^1_{\boldN,\boldL}$ applies to $I_{\boldN,\boldL}$ as a whole by simply estimating $\abs{\sigma^j(X_1,X_2)} \lesssim 1$.

Then by Lemma \ref{AnglesLemma},
$$
  \theta_{12} \lesssim \gamma \equiv \left( \frac{N_0\Lmax^{012}}{N_1N_2} \right)^{1/2},
$$
hence
$$
  I^3_{\boldN,\boldL}
  \lesssim N_0^{-3/2}
  \int
  \chi_{\theta_{12} \lesssim \gamma}
  \widetilde{u_0}(X_0) \widetilde{u_1}(X_1) \widetilde{u_2}(X_2)  
  \, d\mu^{12}_{-X_0}
  \, dX_0
$$
with
$$
  \widetilde{u_0}(X_0)
  =
  \chi_{K^{\pm_0}_{N_0,L_0}}(X_0) \frac{\Bigabs{\widehat{\abs{\psi_0}^2}(\xi_0)}}{\angles{\tau_0\pm_0\abs{\xi_0}}}.
$$
By Lemma \ref{WhitneyLemma2} applied to the pair $(\pm_1\xi_1,\pm_2\xi_2)$,
\begin{equation}\label{S:60}
  I^3_{\boldN,\boldL}
  \lesssim N_0^{-3/2}
  \sum_{\omega_1,\omega_2}
  \int
  \widetilde{u_0}(X_0)
  \widetilde{u_1^{\gamma,\omega_1}}(X_1)
  \widetilde{u_2^{\gamma,\omega_2}}(X_2)
  \, d\mu^{12}_{-X_0}
  \, dX_0,
\end{equation}
where the sum is over $\omega_1,\omega_2 \in \Omega(\gamma)$ with $\theta(\omega_1,\omega_2) \lesssim \gamma$ and $u_j^{\gamma,\omega_j} = \Proj_{\pm_j\xi_j \in \Gamma_{\gamma}(\omega_j)} u$. So in the last integral, $\xi_1,\xi_2$ are both restricted to a tube of radius
$$
  r \sim \Nmax^{12}\gamma \sim \left( \frac{\Nmax^{12}N_0\Lmax^{012}}{\Nmin^{12}} \right)^{1/2}
$$
around $\R\omega_1$, hence the same is true for $\xi_0=\xi_2-\xi_1$, so we get
\begin{align*}
  I^3_{\boldN,\boldL}
  &\lesssim N_0^{-3/2}
  \sum_{\omega_1,\omega_2}
  \norm{\Proj_{\R \times T_r(\omega_1)} u_0}
  \norm{ \Proj_{K^{\pm_0}_{N_0,L_0}} \left( u_1^{\gamma,\omega_1}
  \overline{u_2^{\gamma,\omega_2}} \right) }
  \\
  &\lesssim
  N_0^{-3/2}
  \sum_{\omega_1,\omega_2}
  \norm{\Proj_{T_r(\omega_1)}
  \Proj_{\angles{\xi_0} \sim N_0} \abs{\psi_0}^2}
  \\
  &\quad \times\left( N_0^{1/2} \Nmin^{012} L_0 (\Lmin^{12})^{1/2} \right)^{1/2} \norm{u_1^{\gamma,\omega_1}} 
  \norm{u_2^{\gamma,\omega_2}},
\end{align*}
where we used \eqref{Bilinear2}. Applying the estimate (by Plancherel and Cauchy-Schwarz this reduces to the obvious fact that the area of intersection of a strip of width $r$ and a disk of radius $N_0$ is comparable to $rN_0$)
\begin{equation}\label{S:56}
  \sup_{\omega \in \mathbb S^1} \norm{\Proj_{T_r(\omega)}
  \Proj_{\angles{\xi_0} \sim N_0} \abs{\psi_0}^2}
  \lesssim
  \left( r N_0 \right)^{1/2} \norm{\psi_0}^2,
\end{equation}
and summing $\omega_1,\omega_2$ as in \eqref{OmegaSum}, we then obtain
\begin{align*}
  I^3_{\boldN,\boldL}
  &\lesssim N_0^{-3/2}
  \left( r N_0 \right)^{1/2} \norm{\psi_0}^2
  \left( N_0^{1/2} \Nmin^{012} L_0 (\Lmin^{12})^{1/2} \right)^{1/2}
  \norm{u_1}\norm{u_2}
  \\
  &\lesssim
  \left( \frac{\Nmax^{12}\Nmin^{012}}{N_0\Nmin^{12}} \right)^{1/4}
  L_0^{1/2} \left( \Lmin^{12} \Lmax^{012}\right)^{1/4} \norm{\psi_0}^2
  \norm{u_1}\norm{u_2}
  \\
  &\lesssim
  L_0^{1/2} \left( \Lmin^{12} \Lmax^{012}\right)^{1/4} \norm{\psi_0}^2
  \norm{u_1}\norm{u_2}
  \\
  &\lesssim
  L_0^{3/4} (L_1L_2)^{1/4} \norm{\psi_0}^2
  \norm{u_1}\norm{u_2},
\end{align*}
where we used $\Nmin^{012} \Nmax^{012} \sim N_0\Nmin^{12}$ and $\Lmin^{12} \Lmax^{012} \le L_0L_1L_2$. Thus,
$$
  \sum_{\boldN,\boldL} \frac{I^3_{\boldN,\boldL}}{N_0^{1/2}L_0}
  \lesssim
  \norm{\psi_0}^2 \sum_{\boldN,\boldL} \frac{(L_1^{1/4}\norm{u_1})(L_2^{1/4}\norm{u_2})}{N_0^{1/2} L_0^{1/4}},
$$
so summing the $N$'s is easy (if $N_0 \sim \Nmax^{12}$, all the $N$'s can be summed using the factor $N_0^{-1/2}$; if $N_0 \ll N_1 \sim N_2$, we sum $N_1 \sim N_2$ using Cauchy-Schwarz and $N_0$ using the factor $N_0^{-1/2}$), we can sum $L_0$ using the factor $L_0^{-1/4}$, and finally we sum $L_1$ and $L_2$ using the definition of the norm on $X^{0,1/4;1}_{\pm}$, obtaining
$$
  \sum_{\boldN,\boldL} \frac{I^3_{\boldN,\boldL}}{N_0^{1/2}L_0}
  \lesssim
  \norm{\psi_0}^2
  \norm{\psi_1}_{X^{0,1/4;1}_{\pm_1}}
  \norm{\psi_2}_{X^{0,1/4;1}_{\pm_2}}
$$
as needed for \eqref{HighFreq}.

\subsection{Estimate for $I= I_{\abs{\xi_0} < c}$} Since $\abs{\sigma^j(X_1,X_2)} \lesssim 1$, it suffices to prove the bound \eqref{LowFreq} for
\begin{align*}
  I_1
  &=
  \int_{\abs{\xi_0} < c}
  \abs{\widehat\rho(\tau_0\pm_0\abs{\xi_0})}
  \abs{\widehat{\mathbf a}(\xi_0)} 
  \abs{\widetilde{\psi_1}(X_1)} \abs{\widetilde{\psi_2}(X_2)}
  \, d\mu^{12}_{-X_0}
  \, dX_0,
  \\
  I_2
  &=
  \int_{1 \le \abs{\xi_0} < c}
  \abs{\widehat\rho(\tau_0\pm_0\abs{\xi_0})}
  \frac{\bigabs{\widehat{\dot{\mathbf a}}(\xi_0)}}{\abs{\xi_0}} 
  \abs{\widetilde{\psi_1}(X_1)} \abs{\widetilde{\psi_2}(X_2)}
  \, d\mu^{12}_{-X_0}
  \, dX_0,
  \\
  I_3
  &=
  \int_{\abs{\xi_0} < 1}
  \abs{\frac{\widehat\rho(\tau_0+\abs{\xi_0}) - \widehat\rho(\tau_0-\abs{\xi_0})}
  { \abs{\xi_0} }}
  \bigabs{\widehat{\dot{\mathbf a}}(\xi_0)}
  \abs{\widetilde{\psi_1}(X_1)} \abs{\widetilde{\psi_2}(X_2)}
  \, d\mu^{12}_{-X_0}
  \, dX_0.
\end{align*}
Since $\widehat \rho$ is rapidly decreasing and $\mathbf a = - \Delta^{-1} (\partial_2 B^3_0, - \partial_1 B^3_0, 0)$,
\begin{align*}
  I_1
  &\lesssim
  \int
  \frac{\chi_{\abs{\xi_0} < c} \abs{\widehat{\mathbf a}(\xi_0)}}{\angles{\tau_0\pm_0\abs{\xi_0}}^2}
  \abs{\widetilde{\psi_1}(X_1)} \abs{\widetilde{\psi_2}(X_0+X_1)}
  \, dX_1
  \, dX_0
  \\
  &\le
  \int
  \frac{\chi_{\abs{\xi_0} < c} \abs{\widehat{\mathbf a}(\xi_0)}}{\angles{\tau_0\pm_0\abs{\xi_0}}^2}
  \, dX_0
  \norm{\psi_1}\norm{\psi_2}
  \\
  &\lesssim
  \int
  \chi_{\abs{\xi_0} < c} \abs{\widehat{\mathbf a}(\xi_0)}
  \, d\xi_0
  \norm{\psi_1}\norm{\psi_2}
  \\
  &\lesssim
  \int
  \frac{\chi_{\abs{\xi_0} < c} \abs{\widehat{B^3_0}(\xi_0)}}{\abs{\xi_0}} 
  \, d\xi_0
  \norm{\psi_1}\norm{\psi_2}
  \\
  &\le
  \sum_{0 < N_0 < c}
  \left(\int_{\abs{\xi_0} \sim N_0}
  \frac{1}{\abs{\xi_0}^2}
  \, d\xi_0 \right)^{1/2}
  \norm{\Proj_{\abs{\xi_0} \sim N_0} B^3_0}
  \norm{\psi_1}\norm{\psi_2}
  \\
  &\lesssim
  \sum_{0 < N_0 < c}
  \norm{\Proj_{\abs{\xi_0} \sim N_0} B^3_0}
  \norm{\psi_1}\norm{\psi_2}.
\end{align*}
Similarly, since $\dot{\mathbf a} = - \mathbf E_0 = - \mathbf E^{\mathrm{df}}_0 - \Delta^{-1}\nabla(\abs{\psi_0}^2)$,
\begin{align*}
  I_2
  &\lesssim
  \left( \sum_{1 \le N_0 < c}
  \norm{\Proj_{\abs{\xi_0} \sim N_0} \mathbf E_0^{\text{df}}}
  +
  \int_{\abs{\xi_0} < c} \frac{1}{\angles{\xi_0}^2} \Bigabs{\widehat{\abs{\psi_0}^2}(\xi_0)} \, d\xi_0
  \right)
  \norm{\psi_1}\norm{\psi_2}
  \\
  &\lesssim
  \left( \sum_{1 \le N_0 < c}
  \norm{\Proj_{\abs{\xi_0} \sim N_0} \mathbf E_0^{\text{df}}}
  +
  \left( \int_{\abs{\xi_0} < c} \frac{d\xi_0}{\angles{\xi_0}} \right)^{1/2}
  \norm{\abs{\psi_0}^2}_{H^{-3/2}}
  \right)
  \norm{\psi_1}\norm{\psi_2}
  \\
  &\lesssim
  \left( \sum_{1 \le N_0 < c}
  \norm{\Proj_{\abs{\xi_0} \sim N_0} \mathbf E_0^{\text{df}}}
  +
  c^{1/2}
  \norm{\psi_0}^2
  \right)
  \norm{\psi_1}\norm{\psi_2},
\end{align*}
where $\bignorm{\abs{\psi_0}^2}_{H^{-3/2}} \lesssim \norm{\psi_0}^2$ by Lemma \ref{SobolevLemma}.

Finally, since $\widehat\rho(\tau_0+\abs{\xi_0}) - \widehat\rho(\tau_0-\abs{\xi_0}) = 2\abs{\xi_0} \int_0^1 {\widehat\rho}\,'(\tau_0-\abs{\xi_0}+2s\abs{\xi_0}) \, ds$,
\begin{align*}
  I_3
  &\lesssim
  \int_0^1 \int_{\abs{\xi_0} < 1}
  \abs{{\widehat\rho}\,'(\tau_0-\abs{\xi_0}+2s\abs{\xi_0})}
  \bigabs{\widehat{\dot{\mathbf a}}(\xi_0)}
  \, dX_0 \, ds \norm{\psi_1}\norm{\psi_2}
  \\
  &\lesssim
  \int_{\abs{\xi_0} < 1} \bigabs{\widehat{\dot{\mathbf a}}(\xi_0)}
  \, d\xi_0 \norm{\psi_1}\norm{\psi_2}
  \\
  &\le
  \left( \int_{\abs{\xi_0} < 1} \abs{\widehat{\mathbf E_0^{\text{df}}}(\xi_0)}
  \, d\xi_0
  +
  \int_{\abs{\xi_0} < 1} \frac{1}{\abs{\xi_0}} \Bigabs{\widehat{\abs{\psi_0}^2}(\xi_0)}
  \, d\xi_0
  \right)
  \norm{\psi_1}\norm{\psi_2}
  \\
  &\lesssim
  \left( \norm{\Proj_{\abs{\xi_0} < 1}\mathbf E_0^{\text{df}}}
  +
  \left(\int_{\abs{\xi_0} < 1} \frac{d\xi_0}{\abs{\xi_0}} \right)^{1/2}
  \norm{\abs{D}^{-1/2}\Proj_{\abs{\xi_0} < 1} (\abs{\psi_0}^2)}
  \right)
  \norm{\psi_1}\norm{\psi_2}
  \\
  &\lesssim
  \left( \sum_{0 < N_0 < 1}
  \norm{\Proj_{\abs{\xi_0} \sim N_0} \mathbf E_0^{\text{df}}}
  +
  \norm{\psi_0}^2
  \right)
  \norm{\psi_1}\norm{\psi_2},
\end{align*}
where we estimated
$$
  \norm{\abs{D}^{-1/2}\Proj_{\abs{\xi_0} < 1} (\abs{\psi_0}^2)}
  \le
  \norm{\abs{D}^{-1/2} \angles{D}^{-1} (\abs{\psi_0}^2)}
  \lesssim \norm{\psi_0}^2
$$
by Lemma \ref{SobolevLemma}.

This completes the proof of the trilinear estimate.

\section{Estimates for the electromagnetic field}\label{N}

Here we prove Theorem \ref{EMgrowth}.

Denote by $S^{\mathrm{W}}_\pm(t) = e^{-it(\pm\abs{D})}$ and $S^{\mathrm{KG}}_\pm(t) = e^{-it(\pm\angles{D})}$ the propagators of the evolution operators $-i\partial_t \pm \abs{D}$ and $-i\partial_t \pm \angles{D}$ respectively. Then by Duhamel's principle applied to \eqref{Eeq} and \eqref{Beq},
\begin{align}
  \label{Eexpansion}
  \mathbf E^{\mathrm{df}}_\pm(t)
  &= S^{\mathrm{KG}}_\pm(t) \mathbf E^{\mathrm{df}}_\pm(0)
  \\ \notag
  &\quad -
  \int_0^t S^{\mathrm{KG}}_\pm(t-s) (\pm 2\angles{D})^{-1}  \left[ \mathcal P_{\text{df}}(-\nabla J_0 + \partial_t \mathbf J) - \rho_T\mathbf E^{\mathrm{df}} \right](s) \, ds,
  \\
  \label{Bexpansion}
  B^3_\pm(t)
  &= S^{\mathrm{W}}_\pm(t) B^3_\pm(0)
  -
  \int_0^t  S^{\mathrm{W}}_\pm(t-s) (\pm 2\abs{D})^{-1}\left(\partial_1 J_2 - \partial_2 J_1\right)(s) \, ds,
\end{align}
for $\abs{t} \le T$.  

Since $S^{\mathrm{W}}_\pm(t)$ and $S^{\mathrm{KG}}_\pm(t)$ are unitary,
\begin{align*}
  \norm{S^{\mathrm{KG}}_\pm(t) \mathbf E^{\mathrm{df}}_\pm(0)}_{(T)}
  &=
  \norm{\mathbf E^{\mathrm{df}}_\pm(0)}_{(T)},
  \\
  \norm{S^{\mathrm{W}}_\pm(t) B^3_\pm(0)}_{(T)}
  &=
  \norm{B^3_\pm(0)}_{(T)},
\end{align*}
for all $t$, and this takes care of the first term on the right hand side of \eqref{ModifiedEMgrowth}, hence it remains to prove that, for some $C$ depending only on the charge norm and $\abs{M}$,
\begin{equation}\label{InhomGrowth}
  \sup_{\abs{t} \le T} \norm{I_j(t)}_{(T)} \le CT^{1/2}\log(1/T)
\end{equation}
for the inhomogeneous terms
\begin{align*}
  I_1(t)
  &=
  \int_0^t S^{\mathrm{W}}_\pm(t-s) \abs{D}^{-1} \left(\partial_1 J_2 - \partial_2 J_1\right)(s) \, ds,
  \\
  I_2(t)
  &=
  \int_0^t S^{\mathrm{KG}}_\pm(t-s) \angles{D}^{-1}  \mathcal P_{\text{df}}(-\nabla J_0 + \partial_t \mathbf J)(s) \, ds,
  \\
  I_3(t)
  &=
  \int_0^t S^{\mathrm{KG}}_\pm(t-s) \angles{D}^{-1} (\rho_T\mathbf E^{\mathrm{df}})(s) \, ds.
\end{align*}
These are defined for $\abs{t} \le T$, but after choosing an extension of $\psi$ we can consider all $t \in \R$ and insert the cutoff $\rho_T(t) = \rho(t/T)$ in front of $\psi$, so that
\begin{equation}\label{Jcut}
  J^\mu = \innerprod{\boldsymbol\alpha^\mu\rho_T\psi}{\rho_T\psi}.
\end{equation}

The extensions (or representatives, to be precise) of $\psi_\pm \in X^{0,1/2;1}_\pm(S_T)$, which we still denote $\psi_\pm$ for convenience, can of course be chosen so that
$$
  \norm{\psi_\pm}_{X^{0,1/2;1}_\pm} \le 2\norm{\psi_\pm}_{X^{0,1/2;1}_\pm(S_T)},
$$
and in view of \eqref{IterationBound} we then have
\begin{equation}\label{ExtensionBound}
  \norm{\psi_\pm}_{X^{0,1/2;1}_\pm} \le C_1,
\end{equation}
where $C_1$ only depends on the charge constant. We may further assume
$$
  \mathbf \Pi_\pm \psi_\pm = \psi_\pm,
$$
since this already holds on $S_T$, and replacing $\psi_\pm$ by $\mathbf\Pi_\pm \psi_\pm$ it will hold globally; moreover, applying $\mathbf\Pi_\pm$ does not increase the norm. Having thus chosen the extensions $\psi_\pm$, we define the extension of $\psi$ itself by
$$
  \psi = \psi_+ + \psi_-,
$$
and note that $\mathbf \Pi_\pm \psi = \psi_\pm$ by orthogonality of the projections.

Writing $\psi_j = \rho_T\psi_{\pm_j}$ for given signs $\pm_j$, and applying \eqref{Convolution} to \eqref{Jcut}, we now note that
$$
  \widetilde{J_\kappa}(X_0)
  \simeq
  \sum_{\pm_1,\pm_2}
  \int
  \Innerprod{\boldsymbol\alpha_\kappa\boldsymbol\Pi(\pm_1\xi_1)z_1(X_1)}{\boldsymbol\Pi(\pm_2\xi_2)z_2(X_2)}
  \abs{\widetilde{\psi_1}(X_1)} \abs{\widetilde{\psi_2}(X_2)}
  \, d\mu^{12}_{X_0},
$$
where $X_j = (\tau_j,\xi_j)$ and $\widetilde{\psi_j} = z_j \bigabs{\widetilde{\psi_j}}$ with $\abs{z_j} = 1$.

Observe that the symbol of $(1/i)\partial_\kappa$ is $X_0^\kappa = \tau_0$ for $\kappa=0$, and $X_0^\kappa = \xi_0^\kappa$ for $\kappa=1,2$, where we write $\xi_0=(\xi_0^1,\xi_0^2)$. Thus,
\begin{gather}
  \label{Bfourier}
  \abs{\mathcal F\left(\partial_1 J_2 - \partial_2 J_1\right)(X_0)}
  \le
  \sum_{\pm_1,\pm_2} \sum_{k,l=1,2; k \neq l} F^{\pm_1,\pm_2}_{kl}(X_0),
  \\
  \label{Efourier}
  \abs{\mathcal F\left[\mathcal P_{\text{df}}(-\nabla J_0 + \partial_t \mathbf J)\right](X_0)}
  \le
  \sum_{\pm_1,\pm_2} \sum_{k=1,2} F^{\pm_1,\pm_2}_{k0}(X_0),
\end{gather}
where
$$
  F^{\pm_1,\pm_2}_{\kappa\lambda}(X_0)
  =
  \int \abs{\sigma_{\kappa\lambda}(X_1,X_2)}
  \abs{\widetilde{\psi_1}(X_1)} \abs{\widetilde{\psi_2}(X_2)}
  \, d\mu^{12}_{X_0}
$$
and the symbol
\begin{align*}
  \sigma_{\kappa\lambda}(X_1,X_2)
  &= X_0^\kappa \Innerprod{\boldsymbol\alpha_\lambda\boldsymbol\Pi(\pm_1\xi_1)z_1(X_1)}{\boldsymbol\Pi(\pm_2\xi_2)z_2(X_2)}
  \\
  &\quad -
  X_0^\lambda \Innerprod{\boldsymbol\alpha_\kappa\boldsymbol\Pi(\pm_1\xi_1)z_1(X_1)}{\boldsymbol\Pi(\pm_2\xi_2)z_2(X_2)}
\end{align*}
has the following null structure:

\begin{lemma}\label{N:Lemma} \emph{(\cite{Selberg:2008b}.)} For any choice of signs $\pm_0,\pm_1,\pm_2$, and writing $\theta_{\kappa\lambda} = \theta(\pm_\kappa\xi_\kappa,\pm_\lambda\xi_\lambda)$ for $\kappa,\lambda=0,1,2$, we have
\begin{align}
  \label{N:12}
  \abs{\sigma_{kl}(X_1,X_2)}
  &\lesssim
  \abs{\xi_0}\theta_{12} + \abs{\xi_0}\min(\theta_{01},\theta_{02}),
  \\
  \label{N:14}
  \abs{\sigma_{k0}(X_1,X_2)}
  &\lesssim
  \abs{\xi_0}\theta_{12} + \abs{\xi_0}\min(\theta_{01},\theta_{02})
  + \abs{\tau_0\pm_0\abs{\xi_0}},
\end{align}
for $k,l=1,2$.
\end{lemma}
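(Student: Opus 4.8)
The plan is to reduce both estimates \eqref{N:12} and \eqref{N:14} to the elementary null form inequality
$$
  \bigl\vert X_0^\kappa \Innerprod{\boldsymbol\alpha_\lambda v_1}{v_2} - X_0^\lambda \Innerprod{\boldsymbol\alpha_\kappa v_1}{v_2} \bigr\vert
  \lesssim
  \abs{\xi_0}\,\theta_{12} + \abs{\tau_0 \pm_0 \abs{\xi_0}} + \bigl(\abs{\hypwt_1} + \abs{\hypwt_2}\bigr),
$$
valid for unit spinors $v_j = \boldsymbol\Pi(\pm_j \xi_j) z_j$, where $\hypwt_j = \tau_j \pm_j \abs{\xi_j}$; this is exactly the type of bound established in \cite{Selberg:2008b} for the Maxwell part. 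The antisymmetrized combination $X_0^\kappa (\boldsymbol\alpha_\lambda)_{ab} - X_0^\lambda (\boldsymbol\alpha_\kappa)_{ab}$, viewed as a bilinear form in the spinor slots, annihilates the pair of plane-wave spinors associated with the same sign and the same null direction; the key algebraic input is the identity $\boldsymbol\Pi(e) \boldsymbol\alpha^\mu \boldsymbol\Pi(e) = e^\mu_{\pm}\boldsymbol\Pi(e)$ (with $e^0_\pm = \pm 1$, $e^j_\pm = e^j$), so that when $\pm_1\xi_1/\abs{\xi_1}$ and $\pm_2\xi_2/\abs{\xi_2}$ coincide and $X_0$ lies on the null cone, the two terms cancel. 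The deviation from this ideal situation is measured precisely by $\theta_{12}$ (angular separation of the two spinor directions), $\abs{\tau_0 \pm_0 \abs{\xi_0}}$ (failure of $X_0$ to lie on the $\pm_0$-cone) and the hyperbolic weights $\hypwt_1, \hypwt_2$ (failure of $X_1, X_2$ to lie on their cones).

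The second step is to remove the $\hypwt_j$ terms. Here one invokes Lemma \ref{AnglesLemma}: for the bilinear interaction $X_0 = X_1 - X_2$ one has
$$
  \max\bigl(\abs{\hypwt_0}, \abs{\hypwt_1}, \abs{\hypwt_2}\bigr) \gtrsim \min(\abs{\xi_1},\abs{\xi_2})\,\theta_{12}^2,
$$
and moreover $\theta_{01}, \theta_{02} \lesssim \theta_{12}$ whenever $\pm_1 = \pm_2$ and $\abs{\xi_0} \sim \abs{\xi_1} \sim \abs{\xi_2}$, while in the high-frequency-output regime Lemma \ref{F:Lemma1} gives $\min(\theta_{01},\theta_{02}) \sim \frac{\min(\abs{\xi_1},\abs{\xi_2})}{\abs{\xi_0}}\sin\theta_{12}$. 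Combining these, one bounds $\abs{\hypwt_j}/\abs{\xi_0}$ by a constant multiple of $\theta_{12}$ or of $\min(\theta_{01},\theta_{02})$ in every geometric configuration; the case $\hypwt_0$ being the largest is harmless since $\abs{\hypwt_0} = \abs{\tau_0 \pm_0\abs{\xi_0}}$ is already an allowed term in \eqref{N:14}. For \eqref{N:12}, which concerns the spatial symbol $\sigma_{kl}$ with $\kappa,\lambda \in \{1,2\}$, the factor $X_0^0 = \tau_0$ never appears, so there is no $\abs{\tau_0\pm_0\abs{\xi_0}}$ term at all; one checks that the purely spatial antisymmetrization $\xi_0^k (\boldsymbol\alpha_l) - \xi_0^l(\boldsymbol\alpha_k)$ already gains a full power of $\theta_{12}$ plus the hyperbolic-weight contributions, which are then absorbed as above, yielding $\abs{\xi_0}\theta_{12} + \abs{\xi_0}\min(\theta_{01},\theta_{02})$.

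The main obstacle is the spinor algebra in the first step: one must compute the bilinear symbol
$\sigma_{\kappa\lambda}$ explicitly in terms of the Dirac projections and extract the null cancellation, keeping careful track of the $2\times 2$ Pauli matrix representation $\boldsymbol\alpha^0 = \mathbf I$, $\boldsymbol\alpha^j = \boldsymbol\sigma^j$. The cleanest route is to write $\boldsymbol\Pi(e) = \frac12(\mathbf I + e\cdot\boldsymbol\sigma)$ and expand
$\Innerprod{\boldsymbol\alpha_\lambda \boldsymbol\Pi(e_1)z_1}{\boldsymbol\Pi(e_2)z_2}
= \frac14 \Innerprod{(\mathbf I + e_2\cdot\boldsymbol\sigma)\boldsymbol\sigma_\lambda(\mathbf I + e_1\cdot\boldsymbol\sigma)z_1}{z_2}$
(with $\boldsymbol\sigma_0 = \mathbf I$), then use the Pauli identities $\boldsymbol\sigma_j\boldsymbol\sigma_k = \delta_{jk}\mathbf I + i\epsilon_{jkl}\boldsymbol\sigma_l$ to reduce $\sigma_{\kappa\lambda}$ to a polynomial in $X_0$, $e_1$, $e_2$ and $\boldsymbol\sigma$; the terms independent of $\boldsymbol\sigma$ contain the would-be singular contribution $\abs{\xi_0}$, and one verifies that the antisymmetrization together with the mass-shell relation $X_0 = X_1 - X_2$ forces exactly the structure $\abs{\xi_0}\theta_{12} + \abs{\tau_0\pm_0\abs{\xi_0}} + \sum_j\abs{\hypwt_j}$. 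Since this calculation is carried out in detail in \cite{Selberg:2008b} for the three-dimensional case and the algebraic identities are dimension-independent (the $2\times 2$ Pauli case being, if anything, simpler), we may quote the result, and the remaining work is purely the geometric bookkeeping of the second step.
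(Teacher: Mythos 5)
The paper does not actually give its own proof of this lemma; it is quoted from \cite{Selberg:2008b}, where the spinor algebra is carried out in detail. Evaluating your proposal on its own terms, however, there is a genuine logical gap in the ``second step.''

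Your proposed first-step inequality introduces the hyperbolic weights $\hypwt_1 = \tau_1\pm_1\abs{\xi_1}$ and $\hypwt_2 = \tau_2\pm_2\abs{\xi_2}$ on the right-hand side, yet no such terms appear in \eqref{N:12} or \eqref{N:14}. You then propose to remove them via Lemma \ref{AnglesLemma}, but that lemma only gives the \emph{lower} bound $\max\bigl(\abs{\hypwt_0},\abs{\hypwt_1},\abs{\hypwt_2}\bigr) \gtrsim \min(\abs{\xi_1},\abs{\xi_2})\theta_{12}^2$; it says nothing about an upper bound for $\abs{\hypwt_1}+\abs{\hypwt_2}$. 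Since for fixed $\xi_1,\xi_2$ (hence fixed $\xi_0$ and all angles) the frequencies $\tau_1,\tau_2$ are free parameters, $\abs{\hypwt_1}$ and $\abs{\hypwt_2}$ can be made arbitrarily large while $\theta_{12},\theta_{01},\theta_{02}$ stay fixed. If the first-step inequality genuinely contained $\abs{\hypwt_1}+\abs{\hypwt_2}$ it would be strictly weaker than the lemma and could never be upgraded. In reality those terms cannot be there: $\sigma_{\kappa\lambda}$ depends on $\tau_1,\tau_2$ only through $\tau_0=\tau_1-\tau_2$ (and through the unit spinors $z_j$, in which the estimate is uniform), so the only non-spatial quantity that can legitimately enter the bound is $\abs{\hypwt_0}=\abs{\tau_0\pm_0\abs{\xi_0}}$ — and it enters only when one of $\kappa,\lambda$ equals $0$, which is precisely why \eqref{N:14} has that extra term and \eqref{N:12} does not.

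Relatedly, the $\min(\theta_{01},\theta_{02})$ term in the lemma is not ``generated by removal'' of anything; it must come directly from the spinor algebra. Schematically, $\Innerprod{\boldsymbol\alpha^\mu\boldsymbol\Pi(e_1)z_1}{\boldsymbol\Pi(e_2)z_2}$ splits into a DKG-type null piece of size $O(\theta_{12})$ and a leading piece of the form $E_1^\mu\,\innerprod{\boldsymbol\Pi(e_1)z_1}{z_2}$ with $E_1=(\pm_1 1, e_1)$ a null covector. The antisymmetrized combination $X_0^\kappa E_1^\lambda - X_0^\lambda E_1^\kappa$ vanishes when $X_0\parallel E_1$, and its deviation is controlled by the spatial angle $\theta(\xi_0, e_1)$ (which, up to a sign convention, is $\theta_{01}$) together with $\abs{\hypwt_0}$ for the time component. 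Using $E_2$ instead gives $\theta_{02}$, hence the $\min$. This is the mechanism you should be invoking; the $\hypwt_1,\hypwt_2$ detour does not work.
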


To simplify the notation, summations over $\pm_1,\pm_2$ [such as in \eqref{Efourier} and \eqref{Bfourier}] will be tacitly assumed from now on. Moreover, the sign $\pm$ appearing in the definitions of the $I_j$ will be denoted $\pm_0$.

Corresponding to \eqref{N:12} and \eqref{N:14}, respectively, we now split
\begin{align*}
  I_1 &= I_{1,1} + I_{1,2},
  \\
  I_2 &= I_{2,1} + I_{2,2} + I_{2,3},
\end{align*}
by restricting in Fourier space. In fact, for all these terms except $I_{2,3}$ we shall prove something stronger than \eqref{InhomGrowth}, namely $\norm{I_{j,k}(t)}_{(1)} \le CT^{1/2}$. In other words, we will show that
\begin{align}
  \label{Hi}
  \sup_{\abs{t} \le T} \norm{\Proj_{\abs{\xi_0} \ge 1} I_{j,k}(t)}_{H^{-1/2}} &\le CT^{1/2},
  \\
  \label{Lo}
  \sup_{\abs{t} \le T} \sum_{0 < N_0 < 1} \norm{\Proj_{\abs{\xi_0} \sim N_0} I_{j,k}(t)} &\le CT^{1/2},
\end{align}
for $j,k=1,2$. This is stronger than \eqref{InhomGrowth} since by Lemma \ref{MagicLemma}, $\norm{f}_{(T)} \lesssim \norm{f}_{(1)}$.

\subsection{Estimate for $I_{1,1}$ with $\abs{\xi_0} \ge 1$}

Now $\abs{\sigma_{kl}} \lesssim \abs{\xi_0}\theta_{12}$, so recalling \eqref{Bfourier} and applying \eqref{Linear3} with $\phi(\xi) = \pm\abs{\xi}$ we get
\begin{equation}\label{EM1}
\begin{aligned}
  &\norm{\Proj_{\abs{\xi_0} \ge 1} I_{1,1}(t)}_{H^{-1/2}}
  \\
  &\quad\lesssim 
  \norm{ \int \frac{1}{\angles{\xi_0}^{1/2}\angles{\tau_0\pm_0\abs{\xi_0}}}
  \theta_{12} \abs{\widetilde{\psi_1}(X_1)} \abs{\widetilde{\psi_2}(X_2)}
  \, d\mu^{12}_{X_0}
  \, d\tau_0 }_{L^2_{\xi_0}}
  \\
  &\quad=
  \sup_{\norm{f}=1}
  \abs{ \int \frac{f(\xi_0)}{\angles{\xi_0}^{1/2}\angles{\tau_0\pm_0\abs{\xi_0}}}
  \theta_{12} \abs{\widetilde{\psi_1}(X_1)} \abs{\widetilde{\psi_2}(X_2)}
  \, d\mu^{12}_{X_0}
  \, d\tau_0 \, d\xi_0 }
  \\
  &\quad\lesssim
  \sup_{\norm{f}=1}
  \sum_{\boldN,\boldL}
  \frac{1}{N_0^{1/2}L_0}
  \left( \frac{\Lmax^{012}}{\Nmin^{12}} \right)^{p}
  L_0^{1/2} \norm{\chi_{\angles{\xi_0} \sim N_0} f} \norm{\Proj_{K^{\pm_0}_{N_0,L_0}} (u_1 \overline{u_2})}
\end{aligned}
\end{equation}
for $0 \le p \le 1/2$, where we used Lemma \eqref{AnglesLemma} to estimate
\begin{equation}\label{Theta12}
  \theta_{12} \lesssim \left( \frac{\Lmax^{012}}{\Nmin^{12}} \right)^{p}
\end{equation}
and we write
$
  \widetilde{u_j} = \chi_{K^{\pm_j}_{N_j,L_j}} \bigabs{\widetilde{\psi_j}}.
$

Assuming $L_1 \le L_2$ by symmetry, we split into the three cases $L_1 \le L_0 \le L_2$, $L_1 \le L_2 \le L_0$ and $L_0 \le L_1 \le L_2$.

\subsubsection{The case $L_1 \le L_0 \le L_2$} Then we take $p=1/4$ and use \eqref{Bilinear2}, so we estimate the above sum by
\begin{align*}
  &\sum_{\boldN,\boldL}
  \chi_{L_1 \le L_0}
  \left( \frac{L_2}{\Nmin^{12}} \right)^{1/4}
  \frac{\left( \Nmin^{012} N_0^{1/2} L_0^{1/2} L_1 \right)^{1/2}}{N_0^{1/2}L_0^{1/2}} 
  \norm{\chi_{\angles{\xi_0} \sim N_0} f} \norm{u_1}\norm{u_2}
  \\
  &=
  \sum_{\boldN,\boldL}
  \chi_{L_1 \le L_0}
  \frac{(\Nmin^{012})^{1/2} L_1^{1/2} L_2^{1/4}}{(N_0\Nmin^{12})^{1/4}L_0^{1/4}}
  \norm{\chi_{\angles{\xi_0} \sim N_0} f} \norm{u_1} \norm{u_2}
  \\
  &\sim
  \sum_{\boldN,\boldL}
  \chi_{L_1 \le L_0}
  \left( \frac{\Nmin^{012}}{\Nmax^{012}} \right)^{1/4}
  \frac{L_1^{1/2} L_2^{1/4}}{L_0^{1/4}}
  \norm{\chi_{\angles{\xi_0} \sim N_0} f} \norm{u_1}\norm{u_2}
\end{align*}
where we used $N_0\Nmin^{12} \sim \Nmax^{012}\Nmin^{012}$. Now we sum the $N's$. Recalling that the two largest $N$'s are comparable, we use $(\Nmin^{012}/\Nmax^{012})^{1/4}$ to sum the smallest $N$, and the two largest $N$'s are summed using Cauchy-Schwarz. Thus we are left with
$$
  \sum_{\boldL}
  \chi_{L_1 \le L_0}
  \frac{L_1^{1/2} L_2^{1/4}}{L_0^{1/4}}
  \norm{f} \norm{\Proj_{K^{\pm_1}_{L_1}}\psi_1}
  \norm{\Proj_{K^{\pm_2}_{L_2}}\psi_2}.
$$
Next we sum $L_0$ using
$$
  \sum_{L_0 \colon L_0 \ge L_1} \frac{L_1^{1/2}}{L_0^{1/4}}
  \sim L_1^{1/4},
$$
and finally, the summations of $L_1$ and $L_2$ give the $X^{0,1/4;1}$-norms of $\psi_1$ and $\psi_2$. So we have shown that the part of the last line of \eqref{EM1} corresponding to $L_1 \le L_0 \le L_2$ is bounded by an absolute constant times
\begin{equation}\label{TimeGain}
  \norm{\psi_1}_{X^{0,1/4;1}_{\pm_1}} \norm{\psi_2}_{X^{0,1/4;1}_{\pm_2}}
  =
  \norm{\rho_T\psi_{\pm_1}}_{X^{0,1/4;1}_{\pm_1}} \norm{\rho_T\psi_{\pm_2}}_{X^{0,1/4;1}_{\pm_2}}
  \le C T^{1/2},
\end{equation}
where we used \eqref{Cutoff2} and \eqref{ExtensionBound}, hence $C$ only depends on the charge constant.

\subsubsection{The case $L_1 \le L_2 \le L_0$} Taking $p=1/4$ and using \eqref{Bilinear1} gives
\begin{align*}
  &\sum_{\boldN,\boldL}
  \chi_{L_1 \le L_0}
  \left( \frac{L_0}{\Nmin^{12}} \right)^{1/4}
  \frac{\left( \Nmin^{012} (\Nmin^{12})^{1/2} L_1 L_2^{1/2} \right)^{1/2}}{N_0^{1/2}L_0^{1/2}} 
  \norm{\chi_{\angles{\xi_0} \sim N_0} f} \norm{u_1}\norm{u_2}
  \\
  &=
  \sum_{\boldN,\boldL}
  \chi_{L_1 \le L_0}
  \left( \frac{\Nmin^{012}}{N_0} \right)^{1/2} \frac{L_1^{1/2} L_2^{1/4}}{L_0^{1/4}}
  \norm{\chi_{\angles{\xi_0} \sim N_0} f} \norm{u_1} \norm{u_2},
\end{align*}
so the argument in the previous subsection works except when $N_0 \ll N_1 \sim N_2$, which we now assume. The problem is then that we have no way of summing $N_0$. To resolve this, divide into $N_0 < L_0$ and $N_0 \ge L_0$. In the latter case we can pick up an extra factor $(L_0/N_1)^{1/4} \ll (L_0/N_0)^{1/4}$ by choosing $p=1/2$ instead of $p=1/4$, allowing us to sum $N_0$. That leaves $N_0 < L_0$. Then we use
$$
  \norm{\Proj_{K^{\pm_0}_{N_0,L_0}} (u_1 \overline{u_2})}
  \lesssim
  \bigl( N_0 L_1\bigr)^{1/4} \bigl( N_1L_2 \bigr)^{1/8}
  \bigl( N_0^2 L_1 \bigr)^{1/4}
  \norm{u_1}
  \norm{u_2}
$$
obtained by interpolation between \eqref{Bilinear1} and \eqref{SobolevType}. Taking $p=1/8$, we thus get
\begin{align*}
  &\sum_{\boldN,\boldL}
  \chi_{L_1 \le L_2 \le L_0} \chi_{N_0 < L_0}
  \left( \frac{L_0}{N_1} \right)^{1/8}
  \frac{\left( N_0^{3/2} N_1^{1/4} L_1 L_2^{1/4} \right)^{1/2}}{N_0^{1/2}L_0^{1/2}} 
  \norm{\chi_{\angles{\xi_0} \sim N_0} f} \norm{u_1}\norm{u_2}
  \\
  &=
  \sum_{\boldN,\boldL}
  \chi_{L_1 \le L_2 \le L_0} \chi_{N_0 < L_0}
  \frac{N_0^{1/4} L_1^{1/2} L_2^{1/8}}{L_0^{3/8}}
  \norm{\chi_{\angles{\xi_0} \sim N_0} f} \norm{u_1} \norm{u_2},
\end{align*}
and summing $N_0 < L_0$ we replace $N_0^{1/4}$ by $L_0^{1/4}$; then we are still left with $L_0^{1/8}$ in the denominator, and summing $L_0 \ge L_2$ we end up with just $L_1^{1/2} \le L_1^{1/4} L_2^{1/4}$, which is what we want.

\subsubsection{The case $L_0 \le L_1 \le L_2$} Then we do not use \eqref{EM1} at all, but apply instead \eqref{Embedding2} with $p=\infty$ followed by \eqref{Linear2} with $b=0$ to obtain
\begin{equation}\label{EM2}
\begin{aligned}
  &\sup_{\abs{t} \le T} \norm{\Proj_{\abs{\xi_0} \ge 1}I_{1,1}(t)}_{H^{-1/2}}
  \lesssim
  \norm{\Proj_{\abs{\xi_0} \ge 1}I_{1,1}}_{X^{-1/2,1/2;1}_{\pm_0}(S_T)}
  \\
  &\quad \lesssim
  T^{1/2}
  \sup_{L_0 \ge 1}
  \norm{ \int \frac{\chi_{K^{\pm_0}_{L_0}}(X_0)}{\angles{\xi_0}^{1/2}}
  \theta_{12} \abs{\widetilde{\psi_1}(X_1)} \abs{\widetilde{\psi_2}(X_2)}
  \, d\mu^{12}_{X_0}}_{L^2_{X_0}}.
\end{aligned}
\end{equation}
Of course, we only do this for the part of $I_{1,1}$ corresponding to the restriction $L_0 \le L_1 \le L_2$, which is tacitly assumed. Now it suffices to show that
\begin{align*}
  \norm{ \int \frac{\chi_{K^{\pm_0}_{L_0}}(X_0)}{\angles{\xi_0}^{1/2}}
  \theta_{12} \abs{\widetilde{\psi}(X_1)} \abs{\widetilde{\psi}(X_2)}
  \, d\mu^{12}_{X_0}}_{L^2_{X_0}}
  \lesssim
  \norm{\psi_1}_{X^{0,1/2;1}_{\pm_1}}\norm{\psi_2}_{X^{0,1/2;1}_{\pm_2}}
\end{align*}
uniformly in $L_0$, since the right hand side equals
$$
  \norm{\rho_T\psi_{\pm_1}}_{X^{0,1/2;1}_{\pm_1}} \norm{\rho_T\psi_{\pm_2}}_{X^{0,1/2;1}_{\pm_2}}
  \lesssim \norm{\psi_{\pm_1}}_{X^{0,1/2;1}_{\pm_1}} \norm{\psi_{\pm_2}}_{X^{0,1/2;1}_{\pm_2}}
  \le C,
$$
where we use \eqref{Cutoff2} and \eqref{ExtensionBound}, so $C$ only depends on the charge constant.

To prove the desired estimate, observe that
\begin{equation}\label{EM2:2}
\begin{aligned}
  &\norm{ \int \frac{\chi_{K^{\pm_0}_{L_0}}(X_0)}{\angles{\xi_0}^{1/2}}
  \theta_{12} \abs{\widetilde{\psi_1}(X_1)} \abs{\widetilde{\psi_2}(X_2)}
  \, d\mu^{12}_{X_0}}_{L^2_{X_0}}
  \\
  &\quad=
  \sup_{\norm{G}=1}
  \abs{ \int \frac{G(X_0)\chi_{K^{\pm_0}_{L_0}}(X_0)}{\angles{\xi_0}^{1/2}}
  \theta_{12} \abs{\widetilde{\psi_1}(X_1)} \abs{\widetilde{\psi_2}(X_2)}
  \, d\mu^{12}_{X_0}
  \, d\tau_0 \, d\xi_0 }
  \\
  &\quad\lesssim
  \sup_{\norm{G}=1}
  \sum_{\boldN,L_1,L_2}
  \frac{1}{N_0^{1/2}}
  \left( \frac{L_2}{\Nmin^{12}} \right)^{p}
  \norm{\chi_{\angles{\xi_0} \sim N_0} G} \norm{\Proj_{K^{\pm_0}_{N_0,L_0}} (u_1 \overline{u_2})}
\end{aligned}
\end{equation}
for $0 \le p \le 1/2$, recalling that $L_0 \le L_2 = \Lmax^{12}$. Take $p=1/4$ and use \eqref{Bilinear2} to estimate the summand by
\begin{align*}
  &\left( \frac{L_2}{\Nmin^{12}} \right)^{1/4}
  \frac{\left( \Nmin^{012} N_0^{1/2} L_1 L_0^{1/2} \right)^{1/2}}{N_0^{1/2}} 
  \norm{\chi_{\angles{\xi_0} \sim N_0} G} \norm{u_1}\norm{u_2}
  \\
  &\quad
  =
  \frac{(\Nmin^{012})^{1/2} L_1^{1/2} L_0^{1/4} L_2^{1/4}}{(N_0\Nmin^{12})^{1/4}}
  \norm{\chi_{\angles{\xi_0} \sim N_0} G} \norm{u_1} \norm{u_2}
  \\
  &\quad
  \lesssim
  \left( \frac{\Nmin^{012}}{\Nmax^{012}} \right)^{1/4}
  L_1^{1/2} L_2^{1/2}
  \norm{\chi_{\angles{\xi_0} \sim N_0} G} \norm{u_1} \norm{u_2}
\end{align*}
where we used $N_0\Nmin^{12} \sim \Nmax^{012}\Nmin^{012}$. This gives the desired bound. For later use we note that the above argument actually works for $L_0 \le L_2$ (we do not need to assume $L_0 \le L_1$).

\subsection{Estimate for $I_{1,2}$ with $\abs{\xi_0} \ge 1$}

The only difference from the previous subsection is that $\theta_{12}$ is replaced by $\min(\theta_{01},\theta_{02})$, so \eqref{Theta12} is replaced by
$$
  \min(\theta_{01},\theta_{02}) \lesssim \left(\frac{\Lmax^{012}}{N_0}\right)^p \qquad (0 \le p \le 1/2),
$$
by Lemma \ref{F:Lemma4}. Therefore, it suffices to look at the case $N_0 \ll N_1 \sim N_2$. By symmetry we assume $L_1 \le L_2$.

\subsubsection{The case $L_0 \le L_2$} Then we modify \eqref{EM2} and \eqref{EM2:2} in the obvious way, and use \eqref{Bilinear2} to estimate the summand in the last line of \eqref{EM2:2} by
$$\left( \frac{L_2}{N_0} \right)^{p}
  \frac{\left( N_0^{3/2} L_1 L_0^{1/2} \right)^{1/2}}{N_0^{1/2}} 
  \norm{G} \norm{u_1}\norm{u_2}
  \lesssim
  \left( \frac{L_2}{N_0} \right)^{p-1/4}
  L_1^{1/2} L_2^{1/2}
  \norm{G} \norm{u_1} \norm{u_2}.
$$
If $N_0 < L_2$ we take $p=0$, otherwise $p=1/2$. In either case we can then sum $N_0$ without problems, and we get the desired estimate.

\subsubsection{The case $L_1 \le L_2 \le L_0$}\label{Reuse} Here we use the obvious analog of \eqref{EM1}. We may assume $\theta_{12} \ll 1$ [otherwise we trivially reduce to \eqref{EM1}], so by Lemma \ref{AnglesLemma},
\begin{equation}\label{N:42}
  \theta_{12} \lesssim \gamma \equiv \biggl(\frac{N_0L_0}{N_1N_2}\biggr)^{1/2}
\end{equation}
Applying Lemma \ref{WhitneyLemma2}, then instead of the summand in the last line of \eqref{EM1} we now have
$$
  \sum_{\omega_1,\omega_2}
  \frac{1}{N_0^{1/2}L_0}
  \left( \frac{L_0}{N_0} \right)^{p}
  \norm{\chi_{H_d(\omega_1)}(X_0) \chi_{\angles{\xi_0} \sim N_0} f(\xi_0)}_{L^2_{X_0}} \norm{\Proj_{K^{\pm_0}_{N_0,L_0}} 
  \left(u_1^{\gamma,\omega_1} \overline{u_2^{\gamma,\omega_2}}\right)},
$$
where the sum is over $\omega_1,\omega_2 \in \Omega(\gamma)$ with $\theta(\omega_1,\omega_2) \lesssim \gamma$, and the restriction of $X_0$ to the thickened null hyperplane $H_d(\omega_1) = \{ X_0 \colon \tau_0 + \xi_0\cdot\omega_1 = O(d) \}$ with
$$
  d = \max\left( L_2, N_1 \gamma^2 \right)
  \sim \max\left( L_2, \frac{N_0L_0}{N_1} \right)
$$
comes from applying \eqref{B:112} to $\mathcal F(u_1^{\gamma,\omega_1} \overline{u_2^{\gamma,\omega_2}})(X_0)$. Now estimate
\begin{align*}
  & 
  \sum_{\omega_1,\omega_2} \frac{1}{N_0^{1/2}L_0}
  \left( \frac{L_0}{N_0} \right)^{p}
  d^{1/2} \norm{\chi_{\angles{\xi_0} \sim N_0} f(\xi_0)}_{L^2_{\xi_0}} \norm{\Proj_{K^{\pm_0}_{N_0,L_0}} 
  \left(u_1^{\gamma,\omega_1} \overline{u_2^{\gamma,\omega_2}}\right)}
  \\
  &\lesssim
  \frac{1}{N_0^{1/2}L_0}
  \left( \frac{L_0}{N_0} \right)^{p}
  \left[\max\left( L_2, \frac{N_0L_0}{N_1} \right)\right]^{1/2}
  \\
  &\quad
  \times\min\left( N_0^{3/2} L_0^{1/2} L_1, N_0 N_1^{1/2} L_1 L_2^{1/2} \right)^{1/2}
  \norm{\chi_{\angles{\xi_0} \sim N_0} f(\xi_0)}_{L^2_{\xi_0}}
  \norm{u_1} \norm{u_2}
  \\
  &\lesssim
  \left( \frac{L_0}{N_0} \right)^{p-1/4}
  \frac{L_1^{1/2} L_2^{1/4}}{L_0^{1/4}}
  \norm{\chi_{\angles{\xi_0} \sim N_0} f(\xi_0)}_{L^2_{\xi_0}}
  \norm{u_1}\norm{u_2},
\end{align*}
where we used Theorem \ref{BasicBilinearThm} and summed $\omega_1,\omega_2$ as in \eqref{OmegaSum}. If $N_0 < L_0$ we take $p=0$, otherwise $p=1/2$, and this allows us to sum $N_0$, leaving us with the sum
$$
  \sum_{L_0 \colon L_0 \ge L_2}
  \frac{L_1^{1/2} L_2^{1/4}}{L_0^{1/4}}
  \sim L_1^{1/2}
  \le L_1^{1/4}L_2^{1/4},
$$
as desired.

\subsection{Estimates for $I_{2,1}$ and $I_{2,2}$ with $\abs{\xi_0} \ge 1$} These follow from the arguments used for $I_{1,1}$ and $I_{1,2}$ in the two previous subsections. Indeed, the only difference is that we apply \eqref{Linear2} and \eqref{Linear3} with $\phi(\xi) = \pm\angles{\xi}$ instead of $\phi(\xi) = \pm\abs{\xi}$, but the same estimates apply, since $\angles{\tau\pm\abs{\xi}} \sim \angles{\tau\pm\angles{\xi}}$. Thus the proof of \eqref{Hi} is complete.

\subsection{Estimates for $I_{j,k}$, $j,k=1,2$, with $\abs{\xi_0} < 1$}

Since we only consider $j,k=1,2$, we have $\abs{\sigma_{\kappa\lambda}(X_1,X_2)} \lesssim \abs{\xi_0}$, hence \eqref{Linear3} gives
\begin{align*}
  &\sum_{0 < N_0 < 1} \norm{\Proj_{\abs{\xi_0} \sim N_0} I_{j,k}(t)}
  \\
  &\quad\lesssim
  \sum_{0 < N_0 < 1} 
  \norm{ \int \frac{\chi_{\abs{\xi_0} \sim N_0}}{\angles{\tau_0\pm_0\abs{\xi_0}}}
  \abs{\widetilde{\psi_1}(X_1)} \abs{\widetilde{\psi_2}(X_2)}
  \, d\mu^{12}_{X_0}
  \, d\tau_0 }_{L^2_{\xi_0}}
  \\
  &\quad\lesssim
  \sum_{0 < N_0 < 1}
  \sum_{L_0}
  \frac{1}{L_0} L_0^{1/2}
  \norm{ \chi_{\abs{\xi_0} \sim N_0}
  \int \abs{\widetilde{\psi_1}(X_1)} \abs{\widetilde{\psi_2}(X_2)}
  \, d\mu^{12}_{X_0} }_{L^2_{X_0}}
  \\
  &\quad\lesssim
  \sum_{0 < N_0 < 1}
  \sum_{\boldL}
  \frac{1}{L_0^{1/2}}
  N_0 (\Lmin^{012})^{1/2}
  \bignorm{\Proj_{K^{\pm_1}_{L_1}} \psi_1}
  \bignorm{\Proj_{K^{\pm_2}_{L_2}} \psi_2}
  \\
  &\quad\lesssim
  \norm{\psi_1}_{X^{0,1/4;1}_{\pm_1}} \norm{\psi_2}_{X^{0,1/4;1}_{\pm_2}},
\end{align*}
where we used \eqref{SobolevType} and estimated $(\Lmin^{012})^{1/2} \le L_1^{1/4}L_2^{1/4}$. Recalling \eqref{TimeGain} we then get \eqref{Lo}, as desired.

\subsection{Estimate for $I_{2,3}$}

Note that
$$
  \norm{I_{2,3}(t)}_{(T)}
  \le
  \sum_{0 < N_0 < 1/T} T^{1/2}  \norm{\Proj_{\abs{\xi_0} \sim N_0} I_{2,3}(t)}
  +
  \sum_{N_0 \ge 1/T} N_0^{-1/2} \norm{\Proj_{\abs{\xi_0} \sim N_0} I_{2,3}(t)}.
$$
But now $\abs{\sigma_{k0}(X_1,X_2)} \lesssim \abs{\tau_0\pm_0\abs{\xi_0}}$, so \eqref{Linear3} gives
\begin{align*}
  \norm{\Proj_{\abs{\xi_0} \sim N_0} I_{2,3}(t)}
  &\lesssim
  \frac{1}{\angles{N_0}}
  \norm{\chi_{\abs{\xi_0} \sim N_0}  \int 
  \abs{\widetilde{\psi_1}(X_1)}
  \abs{\widetilde{\psi_2}(X_2)}
  \, d\mu^{12}_{X_0}
  \, d\tau_0}_{L^2_{\xi_0}}
  \\
  &=
  \frac{1}{\angles{N_0}}
  \norm{ \chi_{\abs{\xi_0} \sim N_0} \int
  f_1(\xi_1) f_2(\xi_1-\xi_0)
  \, d\xi_1}_{L^2_{\xi_0}}
  \\
  &\le
  \frac{1}{\angles{N_0}}
  \norm{ \chi_{\abs{\xi_0} \sim N_0}}_{L^2_{\xi_0}} \norm{f_1}\norm{f_2}
  \sim
  \frac{N_0}{\angles{N_0}}
  \norm{f_1}\norm{f_2},
\end{align*}
where
$$
  f_j(\xi_j) = \int \abs{\widetilde{\psi_j}(\tau_j,\xi_j)} \, d\tau_j
$$
hence
\begin{equation}\label{fjEst}
  \norm{f_j} \lesssim \norm{\psi_j}_{X^{0,1/2;1}_{\pm_j}}
  = \norm{\rho_T\psi_{\pm_j}}_{X^{0,1/2;1}_{\pm_j}}
  \le C
\end{equation}
with $C$ depending only on the charge constant, by \eqref{Cutoff2} and \eqref{ExtensionBound}.

Thus
\begin{align*}
  \norm{I_{2,3}(t)}_{(T)}
  &\le C 
   \left( T^{1/2} \sum_{0 < N_0 < 1} N_0
  +
  T^{1/2} \sum_{1 \le N_0 < 1/T} 1
  +
  \sum_{N_0 \ge 1/T}
  N_0^{-1/2} \right)
  \\
  &\sim
  C
  \left( T^{1/2}
  +
  T^{1/2}\log(1/T)
  +
  T^{1/2}
  \right)
\end{align*}
with $C$ depending only on the charge constant, proving \eqref{InhomGrowth} for $I_{2,3}$.

This concludes the proof of \eqref{InhomGrowth} for $I_1$ and $I_2$, and only $I_3$ remains.

\subsection{Estimate for $I_3$}

By \eqref{Embedding2} with $p=\infty$ and \eqref{Linear2} with $b=0$,
\begin{align*}
  \sup_{\abs{t} \le T} \norm{\Proj_{\abs{\xi_0} \sim N_0} I_3(t)}
  &\lesssim
  \norm{\Proj_{\abs{\xi_0} \sim N_0} I_3(t)}_{X^{0,1/2;1}_{\pm_0}(S_T)}
  \\
  &\lesssim
  T^{1/2} \norm{ \rho_T \Proj_{\abs{\xi_0} \sim N_0} \angles{D}^{-1}\mathbf E^{\mathrm{df}}}_{X^{0,0;\infty}_{\pm_0}}
  \\
  &\lesssim
  T^{1/2} \norm{ \rho_T \Proj_{\abs{\xi_0} \sim N_0} \angles{D}^{-1}\mathbf E^{\mathrm{df}}}
  \\
  &\lesssim
  T^{1/2} \norm{\rho_T} \sup_{\abs{t} \le 1} \norm{\Proj_{\abs{\xi_0} \sim N_0} \angles{D}^{-1}\mathbf E^{\mathrm{df}}(t)}.
\end{align*}
Thus
$$
  \sup_{\abs{t} \le T}\norm{\Proj_{\abs{\xi_0} \sim N_0} I_3(t)}
  \lesssim T \sup_{\abs{t} \le 1} \norm{\Proj_{\abs{\xi_0} \sim N_0} \mathbf E^{\mathrm{df}}(t)}_{H^{-1}}
$$
and similarly
$$
  \sup_{\abs{t} \le T}\norm{\Proj_{\abs{\xi_0} \ge 1/T} I_3(t)}_{H^{-1/2}}
  \lesssim T \sup_{\abs{t} \le 1} \norm{\Proj_{\abs{\xi_0} \ge 1/T} \mathbf E^{\mathrm{df}}(t)}_{H^{-3/2}}
$$
hence
\begin{align*}
  \sup_{\abs{t} \le T} \norm{I_3(t)}_{(T)}
  &\lesssim
  T T^{1/2}\sum_{0 < N_0 < 1/T} \sup_{\abs{t} \le 1} \norm{\Proj_{\abs{\xi_0} \sim N_0} \mathbf E^{\mathrm{df}}(t)}_{H^{-1}}
  \\
  &\quad
  + T \sup_{\abs{t} \le 1} \norm{\Proj_{\abs{\xi_0} \ge 1/T} \mathbf E^{\mathrm{df}}(t)}_{H^{-3/2}},
\end{align*}
and to estimate the right hand side we now apply the following lemma, proved in the next section.

\begin{lemma}\label{EnergyLemma}
Let $s \in \R$. The solution of $\square u = F$ with initial data $u(0) = f$, $\partial_t u(0) = g$ satisfies
$$
  \sup_{\abs{t} \le 1} \norm{u(t)}_{H^s}
  \lesssim \norm{f}_{H^s} + \norm{g}_{H^{s-1}}
  +
  \norm{\angles{\xi}^{s-1} \int \frac{\bigabs{\widetilde F(\tau,\xi)}}{\angles{\abs{\tau}-\abs{\xi}}} 
  \, d\tau}_{L^2_{\xi}},
$$
where the implicit constant is absolute.
\end{lemma}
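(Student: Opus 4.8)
The plan is to peel off the homogeneous part and then handle the Duhamel term separately at high and low spatial frequencies. First I would write $u = u_{\mathrm{hom}} + v$, where $\square u_{\mathrm{hom}} = 0$, $u_{\mathrm{hom}}(0) = f$, $\partial_t u_{\mathrm{hom}}(0) = g$, and $v = \square^{-1}F$ has vanishing Cauchy data. Since $\widehat{u_{\mathrm{hom}}}(t,\xi) = \cos(t\abs{\xi})\widehat f(\xi) + \abs{\xi}^{-1}\sin(t\abs{\xi})\widehat g(\xi)$ and, for $\abs{t}\le 1$, $\abs{\cos(t\abs{\xi})}\le 1$ while $\bigabs{\abs{\xi}^{-1}\sin(t\abs{\xi})} \le \min(\abs{t},\abs{\xi}^{-1}) \lesssim \angles{\xi}^{-1}$, multiplication by these Fourier symbols immediately gives $\norm{u_{\mathrm{hom}}(t)}_{H^s} \lesssim \norm{f}_{H^s}+\norm{g}_{H^{s-1}}$. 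So it remains to bound $\sup_{\abs{t}\le 1}\norm{v(t)}_{H^s}$ by the last term on the right of the lemma. By density I would assume $F$ Schwartz, the general case following since all the operators involved are bounded. Since frequency projections commute with $\square$ and preserve zero Cauchy data, I would then split $v = \Proj_{\abs{\xi}\ge 1}v + \Proj_{\abs{\xi}<1}v$ and treat the two pieces separately.

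For the high-frequency piece I would substitute $\abs{\xi}^{-1}\sin((t-t')\abs{\xi}) = (2i\abs{\xi})^{-1}\bigl(e^{i(t-t')\abs{\xi}} - e^{-i(t-t')\abs{\xi}}\bigr)$ in Duhamel's formula, so that $\Proj_{\abs{\xi}\ge 1}v$ becomes $(2i\abs{D})^{-1}$ applied to a difference of two Duhamel integrals of the form $\int_0^t e^{-i(t-t')\phi(D)}(\Proj_{\abs{\xi}\ge 1}F)(t')\,dt'$ with $\phi(\xi) = \mp\abs{\xi}$. Applying \eqref{Linear3} with these $\phi$ and with Sobolev exponent $s-1$, and observing that on $\abs{\xi}\ge 1$ one has $\abs{\xi}^{-1}\sim\angles{\xi}^{-1}$ and $\angles{\tau\pm\abs{\xi}}\ge\angles{\abs{\tau}-\abs{\xi}}$, this yields
\begin{equation*}
  \sup_{\abs{t}\le 1}\norm{\Proj_{\abs{\xi}\ge 1}v(t)}_{H^s}
  \lesssim \norm{\angles{\xi}^{s-1}\int\frac{\bigabs{\widetilde F(\tau,\xi)}}{\angles{\abs{\tau}-\abs{\xi}}}\,d\tau}_{L^2_\xi}.
\end{equation*}

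For the low-frequency piece the crude pointwise bound on the wave Duhamel kernel loses a factor $\abs{\xi}^{-1}$, which is not locally square-integrable near $\xi = 0$; recovering it is the heart of the matter, and the plan is to do so by a single integration by parts in time, which encodes the cancellation $\abs{\xi}^{-1}\sin((t-t')\abs{\xi})\to(t-t')$ as $\abs{\xi}\to 0$. Fixing $\xi$ with $\abs{\xi}<1$, in $\widehat v(t,\xi) = \int_0^t \abs{\xi}^{-1}\sin((t-t')\abs{\xi})\,\widehat F(t',\xi)\,dt'$ I would integrate by parts using the primitive $\Psi(t',\xi) = \int_0^{t'}\widehat F(r,\xi)\,dr$; since $\abs{\xi}^{-1}\sin((t-t')\abs{\xi})$ vanishes at $t'=t$, $\Psi(0,\xi)=0$, and $\partial_{t'}\bigl(\abs{\xi}^{-1}\sin((t-t')\abs{\xi})\bigr) = -\cos((t-t')\abs{\xi})$ has modulus at most one, this gives $\widehat v(t,\xi) = \int_0^t \cos((t-t')\abs{\xi})\,\Psi(t',\xi)\,dt'$, hence $\abs{\widehat v(t,\xi)}\le\sup_{\abs{t'}\le 1}\abs{\Psi(t',\xi)}$ for $\abs{t}\le 1$. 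Writing $\Psi$ through the Fourier transform in time, $\Psi(t',\xi)\simeq\int\frac{e^{it'\tau}-1}{i\tau}\widetilde F(\tau,\xi)\,d\tau$, and using $\bigabs{(e^{it'\tau}-1)/\tau}\le\min(\abs{t'},2/\abs{\tau})\lesssim\angles{\tau}^{-1}$ for $\abs{t'}\le 1$, I obtain $\abs{\Psi(t',\xi)}\lesssim\int\angles{\tau}^{-1}\bigabs{\widetilde F(\tau,\xi)}\,d\tau$. Since $\angles{\tau}\sim\angles{\abs{\tau}-\abs{\xi}}$ and $\angles{\xi}^{s-1}\sim 1$ for $\abs{\xi}<1$, taking the $L^2_\xi$ norm over $\abs{\xi}<1$ finishes.

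The step I expect to be the main obstacle is this low-frequency analysis: one has to see that the elementary $\square^{-1}$ kernel estimates are off exactly by $\abs{\xi}^{-1}$, and then exploit the above cancellation — realized here by the integration by parts — together with the comparison $\angles{\tau}\sim\angles{\abs{\tau}-\abs{\xi}}$, which holds only because $\abs{\xi}$ is bounded. The remaining points (the Fubini interchange in the representation of $\Psi$ and the density argument) are routine, since all the operators in play are bounded on the spaces involved.
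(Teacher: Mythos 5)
Your proof is correct, and the low-frequency treatment is a genuinely different route from the paper's. The paper (after reducing to $f=g=0$, as you do) splits $F$ into \emph{three} regions in $(\tau,\xi)$-space: (i) $\abs{\xi}\ge 1$, handled by the explicit $\square^{-1}$ representation of Lemma \ref{DuhamelLemma}; (ii) $\abs{\xi}<1$, $\abs{\tau}\ge 2$, where the sum $\widehat{u_+}+\widehat{u_-}$ is combined into a single kernel with denominator $\tau^2-\abs{\xi}^2$ and one checks directly that the apparent $\abs{\xi}^{-1}$ singularity cancels; and (iii) $\abs{\xi}<1$, $\abs{\tau}<2$, handled by the crude energy inequality, where both $\angles{\xi}$ and $\angles{\abs{\tau}-\abs{\xi}}$ are $O(1)$ anyway. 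You instead keep a clean two-way split in $\xi$ alone, and resolve the low-frequency singularity in one shot by a time integration by parts, which transfers the cancellation $\abs{\xi}^{-1}\sin((t-t')\abs{\xi})\to(t-t')$ onto the bounded kernel $\cos((t-t')\abs{\xi})$ and pushes the work into the primitive $\Psi(t',\xi)=\int_0^{t'}\widehat F(r,\xi)\,dr$, whose Fourier representation has a built-in $\angles{\tau}^{-1}$ gain; the comparison $\angles{\tau}\sim\angles{\abs{\tau}-\abs{\xi}}$ on $\abs{\xi}<1$ then closes the argument. Your route avoids the explicit algebra behind Lemma \ref{DuhamelLemma} at low frequency and the extra case split in $\tau$, at the minor cost of invoking \eqref{Linear3} (itself a general Duhamel fact) rather than the paper's tailored $\square^{-1}$ lemma for the high-frequency piece; the two are essentially the same estimate. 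All the individual inequalities you use --- $\bigabs{(e^{it'\tau}-1)/\tau}\lesssim\angles{\tau}^{-1}$ for $\abs{t'}\le 1$, the vanishing boundary terms $u(t)=0$ and $\Psi(0,\xi)=0$, the bound $\abs{\cos}\le 1$, and $\angles{\tau\pm\abs{\xi}}\ge\angles{\abs{\tau}-\abs{\xi}}$ --- check out, so the argument is complete.
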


Applying this to \eqref{WaveEqE}, where $J^\mu$ is now defined for all $t$ by \eqref{Jcut}, we find
\begin{align*}
  &\sup_{\abs{t} \le 1} \norm{\Proj_{\abs{\xi_0} \sim N_0} \mathbf E^{\mathrm{df}}(t)}_{H^{-1}}
  \\
  &\quad
  \lesssim \norm{\Proj_{\abs{\xi_0} \sim N_0} \mathbf E^{\mathrm{df}}_0}_{H^{-1}}
  + \norm{\Proj_{\abs{\xi_0} \sim N_0} \left( \nabla \times (0,0,B^3_0) - \mathcal P_{\text{df}} \mathbf J(0) \right)}_{H^{-2}}
  \\
  &\quad\quad
  + \norm{\int \frac{\chi_{\abs{\xi_0} \sim N_0}}{\angles{\xi_0}^2\angles{\abs{\tau_0}-\abs{\xi_0}}} \abs{\mathcal F \left[ \mathcal P_{\text{df}}(-\nabla J_0 + \partial_t \mathbf J) \right] (X_0)} \, d\tau_0}_{L^2_{\xi_0}}
  \\
  &\quad\lesssim \norm{\Proj_{\abs{\xi_0} \sim N_0} \mathbf E^{\mathrm{df}}_0}
  + \norm{\Proj_{\abs{\xi_0} \sim N_0} B^3_0}
  + \frac{N_0}{\angles{N_0}^2} \norm{\psi_0}^2
  \\
  &\quad\quad
  + \norm{\int \frac{\chi_{\abs{\xi_0} \sim N_0}}{\angles{\xi_0}^2\angles{\abs{\tau_0}-\abs{\xi_0}}} \abs{\mathcal F \left[ \mathcal P_{\text{df}}(-\nabla J_0 + \partial_t \mathbf J) \right] (X_0)} \, d\tau_0}_{L^2_{\xi_0}},
\end{align*}
where we applied \eqref{BallSobolev} to get $\norm{\Proj_{\abs{\xi_0} \sim N_0} \mathcal P_{\text{df}} \mathbf J(0)}_{H^{-2}} \lesssim N_0\angles{N_0}^{-2} \norm{\psi_0}^2$.

Similarly
\begin{align*}
  &\sup_{\abs{t} \le 1} \norm{\Proj_{\abs{\xi_0} \ge 1/T} \mathbf E^{\mathrm{df}}(t)}_{H^{-3/2}}
  \\
  &\quad
  \lesssim \norm{\Proj_{\abs{\xi_0} \ge 1/T} \mathbf E^{\mathrm{df}}_0}_{H^{-3/2}}
  + \norm{\Proj_{\abs{\xi_0} \ge 1/T} \left( \nabla \times (0,0,B^3_0) - \mathcal P_{\text{df}} \mathbf J(0) \right)}_{H^{-5/2}}
  \\
  &\quad\quad
  + \norm{\int \frac{\chi_{\abs{\xi_0} \ge 1/T}}{\angles{\xi_0}^{5/2}\angles{\abs{\tau_0}-\abs{\xi_0}}} \abs{\mathcal F \left[ \mathcal P_{\text{df}}(-\nabla J_0 + \partial_t \mathbf J) \right] (X_0)} \, d\tau_0}_{L^2_{\xi_0}}
  \\
  &\quad\lesssim \norm{\Proj_{\abs{\xi_0} \ge 1/T} \mathbf E^{\mathrm{df}}_0}_{H^{-1/2}}
  + \norm{\Proj_{\abs{\xi_0} \ge 1/T} B^3_0}_{H^{-1/2}}
  + \norm{\psi_0}^2
  \\
  &\quad\quad
  + \norm{\int \frac{\chi_{\abs{\xi_0} \ge 1/T}}{\angles{\xi_0}^{5/2}\angles{\abs{\tau_0}-\abs{\xi_0}}} \abs{\mathcal F \left[ \mathcal P_{\text{df}}(-\nabla J_0 + \partial_t \mathbf J) \right] (X_0)} \, d\tau_0}_{L^2_{\xi_0}},
\end{align*}
where we used \eqref{Jest2}.

Thus
\begin{equation}\label{I3}
  \sup_{\abs{t} \le T} \norm{I_3(t)}_{(T)}
  \lesssim T \left( D_T(0) + \norm{\psi_0}^2 \right)
  + T \left( T^{1/2} \sum_{0 < N_0 < 1/T} a_{N_0}
  + b \right),
\end{equation}
where
\begin{align*}
  a_{N_0}
  &=
  \norm{\int \frac{\chi_{\abs{\xi_0} \sim N_0}}{\angles{\xi_0}^2\angles{\abs{\tau_0}-\abs{\xi_0}}} \abs{\mathcal F \left[ \mathcal P_{\text{df}}(-\nabla J_0 + \partial_t \mathbf J) \right] (X_0)} \, d\tau_0}_{L^2_{\xi_0}},
  \\
  b
  &=
  \norm{\int \frac{\chi_{\abs{\xi_0} \ge 1/T}}{\angles{\xi_0}^{5/2}\angles{\abs{\tau_0}-\abs{\xi_0}}} \abs{\mathcal F \left[ \mathcal P_{\text{df}}(-\nabla J_0 + \partial_t \mathbf J) \right] (X_0)} \, d\tau_0}_{L^2_{\xi_0}}.
\end{align*}

But by \eqref{Efourier} and \eqref{N:14},
$$
  \abs{\mathcal F\left[\mathcal P_{\text{df}}(-\nabla J_0 + \partial_t \mathbf J)\right](X_0)}
  \lesssim
  \int \left( \abs{\xi_0} + \abs{\abs{\tau_0}-\abs{\xi_0}} \right)
  \abs{\widetilde{\psi}(X_1)} \abs{\widetilde{\psi}(X_2)}
  \, d\mu^{12}_{X_0},
$$
hence
\begin{align*}
  a_{N_0}
  &\lesssim
  \norm{\frac{\chi_{\abs{\xi_0} \sim N_0}}{\angles{\xi_0}}
  \int 
  \abs{\widetilde{\psi}(X_1)} \abs{\widetilde{\psi}(X_2)}
  \, d\mu^{12}_{X_0} \, d\tau_0}_{L^2_{\xi_0}}
  \\
  &=
  \norm{\frac{\chi_{\abs{\xi_0} \sim N_0}}{\angles{\xi_0}}
  \int 
  f_1(\xi_1) f_2(\xi_1-\xi_0)  \, d\xi_1}_{L^2_{\xi_0}}
  \\
  &\lesssim
  \norm{\frac{\chi_{\abs{\xi_0} \sim N_0}}{\angles{\xi_0}}}_{L^2_{\xi_0}}
  \norm{f_1}\norm{f_2}
  \sim
  \frac{N_0}{\angles{N_0}} \norm{f_1}\norm{f_2},
\end{align*}
where $f_j(\xi_j) = \int \bigabs{\widetilde{\psi_j}(\tau_j,\xi_j)} \, d\tau_j$ satisfies \eqref{fjEst} with $C$ depending only the charge constant. Similarly,
$$
  b
  \lesssim
  \norm{\frac{\chi_{\abs{\xi_0} \ge 1/T}}{\angles{\xi_0}^{3/2}}}_{L^2_{\xi_0}}
  \norm{f_1}\norm{f_2}
  \sim
  T^{1/2} \norm{f_1}\norm{f_2},
$$
hence
\begin{align*}
  T^{1/2} \sum_{0 < N_0 < 1/T} a_{N_0}
  + b
  &\lesssim
  T^{1/2}\sum_{0 < N_0 < 1}  N_0
  + T^{1/2} \sum_{1 \le N_0 < 1/T} 1
  + T^{1/2}
  \\
  &\lesssim T^{1/2} + T^{1/2}\log(1/T) + T^{1/2},
\end{align*}
with implicit constants depending only on the charge constant, so we finally conclude that
\begin{align*}
  \sup_{\abs{t} \le T} \norm{I_3(t)}_{(T)}
  &\lesssim (1+\norm{\psi_0}^2) T \left[ 1 + D_T(0) \right] + CT^{3/2}\log(1/T)
  \\
  &\le (1+\norm{\psi_0}^2) T^{1/2}\varepsilon + CT^{3/2}\log(1/T),
\end{align*}
where $C$ depends only on the charge constant and we used \eqref{MDtimeNew} in the last step, recalling that $D_T(0) \le \tilde D_T(0)$. Thus $\varepsilon$ depends only on the charge constant and $\abs{M}$, so we have proved \eqref{InhomGrowth} for $I_3$.

Finally, we remark that the estimates proved in this section also give that $\mathbf E^{\mathrm{df}}_\pm$ and $B^3_\pm$ describe continuous curves in the data space \eqref{Data} for $\abs{t} \le T$.

\section{Proof of Lemma \ref{EnergyLemma}}

For the homogeneous part of $u$ this follows from the standard energy inequality, so we assume $f=g=0$, i.e.~$u=\square^{-1}F$. Now split $F=F_1+F_2+F_3$ corresponding to the following three regions in Fourier space: (i) $\abs{\xi} \ge 1$, (ii) $\abs{\xi} < 1$ and $\abs{\tau} \ge 2$, and (iii) $\abs{\xi} < 1$ and $\abs{\tau} < 2$. Set $u_j=\square^{-1}F_j$ for $j=1,2,3$.

From Lemma \ref{DuhamelLemma} we get
$$
  \norm{u_1(t)}_{H^s}
  \lesssim
  \norm{\chi_{\abs{\xi} \ge 1} \frac{\angles{\xi}^{s}}{\abs{\xi}} \int \frac{\bigabs{\widetilde F(\tau,\xi)}}{\angles{\abs{\tau}-\abs{\xi}}} 
  \, d\tau}_{L^2_{\xi}}
  \lesssim
  \norm{\angles{\xi}^{s-1} \int \frac{\bigabs{\widetilde F(\tau,\xi)}}{\angles{\abs{\tau}-\abs{\xi}}} 
  \, d\tau}_{L^2_{\xi}}
$$
for all $t \in \R$.

Lemma \ref{DuhamelLemma} also gives
\begin{align*}
  \widehat u(t,\xi)
  &=
  \widehat{u_+}(t,\xi) + \widehat{u_-}(t,\xi) 
  \\
  &\simeq
  \frac{1}{\abs{\xi}} \int_{-\infty}^\infty
  \left[
  \frac{e^{it\tau}-e^{-it\abs{\xi}}}{\tau+\abs{\xi}}
  -
  \frac{e^{it\tau}-e^{it\abs{\xi}}}{\tau-\abs{\xi}}
  \right]
  \widetilde F(\tau,\xi) \, d\tau
  \\
  &\simeq
  \frac{1}{\abs{\xi}} \int_{-\infty}^\infty
  \frac{-2\abs{\xi}e^{it\tau} + 2\abs{\xi}\cos(t\abs{\xi}) + 2i\tau\sin(t\abs{\xi})}{\tau^2-\abs{\xi}^2}
  \widetilde F(\tau,\xi) \, d\tau,
\end{align*}
so if $\widetilde F$ is supported in $\abs{\tau} \gg \abs{\xi}$ we get
$$
  \abs{\widehat u(t,\xi)}
  \lesssim
  \int
  \left( \frac{1}{\abs{\tau}^2} + \frac{\min(\abs{t},\abs{\xi}^{-1})}{\abs{\tau}} \right)
  \abs{\widetilde F(\tau,\xi)} \, d\tau,
$$
and applying this to $u_2$ yields
$$
  \sup_{\abs{t} \le 1} \norm{u_2(t)}_{H^s}
  \lesssim
  \norm{\chi_{\abs{\xi} < 1} \int_{\abs{\tau} \ge 2} \frac{\bigabs{\widetilde F(\tau,\xi)}}{\abs{\tau}} 
  \, d\tau}_{L^2_{\xi}}
  \lesssim
  \norm{\angles{\xi}^{s-1} \int \frac{\bigabs{\widetilde F(\tau,\xi)}}{\angles{\abs{\tau}-\abs{\xi}}} 
  \, d\tau}_{L^2_{\xi}}.
$$

Finally, by the standard energy inequality we have
\begin{align*}
  \sup_{\abs{t} \le 1} \norm{u_3(t)}_{H^s}
  &\lesssim
  \int_0^1 \norm{F_3(t)}_{H^{s-1}} \, dt
  \lesssim
  \sup_{\abs{t} \le 1} \norm{F_3(t)}_{H^{s-1}}
  \\
  &\lesssim \norm{\angles{\xi}^{s-1} \int \bigabs{\widetilde{F_3}(\tau,\xi)} \, d\tau}_{L^2_\xi}
  \lesssim
  \norm{\angles{\xi}^{s-1} \int \frac{\bigabs{\widetilde F(\tau,\xi)}}{\angles{\abs{\tau}-\abs{\xi}}} 
  \, d\tau}_{L^2_{\xi}},
\end{align*}
completing the proof of the lemma.

\section{Proof of the linear estimates in $X^{s,b;p}_{\phi(\xi)}$}\label{LemmaProofs}

Here we prove \eqref{Linear1} and \eqref{Linear2} by an argument similar to the one used in \cite{Kenig:1994} for the standard $X^{s,b}$ spaces. Moreover, we prove \eqref{Linear3}.

\subsection{Proof of \eqref{Linear1}} Letting $G \in X_{\phi(\xi)}^{s,-1/2;1}$ denote an arbitrary representative of $F \in X_{\phi(\xi)}^{s,-1/2;1}(S_T)$, we reduce to proving
$$
  \norm{u}_{X_{\phi(\xi)}^{s,1/2;1}(S_T)}
  \lesssim
  \norm{f}_{H^s}
  +
  \norm{G}_{X_{\phi(\xi)}^{s,-1/2;1}}.
$$
By density we may assume $G \in \mathcal S(\R^{1+2})$. Denote by $S(t) = e^{-it\phi(D)}$ the free propagator of $-i\partial_t + \phi(D)$. Split the solution of $[-i\partial_t + \phi(D)] u = G$, $u(0) = f$ into homogeneous and inhomogeneous parts, $u = v + w$, where $v(t) = S(t) f$ and $w(t) = \int_0^t S(t-t') G(t') \, dt'$. 

Since $\widetilde{v}(\tau,\xi) = \delta\left(\tau+\phi(\xi)\right)
\widehat f(\xi)$,
\begin{align*}
  \norm{v}_{X^{s,1/2;1}_{\phi(\xi)}(S_T)}
  &\le \norm{\rho v}_{X^{s,1/2;1}_{\phi(\xi)}}
  \\
  &= \sum_L L^{1/2} \norm{ \angles{\xi}^s \chi_{\angles{\tau+\phi(\xi)} \sim L} 
  \widehat\rho\left(\tau+\phi(\xi)\right) \widehat f(\xi)}_{L^2_{\tau,\xi}}
  \\
  &= \sum_L L^{1/2} \norm{\Proj_{\angles{\tau} \sim L} \rho}_{L^2_t}
  \norm{f}_{H^s}
  = \norm{\rho}_{B^{1/2}_{2,1}} \norm{f}_{H^s}.
\end{align*}

Next, taking Fourier transform in space,
\begin{equation}\label{wFourier}
  \widehat w(t,\xi)
  =
  \int_0^t e^{-i(t-t')\phi(\xi)} \widehat G(t',\xi) \, dt'
  \simeq
  \int \frac{e^{it\lambda}-e^{-it\phi(\xi)}}{i(\lambda+\phi(\xi))} \widetilde G(\lambda,\xi) \, d\lambda
\end{equation}
and then also in time,
$$
  \widetilde w(\tau,\xi)
  =
  \int \frac{\delta(\tau-\lambda)-\delta(\tau+\phi(\xi))}{i(\lambda+\phi(\xi))} \widetilde G(\lambda,\xi) \, d\lambda
  =
  \frac{\widetilde G(\tau,\xi)}{i(\tau+\phi(\xi))}
  - \delta(\tau+\phi(\xi)) \widehat g(\xi),
$$
where
$$
  \widehat g(\xi)
  =
  \int \frac{\widetilde G(\lambda,\xi)}{i(\lambda+\phi(\xi))}  \, d\lambda.
$$
Now split $G = G_1 + G_2$ corresponding to the Fourier domains $\abs{\tau+\phi(\xi)} \lesssim 1$ and $\abs{\tau+\phi(\xi)} \gg 1$ respectively. Write $w=w_1+w_2$ accordingly. Expand
$$
  \widehat{w_1}(t,\xi)
  =
  e^{-it\phi(\xi)} \sum_{n=1}^\infty \int \frac{\left[it(\lambda+\phi(\xi))\right]^n}{n! i(\lambda+\phi(\xi))}\chi_{\abs{\lambda+\phi(\xi)} \lesssim 1} \widetilde G(\lambda,\xi) \, d\lambda
$$
hence
\begin{equation}\label{w1}
  w_1(t) =
  \sum_{n=1}^\infty \frac{t^n}{n!} S(t) f_n
\end{equation}
where
\begin{equation}\label{fn}
  \widehat{f_n}(\xi) = \int \left[i(\lambda+\phi(\xi))\right]^{n-1}
  \chi_{\abs{\lambda+\phi(\xi)} \lesssim 1} \widetilde G(\lambda,\xi) \, d\lambda
\end{equation}
and clearly
$$
  \norm{f_n}_{H^s}
  \lesssim
  \norm{G}_{X^{s,-1/2;1}_{\phi(\xi)}}.
$$
Thus
\begin{align*}
  \norm{w_1}_{X^{s,1/2;1}_{\phi(\xi)}(S_T)}
  &\le
  \norm{\rho w_1}_{X^{s,1/2;1}_{\phi(\xi)}}
  \le
  \sum_{n=1}^\infty \frac{1}{n!} \norm{t^n \rho(t) S(t)f_n}_{X^{s,1/2;1}_{\phi(\xi)}}
  \\
  &\le
  \sum_{n=1}^\infty \frac{1}{n!} \norm{t^n \rho(t)}_{B^{1/2}_{2,1}}
  \norm{f_n}_{H^s}
  \lesssim
  \left(\sum_{n=1}^\infty \frac{n2^{n-1}}{n!}\right)
  \norm{G}_{X^{s,-1/2;1}_{\phi(\xi)}}
\end{align*}
since $ \norm{t^n \rho(t)}_{B^{1/2}_{2,1}} \lesssim \norm{t^n \rho(t)}_{H^1} \lesssim 2^n + n2^{n-1}$. Finally, split $w_2 = a - b$ where
\begin{gather}
  \label{aDef}
  \widetilde a(\tau,\xi)
  =
  \frac{\chi_{\abs{\tau+\phi(\xi)} \gg 1} \widetilde G(\tau,\xi)}{i(\tau+\phi(\xi))},
  \\
  \label{bDef}
  \widetilde b(\tau,\xi)
  =
  \delta(\tau+\phi(\xi)) \widehat h(\xi),
  \qquad
  \widehat h(\xi)
  =
  \int \frac{\chi_{\abs{\lambda+\phi(\xi)} \gg 1} \widetilde G(\lambda,\xi)}{i(\lambda+\phi(\xi))}  \, d\lambda.
\end{gather}
Thus
$$
  \norm{a}_{X^{s,1/2;1}_{\phi(\xi)}}
  \sim
  \sum_{L \gg 1} L^{1/2} \frac{1}{L} \norm{\angles{D}^s\Proj_{\angles{\tau+\phi(\xi)} \sim L} G}
  \le
  \norm{G}_{X^{s,-1/2;1}_{\phi(\xi)}}.
$$
Moreover,
$
  \norm{h}_{H^s}
  \lesssim
  \sum_{L \gg 1} L^{-1} L^{1/2} \norm{\angles{D}^s\Proj_{\angles{\tau+\phi(\xi)} \sim L} G}
$
by Cauchy-Schwarz, so
$$
  \norm{b}_{X^{s,1/2;1}_{\phi(\xi)}(S_T)}
  \le
  \norm{\rho b}_{X^{s,1/2;1}_{\phi(\xi)}}
  \lesssim
  \norm{h}_{H^s}
  \lesssim
  \norm{G}_{X^{s,-1/2;1}_{\phi(\xi)}},
$$
and this completes the proof of \eqref{Linear1}.

\subsection{Proof of \eqref{Linear2}}

The argument here is similar, but we modify the splitting $G = G_1 + G_2$, letting it corresponding to $\abs{\tau+\phi(\xi)} \lesssim 1/T$ and $\abs{\tau+\phi(\xi)} \gg 1/T$ respectively. Then \eqref{w1} holds with $f_n$ given by the obvious modification of \eqref{fn}, hence
$$
  \norm{f_n}_{H^s}
  \lesssim
  \sum_{L \lesssim 1/T} L^{n-1} L^{1/2} \norm{\angles{D}^s\Proj_{\angles{\tau+\phi(\xi)} \sim L}G}
  \lesssim T^{-n+1/2+b} \norm{G}_{X^{s,b;\infty}_{\phi(\xi)}},
$$
where we estimated
$$
  \sum_{L \lesssim 1/T} L^{n-1/2-b}
  \sim T^{-n+1/2+b}
$$
for $b < 1/2$, recalling that $n \ge 1$, hence $n-1/2-b > 0$. Thus
\begin{align*}
  \norm{w_1}_{X^{s,1/2;1}_{\phi(\xi)}(S_T)}
  &\le
  \norm{\rho_T w_1}_{X^{s,1/2;1}_{\phi(\xi)}}
  \\
  &\le
  \sum_{n=1}^\infty \frac{1}{n!} T^n \norm{(t/T)^n \rho(t/T) S(t)f_n}_{X^{s,1/2;1}_{\phi(\xi)}}
  \\
  &\lesssim
  \sum_{n=1}^\infty \frac{1}{n!} T^n\norm{t^n \rho(t)}_{B^{1/2}_{2,1}}
   T^{-n+1/2+b}  \norm{G}_{X^{s,b;\infty}_{\phi(\xi)}}
  \\
  &\lesssim
  T^{1/2+b} \left(\sum_{n=1}^\infty \frac{n2^{n-1}}{n!}\right)
  \norm{G}_{X^{s,b;\infty}_{\phi(\xi)}},
\end{align*}
where we used the elementary estimate
$$
  \norm{\rho_T}_{B^s_{2,1}}
  \lesssim T^{1/2-s}
  \norm{\rho}_{B^s_{2,1}}
  \qquad (0 < s \le 1/2)
$$
with $s=1/2$ and $\rho(t)$ replaced by $t^n\rho(t)$.

The splitting $w_2 = a - b$ is defined as in \eqref{aDef} and \eqref{bDef} but with the obvious modifications, and we have
\begin{align*}
  \norm{a}_{X^{s,1/2;1}_{\phi(\xi)}}
  &\sim
  \sum_{L \gg 1/T} L^{1/2} \frac{1}{L} \norm{\angles{D}^s\Proj_{\angles{\tau+\phi(\xi)} \sim L} G}
  \\
  &\le
  \sum_{L \gg 1/T} L^{-1/2-b}
  \norm{G}_{X^{s,b;\infty}_{\phi(\xi)}}
  \sim T^{1/2+b} \norm{G}_{X^{s,b;\infty}_{\phi(\xi)}},
\end{align*}
provided that $-1/2-b < 0$, i.e.~$b > -1/2$. Since, by Cauchy-Schwarz,
$$
  \norm{h}_{H^s}
  \lesssim
  \sum_{L \gg 1/T} \frac{1}{L} L^{1/2} \norm{\angles{D}^s\Proj_{\angles{\tau+\phi(\xi)} \sim L} G},
$$
we also have
$$
  \norm{b}_{X^{s,1/2;1}_{\phi(\xi)}(S_T)}
  \le
  \norm{\rho b}_{X^{s,1/2;1}_{\phi(\xi)}}
  \lesssim
  \norm{h}_{H^s}
  \lesssim
  T^{1/2+b} \norm{G}_{X^{s,b;\infty}_{\phi(\xi)}},
$$
completing the proof of \eqref{Linear2}.

\subsection{Proof of \eqref{Linear3}} With $w(t) = \int_0^t S(t-t') G(t') \, dt'$, \eqref{wFourier} gives
$$
  \widehat{w}(t,\xi)
  \simeq
  e^{-it\phi(\xi)} \int \frac{e^{it(\lambda+\phi(\xi))}-1}{i(\lambda+\phi(\xi))} \widetilde G(\lambda,\xi) \, d\lambda,
$$
implying \eqref{Linear3}.

\bibliographystyle{amsalpha} 
\bibliography{mybibliography}

\end{document}